\documentclass[11pt]{amsart}


\usepackage{epsfig}

\usepackage[utf8]{inputenc}
\usepackage[T1]{fontenc}

\usepackage{todonotes}
\usepackage{amssymb}
\usepackage{tikz-cd}
\usepackage{comment}
\usepackage{dsfont}
\usepackage{amsthm}
\DeclareMathAlphabet{\mymathbb}{U}{BOONDOX-ds}{m}{n}


\usepackage[shortlabels]{enumitem}
\usepackage[all]{xy}
\usepackage[colorlinks=true, urlcolor=rltblue, citecolor=drkgreen, linkcolor=drkred] {hyperref}
\usepackage{color}
\usepackage{pdfsync}
\definecolor{rltblue}{rgb}{0,0,0.4}
\definecolor{drkred}{rgb}{0.6,0,0}
\definecolor{drkgreen}{rgb}{0,0.4,0}
\usepackage{graphicx}
\usepackage{mathdots}
\usepackage{enumitem}
\setenumerate[1]{label=\arabic*.}

\newtheorem{thm}{Theorem}[section]
\newtheorem{theorem}[thm]{Theorem}
\newtheorem{lemma}[thm]{Lemma}
\newtheorem{proposition}[thm]{Proposition}
\newtheorem{corollary}[thm]{Corollary}
\newtheorem{statement}[thm]{Statement}

\theoremstyle{definition}
\newtheorem{definition}[thm]{Definition}

\theoremstyle{remark}

\newtheorem{remark}[thm]{Remark}
\newtheorem{example}[thm]{Example}

\newtheorem{historic}[thm]{Historic Remark}

\theoremstyle{plain}



\newcounter{contenumi}

\def\la{\langle}
\def\ra{\rangle}


\newcommand{\numberset}{\mathbb}
\newcommand{\N}{\numberset{N}}

\newcommand{\ot}{\operatorname{{o.}t.}}
\newcommand{\height}{\operatorname{ht}}
\newcommand{\ran}{\operatorname{ran}}
\newcommand{\dom}{\operatorname{dom}}
\newcommand{\base}{\operatorname{base}}

\newcommand{\Sub}{\operatorname{Sub}}
\newcommand{\one}{\operatorname{\mymathbb{1}}}
\newcommand{\two}{\operatorname{\mymathbb{2}}}


\def\A{\mathcal{A}}
\def\B{\mathcal{B}}
\def\C{\mathcal{C}}
\def\D{\mathcal{D}}
\def\E{\mathcal{E}}
\def\F{\mathcal{F}}
\def\SD{\mathsf{SD}}
\def\ON{\mathrm{ON}}
\def\om{\omega}
\def\a{\alpha}
\def\b{\beta}
\def\G{\Gamma}
\def\g{\gamma}
\def\d{\delta}
\def\e{\epsilon}
\def\z{\zeta}
\def\1{\mathsd{1}}

\def\pbar{\bar{p}}

\def\conc{{}^\smallfrown}

\def\ggeq{\mathrel{\geq\!\!\!\geq}}

\makeatletter
\newcommand{\dotminus}{\mathbin{\text{\@dotminus}}}

\newcommand{\@dotminus}{%
	\ooalign{\hidewidth\raise1ex\hbox{.}\hidewidth\cr$\m@th-$\cr}%
}
\makeatother

\newcommand{\Ram}{\operatorname{Ram}}
\def\ctimes{\mathbin{{\times}\! \! \!\! {\times}}}
\def\cplus{\mathbin{{\texttt{\#}}}}



\title{The barrier Ramsey theorem}


\author{Alberto Marcone}
\address{Dipartimento di Scienze Matematiche, Informatiche e Fisiche, Universit\`a \newline \indent di Udine, Italy}
\email{alberto.marcone@uniud.it}
\urladdr{\href{http://users.dimi.uniud.it/~alberto.marcone/}{users.dimi.uniud.it/$\sim$alberto.marcone}}

\author{Antonio Montalb\'an}
\address{Department of Mathematics\\
University of California, Berkeley\\
USA}
\keywords{}
\email{antonio@math.berkeley.edu}
\urladdr{\href{http://www.math.berkeley.edu/~antonio/index.html}{www.math.berkeley.edu/$\sim$antonio}}

\author{Andrea Volpi}
\address{Dipartimento di Scienze Matematiche, Informatiche e Fisiche, Universit\`a \newline \indent di Udine, Italy}
\email{andrea.volpi@uniud.it}
\urladdr{\href{https://andreasdfghj.github.io/andreavolpi/}{andreasdfghj.github.io/andreavolpi}}

\keywords{finite Ramsey theorem; largeness notions; barriers}
\subjclass{Primary: 05D10, Secondary: 03F15}
\thanks{The first and third authors were partially supported by the Italian PRIN 2022 \emph{Models, sets and classifications}, prot.\ 2022TECZJA, funded by the European Union - Next Generation EU. The first author is a member of INdAM-GNSAGA.
The second author was partially supported by NSF grant DMS 1954062.}

\begin{document}

\begin{abstract}
In this paper we study a very general finite Ramsey theorem, where both the sets being colored and the homogeneous set must satisfy some largeness notion.
For the homogeneous set this has already been done using the notion of $\alpha$-largeness, where $\alpha$ is a countable ordinal equipped with a system of fundamental sequences.
To extend this approach the more appropriate notion is barrier largeness.
Since the complexity of barriers can be measured by countable ordinals, we define Ramsey ordinals and, using appropriate iterations of the Veblen functions, we are able to compute them.
\end{abstract}

\maketitle

\section{Introduction}\label{sec:introduction}

It is known since Gödel's first incompleteness theorem that there are true statements about the standard natural numbers that cannot be proved with the axioms of Peano arithmetic.
In \cite{MR3727432} Paris and Harrington proved that a certain statement in finite Ramsey theory, expressible in Peano arithmetic, is not provable in this system.
This has been claimed to be the first \lq\lq natural\rq\rq\ statement independent from Peano arithmetic.

\begin{statement}[Paris-Harrington 1977]\label{PH}
Let $n, k, m \in \N$ be such that $m \ge n$.
There exists $N \in \N$ such that for each $s \subseteq \N$ of cardinality $N$ and each coloring of the $n$-size subsets of $s$ in $k$ colors, there exists a set $t \subseteq s$ which is homogeneous for the coloring and such that $|t| > \max (m, \min t)$.
\end{statement}

After this seminal result, Ketonen and Solovay \cite{MR607894} introduced the notion of $\a$-largeness for $\a < \e_0$ (recall that $\e_0$ denotes the first fixed point of the ordinal exponential function and is the proof theoretic ordinal of Peano arithmetic) and improved the computation of $N$ in Statement \ref{PH}.

Given a countable ordinal $\G$ for which we have a system of fundamental sequences (i.e.\ sequences of ordinals approaching any ordinal $< \Gamma$ from below: see Subsection \ref{subsec:systems} for the precise definition), we can define what it means for a finite set of natural numbers to be $\a$-large for $\a < \G$.
Here we mention that typically (though not always, since largeness depends on the chosen system of fundamental sequences) a set $s$ is $n$-large (for $n \in \om$) when $|s| \ge n$, $\om$-large if $|s| > \min s$, $\om^2$-large if it can be partitioned into $\min s$ sets, each entirely preceding the next one and $\om$-large.
Notice that $\a < \b$ does not necessarily imply that every $\b$-large set is also $\a$-large (e.g.\ $\{2,5,8\}$ is $\om$-large but not $n$-large for $3 < n < \om$).
In this parlance, Paris-Harrington theorem states the existence of homogeneous $\om$-large sets for $k$-colorings of $n$-large subsets of a $N$-large set. 

More recently, various authors considered statements extending Statement \ref{PH} to largeness notions and established some relationships between the various parameters \cite{MR1694401, MR1969630, MR2231881, MR2335083, MR2510280, MR4125988, dSW}.
All of these results consider $\alpha$-largeness notions for $\a < \e_0$ (\cite{dSW} for $\a < \e_\om$) and Ramsey like statements that restrict the sizes of the tuples being colored to be $n$ for some fixed natural number $n$. 
In this paper we extend the previous results by studying colorings of all $\g$-size subsets of some finite set $s$ also when $\g \ge \om$.
In \cite{MR677888, MR2346474} the infinite Ramsey theorem has been extended to colorings of the $\g$-size subsets of an infinite set (the homogeneous set is required to be infinite).


We introduce a new, more flexible, framework: largeness notions induced by blocks and barriers.
We give the formal definition of blocks and barriers in Subsection \ref{subsec:barriers}.
Largeness notions induced by blocks generalize largeness notions induced by systems of fundamental sequences.
Moreover, each block has a countable ordinal associated to it, its height, which measures the complexity of the block and of its largeness notion.

Notice also that a system of fundamental sequences is defined on some (typically constructive) ordinal $\G$ and once we fix the system we can work only with ordinals below $\G$.
On the other hand, we can consider blocks with arbitrary (countable) height.


A useful tool to state results in Ramsey theory is the arrow notation: in Section \ref{sec:ramseytheorem} we adapt this notation to our framework and write $\B \rightarrow (\A)^\C_k$ where $\A$, $\B$ and $\C$ are blocks and $k \in \N$.
Then, given ordinals $\a$ and $\g$, we define $\Ram(\a)^\g_k$ as the least $\b$ such that for each $\A$ and $\C$ of height respectively $\a$ and $\g$ there exists $\B$ of height $\b$ with $\B \rightarrow (\A)^\C_k$.

The main result of the paper is that, for $\g < \a$ with $\a$ infinite, the ordinal $\Ram(\a)^{1 + \g}_{<\om} = \sup_{k \in \N} \Ram(\a)^{1 + \g}_k$ is 
\[
\varphi_{\log \g} (\a \cdot \om),
\]
where $\varphi_{\log \g}$ denotes a composition of Veblen functions indexed by the ordinals resulting from a logarithm of $\g$ obtained from its Cantor normal form.\smallskip

We now describe the structure of the paper.
In Section \ref{sec:largeness} we introduce systems of fundamental sequences and the largeness notions induced by them.
We also introduce a way to pass from a system of fundamental sequences on some ordinal $\zeta$ to a system on $\Gamma_\zeta$ (which is the $\zeta$-th ordinal closed under the binary Veblen function). 
We also highlight some properties that the new system satisfies.
Then we give the basics of blocks and barriers and define largeness notions induced by them. 

In Section \ref{sec:bridges} we show how to produce blocks and barriers starting from a system of fundamental sequences, preserving the same largeness notions.
Then we show how to obtain what we call a pseudosystem of fundamental sequences starting from a block or a barrier.

In Section \ref{sec:ramseytheorem} we introduce the notation for Ramsey theory in the framework with blocks and barriers.
We also define the ordinal $\varphi_{\log \g} (\a \cdot \om)$ that in the following sections we prove to be $\Ram(\a)^{1 + \g}_{<\om}$.

Section \ref{sec:pigeonhole} deals with the barrier pigeonhole principle, which computes $\Ram(\a)^1_k$.
In Section \ref{sec:lowbound} we show that under our hypothesis $\varphi_{\log \g} (\a \cdot \om) \leq \Ram(\a)^{1 + \g}_{<\om}$: the proof employs the system of fundamental sequences described in Section  \ref{sec:largeness}.
Section \ref{sec:upbound} completes the proof by showing $\Ram(\a)^{1 + \g}_{<\om} \leq \varphi_{\log \g} (\a \cdot \om)$: this is obtained for arbitrary countable ordinals $\a$ and $\g$.

Finally, in Section \ref{sec:nestedness} we show that our system of fundamental sequences enjoys a crucial property used in the proofs of Section \ref{sec:lowbound}.
Since this proof is rather technical we delay it to the last section.

\section{Largeness Notions}\label{sec:largeness}

If $X \subseteq \N$ we denote by $[X]^{< \om}$ the set of finite subsets of $X$ and, for $n \in \N$, by $[X]^n$ the set of subsets of $X$ with exactly $n$ elements.
We use uppercase letters $X, Y, Z$ to denote subsets of $\N$ which may be finite or infinite.
We use lowercase letters $s, t, u$ to denote finite subsets of $\N$.

We identify a subset of $\N$ with the strictly increasing sequence (finite or infinite) which enumerates it and denote by $X(i)$ or $X_i$ the element in position $i$ in the sequence.
If $s$ is such a finite sequence then $|s|$ denotes its length (coinciding with the cardinality of the set) and we write $s = \la s_0, \ldots, s_{|s|-1} \ra$.

Given $s,t \subseteq \N$, by $s \sqsubseteq t$ we mean that $s$, as a sequence, is an initial segment (or prefix) of $t$.
This is stronger than $s \subseteq t$, which denotes set-theoretic inclusion as usual.
The irreflexive versions of the previous relations are denoted by $\sqsubset$ and $\subset$.

We let $s^* = s \setminus \{\max s\}$ and $X^- = X \setminus \{\min X\}$.
By $s < X$ we mean that each element of $s$ is strictly smaller than each element of $X$ or equivalently (when $s$ and $X$ are nonempty) $\max s < \min X$.
We write $s \conc X$ for the concatenation of the sequences $s$ and $X$.
Notice that in our setting $s \conc X$ does make sense only if $s < X$ and, set-theoretically, coincides with $s \cup X$.

\subsection{Systems of Fundamental Sequences}\label{subsec:systems}

For ordinals $\a$ and $\b$ we write $\a \ggeq \b$ to mean that the exponent of the last term in the Cantor normal form of $\a$ is greater than or equal to the exponent of the first term in the Cantor normal form of $\b$.

\begin{definition}
A \textit{fundamental sequence} for an ordinal $\a > 0$ is a non-decreasing sequence of ordinals $\a [n]$ such that $\sup \{\a [n] + 1 : n \in \om \} = \a$.
For notational convenience we stipulate that the fundamental sequence of $0$ is always $0[n] = 0$ for all $n$.

Let $\G$ be an ordinal.
If we fix a fundamental sequence for each ordinal $\a \le \G$ we speak of a \textit{system of fundamental sequences on $\G$}.
\end{definition}

\begin{definition}
A system of fundamental sequences on $\Gamma$ is \textit{regular} if for each ordinal $\b_0 + \om^{\b_1} \le \Gamma$ with $\b_0 \ggeq \om^{\b_1}$ (i.e.\ $\b_0$ are the leading terms in the Cantor normal form, which has another term, of $\b_0 + \om^{\b_1}$) and each $n$ it holds that
$$(\b_0 + \om^{\b_1})[n] = \b_0 + (\om^{\b_1}[n]).$$
\end{definition}
	
Notice that if $\a$ is an ordinal such that $\om^\b < \a < \om^{\b+1}$, regularity implies that $\a[n] \ge \om^\b$ for each $n$.

\begin{definition}\label{def:nested}
A system of fundamental sequences on $\Gamma$ is \textit{nested} if it is never the case for $\g < \b \le \G$ and $n > 1$ that 
$$\g > \b[n] > \g[n].$$
\end{definition}

Regularity is a very natural property that is satisfied by most systems of fundamental sequences that have been studied.
Nestedness is a weakening of the Bachmann property which states that for ordinals $\g < \b \le \G$ it is never the case that $\g > \b[n] > \g[1]$.
The Bachmann property was introduced in \cite{MR219424} and studied in \cite{MR1141947}.
An example of a system of fundamental sequences that satisfies the Bachmann property (but, incidentally, is not regular) is provided in \cite{MR4749344}.

For the rest of Subsection \ref{subsec:systems} we fix a system of fundamental sequences on some ordinal $\G$.
We also assume to work only with ordinals $\le \G$.

\begin{definition}
Let $s \in [\N]^{< \om}$ and $\a$ be an ordinal.
We denote by $\a[s]$ (or $\a[s_0, \ldots, s_{|s|-1}]$) the ordinal $\a[s_0] \ldots [s_{|s|-1}]$.

We say that $s$ is \textit{$\a$-large} if $\a[s] = 0$, \textit{$\a$-small} if $\a[s] > 0$ and \textit{$\a$-size} if $s$ is $\a$-large but $s^*$ is $\a$-small.
($\a$-size are often called \lq\lq exactly $\a$-large\rq\rq\ in the literature.)
\end{definition}

Notice that every $\a$-large set has a $\a$-size initial segment.
We denote by $[X]^\a$ the set of $\a$-size subsets of $X$.

We introduce a sum operation for largeness notions.
Given ordinals $\a, \b$ and a set $s$, we say that $s$ is $(\a \uplus \b)$-large if $s$ can be partitioned in two parts $s_\b < s_\a$ such that $s = s_\b \conc s_\a$, $s_\b$ is $\b$-size and $s_\a$ is $\a$-large.
We may also say that $s$ is $(\a \uplus \b)$-size if in addition $s_\a$ is $\a$-size.

We start by proving a couple of lemmas about nested systems of fundamental sequences.
	
\begin{lemma}\label{sequencelemma}
Assume the system of fundamental sequences is nested.
Let $n > 1$ and $(n_i)_{i \in \om}$ be a sequence such that $n \le n_i$ for each $i$. 
Then for each ordinal $\a$ there exists $k \in \N$ such that $\a[n] = \a[n_0, \ldots, n_k]$.
\end{lemma}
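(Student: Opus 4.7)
The plan is to track the descending sequence of ordinals $\g_k := \a[n_0, \ldots, n_k]$ for $k \ge 0$ and show it stays $\ge \a[n]$ until equality is first reached; well-foundedness of the ordinals then forces equality at some finite step. First I would dispatch the degenerate cases: if $\a = 0$ or $\a$ is a successor, then $\a[m]$ is independent of $m$, so $\g_0 = \a[n_0] = \a[n]$ and $k=0$ works. So I may assume $\a$ is a limit ordinal, and in particular $\a[m] < \a$ for every $m$.

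The heart of the argument is the following invariant, proved by induction on $k$: either $\g_j = \a[n]$ for some $j \le k$, or $\g_k > \a[n]$. The base case is immediate from $n_0 \ge n$ and the monotonicity of the fundamental sequence of $\a$, which yields $\g_0 = \a[n_0] \ge \a[n]$. For the inductive step, assuming $\g_k > \a[n]$, I would note $\g_k \le \g_0 < \a$ and apply nestedness with $\b := \a$ and $\g := \g_k$: the hypothesis $\g_k > \a[n] = \b[n]$ rules out $\b[n] > \g[n]$, hence $\g_k[n] \ge \a[n]$. Monotonicity of the fundamental sequence of $\g_k$ then gives $\g_{k+1} = \g_k[n_{k+1}] \ge \g_k[n] \ge \a[n]$, using $n_{k+1} \ge n$.

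To conclude, I would invoke well-foundedness. If $\g_k \ne \a[n]$ for every $k$, then by the invariant $\g_k > \a[n] \ge 0$, so every $\g_k$ is strictly positive and hence $\g_{k+1} = \g_k[n_{k+1}] < \g_k$. This produces an infinite strictly descending chain of ordinals, a contradiction. So some $\g_k$ equals $\a[n]$, which is exactly the conclusion of the lemma.

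The main obstacle will be choosing the right invariant and the right way to invoke nestedness. It is tempting to try to show directly that the sequence reaches $\a[n]$, but the cleaner statement is the uniform lower bound $\g_k \ge \a[n]$. It is also essential to apply nestedness with index $n$ on both sides of the inequality and then use monotonicity of $\g_k$'s fundamental sequence to absorb the possibly larger index $n_{k+1}$; attempting to pair $\a[n]$ directly against $\g_k[n_{k+1}]$ does not match the form of the nestedness hypothesis.
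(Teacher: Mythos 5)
Your proof is correct and is essentially the same argument as the paper's: both isolate the (same) application of nestedness with $\b = \a$, $\g = \g_k$, and index $n$ to conclude $\g_k[n] \ge \a[n]$ whenever $\g_k > \a[n]$, then use the monotonicity of the fundamental sequence of $\g_k$ to absorb the possibly larger index $n_{k+1}$; the paper phrases it as a contradiction with the maximality of the last $k$ satisfying $\g_k \ge \a[n]$, whereas you phrase it as an invariant plus well-foundedness.

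One small inaccuracy: the dispatch of successor ordinals is not quite right. For a general nested system the definition of fundamental sequence (non-decreasing with $\sup\{\a[m]+1\} = \a$) does \emph{not} force $\a[m]$ to be independent of $m$ when $\a$ is a successor $\b+1$; it only forces $\a[m] = \b$ for all sufficiently large $m$, while smaller indices may give strictly smaller values (the constancy you describe holds for the specific regular system of Definition~\ref{inducedsystem}, but regularity is not assumed in this lemma). Fortunately the reduction to limit ordinals was never needed: your invariant argument uses only $\g_0 = \a[n_0] < \a$, which holds for every $\a > 0$, so the same induction covers successors verbatim. Delete the successor shortcut, keep the $\a = 0$ triviality, and run the main argument for all $\a > 0$.
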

	
\begin{proof}
Fix $\a$ and notice that the sequence $(\a[n_0, \ldots, n_m])_{m \in \om}$ is strictly decreasing until it reaches $0$: let $k \in \N$ be largest such that $\a[n_0, \ldots, n_k] \ge \a[n]$ ($k$ exists because $\a[n] \le \a[n_0]$).
We claim that $\a[n_0, \ldots, n_k] = \a[n]$.
		
For the sake of contradiction suppose that $\a[n_0, \ldots, n_k] > \a[n]$.
Then $\a > \a[n_0, \ldots, n_k] > \a[n]$ and so by nestedness $\a[n_0, \ldots, n_k, n] \ge \a[n]$. 
Since $n \le n_{k+1}$ then $\a[n_0, \ldots, n_{k+1}] \ge \a[n]$, contradicting the maximality of $k$.
\end{proof}
	
\begin{lemma}\label{subsetlemma}
Assume the system is nested.
Let $s, t \in [\N]^{< \om}$ be such that $|s| \le |t|$ and $1 < t(i) \le s(i)$ for each $i < |s|$.
Then $\a[t] \le \a[s]$ for each ordinal $\a$.
\end{lemma}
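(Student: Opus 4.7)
The plan is to induct on $|s|$ and locate $\a[t_0]$ inside the weakly decreasing chain $\a, \a[s_0], \a[s_0, s_1], \ldots$ by means of Lemma \ref{sequencelemma}. The base case $|s|=0$ is trivial: $\a[s]=\a$, while $\a[t]$ arises from $\a$ by zero or more applications of fundamental sequences, each of which can only decrease the ordinal (or keep it at $0$), so $\a[t] \le \a$.

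For the inductive step $|s| \ge 1$, I first note that $t_0 > 1$ by hypothesis, and that every $s_i$ satisfies $s_i \ge t_0$, since $s_0 \ge t_0$ and $s$ is strictly increasing. Extending $s_0, s_1, \ldots, s_{|s|-1}$ to an infinite sequence of indices all $\ge t_0$ and invoking Lemma \ref{sequencelemma} with $n = t_0$, I obtain some $k \in \N$ (which I take minimal) such that $\a[s_0, s_1, \ldots, s_k] = \a[t_0]$. The argument then splits according to whether $k \ge |s|$ or $k < |s|$.

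If $k \ge |s|$, then $\a[s] = \a[s_0, \ldots, s_{|s|-1}]$ lies above $\a[s_0, \ldots, s_k] = \a[t_0]$ in the weakly decreasing chain, so $\a[s] \ge \a[t_0]$; since $\a[t]$ is obtained from $\a[t_0]$ by the further applications $t_1, \ldots, t_{|t|-1}$, also $\a[t] \le \a[t_0]$, and chaining yields $\a[t] \le \a[s]$. If $k < |s|$, then $\a[s] = (\a[t_0])[s_{k+1}, \ldots, s_{|s|-1}]$, and proving $\a[t] \le \a[s]$ reduces to $(\a[t_0])[t_1, \ldots, t_{|t|-1}] \le (\a[t_0])[s_{k+1}, \ldots, s_{|s|-1}]$. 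This is an instance of the inductive hypothesis applied with ordinal $\a[t_0]$ and the pair $\langle s_{k+1}, \ldots, s_{|s|-1}\rangle$, $\langle t_1, \ldots, t_{|t|-1}\rangle$: the length condition $|s|-1-k \le |t|-1$ follows from $|s| \le |t|$, and the pointwise bound comes from $t(i+1) \le s(i+1) \le s(k+1+i)$ by monotonicity of $s$.

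The main obstacle here is conceptual rather than computational: a naive \emph{peel off the first index} induction would compare $\a[t_0] \le \a[s_0]$ and then look for a monotonicity principle to chain subsequent steps, but the property $\b \le \a \Rightarrow \b[n] \le \a[n]$ is false in general, even assuming nestedness (for example, a successor $\b$ just below a limit $\a$ can have $\b[n]=\b > \a[n]$). Lemma \ref{sequencelemma}, which is precisely where nestedness enters, is what allows the two descending chains to be realigned so that the induction closes.
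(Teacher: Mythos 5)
Your proof is correct and follows essentially the same approach as the paper's: both use Lemma~\ref{sequencelemma} with $n = t_0$ (respectively $n = t(i)$) to align the descending chain through $t$ with a consecutive block of the descending chain through $s$, and then either recurse or conclude that the chain has already dropped below $\a[s]$. The paper phrases this as an iterative construction of a partial increasing function $f$ from indices of $t$ to indices of $s$ preserving the invariant $\a[t\restriction i] = \a[s\restriction f(i)]$, while you organize it as a recursion on $|s|$ that peels off $t_0$ and the matching prefix $s_0,\ldots,s_k$; unwinding your recursion yields exactly the paper's $f$. (One small slip in your closing remark: for a successor $\b$ you have $\b[n] = \b - 1$, not $\b[n] = \b$, though the counterexample you have in mind — e.g.\ $\a = \om$, $\b = n+2$ with $\b[n] = n+1 > n = \a[n]$ — does show that $\b < \a$ need not give $\b[n] \le \a[n]$.)
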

	
\begin{proof}
We define a strictly increasing function $f$ with $\dom (f) \sqsubseteq \{0, \ldots, |t|\}$, $\ran (f) \subseteq \{0, \ldots, |s|\}$ and $\a[t \restriction i] = \a[s \restriction f(i)]$ whenever $f(i)$ is defined.
		
We start by setting $f(0)=0$ which satisfies $\a[t \restriction 0] = \a = \a[s \restriction f(0)]$.
Assume now that we have defined $f(i)$ and that $\a[t \restriction i] = \a[s \restriction f(i)]$.
Since $f$ is strictly increasing we have that $i \le f(i)$ and so $t(i) \le s(i) \le s(f(i)) \le \ldots \le \max s$.
By Lemma \ref{sequencelemma} we have two cases.
Either there exists $j$ with $f(i) \le j < |s|$ such that 
$$\a[t \restriction i][t(i)] = \a[t \restriction i][s(f(i)), \ldots, s(j)],$$
so that $\a[t \restriction {i+1}] = \a[s \restriction {f(i)}] [s(f(i)), \ldots, s(j)] = \a [s \restriction {j+1}]$. 
In this case we set $f(i+1)=j+1$.
Otherwise  $\a[t \restriction {i+1}] < \a [t \restriction i] [s(f(i)), \ldots, \max s] = \a[s]$: in this case we let $f(i+1)$ undefined and we get $\a[t] \le \a[t \restriction {i+1}] < \a[s]$ hence our thesis.
If $f$ is defined on the whole set $\{0, \ldots, |t|\}$ then it must be $|t|=|s|$ and $f(|t|)=|s|$.
Therefore $\a[t] = \a[s]$ and the thesis follows.
\end{proof}

These lemmas show that largeness notions produced by nested systems of fundamental sequences behave as expected.
The proof of the following corollary is immediate from Lemma \ref{subsetlemma}.

\begin{corollary}\label{intuitive syst}
Assume the system is nested and let $s \subseteq t$ with $\min t > 1$.
If $s$ is $\a$-large for some ordinal $\a$, then $t$ is $\a$-large too.
If $s \subsetneq t$ and $t$ is $\a$-size, then $s$ is $\a$-small.
\end{corollary}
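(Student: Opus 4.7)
The plan is to derive both statements directly from Lemma \ref{subsetlemma}, whose hypotheses are $|s| \le |t|$ and $1 < t(i) \le s(i)$ for every $i < |s|$, and whose conclusion is $\a[t] \le \a[s]$. The key preliminary observation is that when $s \subseteq t$ as subsets of $\N$, viewed as their strictly increasing enumerations, one automatically has $t(i) \le s(i)$ for each $i < |s|$: the extra elements of $t \setminus s$, inserted in the sorted order, can only shift the original entries of $s$ to the right. Combined with the assumption $\min t > 1$, this supplies precisely the hypotheses of the lemma applied to the pair $(s,t)$, yielding $\a[t] \le \a[s]$. If $s$ is $\a$-large, i.e.\ $\a[s]=0$, then $\a[t]=0$ as well, proving the first statement.

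For the second statement, the plan is to apply Lemma \ref{subsetlemma} to the pair $(s, t^*)$ instead of $(s,t)$. Since $s \subsetneq t$ is a \emph{proper} inclusion, we have $|s| \le |t| - 1 = |t^*|$, and because $t^*$ consists of the $|t|-1$ smallest elements of $t$, the comparison $t^*(i) = t(i) \le s(i)$ persists for $i < |s|$; moreover $\min t^* \ge \min t > 1$. The lemma therefore gives $\a[t^*] \le \a[s]$. By definition, $t$ being $\a$-size means exactly $\a[t^*] > 0$, so $\a[s] > 0$, i.e.\ $s$ is $\a$-small.

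No serious obstacle is expected: the substantive content is already contained in Lemma \ref{subsetlemma}, and the corollary is essentially bookkeeping translating set-theoretic inclusion into the pointwise comparison of enumerations demanded by the lemma. The only minor care needed is the switch from $t$ to $t^*$ in the second part, where the strictness of $s \subsetneq t$ is precisely what guarantees $|s| \le |t^*|$ and allows Lemma \ref{subsetlemma} to be invoked.
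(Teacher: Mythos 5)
Your proof is correct and takes essentially the same approach the paper intends: the paper declares the corollary ``immediate from Lemma \ref{subsetlemma}'', and your argument simply spells out why — the set-theoretic inclusion $s \subseteq t$ gives the pointwise comparison $t(i) \le s(i)$, and for the second statement one applies the lemma to $(s, t^*)$ rather than $(s,t)$, using strictness of the inclusion to get $|s| \le |t^*|$. Both deductions are exactly the bookkeeping the paper has in mind.
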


Given ordinals $\b < \a$, it is not always the case that an $\a$-large set is $\b$-large.
To analyze this situation, Ketonen and Solovay developed in \cite{MR607894} the notion of $\Rightarrow_n$ and proved that this relation has nice properties on ordinals below $\epsilon_0$.
This leads to the fact, proven below, that every $\a$-large set with the minimum larger than a bound depending only on $\b$ is $\b$-large.
		
\begin{definition}
Given $\a, \b$ we write $\a \Rightarrow_n \b$ to mean that $\b = \a[n, \dots, n]$ for some number (possibly $0$) of $n$'s.
\end{definition}
				
\begin{lemma}\label{lem:canonicalnorm}
Assume the system is nested.
If $\b \le \a$ there exists $n>1$ such that $\a \Rightarrow_n \b$.
\end{lemma}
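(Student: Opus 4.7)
The plan is to proceed by transfinite induction on $\a$, exploiting Lemma \ref{sequencelemma} to upgrade the arities of the fundamental-sequence steps so that a single $n$ works throughout the descent from $\a$ to $\b$. The key observation is that the relation $\Rightarrow_n$ is transparently transitive: if $\a \Rightarrow_n \g$ and $\g \Rightarrow_n \b$, concatenating the two strings of $n$'s witnesses $\a \Rightarrow_n \b$.

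The case $\b = \a$ is trivial, as any $n > 1$ works with zero iterations. Assume therefore $\b < \a$. Because $\sup_k (\a[k]+1) = \a > \b$, there exists $k_0 > 1$ with $\a[k_0] \ge \b$. Since $\a[k_0] < \a$, the inductive hypothesis applied at $\a[k_0]$ furnishes $m > 1$ and $j \ge 0$ such that $\b = \a[k_0][\underbrace{m,\ldots,m}_{j \text{ copies}}]$. Set $n := \max(k_0, m)$; we claim $\a \Rightarrow_n \b$.

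For the first link, apply Lemma \ref{sequencelemma} to $\a$ with the Lemma's parameter taken to be $k_0$ and the constant sequence $(n, n, \ldots)$; this is permissible since $n \ge k_0$, and it produces some $k \ge 0$ with $\a[k_0] = \a[\underbrace{n,\ldots,n}_{k+1}]$, witnessing $\a \Rightarrow_n \a[k_0]$. For each $0 \le i < j$, setting $\g_i := \a[k_0][\underbrace{m,\ldots,m}_{i}]$, apply Lemma \ref{sequencelemma} to $\g_i$ with the Lemma's parameter taken to be $m$ and the constant sequence $(n,n,\ldots)$; this is permissible since $n \ge m$, and it yields $\g_{i+1} = \g_i[m] = \g_i[\underbrace{n,\ldots,n}_{k_i+1}]$ for some $k_i \ge 0$, witnessing $\g_i \Rightarrow_n \g_{i+1}$. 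Transitivity then assembles the chain
\[
\a \Rightarrow_n \g_0 \Rightarrow_n \g_1 \Rightarrow_n \cdots \Rightarrow_n \g_j = \b
\]
into the desired $\a \Rightarrow_n \b$. There is no genuine conceptual obstacle: once Lemma \ref{sequencelemma} is in hand the argument is essentially bookkeeping, and the only care required is to choose $n$ at least as large as both $k_0$ and $m$ so that the Lemma's hypothesis ``$n_i \ge n$'' is satisfied at every invocation.
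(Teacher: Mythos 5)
Your proof is correct and rests on the same key idea as the paper's: use Lemma~\ref{sequencelemma} with $n$ taken to be the maximum of the arities appearing along some descent from $\a$ to $\b$, then chain the rewrites together. The paper avoids transfinite induction by greedily constructing the entire finite sequence $n_0,\ldots,n_k$ with $\b = \a[n_0,\ldots,n_k]$ in one pass (well-foundedness guaranteeing termination) and then applying Lemma~\ref{sequencelemma} step by step, but this is only an organizational difference and the substance of the two arguments is identical.
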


\begin{proof}
Let $n_0, \dots, n_k$ be a sequence such that $\b = \a [n_0, \ldots, n_k]$: such a sequence exists by letting $n_0 = \min \{n > 1 : \b \le \a[n]\}$ and $n_{i+1} = \min \{n > 1 : \b \le \a[n_0, \dots, n_i, n]\}$ as long as $\b < \a[n_0, \dots, n_i]$. 
Since the sequence $\a [n_0, n_1, \ldots]$ is strictly decreasing, at some finite stage it must be $\b = \a [n_0, \ldots, n_k]$.
Let $n = \max \{n_0, \dots, n_k\}$.
By Lemma \ref{sequencelemma}, $\a [n_0] = \a [n, \dots, n]$ for a number of $n$'s.
Iterating, we get $\b = \a [n_0, \dots, n_k] = \a [n, \dots, n]$, i.e.\ $\a \Rightarrow_n \b$.
\end{proof}
		
\begin{definition}\label{normdefinition}
Assume the system is nested.
To each ordinal $\a$ we associate a natural number $|\a|$ that we call the norm of $\a$:
$$|\a| = \min \{n > 1 : \G \Rightarrow_n \a \}.$$
\end{definition}

The norm function was introduced in \cite{MR607894} as well.
		
\begin{proposition}\label{prop: geq norm}
Assume the system is nested.
If $n \ge |\a|$ then $\G \Rightarrow_n \a$.
\end{proposition}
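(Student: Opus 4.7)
The plan is to iterate Lemma~\ref{sequencelemma} to replace each $m$-step of the fundamental-sequence descent witnessing $\G \Rightarrow_m \a$ (where $m := |\a|$) by a finite block of $n$-steps. The central splicing tool is the following: since $n \ge |\a| \ge 2$, applying Lemma~\ref{sequencelemma} with its ambient ``$n$'' taken to be $m$ and its sequence $(n_i)_{i \in \om}$ taken to be the constant sequence $n_i = n$ yields, for every ordinal $\b$, some $l \in \N$ such that
\[
\b[m] = \b[\,\underbrace{n, \ldots, n}_{l}\,].
\]
The hypothesis $n \le n_i$ of the lemma is trivial here, and $m > 1$ holds since $|\a| > 1$ by definition.

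By the definition of $|\a|$, fix $k \ge 0$ with $\a = \G[\,\underbrace{m, \ldots, m}_{k}\,]$, and set $\b_j := \G[\,\underbrace{m, \ldots, m}_{j}\,]$ for $j \le k$. I would prove by induction on $j \le k$ that each $\b_j$ admits an expression of the form $\G[\,\underbrace{n, \ldots, n}_{l_j}\,]$ for some $l_j \in \N$. The base case is immediate with $l_0 = 0$. For the inductive step, feed $\b := \b_j$ into the splicing tool above: this produces some $l \in \N$ with $\b_j[m] = \b_j[\,\underbrace{n, \ldots, n}_{l}\,]$, hence $\b_{j+1} = \G[\,\underbrace{n, \ldots, n}_{l_j + l}\,]$, so $l_{j+1} := l_j + l$ works. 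At $j = k$ we read off $\a = \b_k = \G[\,\underbrace{n, \ldots, n}_{l_k}\,]$, which is precisely $\G \Rightarrow_n \a$.

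I do not expect a genuine obstacle: the whole argument is a bookkeeping exercise that delegates the real content to Lemma~\ref{sequencelemma}. The only points meriting attention are verifying the lemma's hypotheses (handled above using $n \ge m \ge 2$) and noting that, although the splicing tool is stated for a single ordinal, we apply it to different ordinals $\b_0, \b_1, \ldots, \b_{k-1}$ along the induction, obtaining a (possibly different) length $l$ at each step which then simply accumulates into $l_k$.
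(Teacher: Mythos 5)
Your proof is correct and spells out exactly the iteration that the paper leaves implicit: the paper's proof consists of the single line ``Immediate from Lemma \ref{sequencelemma}.'' Your bookkeeping---applying Lemma \ref{sequencelemma} with its ambient parameter set to $m = |\a|$ and the constant sequence with value $n$, then accumulating the resulting block lengths along the $k$-step descent from $\G$ to $\a$---is precisely the intended argument, and your verification of the hypotheses ($m > 1$ from the definition of the norm, $m \le n$ by assumption) is correct.
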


\begin{proof}
Immediate from Lemma \ref{sequencelemma}.
\end{proof}

The next property of the norm function is crucial: we say that the norm is good because it satisfies this property.

\begin{proposition}\label{prop: goodnorm}
Assume the system is nested.
If $\b < \d$ then $\b \le \d[|\b|]$.
\end{proposition}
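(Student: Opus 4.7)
The plan is to use Proposition~\ref{prop: geq norm} to set $n = |\b|$ and obtain a chain $\g_0 = \Gamma$, $\g_{j+1} = \g_j[n]$, with $\g_k = \b$ for some $k \ge 1$, and then to locate $\d$ inside this chain and apply nestedness.

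First, suppose $\b < \d \le \g_1$. Then there exists a (unique) $i \in \{1, \ldots, k-1\}$ with $\g_{i+1} \le \d \le \g_i$. If $\d$ equals $\g_i$ or $\g_{i+1}$, then $\d[n]$ is the next element of the chain and hence $\ge \g_k = \b$. Otherwise $\g_{i+1} < \d < \g_i$; since $i \ge 1$ we have $\g_i < \Gamma$, so applying nestedness with $\g = \d$ and $\b' = \g_i$ yields $\g_i[n] \ge \d$ or $\g_i[n] \le \d[n]$. The first alternative fails because $\g_i[n] = \g_{i+1} < \d$, so $\g_{i+1} \le \d[n]$, and thus $\d[n] \ge \g_{i+1} \ge \b$.

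The main obstacle is the case $\d > \g_1 = \Gamma[n]$, where nestedness cannot be invoked directly with $\b' = \Gamma$. To handle it, pick the least $m$ with $\Gamma[m] \ge \d$; such an $m$ exists because $\d < \Gamma$, and because $\d > \Gamma[n]$ it satisfies $m > n$. Applying Lemma~\ref{sequencelemma} to $\Gamma$ with the sequence $(m, n, n, \ldots)$, whose terms are all $\ge n$, produces some $K \ge 1$ with $\Gamma[n] = \Gamma[m, \underbrace{n, \ldots, n}_{K}]$. Prepending the $n$-chain $\Gamma[m], \Gamma[m][n], \ldots, \Gamma[m][\underbrace{n, \ldots, n}_{K}] = \g_1$ in front of the original chain yields an extended chain $\psi_0 = \Gamma[m], \psi_1, \ldots, \psi_{K + k - 1} = \b$ with $\psi_{j+1} = \psi_j[n]$ and, crucially, every $\psi_j \le \Gamma[m] < \Gamma$. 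Since $\b < \d \le \psi_0$, the argument of the preceding paragraph now carries through verbatim inside this extended chain (each $\psi_j$ is a strict predecessor of $\Gamma$, so nestedness with $\b' = \psi_j$ is always available) and delivers $\d[n] \ge \b$.
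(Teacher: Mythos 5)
Your argument matches the paper's: walk down the chain $\g_0 = \G$, $\g_{j+1} = \g_j[n]$, $\g_k = \b$ with $n = |\b|$, locate the link around $\d$, and apply nestedness; your Case A is essentially the paper's proof. Your Case B notes, correctly, that when $\G[n] < \d$ the paper's one-step argument formally invokes nestedness with $\b' = \G$, which sits outside the stated hypothesis $\b' < \G$. The workaround does not actually escape this, though: it invokes Lemma~\ref{sequencelemma} with $\a = \G$, and the proof of that lemma applies nestedness with $\b' = \a$, so the same boundary issue merely resurfaces inside the lemma. Since the norm of Definition~\ref{normdefinition} already presupposes a fundamental sequence at $\G$, and Proposition~\ref{prop: geq norm} already applies Lemma~\ref{sequencelemma} at $\a = \G$, the paper's operative (if unstated) convention is that the system and nestedness extend to include $\G$; under that convention the paper's direct argument covers every case uniformly, and your Case B, while not incorrect, does not buy additional rigor.
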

		
\begin{proof}
Let $n = |\b|$.
For some number of $n$'s we have 
$$\G [n, \ldots ,n] \ge \d > \G [n, \ldots ,n][n] \ge \G [n,\ldots,n][n,\ldots,n] = \b.$$
If $\G [n,\ldots,n] = \d$ then we immediately get $\d[n] \ge \b$.
On the other hand, if $\G [n,\ldots,n] > \d$, since we have also $\d > \G [n,\ldots,n] [n]$, then nestedness implies	$\d[n] \ge \G [n,\ldots,n][n] \ge \b$.
\end{proof}
		
Notice that the norm function depends on the system of fundamental sequences.
For this reason, the goodness is really a property of the system itself.
The following important lemmas hold for any good norm.
		
\begin{lemma}\label{lem: rm above norm} 
Assume the system is nested.
For any $\a$ and $\b$ and $n \ge |\b|$,
$$\a \ge \b \quad \iff \quad \a \Rightarrow_n \b.$$
\end{lemma}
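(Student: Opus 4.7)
The plan is to split the biconditional. The implication $\a \Rightarrow_n \b \Longrightarrow \a \ge \b$ is essentially free: by definition of a fundamental sequence, the map $\gamma \mapsto \gamma[n]$ strictly decreases any nonzero ordinal, so if $\b$ is obtained from $\a$ by some (possibly zero) number of applications of $[n]$, then $\b \le \a$.

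For the converse, I would iterate the fundamental sequence at $n$ starting from $\a$ and chase this sequence down to $\b$. Concretely, set $\gamma_0 = \a$ and $\gamma_{i+1} = \gamma_i[n]$. Since this sequence is strictly decreasing on nonzero values, well-foundedness of the ordinals guarantees it terminates in finitely many steps; the task reduces to showing that it passes through $\b$ rather than jumping past it.

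The key monotonicity statement is: whenever $\gamma > \b$, we have $\gamma[n] \ge \b$. This is immediate from Proposition \ref{prop: goodnorm}, which gives $\b \le \gamma[|\b|]$, combined with the non-decreasing property of fundamental sequences and the hypothesis $n \ge |\b|$, yielding $\gamma[n] \ge \gamma[|\b|] \ge \b$. Applying this inductively, as long as $\gamma_i > \b$ we still have $\gamma_{i+1} \ge \b$; coupled with strict decrease of the $\gamma_i$, this forces some $\gamma_k = \b$, which exhibits the witness $\a \Rightarrow_n \b$. The edge case $\a = \b$ corresponds to taking zero applications of $[n]$, and $\b = 0$ is absorbed since the sequence eventually reaches $0$.

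I do not anticipate a real obstacle: the heavy lifting has already been carried out in Proposition \ref{prop: goodnorm} (and, upstream, in Lemma \ref{sequencelemma}, where nestedness is used). The present lemma essentially repackages goodness of the norm in the $\Rightarrow_n$ notation, so the only care needed is to verify that the iteration $\gamma_i \mapsto \gamma_i[n]$ cannot skip over $\b$ — which is precisely what goodness of the norm rules out.
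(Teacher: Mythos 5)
Your proof is correct and follows essentially the same route as the paper: both directions rest on the monotonicity of fundamental sequences and on Proposition \ref{prop: goodnorm}, and your explicit iteration $\gamma_0 = \a$, $\gamma_{i+1} = \gamma_i[n]$ together with well-foundedness is just the unrolled form of the paper's transfinite induction on $\a$.
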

		
\begin{proof}
The right-to-left direction is obvious. 
			
The left-to-right direction is proved by transfinite induction.
We have from Proposition \ref{prop: goodnorm} that if $\a > \b$, then $\a[n] \ge \a[|\b|] \ge \b$.
Then, by the induction hypothesis, we get $\a[n] \Rightarrow_n \b$.
\end{proof}

The following lemma was proved by Bigorajska and Kotlarski in \cite{MR2231881} for their specific  system of fundamental sequences on $\e_0$.
We show that any nested system satisfies it.

\begin{lemma}[Estimation Lemma]\label{lem: estimation lemma}
Assume the system is nested.
If $s$ is such that $s^*$ is $\a$-large (equivalently, $s$ is $(1 \uplus \a)$-large) then there is no strictly decreasing function $h \colon s \rightarrow \a$ satisfying $|h(s_i)| \le s_i$ for all $i < |s|$.
\end{lemma}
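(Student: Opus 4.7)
The plan is to argue by contradiction. Suppose such a strictly decreasing $h \colon s \to \alpha$ exists. The strategy is to prove by induction on $i < |s|$ the bound
\[
h(s_i) \le \alpha[s_0, s_1, \ldots, s_i].
\]
Once this is established, taking $i = |s|-2$ yields $h(s_{|s|-2}) \le \alpha[s^*] = 0$ (since $s^*$ being $\alpha$-large means $\alpha[s^*] = 0$), so $h(s_{|s|-2}) = 0$. But strict decreasingness then demands a smaller ordinal $h(s_{|s|-1}) < 0$, a contradiction.

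For the base case, since $h(s_0) < \alpha$, Proposition \ref{prop: goodnorm} (goodness of the norm) gives $h(s_0) \le \alpha[|h(s_0)|]$; the hypothesis $|h(s_0)| \le s_0$ together with the non-decreasing property of fundamental sequences yields $\alpha[|h(s_0)|] \le \alpha[s_0]$, as desired. For the inductive step, assuming $h(s_i) \le \alpha[s_0, \ldots, s_i]$, the strict decrease of $h$ gives $h(s_{i+1}) < h(s_i) \le \alpha[s_0, \ldots, s_i]$, and another application of Proposition \ref{prop: goodnorm} combined with $|h(s_{i+1})| \le s_{i+1}$ and non-decreasingness produces
\[
h(s_{i+1}) \le \alpha[s_0, \ldots, s_i]\bigl[|h(s_{i+1})|\bigr] \le \alpha[s_0, \ldots, s_i][s_{i+1}] = \alpha[s_0, \ldots, s_{i+1}].
\]
The edge case $h(s_{i+1}) = 0$ is trivial and does not require goodness.

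I do not expect a serious obstacle: the proof is essentially a transcription of ordinal descent into the concrete descent dictated by $(s_i)$, and nestedness is used only through Proposition \ref{prop: goodnorm}, which is invoked twice per inductive step. The only subtle point is to be careful that the strict inequality $h(s_{i+1}) < \alpha[s_0, \ldots, s_i]$ is genuinely available (it comes from combining strict decreasingness with the inductive bound), because goodness of the norm is a strict-inequality statement. Everything else is bookkeeping.
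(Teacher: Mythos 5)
Your proof is correct and follows essentially the same approach as the paper: both induct on $i$ to establish $h(s_i) \le \alpha[s_0,\ldots,s_i]$ using Proposition~\ref{prop: goodnorm} at each step, then derive the contradiction $h(\cdot) < 0$ once the right-hand side reaches $0$. The only cosmetic difference is that the paper runs the induction along the $\alpha$-size proper prefix $\langle s_0,\ldots,s_n\rangle$ of $s$ (so the contradiction is explicitly located at $h(s_{n+1}) < h(s_n) = 0$), whereas you run it to $i = |s|-2$ using $\alpha[s^*]=0$; this is equivalent since $0[n]=0$ propagates. One small clarification: if the $\alpha$-size prefix is strictly shorter than $s^*$, the contradiction actually surfaces inside the inductive step (at $i=n+1$) rather than after it completes, but this only changes where the contradiction is noticed, not the validity. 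Also, goodness (Proposition~\ref{prop: goodnorm}) is invoked once per step, not twice — the second inequality only uses monotonicity of the fundamental sequence.
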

		
\begin{proof}
Let $\la s_0, \dots, s_n \ra$ be the $\a$-size proper prefix of $s$.
Suppose $h \colon s \rightarrow \a$ is strictly decreasing and satisfies $|h(s_i)| \le s_i$ for all $i < |s|$.

We prove by induction on $i \le n$ that $h(s_i) \le \a[s_0, \ldots, s_i]$.
When $i = 0$, $h(s_0) < \a$ and $|h(s_0)| \le s_0$ imply, by Proposition \ref{prop: goodnorm}, $h(s_0) \le \a[|h(s_0)|] \le \a[s_0]$.
For the successor step, we have $h(s_i) \le \a[s_0, \ldots, s_i]$ by induction hypothesis, and hence $h(s_{i + 1}) < \a[s_0, \dots, s_i]$ because $h$ is strictly decreasing.
Since we are assuming $|h(s_{i+1})| \le s_{i+1}$, using Proposition \ref{prop: goodnorm} we have $h(s_{i+1}) \le \a[s_0, \ldots, s_i][|h(s_{i+1})|] \le \a[s_0, \ldots, s_{i+1}]$.
		
We thus obtain $h(s_n) \le \a[s_0, \dots, s_n] = 0$ and we cannot have that $h(s_{n+1}) < h(s_n)$.
\end{proof}

Various systems of fundamental sequences have been developed, and they all look more or less the same. 
They are usually developed on an ordinal for which we have some ordinal notation.
Here instead, we define a way of going from a system of fundamental sequences on an ordinal $\zeta$ to a system of fundamental sequences on the ordinal $\Gamma_\zeta$ -- the $\zeta$-th fixed point for the binary Veblen function. 
Recall that the unary Veblen functions are a hierarchy of normal functions introduced by Veblen in \cite{MR1500814}.
The function $\varphi_0$ is ordinal exponentiation in base $\om$ and for each ordinal $\a > 0$, $\varphi_\a$ enumerates the common fixed points of $\varphi_\b$ for $\b < \a$.
Since the Veblen functions are normal, each of them has infinitely many fixed points.
Then one can define the binary Veblen function $\varphi(\a,\b) = \varphi_\a (\b)$: its fixed points are the ordinal $\a$ such that $\a = \varphi(\a,0)$.
For each $\zeta$, $\G_\zeta$ is the $\zeta$-th ordinal $\a$ such that $\varphi_\a (0) = \a$ (i.e.\ the $\zeta$-th fixed point of the binary Veblen function).

\begin{definition}\label{inducedsystem}
Given a system of fundamental sequences on some ordinal $\zeta$ (which we denote as $\xi \lceil n \rceil$ for $\xi \le \zeta$), we define a system of fundamental sequences on $\G_\zeta$ as follows.
\begin{enumerate}[(1)]
	\item If $\a = \a_0 + \om^{\a_1}$ with $\a_0 \ggeq \om^{\a_1}$ then $\a [n] = \a_0 + (\om^{\a_1} [n])$.
	\item $0[n] = 0$ and $1[n] =0$.
	\item If $0 < \a < \om^\a$ then $\om^\a [n] = \om^{\a [n]} \cdot n$.
	\item If $0 < \d < \varphi_\d (0)$ then $\varphi_{\d} (0) [n] = \varphi_{\d [n]}^{n + 1} (0)$.
	\item If $0 < \a < \varphi_\d (\a)$ and $0 < \d$ then $\varphi_{\d} (\a) [n] = \varphi_{\d [n]}^{n+1}(\varphi_{\d}(\a [n]) + 1)$.
	\item $\G_0 [n] = \varphi_{\varphi_{\ddots \varphi_0 (0) \iddots} (0)} (0)$, where $\varphi$ is iterated $n+1$ times.
    \item \label{7} If $0 < \xi \le \zeta$ then $\G_\xi [n] = \varphi_{\varphi_{\ddots \varphi_{\G_{\xi \lceil n \rceil} + 1} (0) \iddots} (0)} (0)$, with $\varphi$ iterated $n + 1$ times.
\end{enumerate}
\end{definition}
		
Notice that \ref{7} above is where we are explicitly using the system of fundamental sequences $\xi \lceil n \rceil$ on $\zeta$.
It is not hard to see that Definition \ref{inducedsystem} really yields a system of fundamental sequences.
Moreover, this system is regular.

\begin{theorem}\label{nested->nested}
If the system of fundamental sequences on $\zeta$ is nested then the system of fundamental sequences on $\G_\zeta$ introduced in Definition \ref{inducedsystem} is nested.
\end{theorem}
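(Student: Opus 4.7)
The plan is to argue by transfinite induction on $\beta < \Gamma_\zeta$ that for every $\gamma < \beta$ and every $n > 1$ the conjunction $\gamma > \beta[n] > \gamma[n]$ fails, i.e.\ either $\gamma \le \beta[n]$ or $\beta[n] \le \gamma[n]$. The induction splits into seven cases according to which clause of Definition \ref{inducedsystem} computes $\beta[n]$; within each case we further split on the form of $\gamma$, reading it off its Cantor normal form or its Veblen normal form as appropriate. A preliminary (and easily verified) observation is that the induced system is regular, which lets us reduce several configurations by peeling off a common additive prefix.

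For clause (1), with $\beta = \beta_0 + \omega^{\beta_1}$ and $\beta_0 \ggeq \omega^{\beta_1}$, the case $\gamma < \beta_0$ is immediate since $\gamma < \beta_0 \le \beta[n]$. Otherwise $\gamma = \beta_0 + \gamma'$ with $0 < \gamma' < \omega^{\beta_1}$, and regularity gives $\gamma[n] = \beta_0 + \gamma'[n]$ and $\beta[n] = \beta_0 + \omega^{\beta_1}[n]$, so nestedness reduces to the pair $(\omega^{\beta_1}, \gamma')$, handled by induction. Clauses (2) and (6) are trivial or very short. For clause (3), with $\beta = \omega^\alpha$ and $\beta[n] = \omega^{\alpha[n]} \cdot n$, either $\gamma < \omega^{\alpha[n]} \cdot n$ already, or the leading Veblen/exponential term of $\gamma$ is of the form $\omega^{\alpha'}$ with $\alpha[n] \le \alpha' < \alpha$, and the problem reduces again to nestedness for $\alpha$ combined with the additive/regularity step above.

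The substantive work is in clauses (4), (5), (7), where $\beta[n]$ is an iterate $\varphi_{\delta[n]}^{n+1}(\nu)$ of a Veblen function. The general pattern is: for $\gamma < \beta$, locate $\gamma$ between two consecutive iterates $\varphi_{\delta[n]}^{k}(\nu)$ and $\varphi_{\delta[n]}^{k+1}(\nu)$, and then use the inductive hypothesis on $\delta$ (and, for clause (5), also on the Veblen argument $\alpha$) together with monotonicity of $\varphi$ to push $\gamma[n]$ past $\varphi_{\delta[n]}^{k}(\nu)$, hence past $\beta[n]$. Clause (7), $\Gamma_\xi[n] = \varphi_{\varphi_{\ddots \varphi_{\Gamma_{\xi \lceil n \rceil}+1}(0) \iddots}(0)}(0)$, is the most delicate, because controlling $\gamma[n]$ for $\gamma$ close to $\Gamma_\xi$ forces one to invoke nestedness of the input $\lceil\cdot\rceil$-system on $\zeta$ at the strictly smaller ordinal $\xi$, and to propagate this through the tower of Veblen iterations defining $\Gamma_\xi[n]$. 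I expect this coupling — nestedness of the $\Gamma_\zeta$-system at a limit of fixed points feeding off nestedness of the input system at a strictly smaller ordinal in $\zeta$, threaded through an $(n{+}1)$-fold Veblen iteration — to be the main obstacle, and the reason the authors deferred the argument to a dedicated final section.
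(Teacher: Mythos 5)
Your proposal is a plausible high-level plan, and your instinct that the Veblen and $\Gamma_\xi$ clauses are the substantive difficulty is correct, but as written it has a genuine gap: you have not identified the strengthened inductive statement that the paper needs to make the Veblen cases go through. The paper proves, by simultaneous induction on the \emph{pair} $(\gamma,\beta)$, not only the nestedness assertion (call it (A), that $\gamma[n] < \beta[n] < \gamma < \beta$ is impossible) but also an auxiliary statement (B): for $\gamma < \varphi_\delta(\gamma)$ one never has $\gamma[n] < \varphi_\delta(\beta[n]) < \gamma < \varphi_\delta(\beta)$. This second statement is used in an essential way — for instance in the subcase $\gamma = \varphi_{\gamma_0}(\gamma_1)$, $\beta = \varphi_{\beta_0}(\beta_1)$ with $\beta_0[n] < \gamma_0 < \beta_0$, one peels one application of $\varphi_{\gamma_0}$ off of $\varphi_{\beta_0}(\beta_1[n]) < \gamma$ to get $\varphi_{\beta_0}(\beta_1[n]) < \gamma_1 < \varphi_{\beta_0}(\beta_1)$ and then invokes (B) for the smaller pair $(\gamma_1,\beta_1)$, and there is no way to invoke plain (A) here because one of the two ordinals being compared has a $\varphi_{\beta_0}$ wrapped around it. Your heuristic of ``locating $\gamma$ between consecutive iterates'' captures the flavor of the finer $\gamma_i$-chaining argument in the paper, but without (B) the recursion simply does not close.

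Two further issues. First, your proposal inducts on $\beta$ alone and case-splits primarily by the clause computing $\beta[n]$, whereas the paper inducts on the pair $(\gamma,\beta)$ and primarily case-splits by the \emph{form of $\gamma$}; the latter organization is what allows the preliminary lemmas (especially the analogue of Lemma \ref{notrange}: if $\gamma < \beta$ with $\gamma \in \ran\varphi_\delta$ and $\beta \notin \ran\varphi_\delta$ then $\gamma \le \beta[1]$) to dispatch most configurations immediately, and several of the paper's inductive invocations go to pairs with the same $\beta$ but a smaller $\gamma$, which your single-variable induction cannot see. Second, you explicitly concede that you have not worked out clause (7) (the $\Gamma_\xi$ case), which is indeed where the hypothesis on the base system is consumed; the paper handles it via Lemma \ref{gammaseq} (characterizing when $\delta[n] = \Gamma_\xi$) together with the $\gamma_i$-chain and, in the final subcase $\gamma = \Gamma_\xi$, $\beta = \Gamma_\delta$, a direct transfer to nestedness of the $\lceil\cdot\rceil$-system. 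Until that case and the (B)-strengthening are supplied, the proposal does not constitute a proof.
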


We postpone the quite technical proof of Theorem \ref{nested->nested} to Section \ref{sec:nestedness}.
We may assume from now on that the system of Definition \ref{inducedsystem} is nested.

\begin{remark}
Notice that our system fails the Bachmann property just by the definition of the fundamental sequences for successors and powers of $\om$.
Let $\b_0$ be a limit ordinal which is not an $\e$-ordinal (namely, not a fixed point of the exponential function $\varphi_0$) and $n > 1$.
Then set $\g_0 = \b_0[n] + 1$, so that in particular $\g_0 < \b_0$.
Let $\b = \om^{\b_0}$ and $\g = \om^{\g_0}$: then $\b[n] = \om^{\b_0[n]} \cdot n$ and $\g[1] = \om^{\b_0[n]}$, so that $\b > \g > \b[n] > \g[1]$.

Moreover, notice that without the requirement $n > 1$ in Definition \ref{def:nested} our system cannot be nested.
In fact, for $\g = \om^{\om^{\om^{\epsilon_0 + 1}}}$, $\b = \epsilon_1$ and $n = 1$ we have $\g[1] = \om^{\om^{\om^{\epsilon_0}}} = \epsilon_0 < \b[1] = \om^{\om^{\epsilon_0+1}} < \g < \b$.
\end{remark}

\subsection{Fronts, Blocks and Barriers}\label{subsec:barriers}

Blocks and barriers are combinatorial objects introduced in \cite{MR173640} to define better quasi orderings, a refinement of the notion of well quasi orderings.
Laver later used better quasi orderings in \cite{MR279005} to prove Fraïssé's order type conjecture.
Here we give the definition of a front \cite{MR4291876}, which is a slightly more general object than a block but it is more useful for our purposes.
We consider also smooth barriers, which were introduced in \cite{MR1219735}.

\begin{definition}\label{blockdef}
A set $\B \subseteq [\N]^{< \om}$ is a \textit{front} if either $\B = \{ \la \ra \}$ or it satisfies the following properties:
\begin{enumerate}[(1)]
	\item $\base(\B) = \{n \in \N: \exists s \in \B \, \exists i < |s| \, (s(i) = n)\}$ is infinite;
	\item for each infinite $X \subseteq \base(\B)$ there is some $s \in \B$ such that $s \sqsubset X$;
	\item for each $s,t \in \B$ $s \not\sqsubset t$.
\end{enumerate}

We say that $\B = \{ \la \ra\}$ is the \textit{degenerate front} and by convention we say that $\base(\{ \la \ra\}) = \N$.
A \textit{block} is a non degenerate front.

A \textit{barrier} is a front $\B$ such that for each $s,t \in \B$ $s \not\subset t$.

A front $\B$ is \textit{smooth} if for each $s,t \in \B$ with $|s| < |t|$ there exists $i < |s|$ such that $s(i) < t(i)$.
\end{definition}

For every $n$, $\{s \in [\N]^{< \om} : |s| = n\}$ is a smooth barrier.
For convenience, we denote by $\one$ and $\two$ the barriers of singletons and pairs.

The set $\{s \in [\N]^{< \om} : |s| = s(0) + 1\}$ is also a smooth barrier, often called the Schreier barrier.

A subfront (respectively, a subblock, a subbarrier) is a subset of a front which is still a front (respectively, a block, a barrier).
If $\B$ is a block and $\B' \subseteq \B$ is a subblock of $\B$ then it must be $\B' = \{s \in \B : s \subseteq X\}$ for some infinite $X \subseteq \base(\B)$. 
Vice versa, for each infinite $X \subseteq \base(\B)$ we obtain a subblock of $\B$ by restricting to the elements of $\B$ which are subsets of $X$: we denote it by $\B \restriction X$.
It is straightforward to see that a smooth block is automatically a barrier. 
It is also immediate that any subblock of a (smooth) barrier is a (smooth) barrier.
	
Notice that up to isomorphism we may assume that the base of a front is $\N$.
We often tacitly do that in proofs where only one block is involved and no restrictions on its base are required.
	
Every front is well ordered by lexicographic order and we denote by $\ot (\B)$ its \textit{order type}.
The degenerate front is the only front of order type $1$.
While the order type of blocks can be any countable limit ordinal, it can be proved that the order type of barriers are exactly the ordinals of the form $\om^\b \cdot n$ for $0 \le \b < \om_1$, $0 < n < \om$ and if $\b < \om$ then $n = 1$ \cite{MR366758}.
Any barrier of order type $\om^\b \cdot n$ contains a subbarrier of order type $\om^\b$. 
The order type of each smooth barrier is indecomposable (i.e.\ of the form $\om^\b$ for some $0 \le \b < \om_1$) and for each $\b$ with $0 \le \b < \om_1$ there exists a smooth barrier with order type $\om^\b$ \cite{MR1219735}. 
However there exist barriers with indecomposable order type which are not smooth.
	
Another measure of the complexity of a front is its \textit{height}.
Let $\B$ be a front and let 
$$T(\B) = \{s \in [\base(\B)]^{< \om}: \forall t \, (t \sqsubset s \rightarrow t \notin \B)\}.$$
In other words $T(\B)$ is the closure under initial segments of $\B$.
Then $T(\B)$ is a well founded tree and the elements of $\B$ are its leaves.
The \textit{height} $\height(\B)$ is the height of the tree $T(\B)$, defined as usual.
The degenerate front is the only front with height $0$.
For $s \in T(\B)$ we denote its height in the tree $T(\B)$ by $\height_\B(s)$ or simply by $\height(s)$ if the front $\B$ is clear from the context.
Marcone proved the following in \cite{MR1219735}.

\begin{lemma}\label{comb}
If $\B$ is a block, then there exists a smooth barrier $\B'$ such that $\base(\B) = \base(\B')$, $\height(\B) = \height(\B')$ and $\B \subset T(\B')$.
\end{lemma}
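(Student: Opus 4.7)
The plan is to prove Lemma \ref{comb} by transfinite induction on $\height(\B)$. The base case $\height(\B) = 1$ is immediate: such a block $\B$ consists of singletons and is already a smooth barrier, so one may take $\B' = \B$ (and $\B \subset T(\B')$ holds since $\langle \rangle \in T(\B') \setminus \B$).

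For the inductive step, consider a block $\B$ of height $\alpha > 1$. For each $n \in \base(\B)$ define the shift $\B_n := \{t : \langle n \rangle \conc t \in \B\}$; each $\B_n$ is a (possibly degenerate) front whose height $\alpha_n$ is strictly less than $\alpha$, and $\alpha = \sup\{\alpha_n + 1 : n \in \base(\B)\}$. Applying the induction hypothesis to each non-degenerate $\B_n$ yields a smooth barrier $\B_n'$ on $\base(\B_n) \subseteq (n, \infty)$ with $\height(\B_n') = \alpha_n$ and $\B_n \subset T(\B_n')$. The candidate smooth barrier is
\[
\B' \;:=\; \bigcup_{n \in \base(\B)} \{\langle n \rangle \conc t : t \in \B_n'\}.
\]
By direct verification $\base(\B') = \base(\B)$, $\height(\B') = \alpha$, and $\B \subset T(\B')$: indeed $T(\B') \supseteq T(\B)$ since $T(\B')$ is closed under initial segments and contains $\B$.

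The delicate point is smoothness. Given $s, t \in \B'$ with $|s| < |t|$, write $s = \langle m \rangle \conc s'$ and $t = \langle n \rangle \conc t'$ with $s' \in \B_m'$ and $t' \in \B_n'$. If $m < n$ the index $i = 0$ witnesses smoothness; if $m = n$ it follows from smoothness of $\B_n'$. The problematic case is $n < m$: here we need some $j < |s'|$ with $s'(j) < t'(j)$, and this is not guaranteed by the bare induction hypothesis. To fix this I would strengthen the inductive statement so that the output barrier comes with a uniform length–minimum relation of Schreier type, for instance that every $u \in \B_n'$ satisfies $|u| \leq g(\min u)$ for a fixed slowly growing function $g$ agreed in advance. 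This bound, combined with $\min s' > m > n$, forces $|s'|$ to be small enough, relative to $|t'|$, that the coordinates of $t'$ must eventually overtake those of $s'$, producing the required witness $j$.

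The principal obstacle is identifying the correct strengthened hypothesis and showing that it is preserved under the inductive step. One must check that when passing from $\B$ to the shifts $\B_n$, the growth bound $g$ transfers to each sub-barrier in a coherent way, and that it can be re-assembled from the pieces after prepending the $\langle n \rangle$'s. This propagation is the main combinatorial work, and exploits the fact that elements of $\B_n'$ only involve naturals above $n$, so that the length bound inherited from the IH at stage $n$ is compatible with the overall bound demanded on $\B'$. Once the strengthening is in place, the verification of smoothness in the problematic case reduces to a length-vs-minimum counting argument that forces the desired coordinate-wise comparison.
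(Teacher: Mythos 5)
The paper states this lemma with only a citation to \cite{MR1219735} and gives no proof of its own, so there is no in-paper argument to compare against; the proposal must stand or fall on its own.

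You correctly identify the case $n < m$ (with $s = \la m \ra \conc s'$, $t = \la n \ra \conc t'$, $s' \in \B_m'$, $t' \in \B_n'$) as problematic, but the obstruction is structural and the Schreier-type length bound cannot repair it. The gluing $\B' = \bigcup_n \{\la n \ra \conc t : t \in \B_n'\}$ cannot be smooth whenever $n \mapsto \height(\B_n)$ fails to be non-decreasing, which is exactly what happens when $\B$ is not already smooth at level one. By Lemma \ref{alternativesmooth}, smoothness of $\B'$ forces: if $\height_{\B'}(\la m \ra) < \height_{\B'}(\la n \ra)$ then $m < n$. In your construction $\height_{\B'}(\la n \ra) = \height(\B_n') = \height(\B_n)$, so any inversion in the first-level heights of $\B$ persists unchanged, and no strengthening of the inductive hypothesis that leaves $\height(\B_n')$ equal to $\height(\B_n)$ can change that. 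Concretely, take $\base(\B) = \N$, $\B_0 = [\N \setminus \{0\}]^2$ and $\B_n = [\N \setminus \{0,\ldots,n\}]^1$ for $n \geq 1$; then $\B$ is a non-smooth block of height $3$, each $\B_n$ is already a smooth barrier so your construction returns $\B$ itself, while a correct choice is $\B' = [\N]^3$. The fix must raise the heights of the sub-barriers --- in this example from $(2,1,1,\ldots)$ to $(2,2,2,\ldots)$, i.e.\ monotonize by running maxima over initial segments --- and then ensure that the boosted barriers over different $n$'s dominate one another coordinate-wise, which is the real combinatorial content of the lemma. A blanket bound $|u| \le g(\min u)$ does neither: if $t'$ sits below $s'$ in every coordinate up to $|s'|$, the length bound gives no reason for later coordinates of $t'$ to overtake $s'$, since the two tuples live in unrelated sub-barriers $\B_m'$ and $\B_n'$.
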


\begin{definition}
If $\A$ and $\B$ are fronts such that $\base(\B) = \{n \in \base(\A) : n \ge m\}$ for some $m$ (we say that $\base(\B)$ is a \textit{final segment} of $\base(\A)$) then the \textit{sum} $\A \oplus \B$ is the front $\{s \conc t: s \in \B, t \in \A, \max s < \min t\}$.

If the base of one of the fronts $\A$ and $\B$ is a final segment of the other we define the \textit{union} $\A \sqcup \B$ to be the front consisting of the leaves of $T(\A) \cup T(\B)$.

If we apply either operation to smooth barriers we obtain a smooth barrier.  
\end{definition}
    
\begin{remark}\label{heightunionsum}
Notice that $\base(\A \oplus \B) = \base(\B)$ and $\base(\A \sqcup \B) = \base(\A) \cup \base(\B)$.

Arguing by induction and assuming that $\A$ and $\B$ are smooth barriers\footnote{this might fail when $\A$ is not a smooth barrier.}, for each $s \in T(\B)$ $\height_{\A \oplus \B} (s) = \height(\A) + \height_\B(s)$ and hence $\height(\A \oplus \B) = \height(\A) + \height(\B)$ (this explains the order of the summands in our notation).

Again an easy induction shows that if $\A$ and $\B$ are fronts then for each $s \in T(\A \sqcup \B)$ it holds $\height_{\A \sqcup \B} (s) = \max (\height_\A(s), \height_\B(s))$, where we stipulate that $\height_\A(s) = -1$ if $s \notin T(\A)$ (and the same for $\B$).
It follows that $\height(\A \sqcup \B) = \max (\height(\A), \height(\B))$.
\end{remark}
	
If $\B$ is a block and $s \in T(\B)$ then we define the tree $T_s(\B) = \{t : s \conc t \in T(\B)\}$ and we call $\B_s$ the set of its leaves, so that $\B_s = \{t \in [\base(\B)]^{< \om} : s \conc t \in \B\}$.
One can check that $\B_s$ is a front, $\height_\B (s) = \height (\B_s)$ and if $\B_s$ is non degenerate then $\base(\B_s) = \base(\B) \setminus \{0, \ldots, \max s\}$.
Moreover, if $\B$ is a smooth barrier then $\B_s$ is a smooth barrier too.
If $s = \la n \ra$ then we simply write $T_n(\B)$ and $\B_n$ instead of $T_{\la n \ra} (\B)$ and $\B_{\la n \ra}$. 
	
We now establish the relationship between the order type and the height of a smooth barrier.
	
\begin{proposition}\label{smoothhtot}	
If $\B$ is a smooth barrier then $\ot (\B) = \om^\a$ if and only if $\height(\B) = \a$.
\end{proposition}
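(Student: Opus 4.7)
The plan is to prove the stronger identity $\ot(\B) = \omega^{\height(\B)}$ for every smooth barrier $\B$; since the map $\alpha \mapsto \omega^\alpha$ is injective on ordinals, this yields the biconditional. I argue by transfinite induction on $\alpha = \height(\B)$. The base case $\alpha = 0$ forces $\B = \{\la\ra\}$, of order type $1 = \omega^0$.

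For $\alpha > 0$, I decompose $\B$ along its root. Since lexicographic order places every element of $\B$ starting with $n$ strictly before every element starting with any larger number, one has $\ot(\B) = \sum_{n \in \base(\B)} \ot(\B_n)$, summed in increasing order of $n$. Each $\B_n$ is a smooth barrier of height $h_n := \height_\B(\la n\ra) < \alpha$, hence $\ot(\B_n) = \omega^{h_n}$ by the inductive hypothesis.

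The crucial step is to show that $(h_n)_{n \in \base(\B)}$ is non-decreasing. I would derive this from the following monotonicity lemma: for all $s, t \in T(\B)$ with $|s| = |t|$ and $s(i) \le t(i)$ for every $i < |s|$, one has $\height_\B(s) \le \height_\B(t)$. The lemma is proved by transfinite induction on $\height_\B(s)$; the zero case is trivial. If $\height_\B(s) > 0$, then $s \notin \B$, so $s$ extends to some leaf $s^* \in \B$ with $|s^*| > |s|$. Smoothness forces $t \notin \B$, for otherwise $t \in \B$ with $|t| < |s^*|$ would yield some $i < |t|$ with $t(i) < s^*(i) = s(i)$, contradicting $s(i) \le t(i)$. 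Hence $t$ extends freely in $T(\B)$, and for each extension $s \conc \la k\ra \in T(\B)$ one can pick $k' \ge k$ with $k' > \max t$ in $\base(\B)$ and apply the inductive hypothesis to the pair $(s \conc \la k\ra, t \conc \la k'\ra)$ to obtain $\height_\B(t) \ge \height_\B(s)$.

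With monotonicity of $(h_n)$ in hand, the identity follows by routine ordinal arithmetic. If $(h_n)$ is eventually constant at some $\beta$, then $\height(\B) = \beta + 1$ and $\sum_n \omega^{h_n} = \omega^\beta \cdot \omega = \omega^{\beta+1}$, since the infinite tail of $\omega^\beta$'s absorbs any finite initial sum of values $\le \omega^\beta$. If instead $(h_n)$ is unbounded with supremum the necessarily limit ordinal $\beta$, then $\height(\B) = \beta$, and the partial sums, each absorbed into the largest $\omega^{h_N}$ reached so far, have supremum $\omega^\beta$. In either case $\ot(\B) = \omega^{\height(\B)}$. The main obstacle is the monotonicity lemma, which is precisely where smoothness is essential; the surrounding computation is standard.
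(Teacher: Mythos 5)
Your proof is correct, but it takes a genuinely different route from the paper. The paper fixes $\ot(\B) = \om^\a$ and proceeds by transfinite induction on the order type; the key external input is the cited fact (attributed to \cite{MR1219735} just above) that the order type of a smooth barrier is always indecomposable, which is used both to write each $\ot(\B_n)$ as a power of $\om$ in the forward direction and, in the converse, to see immediately that $\ot(\B) = \om^\g$ for some $\g$ so that the forward direction finishes the job. You instead prove the stronger identity $\ot(\B) = \om^{\height(\B)}$ directly by induction on $\height(\B)$, decompose at the root, and then do the work that the cited indecomposability fact encapsulates: you establish the monotonicity lemma (that $s(i) \le t(i)$ coordinatewise with $|s|=|t|$ forces $\height_\B(s) \le \height_\B(t)$) and carry out the ordinal-arithmetic computation $\sum_n \om^{h_n} = \om^{\sup\{h_n+1\}}$ for a non-decreasing sequence $(h_n)$. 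Your monotonicity lemma is exactly the contrapositive of (one direction of) the paper's Lemma \ref{alternativesmooth}\ref{22}, which the paper proves separately and independently a bit later in the same subsection, so there is no circularity worry in swapping the order. The trade-off is clear: the paper's argument is shorter because it leans on the cited indecomposability result, while yours is self-contained and in fact reproves that indecomposability as a by-product.

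One small stylistic caution: in the unbounded case your phrase "each absorbed into the largest $\om^{h_N}$ reached so far" is a bit loose; what makes it work is that for indices $N$ at which $h_N$ strictly exceeds all earlier $h_n$, the partial sum $\sum_{n \le N}\om^{h_n}$ equals $\om^{h_N}$ exactly (the earlier finite sum is $<\om^{h_N}$ and is absorbed on the left by the indecomposable $\om^{h_N}$), and these indices are cofinal. It would be worth spelling that out in a final write-up, but the underlying reasoning is sound.
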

	
\begin{proof}
If $\B$ is the degenerate front then the equivalence is immediate with $\a = 0$.

Now assume $\ot (\B) = \om^\a$ with $\a>0$ and suppose the result holds for all smooth barriers of order type less than $\om^\a$.
Notice that $\la n \ra \in T(\B)$ for every $n$. 
Since $\B$ is a smooth barrier, each $\B_n$ is a smooth barrier (possibly degenerate) and so $\ot(\B_n) = \om^{\gamma_n}$ for some $\gamma_n$.
Since $\om^\a = \sum_{n \in \om} \om^{\gamma_n}$ we have $\sup \{\gamma_n +1 : n \in \om\} = \a$.
By inductive hypothesis $\height (\la n \ra) = \height (\B_n) = \gamma_n$ and therefore $\height(\B) = \sup \{\gamma_n +1 : n \in \om\} = \a$.
		
If instead $\height(\B) = \a$ then, since $\B$ is a smooth barrier, $\ot(\B) = \om^\g$ for some $\g$.
By the direction already proved, it must be $\g = \a$.
\end{proof}

We noticed before that for each countable ordinal $\b$ there exists a smooth barrier of order type $\om^\b$.
By Proposition \ref{smoothhtot}, for each countable ordinal $\b$ there exists a smooth barrier of height $\b$.

A nice property satisfied by each smooth barrier $\B$ (but not by every barrier) is that the height is preserved by restricting to final segments of $\base (\B)$.

\begin{lemma}\label{sb tail height}
If $\B$ is a smooth barrier and $X$ is a final segment of $\base(\B)$ then $\height (\B) = \height (\B \restriction X)$. 
\end{lemma}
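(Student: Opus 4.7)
The plan is to reduce the statement to the order-type version via Proposition \ref{smoothhtot} and then exploit additive indecomposability of $\omega^\alpha$.

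Let $\alpha = \height(\B)$. The degenerate case $\alpha = 0$ is immediate since then $\B = \{\langle\rangle\} = \B \restriction X$. So assume $\alpha > 0$ and let $m = \min X$, so that $\B \restriction X = \{s \in \B : s \subseteq X\} = \{s \in \B : \min s \ge m\}$. The first observation is that $\B \restriction X$ is a subbarrier of $\B$, and by the remarks following Definition \ref{blockdef}, every subbarrier of a smooth barrier is itself a smooth barrier. Thus both $\B$ and $\B \restriction X$ are smooth barriers, and by Proposition \ref{smoothhtot} it suffices to prove that $\ot(\B \restriction X) = \ot(\B) = \omega^\alpha$.

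The second observation is a purely order-theoretic one: $\B \restriction X$ is a final segment of $\B$ under the lexicographic order, because any $s \in \B$ with $\min s < m$ lex-precedes any $s' \in \B$ with $\min s' \ge m$. Write $\B = \B_{\mathrm{lo}} \sqcup (\B \restriction X)$ in this order, where $\B_{\mathrm{lo}} = \{s \in \B : \min s < m\}$. Then in ordinal arithmetic $\ot(\B) = \ot(\B_{\mathrm{lo}}) + \ot(\B \restriction X)$. Since $\omega^\alpha$ is additively indecomposable, it will follow that $\ot(\B \restriction X) = \omega^\alpha$ as soon as we show $\ot(\B_{\mathrm{lo}}) < \omega^\alpha$.

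For the bound on $\ot(\B_{\mathrm{lo}})$, I would split $\B_{\mathrm{lo}}$ into the finitely many pieces indexed by $n \in \base(\B)$ with $n < m$: each piece is order-isomorphic to $\B_n$, which is a (possibly degenerate) smooth barrier of height $\gamma_n$ satisfying $\gamma_n + 1 \le \alpha$, so $\gamma_n < \alpha$. By Proposition \ref{smoothhtot} each $\ot(\B_n) = \omega^{\gamma_n} < \omega^\alpha$, and a finite sum of ordinals strictly below $\omega^\alpha$ is again strictly below $\omega^\alpha$.

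The main (minor) subtlety I expect is justifying the strict inequality $\gamma_n < \alpha$ uniformly, which is extracted from the height recurrence $\alpha = \sup\{\gamma_n + 1 : n \in \base(\B)\}$ used in the proof of Proposition \ref{smoothhtot}: this holds whether $\alpha$ is a successor or a limit. Once this is in place, the chain $\ot(\B_{\mathrm{lo}}) < \omega^\alpha$, additive indecomposability, and a second application of Proposition \ref{smoothhtot} deliver $\height(\B \restriction X) = \alpha$.
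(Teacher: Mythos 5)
Your proof is correct and follows essentially the same route as the paper's: reduce to order types via Proposition~\ref{smoothhtot}, observe that $\B\restriction X$ is a lexicographic final segment of $\B$, and use additive indecomposability of $\om^\a$ to conclude $\ot(\B\restriction X)=\om^\a$. The paper states this last step tersely; you spell out the needed bound $\ot(\B_{\mathrm{lo}})<\om^\a$ by decomposing the initial part into finitely many copies of the $\B_n$ with $n<m$ and invoking $\gamma_n<\a$, which is a cleaner justification than the paper gives but not a different idea. (A small shortcut you could have used instead: since $\B\restriction X$ is a nonempty final segment, $\ot(\B_{\mathrm{lo}})<\om^\a$ follows automatically from $\ot(\B_{\mathrm{lo}})+\ot(\B\restriction X)=\om^\a$, so the decomposition into the pieces $\B_n$ is not strictly required.)
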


\begin{proof}
It is clear that $\height(\B) \ge \height (\B \restriction X)$.
Let $\a = \height (\B)$, so that by Proposition \ref{smoothhtot} $\ot (\B) = \om^\a$.
Since $\B \restriction X$ is a final segment of $\B$ with respect to the lexicographic order we have $\ot (\B \restriction X) = \om^\a$ and hence $\height (\B \restriction X) = \a$.
\end{proof}

The first part of the following lemma is \cite[Lemma 3.2]{MR1219735}, while the second follows from the first and Proposition \ref{smoothhtot}.
	
\begin{lemma}\label{smallerht}		
If $\A$ and $\B$ are fronts with the same base such that $\ot(\A) < \ot(\B)$ then there exist $s \in \A$ and $t \in \B$ such that $s \sqsubset t$.
		
If $\A$ and $\B$ are smooth barriers with the same base such that $\height(\A) < \height(\B)$ then there exist $s \in \A$ and $t \in \B$ such that $s \sqsubset t$.
\end{lemma}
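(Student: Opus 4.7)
The plan is to prove the two parts separately, relying on an external citation for the first and deducing the second from it via Proposition \ref{smoothhtot}.

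For the first part, I would invoke \cite[Lemma 3.2]{MR1219735}. The underlying idea in that reference is as follows: assume for contradiction that no pair $(s,t) \in \A \times \B$ satisfies $s \sqsubset t$. Since $\A$ and $\B$ share the same base, for every $t \in \B$ and every infinite extension $Y$ of $t$ in $\base(\A)$, there exists a unique $s \in \A$ with $s \sqsubset Y$; the contradiction hypothesis then forces $t \sqsubseteq s$, because otherwise $s \sqsubset t$ would yield exactly the forbidden pair. Using this compatibility uniformly in $Y$ and iterating, one sets up a rank-style comparison between the trees $T(\A)$ and $T(\B)$ that produces a lexicographic-order-preserving correspondence witnessing $\ot(\B) \le \ot(\A)$, contradicting $\ot(\A) < \ot(\B)$.

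For the second part, the argument is immediate from the first together with Proposition \ref{smoothhtot}. Since $\A$ and $\B$ are smooth barriers, that proposition yields $\ot(\A) = \om^{\height(\A)}$ and $\ot(\B) = \om^{\height(\B)}$. The hypothesis $\height(\A) < \height(\B)$, combined with the strict monotonicity of ordinal exponentiation with base $\om$ in the exponent, gives $\ot(\A) < \ot(\B)$. Applying the first part to $\A$ and $\B$ produces the required $s \in \A$ and $t \in \B$ with $s \sqsubset t$.

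The main obstacle lies entirely in the first part, which is why we appeal to \cite{MR1219735}: the delicate point there is the lexicographic-order bookkeeping needed to extract the order-type inequality from the combinatorial compatibility. The second part is then a one-line derivation once Proposition \ref{smoothhtot} is in hand.
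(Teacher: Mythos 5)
Your proposal is correct and matches the paper's approach exactly: the first part is taken directly from \cite[Lemma 3.2]{MR1219735}, and the second part is the one-line consequence of Proposition \ref{smoothhtot} combined with strict monotonicity of $\om^{(\cdot)}$ followed by an application of the first part. The paper does not reproduce the proof of the cited lemma, so your sketch is optional supplementary detail rather than a divergence.
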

    
The next lemma gives an alternative characterization of smoothness.
	
\begin{lemma}\label{alternativesmooth}
Let $\B$ be a block.
The following are equivalent:
\begin{enumerate}[(1)]
	\item \label{21} $\B$ is smooth;
	\item \label{22} for all $s,t \in T(\B)$ if $|s| = |t|$ and $\height(s) < \height(t)$ then there exists $i < |s|$ such that $s(i) < t(i)$.
\end{enumerate}
\end{lemma}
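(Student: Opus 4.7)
The plan is to handle the two implications separately. For (\ref{22}) $\Rightarrow$ (\ref{21}) a short truncation suffices; the content lies in (\ref{21}) $\Rightarrow$ (\ref{22}), which I would reduce to the definition of smoothness by extending $s$ and $t$ to leaves of $\B$ in a coordinated way, using Lemma \ref{smallerht} to ensure that one extension is a proper prefix of the other.

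For (\ref{22}) $\Rightarrow$ (\ref{21}), take $s, t \in \B$ with $|s| < |t|$ and let $t' = t \restriction |s| \in T(\B)$. Since $t' \sqsubsetneq t \in \B$, condition (3) in Definition \ref{blockdef} forbids $t' \in \B$, so $t'$ is an internal node of $T(\B)$ with $\height_\B(t') \ge 1 > 0 = \height_\B(s)$. Applying (\ref{22}) to $s$ and $t'$, which have the same length, yields $i < |s|$ with $s(i) < t'(i) = t(i)$.

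For (\ref{21}) $\Rightarrow$ (\ref{22}), take $s, t \in T(\B)$ with $|s| = |t| = \ell$ and $\height_\B(s) < \height_\B(t)$. Since $\B$ is a smooth block it is a smooth barrier, and hence so are $\B_s$ and $\B_t$, of heights $\height_\B(s)$ and $\height_\B(t)$ respectively. Set $X = \base(\B) \setminus \{0, \ldots, \max(\max s, \max t)\}$; then $\B_s \restriction X$ and $\B_t \restriction X$ are smooth barriers with common base $X$ whose heights are unchanged by Lemma \ref{sb tail height}. Lemma \ref{smallerht} therefore supplies $u \in \B_s \restriction X$ and $v \in \B_t \restriction X$ with $u \sqsubset v$. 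Consequently $s \conc u, t \conc v \in \B$ and $|s \conc u| = \ell + |u| < \ell + |v| = |t \conc v|$, so smoothness of $\B$ produces an index $i < \ell + |u|$ with $(s \conc u)(i) < (t \conc v)(i)$.

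The one delicate step, and the reason for insisting on $u \sqsubset v$ rather than arbitrary leaf extensions, is the localization of this witness. Since $u$ is a prefix of $v$, we have $u(j) = v(j)$ for every $j < |u|$; thus any $i$ with $\ell \le i < \ell + |u|$ would give $(s \conc u)(i) = u(i-\ell) = v(i-\ell) = (t \conc v)(i)$, contradicting the strict inequality. Hence $i < \ell$ and $s(i) = (s \conc u)(i) < (t \conc v)(i) = t(i)$, which is exactly condition (\ref{22}). The main obstacle is precisely this coordination: without forcing $u \sqsubsetneq v$, the smoothness witness could lie entirely in the suffix region and reveal nothing about $s$ and $t$ themselves.
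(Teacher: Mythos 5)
Your (2) $\Rightarrow$ (1) argument coincides with the paper's. For (1) $\Rightarrow$ (2), however, you take a genuinely different route. The paper proves this direction by transfinite induction on $\height_\B(s)$: the base case $\height_\B(s)=0$ picks any leaf extension of $t$ and applies smoothness, while the inductive step appends a single common element $n>\max(s\cup t)$ to both $s$ and $t$ (chosen so $\height(t\conc\la n\ra)\ge\height(s)$) and checks the witness cannot be the last coordinate. Your proof avoids induction entirely: you pass to the residual barriers $\B_s$ and $\B_t$, restrict to a common tail $X$, invoke Lemma \ref{sb tail height} to preserve heights and Lemma \ref{smallerht} to extract leaves $u\sqsubset v$, and then apply smoothness of $\B$ once to $s\conc u$ and $t\conc v$, with the crucial $u\sqsubset v$ coordination ensuring the witness lives in the first $\ell$ coordinates. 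The paper's argument is more self-contained and elementary; yours leverages the existing structural lemmas about smooth barriers and is arguably shorter. Both are correct.

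One small point worth flagging: when $\height_\B(s)=0$, the residual $\B_s$ is the degenerate front $\{\la\ra\}$, whose base is $\N$ by convention rather than $\base(\B)\setminus\{0,\ldots,\max s\}$, so the hypotheses of Lemma \ref{smallerht} (``same base'') are not met literally. The conclusion is nonetheless trivially true there since $\la\ra\sqsubset v$ for any nonempty $v\in\B_t\restriction X$, so the argument still goes through, but a clean write-up would treat this edge case separately rather than quoting the lemma.
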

	
\begin{proof}
First assume \ref{22}.
Let $s,t \in \B$ be such that $|s| < |t|$.
Let $u \sqsubset t$ be its initial segment of length $|s|$. 
Then $\height(u) > 0$ and so by \ref{22} there exists $i < |s|$ such that $s(i) < u(i) = t(i)$ meaning that $\B$ is smooth.
		
Now assume that $\B$ is smooth and fix $s,t \in T(\B)$ of the same length and such that $\height(s) < \height(t)$.
We proceed by induction on $\height(s)$. 
If $\height(s) = 0$ then $\height(t) > 0$ and so $s \in \B$ and $t \in T(\B) \setminus \B$. 
Let $u \in \B$ be an extension of $t$. 
Then $|s| = |t| < |u|$ and so by \ref{21} there exists $i < |s|$ such that $s(i) < u(i) = t(i)$.
Suppose now that $\height(s) = \a > 0$. 
By Lemma \ref{sb tail height} $\height(\B_t) = \height (\B_t \restriction X)$ where $X = \base(\B) \setminus \{0, \ldots, \max(s \cup t)\}$, hence there exists $n > \max (s \cup t)$ such that $\height(t \conc \la n \ra) \ge \a$.
Then $\height(s \conc \la n \ra) < \a \le \height(t \conc \la n \ra)$.
By inductive hypothesis there exists $i < |s \conc \la n \ra|$ such that $s \conc \la n \ra (i) < t \conc \la n \ra (i)$.
Notice that it must be $i < |s|$ because $s \conc \la n \ra (|s|) = n = t \conc \la n \ra (|s|)$.
Therefore $s(i) < t(i)$.    
\end{proof}

We now define largeness notions starting from fronts.
In Section \ref{sec:bridges} we show that these are reasonable generalizations of the classical largeness notions based on systems of fundamental sequences.

\begin{definition}
If $\B$ is a front, we say that $t \in [\base(\B)]^{< \om}$ is \textit{$\B$-large} if there exists $s \in \B$ such that $s \sqsubseteq t$, \textit{$\B$-small} if there exists $s \in \B$ such that $t \sqsubset s$, and \textit{$\B$-size} if $t \in \B$.
\end{definition}

If $\B$ is the degenerate front then all finite sets are $\B$-large and the only $\B$-size set is $\la \ra$.
If $\B = [\N]^k$ then the $\B$-large sets are exactly the finite sets of cardinality at least $k$.
If $\B$ is the Schreier barrier then $\B$-large means $\om$-large in the classical (Paris-Harrington) sense.

Notice that in the context of largeness notions with smooth barriers, the $\oplus$ operation plays essentially the same role as the $\uplus$ operation we defined for largeness notions associated to systems of fundamental sequences.

When we consider barriers, we immediately obtain the analogue of Corollary \ref{intuitive syst}.

\begin{corollary}\label{intuitive bar}
If $\B$ is a barrier, $s \subseteq t$ and $s$ is $\B$-large, then $t$ is $\B$-large too.
If $s \subsetneq t$ and $t$ is $\B$-size, then $s$ is $\B$-small.
\end{corollary}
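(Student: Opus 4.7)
The plan is to prove both halves of the corollary by a single structural template: extend the given set to an infinite subset of $\base(\B)$, apply the defining property of a front to obtain an element of $\B$ as a prefix of this extension, and then invoke the barrier condition ($u, v \in \B \implies u \not\subset v$) to rule out the incompatible case. No induction on $\height(\B)$ and no appeal to Lemma~\ref{comb} should be needed.

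For the first statement, suppose $s \subseteq t$ and $s$ is $\B$-large, so there is $u \in \B$ with $u \sqsubseteq s$. Since $\base(\B)$ is infinite, extend $t$ to some infinite $X \subseteq \base(\B)$ with $t \sqsubset X$. By the front property there exists $v \in \B$ with $v \sqsubset X$. Because both $t$ and $v$ are prefixes of $X$, they are $\sqsubseteq$-comparable, so either $v \sqsubseteq t$, whence $t$ is $\B$-large and we are done, or $t \sqsubsetneq v$. In the second case $u \subseteq s \subseteq t \subsetneq v$ gives $u \subsetneq v$ as sets, with $u, v \in \B$, contradicting the barrier condition.

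For the second statement, suppose $s \subsetneq t$ with $t \in \B$. Extend $s$ to an infinite $X \subseteq \base(\B)$ with $s \sqsubset X$, and apply the front property to obtain $u \in \B$ with $u \sqsubset X$. Either $u \sqsubseteq s$, in which case $u \subseteq s \subsetneq t$ with $u, t \in \B$ violates the barrier condition, or $s \sqsubseteq u$. The equality $s = u$ would put $s \in \B$ and then $s \subsetneq t \in \B$ would again violate the barrier condition, so $s \sqsubsetneq u$, witnessing that $s$ is $\B$-small.

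There is no real obstacle here: the argument is entirely structural. The only point to notice is that one really does need to pass to an infinite extension in each part in order to invoke the front property, and it is precisely the barrier condition (as opposed to the weaker front condition) that forces the chosen element of $\B$ onto the side of the prefix comparison that yields the conclusion.
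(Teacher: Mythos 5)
Your proof is correct. The paper states Corollary~\ref{intuitive bar} without proof, remarking only that it is the ``immediate'' analogue of Corollary~\ref{intuitive syst} in the barrier setting, so there is no explicit argument to compare against. Your argument is exactly the natural one that justifies the ``immediate'': extend to an infinite subset of $\base(\B)$, use the front property to get an element of $\B$ as a prefix, use the total ordering of prefixes of a common sequence, and invoke the barrier condition to dispose of the bad branch. The edge cases ($t$ or $s$ empty, $\B$ degenerate) are all covered by the same reasoning, and your observation that the argument needs the strictly stronger barrier condition rather than merely the front condition is the right thing to highlight.
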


Given a front $\B$ we can write $\height(\B)$ in Cantor normal form as $\om^{\b_0} + \ldots + \om^{\b_{n-1}}$.
It is however not always the case, even assuming smoothness of $\B$, that we can decompose $\B$ as the sum of smooth barriers $\B_i$ of height $\om^{\b_i}$ for $i < n$. 
It is therefore natural to give the following definition.
	
\begin{definition}
Let $\B$ be a smooth barrier of height (written in Cantor normal form) $\om^{\b_0} + \ldots + \om^{\b_{n-1}}$.
We say that $\B$ is \textit{decomposable} if there exist smooth barriers $\B_0, \dots, \B_{n-1}$ with $\height(\B_i) =\om^{\b_i}$ such that $\B = \B_0 \oplus \ldots \oplus \B_{n-1}$.
The ordered sequence of barriers $\B_0, \ldots, \B_{n-1}$ is called a \textit{decomposition} of $\B$.
We abbreviate smooth decomposable barrier with $\SD$-barrier.
\end{definition}
	
Notice that a smooth barrier with indecomposable height is already a decomposition of itself.


We now describe how to extend the elements of a smooth barrier to obtain a $\SD$-barrier with the same base and height.
This requires working with the terms of the Cantor normal form of the height one by one.
Recall that if $\om^{\b_1}$ is the last term in the Cantor normal form of $\height(\B)$ (which has more than one term) then we can write $\height(\B) = \b_0 + \om^{\b_1}$ with $\b_0 \ggeq \om^{\b_1}$.
	
\begin{definition}
Let $\B$ be a non degenerate front with $\height(\B) = \b_0 + \om^{\b_1}$ where $\b_0 \ggeq \om^{\b_1}$.
The \textit{Lower Part of $\B$} is 
\[
\overline{\B}_1 = \{t \in T(\B) : \height_\B(t) \le \b_0 \wedge \height_\B(t^*) > \b_0\}.
\]
		
Let 
$$T_0 = \bigcup_{t \in \overline{\B}_1} T(\B_t) \cup \{\la n \ra : n \in \base(\B) \setminus \bigcup_{t \in \overline{\B}_1} \base(\B_t)\}.$$
The \textit{Upper Part of $\B$} is the set $\overline{\B}_0$ of leaves of $T_0$.
\end{definition}
	
We remark that $\{\la n \ra : n \in \base(\B) \setminus \bigcup_{t \in \overline{\B}_1} \base(\B_t)\}$ is included in the definition of $T_0$ just to ensure $\base(\overline{\B}_0) = \base(\B)$.
	
\begin{lemma}\label{lowerpart}
Let $\B$ be a smooth barrier with $\height(\B)= \b_0 + \om^{\b_1}$ where $\b_0 \ggeq \om^{\b_1}$.
Then $\overline{\B}_0$ and $\overline{\B}_1$ are smooth barriers with the same base as $\B$, $\height(\overline{\B}_0) = \b_0$, $\height(\overline{\B}_1) = \om^{\b_1}$ and each element of $\overline{\B}_0 \oplus \overline{\B}_1$ extends some element of $\B$.
\end{lemma}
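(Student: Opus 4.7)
The plan is to verify the three components of the conclusion in turn: that $\overline{\B}_1$ is a smooth barrier with height $\om^{\b_1}$; that $\overline{\B}_0$ is a smooth barrier with height $\b_0$; and that each $w = t \conc u \in \overline{\B}_0 \oplus \overline{\B}_1$ (with $t \in \overline{\B}_1$, $u \in \overline{\B}_0$, $\max t < \min u$) extends some element of $\B$. The unifying technical tool throughout is the height-based characterization of smoothness from Lemma \ref{alternativesmooth}, applied inside $\B$.

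I first handle $\overline{\B}_1$. It is an antichain in $T(\B)$, since $s \sqsubsetneq t$ in $\overline{\B}_1$ would force $\height_\B(s) \geq \height_\B(t^*) > \b_0$, contradicting $s \in \overline{\B}_1$. For any infinite $X \subseteq \base(\B)$ the heights $\height_\B(X \restriction k)$ strictly decrease from $\b_0 + \om^{\b_1}$ to $0$, so they cross $\b_0$ at a unique step, yielding the required initial segment of $X$ in $\overline{\B}_1$; varying $X$ over infinite sets starting with each $n \in \base(\B)$ gives $\base(\overline{\B}_1) = \base(\B)$. Smoothness is immediate from Lemma \ref{alternativesmooth}: for $s_1, s_2 \in \overline{\B}_1$ with $|s_1| < |s_2|$, the node $s_2 \restriction |s_1| \in T(\B)$ has $\height_\B \geq \height_\B(s_2^*) > \b_0 \geq \height_\B(s_1)$, so some $i < |s_1|$ satisfies $s_1(i) < s_2(i)$. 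For the height I would prove, by transfinite induction along $T(\overline{\B}_1)$, that $\height_{\overline{\B}_1}(s) = \zeta$ whenever $s \in T(\overline{\B}_1)$ and $\height_\B(s) = \b_0 + \zeta$ (with $\zeta = 0$ when $\height_\B(s) \leq \b_0$); the hypothesis $\b_0 \ggeq \om^{\b_1}$ is precisely what makes this decomposition unique and what lets $\b_0 + (\cdot)$ commute with suprema below $\om^{\b_1}$. Evaluating at $\la \ra$ yields $\height(\overline{\B}_1) = \om^{\b_1}$.

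The harder case is $\overline{\B}_0$. For each $s \in T_0$ set $A(s) = \{t \in \overline{\B}_1 : \max t < \min s \text{ and } t \conc s \in T(\B)\}$. The key structural observation is that for non-empty $s \in T_0$ with $A(s) \neq \emptyset$ the subtree of $T_0$ rooted at $s$ is canonically isomorphic to $\bigcup_{t \in A(s)} T(\B_{t \conc s})$, of height $\max_{t \in A(s)} \height_\B(t \conc s)$ (the maximum existing because $A(s)$ is finite). Well-foundedness of $T_0$ follows by pigeonhole on the finite set $\{t \in \overline{\B}_1 : \max t < X(0)\}$ together with well-foundedness of each $T(\B_t)$, so $\overline{\B}_0$ is a front with $\base(\overline{\B}_0) = \base(\B)$. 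The leaves of $T_0$ admit a useful characterization: $s \in \overline{\B}_0$ if and only if $t \conc s \in \B$ for every $t \in A(s)$. Smoothness is then shown by contradiction: suppose $s_1, s_2 \in \overline{\B}_0$, $|s_1| < |s_2|$ and $s_1(i) \geq s_2(i)$ for all $i < |s_1|$, and pick $r \in A(s_2)$, so $r \conc s_2 \in \B$ and $\max r < \min s_2 \leq \min s_1$. The contrapositive of Lemma \ref{alternativesmooth} inside $\B$, applied inductively to $r \conc (s_1 \restriction k)$ and $r \conc (s_2 \restriction k)$, gives these nodes positive height in $T(\B)$ for each $k < |s_1|$, so inductively $r \conc s_1 \in T(\B)$; hence $r \in A(s_1)$ and the leaf characterization yields $r \conc s_1 \in \B$. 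But then $r \conc s_1, r \conc s_2 \in \B$ have strictly unequal lengths yet are coordinatewise $\geq$, contradicting smoothness of $\B$. For the height, the subtree formula at $\la \ra$ gives $\height(\overline{\B}_0) = \max_{t \in \overline{\B}_1} \height_\B(t)$ (any second-part singletons contribute at most $1 \leq \b_0$); the upper bound $\b_0$ is immediate, and the matching lower bound follows from the standard fact that every ordinal $\leq \height(\B)$ is realized as the height of some node of $T(\B)$, since a node of height exactly $\b_0$ has parent of strictly larger height and therefore lies in $\overline{\B}_1$.

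For the extension clause, fix $w = t \conc u$. If $t \conc u \in T(\B)$ then $t \in A(u)$ and the leaf characterization of $\overline{\B}_0$ yields $t \conc u \in \B$, so $w$ extends itself. Otherwise let $k_0 = \max\{k : w \restriction k \in T(\B)\}$; since any non-leaf node $v$ of $T(\B)$ satisfies $v \conc \la m \ra \in T(\B)$ for every $m \in \base(\B)$ with $m > \max v$, the node $w \restriction k_0$ must be a leaf of $T(\B)$, giving $w \restriction k_0 \in \B$ with $w \restriction k_0 \sqsubseteq w$. The main obstacle throughout is the smoothness and height computation for $\overline{\B}_0$: both reduce, with the right setup, to applications of Lemma \ref{alternativesmooth} inside $\B$, but the combinatorics of $T_0$ as a union of subtrees and of the auxiliary set $A(s)$ require careful bookkeeping.
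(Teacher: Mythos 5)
Your proof is correct and follows the same overall decomposition as the paper's argument (which is essentially forced by the definitions of $\overline{\B}_0$ and $\overline{\B}_1$), but the technical work for the Upper Part $\overline{\B}_0$ is genuinely different. Where the paper proves smoothness of $\overline{\B}_0$ directly, by taking $s,t \in \overline{\B}_0$ with $|s|<|t|$ and $t(0)\le s(0)$, picking $u\in\overline{\B}_1$ with $t\in\B_u$, observing that $s$ must be $\B_u$-large (since $\B_u\subset T_0$ and $s$ is a leaf of $T_0$), and then invoking the already-known smoothness of $\B_u$, you instead set up the auxiliary set $A(s)=\{t\in\overline{\B}_1 : \max t<\min s,\ t\conc s\in T(\B)\}$, establish the leaf characterization ``$s\in\overline{\B}_0$ iff $t\conc s\in\B$ for every $t\in A(s)$,'' and then run an inductive height argument inside $T(\B)$ using the contrapositive of Lemma~\ref{alternativesmooth} to show $r\conc s_1\in\B$, contradicting smoothness of $\B$. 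Both arguments are valid; yours is a bit longer but makes the structure of $T_0$ (as a union of trees $T(\B_t)$ rooted above various $t\in\overline{\B}_1$) more explicit, and the leaf characterization gives a clean one-line treatment of the extension clause, whereas the paper handles that clause by a separate ad hoc contradiction. Your route for smoothness of $\overline{\B}_1$ (compare heights of $s_1$ and $s_2\restriction|s_1|$ directly in $T(\B)$) is also slightly more economical than the paper's, which first establishes $\height_{\overline{\B}_1}(s)=\height_\B(s)\dotminus\b_0$ and only then invokes Lemma~\ref{alternativesmooth} inside $\overline{\B}_1$.

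One place where you are a little quick: deducing ``$\overline{\B}_0$ is a front'' from the well-foundedness of $T_0$. Well-foundedness alone does not give the front property; you also need that $\la n\ra\in T_0$ for every $n\in\base(\B)$ (this is the purpose of the extra singletons) and that every non-leaf $s\in T_0$ has $s\conc\la m\ra\in T_0$ for all sufficiently large $m\in\base(\B)$ — the latter holds because a non-leaf $s$ lies non-maximally in some $T(\B_t)$, and $\B_t$ is itself a block. With these two facts, following an infinite $X$ down $T_0$ indeed produces a leaf by well-foundedness; the paper's proof instead argues this directly by choosing, for each $t\in\overline{\B}_1$ below $\min X$, the prefix $s_t\in\B_t$ of $X$ and taking the longest one. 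Either way works, but the branching step should be made explicit. A similar small looseness is in your height computation: the subtree formula with $A(s)$ finite applies only to non-empty $s$, so at $\la\ra$ the quantity $\sup_{t\in\overline{\B}_1}\height_\B(t)$ is a priori a supremum, not a maximum; it happens to be attained (as you note), and then everything is fine.
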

	
\begin{proof}
It is clear that $\overline{\B}_1$ is prefix free.
For each $n \in \base(\B)$, $\la n \ra \in T(\overline{\B}_1)$ since $\la n \ra^* = \la \ra$ and $\height_\B(\la \ra) = \b_0 + \om^{\b_1} > \b_0$.
Thus $\base(\overline{\B}_1) = \base(\B)$.
		
Let $X \subseteq \base(\B)$ be infinite and let $s \in \B$ be such that $s \sqsubset X$. 
Let $u \sqsubseteq s$ be shortest such that $\height_\B(u) \le \b_0$.
Notice that $u \ne \la \ra$ and hence $u \in \overline{\B}_1$.
Thus $\overline{\B}_1$ is a block.
		
Before proving the smoothness of $\overline{\B}_1$ we show that for each $s \in T(\overline{\B}_1)$
$$\height_{\overline{\B}_1}(s) = \height_\B(s) \dotminus \b_0 =
\begin{cases}
	0 & \text{ if } \height_\B(s) < \b_0 \\
	\height_\B(s) - \b_0 & \text{ if }  \height_\B(s) \ge \b_0
\end{cases}$$
Notice that in the second case the Cantor normal form of $\height_\B(s)$ starts with $\b_0$ and so we are simply removing those terms.
We proceed by induction on $\height_{\overline{\B}_1} (s)$.
If $\height_{\overline{\B}_1}(s) = 0$ then $s \in \overline{\B}_1$ and $\height_\B(s) \le \b_0$ so that $\height_\B (s) \dotminus \b_0 = 0$. 
If $\height_{\overline{\B}_1}(s) > 0$ then $s \in T(\overline{\B}_1) \setminus \overline{\B}_1$ and
\begin{align*}
	\height_{\overline{\B}_1}(s) & = \sup_{n \in \base(\B)}(\height_{\overline{\B}_1}(s \conc \la n \ra) + 1) \\
	& = \sup_{n \in \base(\B)} (\height_\B(s \conc \la n \ra) \dotminus \b_0 + 1) \\
	& = \sup_{n \in \base(\B)} (\height_\B(s \conc \la n \ra) +1 \dotminus \b_0) \\
	& = \sup_{n \in \base(\B)} (\height_\B(s \conc \la n \ra) +1) \dotminus \b_0 \\
	& = \height_\B (s) \dotminus \b_0,
\end{align*}
where we are using the above observation about $\height_\B(s) \dotminus \b_0$.
		
In particular we have $\height(\overline{\B}_1) = \height_{\overline{\B}_1} (\la \ra) = \height_\B(\la \ra) \dotminus \b_0 = \om^{\b_1}$.
		
To prove the smoothness of $\overline{\B}_1$ we show $2$ of Lemma \ref{alternativesmooth}.
Let $s,t \in T(\overline{\B}_1)$ be such that $|s| = |t|$ and $\height_{\overline{\B}_1} (s) < \height_{\overline{\B}_1}(t)$.
We claim that $\height_\B (s) < \height_\B(t)$.
Towards a contradiction, suppose that $\height_\B (s) \ge \height_\B(t)$.
If $\height_\B(t) \ge \b_0$ then both Cantor normal forms start with $\b_0$.
When we perform ${\dotminus} \b_0$ we simply remove those terms and get $\height_{\overline{\B}_1} (s) \ge \height_{\overline{\B}_1}(t)$, a contradiction.
If $\height_\B (s) \ge \b_0 > \height_\B(t)$ or $\b_0 > \height_\B (s)$ then $\height_{\overline{\B}_1}(t) = 0$ and so $\height_{\overline{\B}_1} (s) \ge \height_{\overline{\B}_1}(t)$, a contradiction.
Therefore $\height_\B (s) < \height_\B(t)$.
By smoothness of $\B$ and Lemma \ref{alternativesmooth}, we get that there exists $i < |s|$ such that $s(i) < t(i)$ and so $\overline{\B}_1$ is smooth. 
		
We already noticed that $\base(\overline{\B}_0) = \base(\B)$.
We now show that $\overline{\B}_0$ is a block.
Recall that for each $t \in T(\B)$, $\B_t$ is a smooth barrier of height $\height_\B(t)$ and base $\base(\B) \setminus \{0, \ldots, \max t\}$.
Notice that $\bigcup_{t \in \overline{\B}_1} T(\B_t)$ is infinite since there exists $s \in \overline{\B}_1 \setminus \B$ and so for each $n \in \base(\B)$ with $n > \max s$, $\la n \ra \in T(\B_s)$.
Let $X \subseteq \base(\overline{\B}_0)$ be infinite.
If $\la \min X \ra \notin \bigcup_{t \in \overline{\B}_1} T(\B_t)$ then we are done as $\la \min X \ra \in \overline{\B}_0$.
Otherwise, for each $t \in \overline{\B}_1$ with $\max t < \min X$, $X \subseteq \base(\B_t)$.
For each such $t$, there exists a prefix $s_t \in \B_t$ of $X$.
Since there are only finitely many such $t$, the longest $s_t$ belongs to $\overline{\B}_0$.
We conclude that $\overline{\B}_0$ is a block with the same base as $\B$.
		
Next we prove that $\overline{\B}_0$ is smooth.
Let $s,t \in \overline{\B}_0$ with $|s| < |t|$. 
If $s(0) < t(0)$ we are done, so assume that $t(0) \le s(0)$.
Since $|t| > 1$ there exists $u \in \overline{\B}_1$ such that $t \in \B_u$. 
Since $t(0) \le s(0)$ we know that $s \subset \base(\B_u)$. 
Notice that $s$ must be $\B_u$-large since $s \in \overline{\B}_0$ and $\B_u \subset T(\overline{\B}_0)$.
Therefore there exists $s' \sqsubseteq s$ such that $s' \in \B_u$ and by smoothness of $\B_u$ we obtain that there exists $i < |s'|$ such that $s(i) = s'(i) < t(i)$ as required.
		
Our next goal is to show that $\height(\overline{\B}_0) = \b_0$.
Let $t \in T(\B)$ be such that $\height_\B (t) = \b_0$ and notice that $t \in \overline{\B}_1$.
Therefore $\height(\overline{\B}_0) \ge \height (\B_t) = \b_0$. 
Now let $n \in \base(\overline{\B}_0)$.
If $n \notin \bigcup_{t \in \overline{\B}_1} \base(\B_t)$ then $\height_{\overline{\B}_0} (\la n \ra) = 0 < \b_0$.
Otherwise there are finitely many $t \in \overline{\B}_1$ with $\la n \ra \in T(\B_t)$.
Call $\A$ the smooth barrier consisting of the $\sqcup$-union of $\B_t$ for these $t$'s.
Then $T(\A) \subseteq T(\overline{\B}_0)$ and $\height_{\overline{\B}_0} (\la n \ra) = \height_\A (\la n \ra) = \max \{ \height_{\B_t} (\la n \ra) : t \in \overline{\B}_1 \wedge \la n \ra \in T(\B_t) \} < \b_0$ by Remark \ref{heightunionsum} and  definition of $\overline{\B}_1$.
This implies that $\height(\overline{\B}_0) \le \b_0$.
		
We are left to prove that each element of $\overline{\B}_0 \oplus \overline{\B}_1$ extends some element of $\B$.
Let $t \conc s \in \overline{\B}_0 \oplus \overline{\B}_1$ with $t \in \overline{\B}_1$ and $s \in \overline{\B}_0$.
Towards a contradiction, suppose that $t \conc s \in T(\B) \setminus \B$ and let $s'$ be such that $s \sqsubset s'$ and $t \conc s' \in \B$.
Then $s' \in \B_t$ and so some extension of $s'$ belongs to $\overline{\B}_0$.
Since we are assuming $s \in \overline{\B}_0$, this contradicts that $\B_0$ is a block.
\end{proof}
	
Notice that in general it is not the case that $\overline{\B}_0 \oplus \overline{\B}_1 = \B$ as there can be $t_1, t_2 \in \overline{\B}_1$ and $s_1 \sqsubset s_2$ such that $s_1 \in \B_{t_1}$ and $s_2 \in \B_{t_2}$. 
	
\begin{example}
Let $\B$ be a smooth barrier with base $\N$ of height $\om+1$ such that $\la 0 \ra \in \B$ and for each $s \in \B$ if $\min s > 0$ then $|s| > 1$.
Then by definition the Lower Part $\overline{\B}_1$ has height $1$ and must be the smooth barrier $\one$ of singletons on base $\N$.
In particular $\la 0 \ra \in \overline{\B}_1$.
The Upper Part $\overline{\B}_0$ clearly contains sequences with positive minimum.
Then $\la 0 \ra \notin \overline{\B}_0 \oplus \overline{\B}_1$.
\end{example}
	
\begin{corollary}\label{dec}
If $\B$ is a smooth barrier, then there exists a $\SD$-barrier $\B'$ such that $\base(\B) = \base(\B')$, $\height(\B) = \height(\B')$ and $\B \subset T(\B')$.
\end{corollary}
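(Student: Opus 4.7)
The plan is to prove Corollary \ref{dec} by induction on the number $n$ of terms in the Cantor normal form $\height(\B) = \om^{\b_0} + \ldots + \om^{\b_{n-1}}$, iterating Lemma \ref{lowerpart} to peel off the rightmost term at each stage. It is convenient to strengthen the statement and establish, simultaneously with the three stated properties, that every element of $\B'$ extends some element of $\B$; this stronger hypothesis will be essential for propagating the conclusion $\B \subset T(\B')$ through the induction.

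The base case $n = 1$ is trivial: the height $\om^{\b_0}$ is indecomposable, so $\B$ is already an $\SD$-barrier and we set $\B' = \B$. For the inductive step, I would write $\height(\B) = \b_0 + \om^{\b_{n-1}}$ with $\b_0 \ggeq \om^{\b_{n-1}}$ and apply Lemma \ref{lowerpart} to obtain smooth barriers $\overline{\B}_0$ and $\overline{\B}_1$ of heights $\b_0$ and $\om^{\b_{n-1}}$ respectively, sharing the base of $\B$, with every element of $\overline{\B}_0 \oplus \overline{\B}_1$ extending some element of $\B$. Since $\overline{\B}_0$ has only $n-1$ terms in its Cantor normal form, the inductive hypothesis yields an $\SD$-barrier $\B_0'$ with the same base and height as $\overline{\B}_0$ and with every element of $\B_0'$ extending some element of $\overline{\B}_0$. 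Setting $\B' = \B_0' \oplus \overline{\B}_1$, this is an $\SD$-barrier obtained by concatenating the decomposition of $\B_0'$ with $\overline{\B}_1$, its base and height agree with those of $\B$ by Remark \ref{heightunionsum}, and any $t' \conc u \in \B'$ satisfies $t \sqsubseteq t'$ for some $t \in \overline{\B}_0$, so $t \conc u \in \overline{\B}_0 \oplus \overline{\B}_1$ extends some $s \in \B$, which in turn is a prefix of $t' \conc u$.

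It remains to deduce $\B \subset T(\B')$ from the strengthened conclusion. Given $s \in \B$, I would extend it to an infinite subset $X$ of $\base(\B')$; the barrier property of $\B'$ produces some $s' \in \B'$ with $s' \sqsubset X$, and then $s$ and $s'$ are comparable under $\sqsubseteq$ because both are prefixes of $X$. If $s \sqsubseteq s'$ we are done. If instead $s' \sqsubset s$, the strengthened property gives some $s'' \in \B$ with $s'' \sqsubseteq s' \sqsubset s$, contradicting the prefix-freeness of $\B$. The proper inclusion then follows from $\la \ra \in T(\B') \setminus \B$. The main obstacle is keeping the extension property alive through the induction: Lemma \ref{lowerpart} supplies it only for the two-part decomposition at each level, and chaining it with the inductive hypothesis for $\overline{\B}_0$ is exactly what lets us conclude the stronger statement for the full $\B'$, which is what breaks the apparent circularity.
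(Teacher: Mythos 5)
Your proof is correct and follows essentially the same peel-off induction through Lemma \ref{lowerpart} that the paper uses. Two small points are worth flagging. First, your strengthened inductive hypothesis (that every element of $\B'$ extends some element of $\B$) is not in fact needed to break any circularity: the paper argues directly via the chain $\B \subset T(\overline{\B}_0 \oplus \overline{\B}_1) \subseteq T(\overline{\B}_0' \oplus \overline{\B}_1)$, where the second inclusion holds because $\overline{\B}_0 \subset T(\overline{\B}_0')$ already gives $T(\overline{\B}_0) \subseteq T(\overline{\B}_0')$ by downward closure, and the first is read off from the construction in Lemma \ref{lowerpart} (given $s \in \B$, its shortest prefix $u$ with $\height_\B(u) \le \b_0$ lies in $\overline{\B}_1$, and the remainder lies in $\B_u \subseteq T(\overline{\B}_0)$). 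Your extra bookkeeping is valid but superfluous. Second, the paper's convention has $\A \oplus \B = \{s \conc t : s \in \B,\ t \in \A,\ \max s < \min t\}$, so in an element $t' \conc u$ of $\B' = \B_0' \oplus \overline{\B}_1$ it is $t'$ that belongs to $\overline{\B}_1$ and $u$ to $\B_0'$; your sentence asserting $t \sqsubseteq t'$ for some $t \in \overline{\B}_0$ has the two roles reversed. The intended argument — apply the inductive extension property to the $\B_0'$-component to drop into $\overline{\B}_0$, then invoke the extension clause of Lemma \ref{lowerpart} — is sound once the labels are straightened out.
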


\begin{proof}
We proceed by induction on the number of terms in the Cantor normal form of $\height (\B)$.
If there is only one term the result is trivial.
		
Suppose that the Cantor normal form of $\height (\B)$ has at least two terms. 
By Lemma \ref{lowerpart}, the Cantor normal form of $\height (\overline{\B}_0)$ has one less term than the one of $\height (\B)$.
Therefore, by inductive hypothesis, there exists a $\SD$-barrier $\overline{\B}_0'$ with the same base and same height as $\overline{\B}_0$ and such that $\overline{\B}_0 \subset T(\overline{\B}_0')$.
It is clear that $\B \subset T(\overline{\B}_0 \oplus \overline{\B}_1) \subseteq T(\overline{\B}_0' \oplus \overline{\B}_1)$ and that $\overline{\B}_0' \oplus \overline{\B}_1$ is a $\SD$-barrier as required.
\end{proof}

Notice that by Lemma \ref{comb} and by Corollary \ref{dec}, starting from a front $\B$ we can always produce a $\SD$-barrier $\B'$ such that $\base(\B) = \base(\B')$, $\height(\B) = \height(\B')$ and $\B \subset T(\B')$.

Notice that the only $\SD$-barriers of height $n$ are of the form $[\base (\B)]^n$.
Hence, up to isomorphism, there is only one such a $\SD$-barrier.
	
Recall that there exist smooth barriers of arbitrary countable height.
Let $\b > 0$ be a countable ordinal with Cantor normal form $\om^{\b_0} + \ldots + \om^{\b_n}$.
For each $i \le n$ fix a smooth barrier $\B_i$ of height $\om^{\b_i}$ and assume that for each $i < n$ $\base(\B_{i+1})$ is a final segment of $\base(\B_i)$.
Hence by Remark \ref{heightunionsum} $\B_0 \oplus \ldots \oplus \B_n$ is a $\SD$-barrier of height $\b$.
Therefore there exist $\SD$-barriers of arbitrary countable height.
	
Let $\a,\b$ be ordinals such that $\a$ is strictly less than the first term in the Cantor normal form of $\b$.
Fix smooth barriers $\A$ of height $\a$ and $\B$ of height $\b$.
Then $\A \oplus \B$ is a smooth barrier of height $\a + \b = \b$ which is not isomorphic to $\B$.
It follows that for each ordinal $\b \ge \om$ there are infinitely many non isomorphic smooth barriers of height $\b$.
Therefore there are infinitely many non isomorphic $\SD$-barriers of any infinite height.	

\begin{remark}\label{sd restriction}
If we restrict a $\SD$-barrier $\B$ to a final segment $X$ of its base, we still get a $\SD$-barrier.
This is because smoothness is clearly preserved since we have a subbarrier, height is the same by Lemma \ref{sb tail height} and any decomposition $\B_0, \ldots, \B_n$ of $\B$ yields a decomposition $\B_0 \restriction X, \ldots, \B_n \restriction X$ of $\B \restriction X$.
\end{remark}

\section{Bridges Between Systems and Barriers}\label{sec:bridges}

In this section we show that given a system of fundamental sequences, we can build fronts which yield the same largeness notions.
Moreover, if the system is nested and regular, we get $\SD$-barriers.
Next we show the converse, namely that given a front we can produce something very similar to a system of fundamental sequences.
Again, if the front is a $\SD$-barrier, the induced system satisfies weaker versions of nestedness and regularity.

For the rest of this section, fix a system of fundamental sequences on some ordinal $\G$ and an infinite set $M \subseteq \N$ with $\min M >1$\footnote{the restriction to natural numbers larger than $1$ mirrors our definition of nestedness.}.
If $\a \le \G$, let $\B^\a = [M]^\a$ be the set of $\a$-size subsets of $M$.

\begin{theorem}\label{alphasizebarrier}
For each $\a \le \G$, $\B^\a$ is a front.
Moreover, if the system is nested every $\B^\a$ is a smooth barrier.
\end{theorem}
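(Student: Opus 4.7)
My plan is to verify the three defining properties of a front in the general case, and then upgrade $\B^\a$ to a smooth barrier under the nestedness hypothesis. The case $\a = 0$ gives $\B^0 = \{\la\ra\}$, which is the degenerate front by fiat, so I would dispose of it immediately and assume $\a > 0$ throughout the rest of the argument.

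The engine of the front-part of the proof is the well-foundedness of fundamental sequences: for any infinite $X \subseteq M$, the sequence $\a, \a[X(0)], \a[X(0),X(1)], \dots$ is strictly decreasing as long as it remains positive (because $\a[m] < \a$ for every $m$), so it reaches $0$ in finitely many steps, and truncating at the first such step yields a unique $\a$-size prefix of $X$. Applied to $X = M \cap [n,\infty)$ for each $n \in M$, this shows $\base(\B^\a) = M$, giving property (1); applied to arbitrary infinite $X \subseteq \base(\B^\a)$, it gives property (2) directly. Prefix-freeness (property (3)) is automatic: if $s \sqsubset t$ were both in $\B^\a$, then $s \sqsubseteq t^*$, and since $\a[s] = 0$ and $0[n] = 0$, we get $\a[t^*] = 0$, contradicting the fact that $t$ being $\a$-size requires $\a[t^*] > 0$.

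Assuming nestedness, the barrier property is immediate from Corollary \ref{intuitive syst}: since $\min M > 1$, a proper subset of an $\a$-size set is $\a$-small, so distinct elements of $\B^\a$ cannot be $\subseteq$-comparable. For smoothness, given $s,t \in \B^\a$ with $|s| < |t|$ and $t(i) \le s(i)$ for every $i < |s|$, I would set $u = t \restriction |s|$, observe $u(i) > 1$ since $u \subseteq M$, and apply Lemma \ref{subsetlemma} to conclude $\a[u] \le \a[s] = 0$. But then $u$ is an $\a$-large proper prefix of $t$, contradicting $\a$-sizeness of $t$.

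The heavy lifting is done by the earlier lemmas, so there is no real obstacle; the only care needed is to verify at each application that the hypothesis ``all entries $>1$'' is met, which is guaranteed by the standing assumption $\min M > 1$ built into the definition of $\B^\a$.
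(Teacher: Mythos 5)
Your proposal is correct and follows essentially the same route as the paper: verify the three front properties for $\alpha > 0$ via well-foundedness of the fundamental sequence descent, then obtain smoothness from Lemma~\ref{subsetlemma} (the paper applies it to $s$ and $t^*$ and reads off $\a[t^*]=0$ directly, whereas you apply it to $s$ and $t \restriction |s|$ and then note that an $\a$-large proper prefix contradicts $\a$-sizeness—a harmless variant). The extra remark that the barrier property also follows from Corollary~\ref{intuitive syst} is unnecessary since, as the paper notes, a smooth block is automatically a barrier, but it is not wrong.
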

	
\begin{proof}
We first need to check the three properties of Definition \ref{blockdef}.
If $\a = 0$ then $\B^\a$ is easily seen to be the degenerate front, so suppose $\a > 0$.
\begin{enumerate}[(1)]
	\item Fixed $n \in M$, let $n_0, \ldots, n_{k-1} \in M$ be such that $\a[n, n_0, \ldots, n_{k-1}] = 0$ but $\a[n, n_0, \ldots, n_{k-2}] > 0$. 
	Then $\la n, n_0, \ldots, n_{k-1} \ra$ is $\a$-size.
	Hence $\la n, n_0, \ldots, n_{k-1} \ra \in \B^\a$ and $n \in \base(\B^\a)$, so $\base(\B^\a) = M$.
	\item Let $X \subseteq M$ be infinite and let $j$ be least such that $\a[X_0, \ldots, X_j]=0$.
	Then $\la X_0, \ldots, X_j \ra \in \B^\a$ and $\la X_0, \ldots, X_j \ra \sqsubset X$.
	\item Let $s,t \in \B^\a$ and suppose for sake of contradiction $s \sqsubset t$. 
	Then $s \sqsubseteq t^*$ and so it follows that $\a[t^*] \le \a [s] = 0$, which contradicts the fact that $t$ is $\a$-size.
\end{enumerate}
		
Assume the system of fundamental sequences is nested. 
Since a smooth block is automatically a barrier it suffices to prove smoothness.
Let $s,t \in \B^\a$ be such that $|s| < |t|$. 
Towards a contradiction assume that for each $i < |s|$, $t(i) \le s(i)$.
Since the system of fundamental sequences is nested and $\min t > 1$, $s$ and $t^*$ satisfy the hypothesis of Lemma \ref{subsetlemma}.
Therefore we have that $\a[t^*] \le \a[s] = 0$, which contradicts the fact that $t$ is $\a$-size.
We conclude that $\B^\a$ is a smooth barrier.
\end{proof}

The next lemmas provide properties that $\B^\a$ inherits from the fixed system of fundamental sequences.
	
\begin{lemma}\label{first}
If $t \in T(\B^\a)$ then $\height_{\B^\a}(t) = \a[t]$.
\end{lemma}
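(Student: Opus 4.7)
My plan is to prove the statement by well-founded induction on the tree $T(\B^\a)$. Recall that $T(\B^\a)$ consists of all sequences in $[M]^{<\om}$ no proper prefix of which is $\a$-size; equivalently (using that every $\a$-large set has an $\a$-size initial segment, noted after the definition of $\a$-size), $T(\B^\a)$ consists of the $\a$-small sequences together with the $\a$-size sequences, with $\B^\a$ being the leaves.

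For the base case $t \in \B^\a$, $t$ is a leaf of $T(\B^\a)$, so $\height_{\B^\a}(t) = 0$; on the other hand $t$ is $\a$-size, which by definition means $\a[t] = 0$, so both sides agree. For the inductive step, take $t \in T(\B^\a) \setminus \B^\a$, so $t$ is $\a$-small, i.e.\ $\a[t] > 0$. For every $n \in M$ with $n > \max t$, the sequence $t \conc \la n \ra$ still has only $\a$-small (hence non-$\a$-size) proper prefixes, so $t \conc \la n \ra \in T(\B^\a)$, and these are exactly the children of $t$. Applying the induction hypothesis to each child gives
\[
\height_{\B^\a}(t) \;=\; \sup \{ \height_{\B^\a}(t \conc \la n \ra) + 1 : n \in M,\ n > \max t \} \;=\; \sup \{ \a[t][n] + 1 : n \in M,\ n > \max t \},
\]
where I use that $\a[t \conc \la n \ra] = \a[t][n]$ by the very definition of $\a[\cdot]$.

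The remaining step is to identify this supremum with $\a[t]$. By the defining property of fundamental sequences, $\sup_{n \in \om}(\a[t][n] + 1) = \a[t]$ since $\a[t] > 0$. The fundamental sequence is non-decreasing, so the same supremum is attained over any cofinal subset of $\om$; in particular, since $M$ is infinite, the set $\{n \in M : n > \max t\}$ is unbounded in $\om$, giving the desired equality.

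There is no serious obstacle here: the only mild subtlety is the cofinality observation needed to pass from the sup over all of $\om$ to the sup over $\{n \in M : n > \max t\}$, and this is immediate from monotonicity of the fundamental sequence. No nestedness or regularity of the system is required for this lemma.
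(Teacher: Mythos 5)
Your proof is correct and takes essentially the same route as the paper's: induction on the height of $t$ in the tree (your well-founded induction on $T(\B^\a)$ is the same thing), using the base case for leaves and the supremum formula for internal nodes. Your extra remark about passing from the sup over $\om$ to the sup over the cofinal set $\{n \in M : n > \max t\}$ via monotonicity of the fundamental sequence is a small detail the paper leaves implicit but which you handle correctly, as is your observation that neither nestedness nor regularity is needed here.
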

	
\begin{proof}
We proceed by induction on $\height_{\B^\a} (t)$.
If $\height_{\B^\a}(t)=0$ then $t \in \B^\a$, $t$ is $\a$-size by definition and so $\a[t]=0$.
If $\height_{\B^\a} (t) > 0$ then $t \in T(\B^\a) \setminus \B^\a$ and
\begin{multline*}
   \height_{\B^\a}(t) = \sup_{n \in M} (\height_{\B^\a}(t \conc \la n \ra) +1) \\
   = \sup_{n \in M} (\a[t \conc \la n \ra] +1) = \sup_{n \in M} (\a[t][n] + 1) = \a[t].\qedhere
\end{multline*}
\end{proof}
	
\begin{corollary}\label{alphaheight}
$\height(\B^\a)=\a$.
\end{corollary}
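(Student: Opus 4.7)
The plan is to derive this immediately from Lemma \ref{first}. By definition, the height of a front $\B$ is the height of the empty sequence in the tree $T(\B)$, that is, $\height(\B^\a) = \height_{\B^\a}(\la \ra)$. Applying Lemma \ref{first} with $t = \la \ra$ gives $\height_{\B^\a}(\la \ra) = \a[\la \ra]$. Since $\a[s_0, \ldots, s_{|s|-1}]$ was defined as the iterated application of the fundamental sequence operation along the entries of $s$, the empty sequence yields no applications and thus $\a[\la \ra] = \a$.

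So the whole argument is a one-line chain of equalities: $\height(\B^\a) = \height_{\B^\a}(\la \ra) = \a[\la \ra] = \a$. There is no real obstacle here; the work was done in Lemma \ref{first}, and this corollary just records the special case $t = \la \ra$.
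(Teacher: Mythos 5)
Your proof is correct and matches the paper's intent: the corollary is stated without proof as an immediate consequence of Lemma \ref{first}, and the intended argument is exactly the instantiation at $t = \la\ra$ that you give. Nothing to add.
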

	
\begin{lemma}\label{second}
For each $t \in T(\B^\a)$, $s \in \B^\a_t$ if and only if $\max t < \min s$ and $s \in \B^{\height_{\B^\a}(t)}$.
\end{lemma}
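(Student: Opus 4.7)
The plan is to unpack both sides of the equivalence and reduce to the identity $\a[t \conc s] = \a[t][s]$, which follows directly from the definition of $\a[\cdot]$ as iterated application of fundamental sequences. Combined with Lemma \ref{first}, which identifies $\height_{\B^\a}(t)$ with $\a[t]$, this should yield the equivalence with minimal effort.

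More precisely, I would first recall that $s \in \B^\a_t$ means, by definition, that $t \conc s \in \B^\a$, i.e., $t \conc s$ is $\a$-size. In the paper's conventions, the concatenation $t \conc s$ of finite increasing sequences only makes sense when $\max t < \min s$, so this condition is automatic in the forward direction and assumed in the reverse direction; under it $t \conc s$ is genuinely an element of $[M]^{<\om}$.

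Now set $\b = \height_{\B^\a}(t)$, so that $\b = \a[t]$ by Lemma \ref{first}. Then $\a[t \conc s] = \a[t][s] = \b[s]$, and likewise $\a[(t \conc s)^*] = \b[s^*]$ whenever $s \neq \la\ra$ (removing the maximum of $t \conc s$ removes the maximum of $s$ in this case). Therefore $t \conc s$ being $\a$-large with $(t \conc s)^*$ being $\a$-small translates exactly into $s$ being $\b$-large with $s^*$ being $\b$-small, which is precisely the definition of $s \in \B^\b = \B^{\height_{\B^\a}(t)}$.

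The only edge case worth checking is $s = \la\ra$, equivalently $t \in \B^\a$. Here $\b = 0$, the front $\B^\b$ is the degenerate front $\{\la\ra\}$, and the condition $\max t < \min s$ is vacuous. Both sides of the claimed equivalence then reduce to $t \in \B^\a$, so the statement still holds. I do not foresee any genuine obstacle: the lemma is essentially a definitional unfolding once Lemma \ref{first} is at hand, and the only modest care needed is in the bookkeeping for the starred version and the degenerate case.
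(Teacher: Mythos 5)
Your proof is correct and takes essentially the same route as the paper: both invoke Lemma \ref{first} to identify $\height_{\B^\a}(t)$ with $\a[t]$ and then unfold the iterated fundamental-sequence identity $\a[t \conc s] = \a[t][s]$ to transfer $\a$-size for $t \conc s$ into $\height_{\B^\a}(t)$-size for $s$. Your treatment of the starred (``small'') half of the size condition and of the degenerate case $s = \la\ra$ is, if anything, slightly more explicit than the paper's.
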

	
\begin{proof}
If $s \in \B^\a_t$ then $\max t < \min s$ and $t \conc s \in \B^\a$ by definition.
It follows that $0 = \a[t \conc s] = \a[t][s]$ and by Lemma \ref{first} $\height_{\B^\a}(t)[s] = 0$.
In other words, $s$ is $\height_{\B^\a}(t)$-large.
Since $t \conc s$ is $\a$-size, no prefix of $s$ can be $\height_{\B^\a}(t)$-large which means that $s \in \B^{\height_{\B^\a}(t)}$.
		
Conversely, let $s \in \B^{\height_{\B^\a}(t)}$ with $\max t < \min s$ (so that $t \conc s$ does make sense).
Then, again by Lemma \ref{first}, $\a[t \conc s] = \a[t][s] = \height_{\B^\a}(t) [s] = 0$ and so $s \in \B^\a_t$.
\end{proof}
    
\begin{lemma}\label{third}
Assume the system of fundamental sequences is nested and regular.
Let $\a = \b_0 + \om^{\b_1}$ where $\b_0 \ggeq \om^{\b_1}$.
If $t \conc s \in \B^\a$ then the following are equivalent:
\begin{enumerate}[(1)]
    \item \label{31} $\height_{\B^\a}(t) = \b_0$;
    \item \label{32} $t$ is $\om^{\b_1}$-size;
    \item \label{33} $s$ is $\b_0$-size.
\end{enumerate}
\end{lemma}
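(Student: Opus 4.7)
The plan is to prove the two easier implications $(2) \Rightarrow (1) \Rightarrow (3)$ and then the more delicate $(3) \Rightarrow (2)$. For $(2) \Rightarrow (1)$ I would compute $\a[t]$ by iterating the regularity condition. Since $\b_0 \ggeq \om^{\b_1}$ we have $\a[n] = \b_0 + \om^{\b_1}[n]$, and as long as $\om^{\b_1}[n] > 0$ its Cantor normal form starts with some $\om^\g$ for $\g < \b_1$ (because $\om^{\b_1}[n] < \om^{\b_1}$), so $\b_0 \ggeq \om^{\b_1}[n]$ and regularity applies again. Iterating, $\a[t \restriction i] = \b_0 + \om^{\b_1}[t \restriction i]$ for every $i \le |t|$ while $\om^{\b_1}[t \restriction (i-1)] > 0$; if $t$ is $\om^{\b_1}$-size then the formula is valid at $i = |t|$ with $\om^{\b_1}[t] = 0$, so $\a[t] = \b_0$, and Lemma \ref{first} gives $\height_{\B^\a}(t) = \b_0$. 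The implication $(1) \Rightarrow (3)$ is then immediate from Lemma \ref{second}: $t \conc s \in \B^\a$ gives $s \in \B^\a_t \subseteq \B^{\height_{\B^\a}(t)} = \B^{\b_0}$, so $s$ is $\b_0$-size.

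For $(3) \Rightarrow (2)$, assume $s$ is $\b_0$-size and apply the same regularity analysis to the whole sequence $t \conc s$: because $\a[t \conc s] = 0$ while $\a[(t \conc s)^*] > 0$, there is a unique index $k$ at which $\om^{\b_1}[(t \conc s) \restriction k]$ first reaches $0$. Setting $u = (t \conc s) \restriction k$ and $v = (t \conc s) \setminus u$ produces a canonical decomposition in which $u$ is $\om^{\b_1}$-size, $\a[u] = \b_0$, and $v$ is $\b_0$-size (from $\b_0[v] = 0$ and $\b_0[v^*] = \a[(t \conc s)^*] > 0$). The task reduces to showing $u = t$, which is exactly (2).

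Suppose by contradiction $u \neq t$ and consider the two cases. If $t \sqsubsetneq u$, then $v$ is a proper suffix of $s$ of length $\ell = |v| < |s|$; the sequences $\la s_0, \ldots, s_{\ell-1}\ra$ and $v = \la s_{|s|-\ell}, \ldots, s_{|s|-1}\ra$ have the same length, entries exceeding $1$, and are pointwise comparable ($s_i \le s_{|s|-\ell+i}$ by strict monotonicity of $s$), so Lemma \ref{subsetlemma} yields $\b_0[s \restriction \ell] \le \b_0[v] = 0$, contradicting that $s \restriction \ell$ is a proper prefix of the $\b_0$-size set $s$ and hence $\b_0$-small. If instead $u \sqsubsetneq t$, then $v$ has length greater than $|s|$ and begins with the non-empty block $\la t_{|u|}, \ldots, t_{|t|-1}\ra$ followed by $s$; the bounds $t_{|u|+i} < \min s \le s_i$ together with $s_j \le s_{j + (|t|-|u|)}$ place $v \restriction |s|$ entrywise below $s$, so Lemma \ref{subsetlemma} gives $\b_0[v \restriction |s|] \le \b_0[s] = 0$, contradicting that $v$ is $\b_0$-size. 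The main obstacle is extracting the right pointwise comparisons for Lemma \ref{subsetlemma} in the two sub-cases and keeping straight which of $s$ or $v$ is being compared with which prefix; the degenerate situations ($\b_0 = 0$, $v = \la \ra$, $t = \la \ra$, or $s = \la \ra$) are handled by direct inspection and all three conditions trivially agree.
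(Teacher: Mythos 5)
Your proof is correct and follows essentially the same route as the paper: you prove the cycle $(2)\Rightarrow(1)\Rightarrow(3)\Rightarrow(2)$ using Lemmas~\ref{first} and~\ref{second} for the first two implications and the monotonicity results of Subsection~\ref{subsec:systems} for the last, while the paper proves $(1)\Rightarrow(3)\Rightarrow(2)\Rightarrow(1)$ with the same ingredients. The only difference is one of detail: where the paper handles $\lnot(2)\Rightarrow\lnot(3)$ in a single terse sentence (implicitly using the unique $\om^{\b_1}$-size prefix / $\b_0$-size suffix decomposition of $t\conc s$ and Corollary~\ref{intuitive syst}), you make that decomposition explicit and derive the contradiction directly from Lemma~\ref{subsetlemma}; since Corollary~\ref{intuitive syst} is itself a corollary of Lemma~\ref{subsetlemma}, the two write-ups rest on the same facts.
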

	
\begin{proof}
Assuming \ref{31}, by Lemma \ref{second} $s \in \B^{\height_{\B^\a}(t)}$ which means that $s$ is $\b_0$-size, so that we have \ref{33}.

If \ref{32} fails then either a proper subset or a proper superset of $s$ is $\b_0$-size, contradicting \ref{33}.


Finally, assuming \ref{32}, by Lemma \ref{first} and regularity of the system of fundamental sequences 
$$\height_{\B^\a} (t) = \a[t] = (\b_0 + \om^{\b_1})[t] = \b_0 + (\om^{\b_1}[t]) = \b_0,$$
that is \ref{31}.
\end{proof}
	
	

	
\begin{theorem}\label{regularbarrier}
Assume the system of fundamental sequences is nested and regular.
Then $\B^\a$ is a $\SD$-barrier.
\end{theorem}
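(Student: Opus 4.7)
The plan is to prove this by induction on the number of terms in the Cantor normal form of $\a$, showing that $\B^\a$ decomposes as an $\oplus$-sum of smooth barriers of the form $\B^{\om^{\b_i}}$. Smoothness of $\B^\a$ is already granted by Theorem \ref{alphasizebarrier}; the real content is decomposability. In the base case $\a = \om^\b$, the height $\om^\b$ is indecomposable, so $\B^\a$ is its own (length-one) decomposition, and there is nothing to prove.

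For the inductive step, write $\a = \b_0 + \om^{\b_1}$ with $\b_0 \ggeq \om^{\b_1}$ and $\b_0 > 0$, so that $\b_0$ has strictly fewer Cantor normal form terms than $\a$. I would set $\B_1 = \B^{\om^{\b_1}}$ on base $M$ and $\B_0 = \B^{\b_0}$ on base $M$ (which are smooth barriers with the correct heights by Theorem \ref{alphasizebarrier} and Corollary \ref{alphaheight}), and prove the decomposition identity $\B^\a = \B_0 \oplus \B_1$. For the inclusion $\supseteq$, given $s \conc t$ with $s \in \B_1$ and $t \in \B_0$, use regularity: since $s^*$ is $\om^{\b_1}$-small, $\a[s^*] = \b_0 + \om^{\b_1}[s^*] > \b_0$, and then $\a[s] = (\b_0 + \om^{\b_1}[s^*])[s_{|s|-1}] = \b_0 + 0 = \b_0$; hence $\a[s \conc t] = \b_0[t] = 0$ and $\a[(s \conc t)^*] = \b_0[t^*] > 0$, making $s \conc t$ $\a$-size. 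For the inclusion $\subseteq$, given $r \in \B^\a$, take $s$ to be the shortest $\om^{\b_1}$-size prefix of $r$ (it exists because $r$ is $\a$-large, hence in particular $\om^{\b_1}$-large, which can be checked by iterating regularity and noting that $\a[\cdot]$ can only reach $0$ after $\om^{\b_1}[\cdot]$ does); then $\a[s] = \b_0$ as above, so the suffix $t = r \setminus s$ is $\b_0$-large, and again using $\a[r^*] > 0$ yields that $t^*$ is $\b_0$-small, hence $t \in \B_0$.

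Once $\B^\a = \B_0 \oplus \B_1$ is established, apply the induction hypothesis to $\B_0 = \B^{\b_0}$ to obtain a decomposition $\B_0 = \B_0^{(0)} \oplus \ldots \oplus \B_{n-2}^{(0)}$ matching the Cantor normal form of $\b_0$, and concatenate with $\B_1$ to get a decomposition of $\B^\a$ with heights exactly the CNF terms of $\a$. The main delicate point is the careful use of regularity in the verification that the shortest $\om^{\b_1}$-size prefix splits $\B^\a$-sets precisely as required: one needs to confirm that $\b_0 \ggeq \om^{\b_1}[u]$ holds throughout the iterated application of the fundamental sequence (which follows because $\om^{\b_1}[u] < \om^{\b_1}$ has its leading exponent below $\b_1$, while $\b_0 \ggeq \om^{\b_1}$ means $\b_0$'s trailing exponent is at least $\b_1$), so that regularity can be invoked at every step until the $\om^{\b_1}$-part vanishes. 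The base compatibility in $\oplus$ is automatic since both summands share the base $M$.
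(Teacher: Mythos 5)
Your proof is correct and takes essentially the same approach as the paper: the paper's proof simply asserts that regularity implies $\B^\a = \B^{\om^{\b_0}} \oplus \ldots \oplus \B^{\om^{\b_n}}$ as "easy to check," and your argument (peeling off one Cantor normal form term per inductive step and using iterated regularity, with the observation that $\b_0 \ggeq \om^{\b_1}[u]$ persists throughout the iteration) is exactly the checking that the paper leaves to the reader.
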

	
\begin{proof}
Theorem \ref{alphasizebarrier} yields that $\B^\a$ is a smooth barrier, so we only need to prove that regularity of the system implies decomposability of the barrier.
Let $\om^{\b_0} + \ldots + \om^{\b_n}$ be the Cantor normal form of $\a$.
Then it is easy to check that regularity of the system of fundamental sequences implies $\B^\a = \B^{\om^{\b_0}} \oplus \ldots \oplus \B^{\om^{\b_n}}$.
\end{proof}	
	
Next we show how we can obtain fundamental sequences starting from a barrier.
We recall that if $\B$ is smooth then each $\B_n$ is smooth.
	
\begin{lemma}\label{barrierfundseq}
Let $\B$ be a smooth barrier and $\height(\B) = \a$.
Then $\a[n] = \height(\B_n)$ is a fundamental sequence for $\a$.
\end{lemma}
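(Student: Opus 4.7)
The plan is to verify the two defining properties of a fundamental sequence (non-decreasing and supremum equal to $\a$) directly from the definitions, using smoothness of $\B$ for monotonicity and the recursive definition of tree height for the supremum. Up to isomorphism we may assume $\base(\B) = \N$, so that $\B_n$ is defined for every $n \in \om$.

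For monotonicity, fix $m \le n$ in $\N$. Then $\la m \ra, \la n \ra \in T(\B)$ both have length $1$, so by the alternative characterization of smoothness (Lemma \ref{alternativesmooth}), if we had $\height_\B(\la m \ra) > \height_\B(\la n \ra)$ there would have to exist $i < 1$ with $\la n \ra (i) < \la m \ra (i)$, i.e.\ $n < m$, a contradiction. Hence $\height_\B(\la m \ra) \le \height_\B(\la n \ra)$, and since $\height_\B(\la n \ra) = \height(\B_n) = \a[n]$, the sequence $(\a[n])_{n \in \om}$ is non-decreasing.

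For the supremum condition, recall that $\height(\B) = \height_\B(\la \ra)$ and, whenever $\la \ra \in T(\B) \setminus \B$ (which is the case because $\B$ is non-degenerate, as it has positive height $\a$), the definition of height in a well founded tree gives
\[
\height_\B(\la \ra) = \sup_{n \in \base(\B)} \bigl(\height_\B(\la n \ra) + 1\bigr) = \sup_{n \in \om} (\a[n] + 1).
\]
Thus $\sup \{\a[n] + 1 : n \in \om\} = \a$, completing the verification.

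There is essentially no obstacle here: both clauses are routine consequences of results already established in Subsection \ref{subsec:barriers}. The only mild subtlety is the case $\a = 0$, i.e.\ when $\B$ is the degenerate front; but this is excluded since $\B$ is a barrier with a height, and even if one wished to include it one would just use the convention $0[n] = 0$ adopted at the outset.
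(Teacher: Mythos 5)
Your proof is correct, and it uses a genuinely different (and in fact slightly cleaner) ingredient for the monotonicity part than the paper does. The paper proves monotonicity by contradiction at the level of barrier elements: given $n<m$ with $\height(\B_m)<\height(\B_n)$, it restricts $\B_n$ to $\base(\B_m)$ via Lemma~\ref{sb tail height} to equalize bases without changing height, then invokes Lemma~\ref{smallerht} to produce $s\in\B_m$, $t\in\B_n$ with $s\sqsubset t$, and finally observes that $\la m\ra\conc s$ and $\la n\ra\conc t$ violate smoothness. You instead apply the alternative characterization of smoothness in Lemma~\ref{alternativesmooth}, item~(2), directly to the length-one nodes $\la m\ra,\la n\ra\in T(\B)$, turning the height comparison into an immediate coordinatewise comparison and skipping the detour through elements of $\B$. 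Both routes are legitimate consequences of results already in Subsection~\ref{subsec:barriers}; yours reuses the packaging done in Lemma~\ref{alternativesmooth}, while the paper's unpacks the argument to the level of barrier elements. The supremum computation is identical in both. One small remark: when you invoke the contradiction, you should assume strictly $m<n$ (the case $m=n$ being trivial), since the conclusion ``$n<m$'' you derive from Lemma~\ref{alternativesmooth} only contradicts $m<n$, not $m\le n$ in the boundary case; this is cosmetic and does not affect correctness.
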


\begin{proof}
If the sequence is not non decreasing there exist $n < m$ with $\a[m] < \a[n]$, i.e.\ $\height(\B_m) < \height(\B_n)$.
Let $\B_n'$ be the restriction of $\B_n$ to $\base (\B_m)$, which is a smooth barrier with the same height as $\B_n$ by Lemma \ref{sb tail height}.
By Lemma \ref{smallerht} there exist $s \in \B_m$ and $t \in \B_n'$ such that $s \sqsubset t$.
The sequences $\la m \ra \conc s$ and $\la n \ra \conc t$ belong to $\B$ and contradict its smoothness.

Moreover
$$\sup \{\a[n] + 1 : n \in \om\} = \sup \{\height(\B_n) +1 : n \in \om\} = \height(\B) = \a.$$
Therefore $\a[n]$ is a fundamental sequence for $\a$.
\end{proof}
	
Recall that if $T$ is a well-founded tree then for each $\b \le \height(T)$ there exists $s \in T$ such that $\height_T (s) = \b$.
Using this and the fact that if $\B$ is a smooth barrier then $\B_s$ is a smooth barrier for each $s \in T(\B)$, we obtain fundamental sequences for every $\b \le \height(\B)$.
More precisely, for each $s \in T(\B) \setminus \B$ let $\b^\B_s = \height_\B(s) = \height(\B_s)$: then we define\footnote{the definition is a bit involved because we need to define $\b^\B_s[n]$ even when $n \notin \base (\B)$.}
$$\b^\B_s[n] = \b^\B_{s \conc \la m \ra} \text{ where } m = \min \{k \in \base(\B) : k \ge \max (\max s +1, n)\}.$$
Analogously to what we do with systems of fundamental sequences, if $s \in \B$ then we stipulate that $\b^\B_s [n] = 0$ for each $n$.
We will often just write $\b_s$ instead of $\b^\B_s$ when the barrier $\B$ is clear from the context.
Notice that the fundamental sequence for $\b_s$ depends not only on the ordinal $\b_s$ but also on the specific $s \in T(\B)$: in fact it is easy to provide examples of barriers $\B$ such that for some $s \ne t \in T(\B)$ we have $\b_s = \b_t$ but $\b_s[n] \ne \b_t[n]$ for some $n$. 
Therefore we do not have a system of fundamental sequences in the sense of Subsection \ref{subsec:systems}.
		
It is clear that we cannot expect nestedness in the strong sense i.e.\ that for every $s,t \in T(\B)$ and $n > 1$ it does not hold
$$\b_s > \b_t > \b_s[n] > \b_t[n]$$
(it is easy to build a counterexample).
However, it is immediate to check that the following pseudonestedness holds: for each $s, t \in T(\B)$ with $s \sqsubset t$ and each $n \ge t(|s|)$ we have that $\b_s[n] \ge \b_t$.

$\SD$-barriers enjoy the following version of regularity.

\begin{lemma}\label{pseudoregularity}
Let $\B$ be a $\SD$-barrier of height $\om^{\b_0} + \ldots + \om^{\b_n}$.
Let $\B = \B_0 \oplus \ldots \oplus \B_n$ be a decomposition and let $\C = \B_0 \oplus \ldots \oplus \B_{n-1}$ so that $\B = \C \oplus \B_n$ where $\height(\B_n) = \om^{\b_n}$ and $\g = \height(\C) \ggeq \om^{\b_n}$.
Then for each $m$ $\b^\B_{\la \ra} [m] = \g + \b^{\B_n}_{\la \ra} [m]$.
\end{lemma}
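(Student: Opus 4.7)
The plan is a short direct computation whose engine is Remark \ref{heightunionsum}, which tells us that for $u \in T(\B_n)$ we have $\height_{\C \oplus \B_n}(u) = \height(\C) + \height_{\B_n}(u)$. The whole argument amounts to showing that the recipe defining $\b^\B_{\la \ra}[m]$ and the recipe defining $\b^{\B_n}_{\la \ra}[m]$ select the same singleton $\la k \ra$, and then applying this additivity.

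First I would observe that $\base(\B) = \base(\B_n)$. This uses the general rule $\base(\A \oplus \A') = \base(\A')$ recorded in Remark \ref{heightunionsum}, applied to $\B = \C \oplus \B_n$. In particular, setting
\[
k = \min\{j \in \base(\B) : j \ge m\} = \min\{j \in \base(\B_n) : j \ge m\},
\]
the definition of the fundamental sequence gives, taking $\max \la \ra + 1 = 0$,
\[
\b^\B_{\la \ra}[m] = \b^\B_{\la k \ra} \qquad \text{and} \qquad \b^{\B_n}_{\la \ra}[m] = \b^{\B_n}_{\la k \ra}.
\]

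Second I would invoke Remark \ref{heightunionsum} on the one-element sequence $\la k \ra$: since $k \in \base(\B_n)$, the singleton $\la k \ra$ lies in $T(\B_n)$ (blocks always contain every singleton from their base, by property (2) of Definition \ref{blockdef} applied to a tail of the base starting at $k$). Thus
\[
\b^\B_{\la k \ra} = \height_\B(\la k \ra) = \height(\C) + \height_{\B_n}(\la k \ra) = \g + \b^{\B_n}_{\la k \ra}.
\]
Chaining this with the two equalities from the previous paragraph delivers $\b^\B_{\la \ra}[m] = \g + \b^{\B_n}_{\la \ra}[m]$, as required.

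There is no serious obstacle: the hypothesis $\g \ggeq \om^{\b_n}$ is not used in the computation itself (it only guarantees that $\g + \om^{\b_n}$ is in Cantor normal form, which justifies that we can speak of the given decomposition); the only checks are that the base coincidence $\base(\B) = \base(\B_n)$ makes the two minimizations pick the same $k$, and that $\la k \ra \in T(\B_n)$ so that Remark \ref{heightunionsum} applies to it.
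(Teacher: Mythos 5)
Your proof is correct and follows essentially the same route as the paper's: the paper re-derives the identity $\height_\B(t) = \g + \height_{\B_n}(t)$ for $t \in T(\B_n)$ by observing that each $s \in \B_n$ has $\height_\B(s) = \g$, whereas you invoke Remark \ref{heightunionsum} directly, but this is the same computation. Both reduce to the case $m \in \base(\B) = \base(\B_n)$ in the same way and then chain the additivity of heights through the definition of $\b_{\la\ra}[m]$.
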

		
\begin{proof}
Notice that we just need to prove the statement when $m \in \base (\B) = \base(\B_n)$. 	
For each $s \in \B_n$ we have $\height_\B(s) = \g$.
Therefore for each $t \in T(\B_n)$ it holds that $\height_\B (t) = \g + \height_{\B_n} (t)$.
It follows that $\b^\B_{\la \ra} [m] = \height_\B (\la m \ra) = \g + \height_{\B_n} (\la m \ra) = \g + \b^{\B_n}_{\la \ra} [m]$.
\end{proof}

\section{Ramsey Theorem for Barriers}\label{sec:ramseytheorem}

By a \textit{$k$-coloring} of a set $X$ we mean a function $c \colon X \rightarrow \{0, \ldots, k {-} 1\}$.
Typically $X$ consists of subsets of some finite set $s$.
A \textit{homogeneous set} for $c$ is a set $t \subseteq s$ for which there exists $i < k$ (the \textit{color of $t$}) such that $c$ colors all subsets of $t$ which belong to $X$ with color $i$.

The arrow notation was introduced by Erd{\"o}s and Rado in \cite{MR58687}.
For each $m,n,k,N \in \N$ with $m \ge n$ we write 
$$N \rightarrow (m)^n_k$$
to mean that for each $s \in [\N]^N$ and each $k$-coloring of $[s]^n$  there is a homogeneous subset of $s$ of cardinality $m$.
		
We adapt the arrow notation to our framework. 
If $\C$ is a front let us denote by $[s]^\C$ the collection of all $\C$-size subsets of $s$.
		
\begin{definition}
Let $k \in \N$ and let $\A$, $\B$ and $\C$ be fronts where $\base(\B) = \base(\A)$ is a final segment of $\base(\C)$.
We write
$$\B \rightarrow (\A)^\C_k$$
to mean that for each $(\one \oplus \B)$-size $s$ and each $k$-coloring of $ [s]^\C$, there exists a homogeneous $(\C \oplus \A)$-size subset of $s$.
\end{definition}

We remark that we ask that the homogeneous set is $(\C \oplus \A)$-size instead of just $\A$-size to be sure that there are actually $\C$-size sets to be colored.
We want to avoid the case in which the set is trivially homogeneous just because it does not have $\C$-size subsets.
Similarly the use of $(\one \oplus \B)$-size sets makes the statements and the arguments more natural.
		
As usual in Ramsey theory, the following asymmetric definition is useful.
		
\begin{definition}
Let $k \in \N$ and let $\A_0, \ldots, \A_{k-1}$, $\B$ and $\C$ be fronts where $\base(\B) = \bigcap_{i < k} \base(\A_i)$ and each $\base(\A_i)$ is a final segment of $\base(\C)$.
We write
$$\B \rightarrow (\A_0, \ldots, \A_{k-1})^\C_k$$
to mean that for each $(\one \oplus \B)$-size $s$ and each $k$-coloring of $ [s]^\C$, there exists a homogeneous $(\C \oplus \A_i)$-size subset of $s$ of color $i$ for some $i < k$.
\end{definition}

If $\A$ and $\C$ are fronts such that $\base(\A)$ is a final segment of $\base(\C)$, we say that a $k$-coloring $c \colon [s]^\C \rightarrow k$ is \textit{bad} (relative to $\C, \A$) if it does not contain a homogeneous $(\C \oplus \A)$-size set.
More generally, if $\A_0, \ldots, \A_{k-1}$ are fronts such that each of $\base(\A_i)$ is a final segment of $\base(\C)$, we say that a $k$-coloring $c \colon [s]^\C \rightarrow k$ is \textit{bad} (relative to $\C, \A_0, \ldots, \A_{k-1}$) if it does not contain a homogeneous $(\C \oplus \A_i)$-size set of color $i$ for any $i < k$.

\begin{lemma}\label{inframbarr}
Let $k \in \N$ and let $\A_0, \ldots, \A_{k-1}, \C$ be fronts such that each of $\base(\A_i)$ is a final segment of $\base(\C)$.
Then each infinite $Y \subseteq \bigcap_{i < k} \base(\A_i)$ has an initial segment $s_Y$ such that for each coloring $c \colon [s_Y]^\C \rightarrow k$ there exists a homogeneous $(\C \oplus \A_i)$-size subset of $s_Y$ of color $i$ for some $i < k$.
\end{lemma}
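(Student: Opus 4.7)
The plan is a compactness argument combining K\"onig's lemma with the infinite Ramsey theorem for fronts (Nash--Williams) to contradict the assumption that no suitable $s_Y$ exists.

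For each $n$, let $s_n$ denote the initial segment of $Y$ of length $n$, and call a $k$-coloring $c : [s_n]^\C \to k$ \emph{bad} if it admits no homogeneous $(\C \oplus \A_i)$-size subset of $s_n$ of color $i$ for any $i < k$. Suppose toward contradiction that every $s_n$ admits a bad coloring. A key observation is that the restriction of a bad coloring to $[s_m]^\C$ for $m \le n$ is again bad: any homogeneous $(\C \oplus \A_i)$-size subset of $s_m$ witnessing goodness of the restriction is also such a subset of $s_n$ for the original coloring. Hence the bad colorings of the sets $[s_n]^\C$, ordered by restriction, form a finitely-branching tree (each level has at most $k^{|[s_n]^\C|}$ nodes) with a node at every level, and K\"onig's lemma produces an infinite branch: a coloring $c : [Y]^\C \to k$ (using $[Y]^\C = \bigcup_n [s_n]^\C$) whose restriction to each $[s_n]^\C$ is bad.

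Next I invoke the Nash--Williams theorem for fronts to produce an infinite $Y' \subseteq Y$ and a color $i < k$ such that $c$ is constantly $i$ on $[Y']^\C$. Since $Y' \subseteq \bigcap_j \base(\A_j) \subseteq \base(\A_i) \subseteq \base(\C)$, the front axioms furnish an initial segment $s_0 \sqsubset Y'$ with $s_0 \in \A_i$, and then an initial segment $s_1 \sqsubset Y' \setminus s_0$ with $s_1 \in \C$ (both $Y'$ and $Y' \setminus s_0$ are infinite subsets of the relevant bases). The concatenation $s_0 \conc s_1$ belongs to $\C \oplus \A_i$, is contained in $Y'$, and therefore has all of its $\C$-size subsets colored $i$. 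Choosing $n$ large enough that $s_0 \conc s_1 \subseteq s_n$ exhibits a homogeneous $(\C \oplus \A_i)$-size subset of $s_n$ of color $i$, contradicting the badness of $c \restriction [s_n]^\C$.

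The main technical obstacle is justifying the Nash--Williams step when $\C$ is an arbitrary front rather than a (smooth) barrier. For barriers this is a classical theorem; the general case should reduce to it via Lemma~\ref{comb}, which supplies a smooth barrier $\C'$ with $\base(\C) = \base(\C')$ and $\C \subseteq T(\C')$. One can propagate the coloring of $[Y]^\C$ through $T(\C')$ to a coloring of $[Y]^{\C'}$, apply Nash--Williams there, and verify that the monochromatic $Y'$ returned still suffices for $\C$ since every element of $\C$ restricted to $Y'$ has some extension in $\C'$ carrying the same color. Apart from this point, the argument is routine bookkeeping with the $\oplus$ operation and the basic front axioms.
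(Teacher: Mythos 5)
Your argument (the compactness step via K\"onig's lemma followed by the Nash--Williams/clopen Ramsey theorem) is exactly the paper's proof, only spelled out in more detail. Your final worry about extending the Ramsey step from barriers to general fronts is unnecessary: Nash--Williams' theorem already applies to blocks, which are precisely the non-degenerate fronts, and the degenerate front case is trivial, so the detour through Lemma~\ref{comb} is superfluous (though harmless).
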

		
\begin{proof}
Nash-Williams proved in \cite{MR173640} that each $k$-coloring $c$ of $[Y]^\C$ has an infinite homogeneous set.
This is essentially the clopen Ramsey theorem \cite{MR337630}.
A standard compactness argument now completes the proof.
\end{proof}

Notice that in the previous lemma, if at least one between $\A_i$ and $\C$ is a block, then $s_Y$ cannot be $\la \ra$.

We can now obtain what we call the barrier Ramsey theorem\footnote{beware that the terminology barrier Ramsey theorem is used in \cite{CGLHP} for a different statement.}, which allows us to introduce the Ramsey ordinals in our framework.

\begin{theorem}[Barrier Ramsey Theorem]\label{transramth}
Let $k \in \N$ and let $\A$ and $\C$ be $\SD$-barriers such that $\base(\A)$ is a final segment of $\base(\C)$.
Then there exists a $\SD$-barrier $\B$ with $\base(\B) = \base (\A)$ such that $\B \rightarrow (\A)^\C_k$.
\end{theorem}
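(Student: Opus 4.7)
The approach is to build $\B$ by first isolating a front of ``Ramsey-sufficient'' finite sets and then refining it to a $\SD$-barrier. Call $s \in [\base(\A)]^{< \om}$ \emph{Ramsey} if every $k$-coloring of $[s]^\C$ admits a homogeneous $(\C \oplus \A)$-size subset of $s$, and let
\[
\B_0 = \{s : s \text{ is } \sqsubseteq\text{-minimal among Ramsey sets}\}.
\]
By Lemma \ref{inframbarr}, every infinite $Y \subseteq \base(\A)$ has a Ramsey initial segment; taking the shortest such prefix yields an element of $\B_0$, so condition (2) of Definition \ref{blockdef} holds. Minimality forces $\B_0$ to be a $\sqsubset$-antichain, and a direct check (handling separately the trivial case where $\A$ and $\C$ are both degenerate, in which $\la \ra$ is already Ramsey) shows $\base(\B_0) = \base(\A)$. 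Hence $\B_0$ is a front.

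Next I would apply Lemma \ref{comb} to the block $\B_0$ to obtain a smooth barrier $\B_1$ with the same base and height such that $\B_0 \subset T(\B_1)$, and then apply Corollary \ref{dec} to $\B_1$ to obtain a $\SD$-barrier $\B$ of the same base and height with $\B_1 \subset T(\B)$, and in particular $\B_0 \subset T(\B)$. The key inheritance property to verify is that every $s \in \B$ extends some element of $\B_0$. To see this, let $s$ be a leaf of $T(\B)$ and extend it to any infinite $Y \subseteq \base(\B) = \base(\A)$. Since $\B_0$ is a front there is $t \in \B_0$ with $t \sqsubset Y$; if $s \sqsubset t$, then $t$ would be a proper $\sqsubset$-descendant of the leaf $s$ inside $T(\B)$, which is impossible. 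Hence $t \sqsubseteq s$.

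With this in hand, $\B \rightarrow (\A)^\C_k$ follows easily. A $(\one \oplus \B)$-size set has the form $s = \la m \ra \conc s'$ with $s' \in \B$, and the previous paragraph gives $s_0 \in \B_0$ with $s_0 \sqsubseteq s' \subseteq s$. Any $k$-coloring $c \colon [s]^\C \to k$ restricts to $[s_0]^\C \subseteq [s]^\C$, and the Ramsey property of $s_0$ supplies a homogeneous $(\C \oplus \A)$-size subset $h \subseteq s_0 \subseteq s$, which is also homogeneous for $c$.

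\textbf{Main obstacle.} Everything hinges on the middle step, namely the passage from the front $\B_0$ to the $\SD$-barrier $\B$: one needs the two-step refinement through Lemma \ref{comb} and Corollary \ref{dec} to preserve the fact that every $\B$-leaf has a prefix in $\B_0$. The antichain/leaf argument above is what lets the Ramsey property of $\B_0$-elements be transferred to $(\one \oplus \B)$-size sets; the rest of the proof is direct bookkeeping with the definitions.
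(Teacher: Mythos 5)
Your proposal is correct and follows essentially the same route as the paper's proof: both use Lemma \ref{inframbarr} to extract a block $\B_0$ of ``Ramsey-sufficient'' sets, then refine it through Lemma \ref{comb} and Corollary \ref{dec} to a $\SD$-barrier $\B$, and exploit the fact that each element of $\B$ extends an element of $\B_0$ (the paper leaves this last inheritance step implicit; you spell it out). One small slip: by the definition of $\oplus$, a $(\one \oplus \B)$-size set has the form $s' \conc \la m \ra$ with $s' \in \B$ (the singleton is appended, not prepended, so the relevant $\B$-size part is $s^*$ rather than $s^-$), but since all you actually use is that $s$ has a $\B$-size subset, the argument goes through unchanged.
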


\begin{proof}
If $\A$ is the degenerate front then $\B = \C$ works.
If $\C$ is the degenerate front then $\B = \A$ works.
Assume that neither $\A$ nor $\C$ is the degenerate front.
In the notation of Lemma \ref{inframbarr}, let $\A_i = \A$ for each $i < k$ and consider 
$$\mathcal{Y} = \{s \in [\base(\A)]^{< \om} : s=s_Y \text{ for some infinite } Y \subseteq \base(\A) \}.$$
Then $\B' = \{s \in \mathcal{Y} : \forall t \sqsubset s (t \notin \mathcal{Y})\}$ is a block with the same base as $\A$ and satisfies $\B' \rightarrow (\A)^\C_k$.
By Lemma \ref{comb} and Corollary \ref{dec} starting from the block $\B'$ we can build a $\SD$-barrier $\B$ with $\base(\B) = \base(\B')$ and $\height(\B) = \height(\B')$ such that each element of $\B$ is $\B'$-large.
Therefore $\B$ is a $\SD$-barrier with the same base as $\A$ that satisfies $\B \rightarrow (\A)^\C_k$.
\end{proof}

We aim to relate $\height(\B)$ for $\B$ satisfying Theorem \ref{transramth} to $\height(\A)$ and $\height(\C)$.

\begin{definition}\label{ramord}
For $k \in \N$ and countable ordinals $\a$ and $\g$, we define $\Ram(\a)^\gamma_k$ as the least ordinal $\b$ such that for all $\SD$-barriers $\A$ and $\C$ with $\height(\A) = \a$, $\height(\C) = \g$ and $\base(\A)$ a final segment of $\base(\C)$, there exists a $\SD$-barrier $\B$ with $\height(\B) = \b$ and $\base(\B) = \base(\A)$ such that $\B \rightarrow (\A)^\C_k$.
Moreover let \[
\Ram(\a)^\gamma_{< \om} = \sup_{k \in \N} \Ram(\a)^\gamma_k.
\]
\end{definition}
		
Again, we extend our definition to the asymmetric case.
		
\begin{definition}\label{generalramord}
For $k \in \N$ and countable ordinals $\a_0, \ldots, \a_{k-1}, \g$, we define $\Ram(\a_0, \ldots,\a_{k-1} )^\gamma_k$ as the least ordinal $\b$ such that for all $\SD$-barriers $\A_0, \ldots, \A_{k-1}, \C$ with $\height(\A_i)=\a_i$, $\height(\C)= \g$ and each $\base(\A_i)$ a final segment of $\base(\C)$, there exists a $\SD$-barrier $\B$ with $\height(\B) = \b$ and $\base(\B)= \bigcap_{i < k} \base(\A_i)$ such that $\B \rightarrow (\A_0, \ldots, \A_{k-1})^\C_k$.
Moreover let 
$$\Ram(\a_0, \ldots, \a_{k-1})^\gamma_{< \om} = \sup_{k \in \N} \Ram(\a_0, \ldots, \a_{k-1})^\gamma_k.$$
\end{definition}
		
Before stating our main theorem, we need to introduce a logarithm function related to the Veblen hierarchy.

\begin{definition}\label{philog}
Let $\g = \om^{\d_0} + \ldots + \om^{\d_n}$ be an ordinal written in Cantor normal form.
We define the function $\varphi_{\log \g}$ as the composition $\varphi_{\d_0} \circ \ldots \circ \varphi_{\d_n}$.
Moreover let $\varphi_{\log 0}$ be the identity function.
\end{definition}

Fernández-Duque and Joosten introduced and studied the concept of hyperations of normal functions in \cite{MR3037552}.
Our logarithm operator is the particular case of the hyperation of ordinal exponentiation in base $\om$, as noticed in \cite[Corollary 4.10]{MR3037552}\footnote{we thank Fedor Pakhomov for pointing us to \cite{MR3037552}.}.

We are now ready to state the main result of the paper.

\begin{theorem}[Main Theorem]\label{maintheorem}
For any countable ordinals $\a$ and $\g$ such that $\g < \a < \G_\z$ and $\a \ge \om$ we have $\Ram(\a)^{1 + \g}_{< \om} = \varphi_{\log \g} (\a \cdot \om)$.
\end{theorem}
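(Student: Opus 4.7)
The plan is to split the Main Theorem into the two inequalities $\varphi_{\log \g}(\a \cdot \om) \leq \Ram(\a)^{1+\g}_{<\om}$ and $\Ram(\a)^{1+\g}_{<\om} \leq \varphi_{\log \g}(\a \cdot \om)$, handled by essentially independent arguments. The tools in hand suggest that the lower bound should be attacked via a carefully engineered bad coloring built from the Veblen-level system of fundamental sequences of Definition \ref{inducedsystem}, while the upper bound should be obtained by induction on (the Cantor normal form of) $\g$, grounded in the barrier pigeonhole principle of Section \ref{sec:pigeonhole}.

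For the lower bound, I would fix an infinite $M \subseteq \N$ with $\min M > 1$, choose $\zeta$ large enough that $\varphi_{\log \g}(\a \cdot \om) < \G_\zeta$, and work inside the nested regular system on $\G_\zeta$ produced by Definition \ref{inducedsystem} and Theorem \ref{nested->nested}. For any $\b < \varphi_{\log \g}(\a \cdot \om)$, the $\SD$-barrier $\B = [M]^\b$ provided by Theorems \ref{alphasizebarrier} and \ref{regularbarrier} has height $\b$ by Corollary \ref{alphaheight}. The goal is then to exhibit $\SD$-barriers $\A, \C$ of heights $\a$ and $1+\g$ over a suitable final segment of $M$ and a $k$-coloring of $[s]^\C$ on an $(\one \oplus \B)$-size $s$ that admits no $(\C \oplus \A)$-size homogeneous set. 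The natural coloring is defined by descending from $\varphi_{\log \g}(\a \cdot \om)$ along the fundamental sequences associated to the $\C$-size prefix of each tuple, extracting at each level an ordinal bounded by the ambient norm of Definition \ref{normdefinition}; badness will be certified by the Estimation Lemma \ref{lem: estimation lemma} applied to the strictly decreasing sequence of ordinals that a hypothetical large homogeneous set would produce. The hypothesis $\g < \a$ and $\a \ge \om$ enters to ensure that $\a \cdot \om$ genuinely dominates the Veblen descent used by the coloring.

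For the upper bound, which is expected to hold for arbitrary countable $\a$ and $\g$, I would proceed by transfinite induction on $\g$, driven by the Cantor normal form. When $\g = 0$, the inequality $\Ram(\a)^1_k \le \varphi_{\log 0}(\a \cdot \om) = \a \cdot \om$ is exactly the content of Section \ref{sec:pigeonhole}. For the inductive step, write $\g = \om^{\d_0} + \g'$ with $\om^{\d_0}$ the leading term and use Definition \ref{philog} to get $\varphi_{\log \g}(\a \cdot \om) = \varphi_{\d_0}\!\bigl(\varphi_{\log \g'}(\a \cdot \om)\bigr)$. Given $\SD$-barriers $\A$ of height $\a$ and $\C$ of height $1+\g$, Lemma \ref{lowerpart} and Corollary \ref{dec} let us split $\C = \C_0 \oplus \C'$ with $\height(\C_0) = \om^{\d_0}$ and $\height(\C') = 1 + \g'$; one then colors $\C$-tuples by thinking of them as pairs $(c_0, c')$ with $c' \in \C'$ and reduces to the inductive hypothesis for $\g'$, the output of which is fed into an outer application of $\varphi_{\d_0}$ governed by Definition \ref{inducedsystem}(5) and the fixed-point properties of Veblen functions. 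A book-keeping lemma of Ramsey product-type, using $\oplus$ in the target barrier and the pseudoregularity of Lemma \ref{pseudoregularity}, bridges the two scales.

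The main obstacle is the lower bound. The bad coloring must encode, at each layer of the composition $\varphi_{\d_0} \circ \cdots \circ \varphi_{\d_n}$, enough ordinal information that any homogeneous $(\C \oplus \A)$-size set would violate the Estimation Lemma at some level, and the argument leans decisively on nestedness of the system of Definition \ref{inducedsystem}, whose verification is the technical Theorem \ref{nested->nested} deferred to Section \ref{sec:nestedness}. A secondary difficulty on the upper-bound side is matching the iteration count $n+1$ inside the fundamental sequences $\varphi_\d(\a)[n] = \varphi_{\d[n]}^{n+1}(\varphi_\d(\a[n])+1)$ with the number of times the inductive hypothesis is applied; choosing the colors $k$ and the base of each auxiliary $\SD$-barrier carefully at each induction step is what keeps the final height within $\varphi_{\log \g}(\a \cdot \om)$.
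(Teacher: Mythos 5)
Your architectural split is the one the paper actually uses: a lower bound witnessed by an engineered bad coloring inside the nested system of Definition \ref{inducedsystem}, and an upper bound by induction on the Cantor normal form of $\g$ grounded in the barrier pigeonhole of Section \ref{sec:pigeonhole}. But both halves, as sketched, have genuine gaps that go beyond omitted detail.

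On the lower bound, you write that the coloring "extracts at each level an ordinal bounded by the ambient norm" and that badness "will be certified by the Estimation Lemma applied to the strictly decreasing sequence of ordinals that a hypothetical large homogeneous set would produce." This conflates homogeneity for a coloring with monotonicity of an assigned ordinal: homogeneity only says the color is \emph{constant} on the set, and constancy of a color gives you a strictly decreasing ordinal sequence only if the color itself encodes which kind of descent is happening. The paper's coloring (Definition \ref{def: coloring on ord}) therefore uses $k{+}3$ colors, not $k$: $k$ colors for the case when the peeled ordinal $p_\g$ genuinely decreases (Case 1, Estimation Lemma into $\a$), one color for the case when $\z_A$ strictly decreases (Case 2, Estimation Lemma into $\g{+}1$), and two more for the cases when $\z_A$ is constant or increasing, which are disposed of not by the Estimation Lemma at all but by a multiset-of-subterms counting argument (Cases 3 and 4). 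None of this machinery — the peeling functions $\pbar_\d$, the ordinal $\z_A$, the multiset $\Sub(\cdot)$, the auxiliary norm $\|\cdot\|$, the convergence Lemma \ref{convergence lemma}, or the prefix-invariance Lemma \ref{lem: prefix coloring} that makes the coloring well-defined on $\C$-size sets — appears in your outline, and without the extra colors the single-application-of-Estimation-Lemma plan does not close. Note also that $\mu < \varphi_{\log\g}(\a \ctimes k)$ with $k{+}3$ colors is still enough for the $\sup_k$ statement, but you have to realize you are allowed (and forced) to inflate the color count.

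On the upper bound, your "pairs $(c_0, c')$" decomposition with a "book-keeping lemma of Ramsey product-type" misses the essential intermediate concept. Directly applying the inductive hypothesis to the tail $\C'$ does not control the ordinal: the paper first proves a \emph{prehomogeneity} theorem (Theorem \ref{keyth}), asserting that one can find a set on which the coloring of a $(\one\oplus\C)$-size tuple depends only on its first element, with height $\varphi_{\log\g}(\a)$, and only afterwards layers the pigeonhole (Theorem \ref{generalpigeon}) on top to convert prehomogeneity into homogeneity. The engine of the prehomogeneity step is the Mathias-forcing-style construction in Lemma \ref{casenu2}, which iteratively stabilizes the coloring relative to the already-chosen initial segment; this is where the exponential color blow-up $k^{(2^{|s|})}$ and the absorption of $\varphi_{\log\height(\C_n)}$ into $\varphi_\d$ actually happen. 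Your appeal to Lemma \ref{pseudoregularity} and Definition \ref{inducedsystem}(5) is a misattribution — neither plays any role in the upper bound. Without the prehomogeneity abstraction, the induction as you set it up would not match the iteration count in the fundamental sequence $\varphi_\d(\a)[n] = \varphi_{\d[n]}^{n+1}(\varphi_\d(\a[n])+1)$, a concern you yourself flag but do not resolve.
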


The appearance of $1 + \g$ in the above statement allows us to avoid using different formulas in the finite and infinite case.
The hypothesis that $\a$ is infinite is necessary;
indeed, when $\a$ (and hence $\g$) is finite, the finite Ramsey theorem always yields $\Ram(\a)^{1 + \g}_{< \om} = \om$.
Finally, notice that we do not get sharp Ramsey ordinals for a fixed number $k$ of colors; 
this is not surprising, as sharp Ramsey ordinals are known only in very few finite cases.

\section{The Barrier Pigeonhole Principle}\label{sec:pigeonhole}

We start by studying the Barrier Pigeonhole Principle and proving the case $\g = 0$ of Theorem \ref{maintheorem}.
We use $\cplus$ and $\ctimes$ to indicate respectively the natural (or Hessenberg) ordinal sum and product.
The following observation about $\cplus$ is useful.

\begin{proposition}\label{hessenbergprop}
Let $\a$ and $\b$ be ordinals such that $\a = \sup_{n \in \N} \a_n$ and $\b = \sup_{n \in \N} \b_n$. 
Then $\a \cplus \b = \max \big(\sup_{n \in \N} (\a_n \cplus \b), \sup_{n \in \N} (\a \cplus \b_n)\big)$.
\end{proposition}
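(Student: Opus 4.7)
The plan is to split into the two standard inequalities. The direction $\max(\sup_n (\a_n \cplus \b), \sup_n (\a \cplus \b_n)) \leq \a \cplus \b$ is immediate from the strict monotonicity of $\cplus$ in each argument: $\a_n \leq \a$ gives $\a_n \cplus \b \leq \a \cplus \b$, and similarly for $\b_n \leq \b$.

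For the reverse direction, I would first dispose of the cases in which one of $\a, \b$ is a successor or $0$. If $\a$ is such an ordinal, then $\sup_n \a_n = \a$ forces $\a_n = \a$ for some $n$, so $\sup_n (\a_n \cplus \b) \geq \a \cplus \b$; the case where $\b$ is a successor or $0$ is symmetric. The substantive case is when both $\a$ and $\b$ are limit ordinals with $\a_n < \a$ and $\b_n < \b$ for all $n$. Writing Cantor normal forms $\a = \om^{\g_1} + \cdots + \om^{\g_p}$ and $\b = \om^{\d_1} + \cdots + \om^{\d_q}$ with non-increasing (strictly positive) exponents, $\a \cplus \b$ is the decreasing rearrangement of the merged list of $p + q$ powers. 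Assume WLOG that $\g_p \leq \d_q$, so the trailing block of $\a \cplus \b$ contains the exponent $\g_p$ contributed by $\a$.

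The key step is to show that for every $\xi < \a \cplus \b$ there exists $\a' < \a$ with $\a' \cplus \b > \xi$. Decomposing $\xi = Y + \eta$, where $Y$ consists of the terms of $\a \cplus \b$ strictly above the slot to be shrunk, I would replace the last term $\om^{\g_p}$ of $\a$ by either $\om^{\g_p'}$ with $\g_p' < \g_p$ large enough (when $\g_p$ is a limit) or by $\om^{\g_p''} \cdot r$ with $r < \om$ large enough (when $\g_p = \g_p'' + 1$ is a successor). In both cases $\a' < \a$ and the new exponent is strictly below $\d_q$, so in the merged list of $\a' \cplus \b$ the structure above is unchanged and the new tail exceeds $\eta$, giving $\a' \cplus \b > \xi$. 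Cofinality of $(\a_n)$ in $\a$ then yields some $\a_n \geq \a'$, and hence $\xi < \a_n \cplus \b \leq \sup_n (\a_n \cplus \b)$. The sub-case $\g_p > \d_q$ is symmetric, producing $\b' < \b$ instead and contributing to $\sup_n (\a \cplus \b_n)$. The main obstacle is precisely this sub-case analysis distinguishing limit versus successor for the smallest exponent $\min(\g_p, \d_q)$; otherwise the argument is a routine computation with Cantor normal forms.
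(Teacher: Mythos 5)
Your proof is correct. The paper states Proposition~5.1 without proof, treating it as a known observation, so there is no paper argument to compare against; your Cantor-normal-form argument is sound and fills that gap. The easy inequality uses only (weak) monotonicity of $\cplus$ in each argument, and the reduction to the case of limit $\a,\b$ with the sequences strictly below is exactly the right move. The core step -- given $\xi < \a \cplus \b$, shrink the smallest exponent of the merged CNF (WLOG coming from $\a$) to produce $\a' < \a$ with $\a' \cplus \b > \xi$ -- works as described in both the limit and successor sub-cases, and cofinality of $(\a_n)$ then does the rest.

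Two small points of tightening. First, your decomposition $\xi = Y + \eta$ (with $Y$ the head of the CNF of $\a\cplus\b$, i.e.\ everything above the last slot) is only available when $\xi \ge Y$; when $\xi < Y$ there is no such $\eta$, but the conclusion is immediate since any positive tail gives $\a' \cplus \b \ge Y > \xi$ -- worth a sentence so the step is uniform. Second, an alternative route avoids Cantor normal forms entirely: one can use the recursive characterization that $\a \cplus \b$ is the least ordinal strictly exceeding every $\a' \cplus \b$ ($\a' < \a$) and every $\a \cplus \b'$ ($\b' < \b$), which immediately gives $\a\cplus\b = \max\bigl(\sup_{\a'<\a}(\a'\cplus\b),\,\sup_{\b'<\b}(\a\cplus\b')\bigr)$ when $\a,\b$ are limits, from which the statement follows by cofinality. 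Your CNF argument is more elementary and self-contained; the recursive characterization is shorter if you already have it to hand.
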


\begin{definition}\label{pigeonfront}
Let $\A_0, \ldots, \A_{k-1}$ be fronts and $Y$ a set such that each $\base(\A_i)$ is a final segment of $Y$ and let $X = \bigcap_{i < k} \base (\A_i)$.
Then we define
$$T(\B) = \{s \in [X]^{< \om} : \text{$s$ has a bad $k$-coloring relative to $\one, \A_0, \ldots, \A_{k-1}$}\}.$$
Let $\B$ be the set of leaves of $T(\B)$ (justifying the name of the initial set).

We sometimes write $T(\B(\A_0, \ldots \A_{k-1}))$ and $\B(\A_0, \ldots \A_{k-1})$ to highlight the dependence on the fronts $\A_0, \ldots, \A_{k-1}$.
\end{definition}

\begin{lemma}\label{pigeonbarrier}
Let $\A_0, \ldots, \A_{k-1}$ be smooth barriers and $Y$ a set such that each $\base(\A_i)$ is a final segment of $Y$.
Then $\B = \B(\A_0, \ldots \A_{k-1})$ of Definition \ref{pigeonfront} is a smooth barrier.
\end{lemma}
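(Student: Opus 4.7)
My plan is to show $\B$ is a smooth block, which yields a smooth barrier. The argument splits into well-foundedness of $T(\B)$, the block property, and smoothness, with the last being the crux.

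Observe first that $T(\B)$ is closed under prefixes, since the restriction of a bad coloring of $s$ to any prefix $t$ remains bad. Applying Lemma \ref{inframbarr} with $\C = \one$ shows that every infinite $Y \subseteq X := \bigcap_i \base(\A_i)$ has a prefix $s_Y$ admitting no bad $k$-coloring, so $s_Y \notin T(\B)$; hence $T(\B)$ has no infinite branch and is well-founded. Since each $\A_j$ is non-degenerate, any coloring of the singleton $\la n \ra$ with $n \in X$ is bad (both the color class and its $*$ are $\A_j$-small for every $j$), placing $\la n \ra \in T(\B)$; well-foundedness of the subtree $T(\B)_{\la n \ra}$ then produces a leaf containing $n$, so $n \in \base(\B)$. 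Thus $\base(\B) = X$ is infinite and $\B$ is a block.

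For smoothness I use Lemma \ref{alternativesmooth} by contrapositive, proving the following \emph{strong monotonicity} for $\B$: if $t \in T(\B)$ and $s \in [X]^{<\om}$ satisfy $|s|=|t|$ and $t(i) \le s(i)$ for all $i < |t|$, then $s \in T(\B)$ and $\height_\B(s) \ge \height_\B(t)$. As a preliminary I would establish the analogue for a single smooth barrier $\A$: if $v \in T(\A)$ and $u \in [\base(\A)]^{<\om}$ with $|u|=|v|$ is pointwise $\ge v$, then $u \in T(\A)$ and $\height_\A(u) \ge \height_\A(v)$. The proof is by induction on $\height_\A(v)$; weak smoothness (Lemma \ref{alternativesmooth}) forbids any proper prefix of $u$ from lying in $\A$, since otherwise the corresponding prefix of $v$ would also be forced into $\A$ (contradicting $v \in T(\A)$ or prefix-freeness of $\A$), and any $v \cup \{m\}$ witnessing the sup definition of $\height_\A(v)$ lifts to some $u \cup \{n\}$ with $n \in \base(\A)$, $n \ge m$, $n > \max u$, to which the inductive hypothesis applies.

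Given the preliminary lemma, strong monotonicity for $\B$ follows by copying: assigning $s(i)$ the color of $t(i)$ in some bad coloring of $t$ produces color classes $s_j \ge t_j$ pointwise, hence $s_j^* \ge t_j^*$ pointwise of equal length. Since each $t_j^* \in T(\A_j) \setminus \A_j$, the preliminary lemma applied to $\A_j$ gives $s_j^* \in T(\A_j)$ with $\height_{\A_j}(s_j^*) \ge \height_{\A_j}(t_j^*) > 0$, hence still $\A_j$-small, so the copied coloring is bad and $s \in T(\B)$. The height inequality $\height_\B(s) \ge \height_\B(t)$ is then obtained by induction on $\height_\B(t)$: for each $\gamma < \height_\B(t)$ pick $m$ with $t \cup \{m\} \in T(\B)$ and $\height_\B(t \cup \{m\}) \ge \gamma$, then $n \in X$ with $n \ge m$ and $n > \max s$, and invoke the inductive hypothesis on $(t \cup \{m\}, s \cup \{n\})$. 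Smoothness of $\B$ follows by contrapositive of Lemma \ref{alternativesmooth}. The main obstacle is the strong (rather than weak) formulation of monotonicity: Lemma \ref{alternativesmooth} compares heights only within $T(\A_j)$, but the copying argument forces us to conclude membership in $T(\A_j)$ for sets not a priori known to lie there; bundling membership and height into one inductive statement resolves this circularity.
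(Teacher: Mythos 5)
Your proof is correct and takes a genuinely different route from the paper's. The paper verifies smoothness directly from Definition \ref{blockdef}: assuming $s, t \in \B$ with $|s| < |t|$ and $t$ pointwise $\le s$, it adds one fresh element $m$, copies a bad coloring of $t$ onto $s \conc \la m \ra$, uses that $s$ is a leaf of $T(\B)$ to extract a homogeneous $(\one \oplus \A_{i_0})$-size set $v_s$, and transfers $v_s$ back into $t \conc \la m \ra$ to contradict the smoothness of $\one \oplus \A_{i_0}$ --- a single, self-contained contradiction argument. You instead go through the characterization of smoothness in Lemma \ref{alternativesmooth}, establishing a stronger \emph{monotonicity} property (membership in $T(\B)$ plus height comparison under pointwise domination), which in turn rests on a preliminary analogue of the same statement for each $\A_j$. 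Both proofs hinge on copying bad colorings and on the smoothness of the $\A_j$'s; yours is longer but isolates a reusable structural fact about heights, and your closing remark that membership must be bundled with the height inequality inside the induction is exactly the right diagnosis of why the bare contrapositive of Lemma \ref{alternativesmooth} would not close the argument. One small gap: you assume each $\A_j$ is non-degenerate when placing singletons into $T(\B)$, but the lemma permits degenerate $\A_j$'s; this case needs a short separate remark (a bad coloring simply avoids color $j$ when $\A_j$ is degenerate, and if every $\A_j$ is degenerate then $\B$ is the degenerate front). The paper's argument, operating only on leaves $s,t \in \B$, avoids that case automatically.
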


\begin{proof}
By Lemma \ref{inframbarr} $T(\B)$ is a well founded tree and hence $\B$ is a front with base $X = \bigcap_{i < k} \base(\A_i)$.
We only need to prove that $\B$ is smooth.
Towards a contradiction, let $s,t \in \B$ with $|s| < |t|$ be such that $t(i) \le s(i)$ for all $i < |s|$ and let $m > \max (s \cup t)$.
Each $k$-coloring of either $s \conc \la m \ra$ or $t \conc \la m \ra$ has a homogeneous $(\one \oplus \A_i)$-size subset of color $i$ for some $i<k$.
Let $c \colon t \rightarrow k$ be a bad $k$-coloring and let $\overline{c} \colon s \conc \la m \ra \rightarrow k$ be defined as $\overline{c}(s \conc \la m \ra (j)) = c(t(j))$ for all $j < |s| +1 \le |t|$.
There exists a homogeneous $(\one \oplus \A_{i_0})$-size set $v_s$ for $\overline{c}$ of color $i_0 < k$.
Extend $c$ to $t \conc \la m \ra$ by setting $c(m) = i_0$.
Then the set $v_t = \{n \in t \conc \la m \ra: c(n) = i_0\}$ is $(\one \oplus \A_{i_0})$-size and $|v_t| \ge |v_s|+1$.
For each $\ell < |v_s|$, either $v_s(\ell) = s(j)$ for some $j < |s|$ and $v_t(\ell) = t(j)$ or $v_s(\ell) = m$ and $v_t(\ell) = t(|s|)$.
Therefore for each $\ell < |v_s|$, $v_t(\ell) \le v_s(\ell)$, contradicting the smoothness of the barrier $\one \oplus \A_{i_0}$.
\end{proof}

\begin{lemma}\label{pigeonheight}
Let $\A_0, \ldots, \A_{k-1}$ be smooth barriers and $Y$ a set such that each $\base(\A_i)$ is a final segment of $Y$. 
Then
\[
\height (\B(\A_0, \ldots, \A_{k-1})) = \height (\A_0) \cplus \ldots \cplus \height (\A_{k-1}).
\]
\end{lemma}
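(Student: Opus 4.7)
The plan is to prove both inequalities by bookkeeping, for each $s$ admitting a bad $k$-coloring $c$, the tuple of heights of its color classes $s^c_i := c^{-1}(i)$ in the respective trees $T(\A_i)$. Write $\alpha_i = \height(\A_i)$. Being bad means that no subset of $s^c_i$ belongs to $\one \oplus \A_i$; since $\one \oplus \A_i$ is a smooth barrier, Corollary \ref{intuitive bar} shows this is equivalent to $s^c_i \in T(\A_i)$. Define
\[
\psi(s, c) := \height_{\A_0}(s^c_0) \cplus \dots \cplus \height_{\A_{k-1}}(s^c_{k-1}), \qquad \psi(s) := \max_c \psi(s, c),
\]
the latter maximum being over the finitely many bad colorings of $s \in T(\B)$.

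For the upper bound I would prove $\height_{T(\B)}(s) \leq \psi(s)$ by induction on the rank of $s$ in the well-founded tree $T(\B)$. Any child $s' := s \conc \la n \ra \in T(\B)$ carries some bad coloring $c'$; its restriction $c := c' \restriction s$ is again bad, and if $c'(n) = i$ then $s^{c'}_j = s^c_j$ for $j \neq i$, while $s^{c'}_i = s^c_i \conc \la n \ra$ is a proper descendant of $s^c_i$ in $T(\A_i)$, so $\height_{\A_i}(s^{c'}_i) < \height_{\A_i}(s^c_i)$. Strict monotonicity of $\cplus$ in each coordinate yields $\psi(s', c') + 1 \leq \psi(s, c) \leq \psi(s)$; maximising over $c'$ gives $\psi(s') + 1 \leq \psi(s)$, which combined with the inductive hypothesis $\height_{T(\B)}(s') \leq \psi(s')$ shows $\height_{T(\B)}(s') + 1 \leq \psi(s)$. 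Taking the supremum over children establishes the claim at $s$; at $s = \la \ra$ the empty coloring is bad with $\psi(\la \ra) = \alpha_0 \cplus \dots \cplus \alpha_{k-1}$, yielding $\height(\B) \leq \alpha_0 \cplus \dots \cplus \alpha_{k-1}$.

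For the lower bound I would prove by transfinite induction on $\sigma$ that whenever $c$ is a bad coloring of $s \in T(\B)$ with $\psi(s, c) = \sigma$, one has $\height_{T(\B)}(s) \geq \sigma$. The case $\sigma = 0$ is immediate. If $\sigma > 0$ choose $i$ with $\beta_i := \height_{\A_i}(s^c_i) > 0$ and let $\tau$ denote the Hessenberg sum of the remaining heights $\height_{\A_j}(s^c_j)$ for $j \neq i$, so $\sigma = \tau \cplus \beta_i$. By the definition of tree height applied to $(\A_i)_{s^c_i}$, for each $\gamma < \beta_i$ there are cofinally many $n \in \bigcap_j \base(\A_j)$ with $n > \max s$ and $\height_{\A_i}(s^c_i \conc \la n \ra) \geq \gamma$; extending $c$ by setting $c'(n) = i$ gives a bad coloring $c'$ of $s \conc \la n \ra$ satisfying $\psi(s \conc \la n \ra, c') \geq \tau \cplus \gamma$. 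The inductive hypothesis gives $\height_{T(\B)}(s \conc \la n \ra) \geq \tau \cplus \gamma$, and an iterated application of Proposition \ref{hessenbergprop} (with a separate argument for the successor case of $\sigma$, where one chooses $i$ with $\beta_i$ itself a successor so that $\gamma = \beta_i - 1$ is reached) combines these lower bounds into $\height_{T(\B)}(s) \geq \sigma$. Specialising to $s = \la \ra$ yields $\height(\B) \geq \alpha_0 \cplus \dots \cplus \alpha_{k-1}$.

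The principal obstacle is the upper bound observation that restriction of a bad coloring stays bad and strictly shrinks exactly one coordinate of the height tuple; once this monotonicity is in place, strict monotonicity of $\cplus$ and well-founded induction handle both directions symmetrically, the only delicate point being the mild case-split invocation of Proposition \ref{hessenbergprop} between limit and successor $\sigma$.
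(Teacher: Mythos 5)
Your proof is correct but takes a genuinely different route from the paper's. The paper does a single induction on the tuple of heights: it establishes the structural identity $\B_n = \bigsqcup_{j<k} \B(\A_0, \ldots, (\A_j)_n, \ldots, \A_{k-1})$ at each root element $n$, and then both inequalities fall out together from Remark~\ref{heightunionsum} and Proposition~\ref{hessenbergprop}. You instead track the potential $\psi(s,c)$, the Hessenberg sum of the color-class heights of a bad coloring, and prove the two bounds separately: the upper by rank induction in $T(\B)$ (restriction of a bad coloring is bad and strictly shrinks exactly one summand), the lower by transfinite induction on $\psi(s,c)$ with an explicit extension of the coloring along a chosen coordinate. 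The way $\psi$ decreases after coloring the root element is exactly the observation underlying the paper's identity for $\B_n$, so the two proofs trade on the same combinatorial fact, differently packaged. Your version is more elementary and makes the Hessenberg exchange explicit, but at the cost you flag yourself: the lower bound cannot just fix one index $i$ — it needs to maximize over all $i$ with $\beta_i > 0$ and then iterate Proposition~\ref{hessenbergprop} across the $k$ coordinates, with the successor/limit case split handled separately — whereas the paper's structural identity absorbs all of this bookkeeping into a single supremum per tree level.
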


\begin{proof}
Denote $\height (\A_i)$ by $\a_i$, $\B(\A_0, \ldots, \A_{k-1})$ by $\B$ and $\base(\B)$ by $X$. 
We proceed by induction on the finite tuples of ordinals $(\a_0 , \ldots , \a_{k-1})$ using the well founded partial order defined by 
$$(\g_0 , \ldots , \g_{\ell-1}) \le (\d_0 , \ldots , \d_{\ell'-1}) \text{ iff } \ell < \ell' \lor (\ell = \ell' \land \forall i < \ell\, \g_i \le \d_i).$$ 

If $(\a_0, \ldots, \a_{k-1}) = (0, \ldots, 0)$ then every $\A_i$ is the degenerate front and so $\B(\A_0, \ldots, \A_{k-1})$ is the degenerate front.
Hence $\height(\B(\A_0, \ldots, \A_{k-1})) = 0$ as required.

If for some $i<k$ we have $\a_i = 0$ (that is, $\A_i$ is the degenerate front) then each singleton is $(\one \oplus \A_i)$-size and this means that a bad $k$-coloring does not use color $i$.
Therefore $\B(\A_0, \ldots, \A_{k-1}) = \B(\A_0, \ldots, \A_{i-1}, \A_{i+1}, \ldots, \A_{k-1})$ and we can use the induction hypothesis.

If for every $i<k$ we have $\a_i > 0$ then we first show that for each $n \in X$ 
$$T(\B_n) = \bigcup_{j<k} T(\B(\A_0, \ldots, (\A_j)_n, \ldots, \A_{k-1})).$$
Let $s \in T(\B_n)$ namely $\la n \ra \conc s \in T(\B(\A_0, \ldots, \A_{k-1}))$.
There exists a bad $k$-coloring $c$ of $\la n \ra \conc s$.
If $c(n) = j$ then $\la n \ra \conc \{m \in s : c(m) = j\}$ belongs to $T(\A_j)$ and so we get that $s \in T(\B(\A_0, \ldots, (\A_j)_n, \ldots, \A_{k-1}))$.
Conversely, let $s \in T(\B(\A_0, \ldots, (\A_j)_n, \ldots, \A_{k-1}))$ for some $j < k$.
There exists a bad $k$-coloring $c$ of $s$ and in particular the set $\la n \ra \conc \{m \in s : c(m) = j\}$ belongs to $T(\A_j)$.
We know that $\base(\B(\A_0, \ldots, (\A_j)_n, \ldots, \A_{k-1})) = X \setminus \{0, \ldots, n\}$.
Therefore $\la n \ra \conc s \in [X]^{< \om}$ and $\la n \ra \conc s \in T(\B)$ which means that $s \in T(\B_n)$.

It follows that for each $n \in X$
$$\B_n = \bigsqcup_{j<k} \B(\A_0, \ldots, (\A_j)_n, \ldots, \A_{k-1}).$$

We are now ready to compute $\height(\B)$.
For each $n \in X$
\begin{align*}
    \height_{\B} (\la n \ra) & = \height \left(\bigsqcup_{j < k} \B(\A_0, \ldots, (\A_j)_n, \ldots, \A_{k-1})\right) \\
	& = \max_{j < k} (\height (\B(\A_0, \ldots, (\A_j)_n, \ldots, \A_{k-1}))
\end{align*}
where the last equality follows by Remark \ref{heightunionsum}.
By inductive hypothesis $\height_{\B} (\la n \ra) = \max_{j < k} (\a_0 \cplus \ldots \cplus \height((\A_j)_n) \cplus \ldots \cplus \a_{k-1})$.
Hence, using commutativity of the natural sum and Proposition \ref{hessenbergprop}, we obtain
\begin{align*}
	\height(\B) & = \sup_{n \in X} (\max_{j < k} (\a_0 \cplus \ldots \cplus \height((\A_j)_n) \cplus \ldots \cplus \a_{k-1}) +1) \\
	& = \a_0 \cplus \ldots \cplus \a_{k-1}. \qedhere
\end{align*}
\end{proof}

\begin{theorem}\label{generalpigeon}
For all countable ordinals $\a_0, \ldots, \a_{k-1}$
$$\Ram(\a_0, \ldots, \a_{k-1})^1_k = \a_0 \cplus \ldots \cplus \a_{k-1}.$$  
\end{theorem}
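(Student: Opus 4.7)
The plan is to prove the two inequalities separately, using $\B_{pig} := \B(\A_0, \ldots, \A_{k-1})$ from Definition~\ref{pigeonfront} as the central object in both directions. By Lemma~\ref{pigeonbarrier} and Lemma~\ref{pigeonheight}, $\B_{pig}$ is a smooth barrier of height $\a_0 \cplus \ldots \cplus \a_{k-1}$ with base $\bigcap_{i < k} \base(\A_i)$.

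For the upper bound $\Ram(\a_0, \ldots, \a_{k-1})^1_k \le \a_0 \cplus \ldots \cplus \a_{k-1}$, I would apply Corollary~\ref{dec} to $\B_{pig}$ to obtain a $\SD$-barrier $\B$ of the same height and same base with $\B_{pig} \subset T(\B)$. To verify $\B \rightarrow (\A_0, \ldots, \A_{k-1})^\one_k$, take any $(\one \oplus \B)$-size set $t = s \conc \la n \ra$. The crucial observation is that some prefix $s^* \sqsubseteq s$ lies in $\B_{pig}$: extending $s$ to an infinite $X \subseteq \base(\B)$, the front $\B_{pig}$ contains some prefix $s^*$ of $X$, and $|s^*| > |s|$ would force $s \sqsubset s^* \in T(\B)$, contradicting that $s$ is a leaf of $T(\B)$. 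Set $m = s(|s^*|)$ if $s^* \sqsubset s$ and $m = n$ otherwise; then $s^* \conc \la m \ra \subseteq t$ properly extends the leaf $s^*$ of $T(\B_{pig})$, so by the very definition of $T(\B_{pig})$ it admits no bad coloring. Restricting any coloring of $t$ to $s^* \conc \la m \ra$ therefore produces a homogeneous $(\one \oplus \A_i)$-size subset of some color $i$, which also witnesses the arrow relation for the original coloring of $t$.

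For the lower bound $\Ram(\a_0, \ldots, \a_{k-1})^1_k \ge \a_0 \cplus \ldots \cplus \a_{k-1}$, I argue by contradiction: suppose some $\SD$-barrier $\B$ with $\base(\B) = \bigcap_{i<k} \base(\A_i)$ and $\height(\B) < \a_0 \cplus \ldots \cplus \a_{k-1}$ satisfies $\B \rightarrow (\A_0, \ldots, \A_{k-1})^\one_k$. Since $\B$ and $\B_{pig}$ are smooth barriers on the same base with $\height(\B) < \height(\B_{pig})$, Lemma~\ref{smallerht} furnishes $s \in \B$ and $t \in \B_{pig}$ with $s \sqsubset t$. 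Writing $t = s \conc \la r_0 \ra \conc r'$ with $r_0 > \max s$, the set $s \conc \la r_0 \ra \subseteq t$ is $(\one \oplus \B)$-size, so by hypothesis every coloring of $s \conc \la r_0 \ra$ admits a good homogeneous subset. But the bad coloring of $t$ witnessing $t \in \B_{pig}$ restricts to a coloring of $s \conc \la r_0 \ra$ which is still bad, since any good homogeneous subset of $s \conc \la r_0 \ra$ would also be a good homogeneous subset of $t$ under the full coloring; this contradiction forces $\height(\B) \ge \a_0 \cplus \ldots \cplus \a_{k-1}$.

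I expect the upper bound to be the delicate step: Corollary~\ref{dec} produces a $\SD$-barrier whose leaves generally differ from elements of $\B_{pig}$, so the non-bad-coloring property of $(\one \oplus \B)$-size sets cannot be read off directly from membership in $\B_{pig}$. The argument bridges this gap by combining $\B_{pig} \subset T(\B)$ with the leaf property of $s \in \B$ to locate a suitable prefix $s^* \in \B_{pig}$, reducing the analysis on $t$ to the one-step extension $s^* \conc \la m \ra$ where Definition~\ref{pigeonfront} applies directly.
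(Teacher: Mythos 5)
Your proof is correct and follows essentially the same strategy as the paper: both directions use $\B(\A_0,\ldots,\A_{k-1})$ from Definition~\ref{pigeonfront} together with Lemmas~\ref{pigeonbarrier}, \ref{pigeonheight}, Corollary~\ref{dec} for the upper bound, and Lemma~\ref{smallerht} for the lower bound. The only difference is that your upper-bound argument spells out explicitly (via the prefix $s^* \in \B_{pig}$ inside the leaf $s$ of the $\SD$-barrier) why the decomposable barrier produced by Corollary~\ref{dec} still arrows, a step the paper leaves terse.
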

		
\begin{proof}
The unique $\SD$-barrier of height $1$ is the singleton barrier $\one$ and we may assume its base is $\N$.
Let $\A_0, \ldots, \A_{k-1}$ be $\SD$-barriers such that each $\base(\A_i)$ is a final segment of $\N$ and with height respectively $\a_0, \ldots, \a_{k-1}$.
Let $\B$ be the set $\B(\A_0, \ldots, \A_{k-1})$ of Definition \ref{pigeonfront}.
By Lemma \ref{pigeonbarrier} $\B$ is a smooth barrier with base $X = \bigcap_{i < k} \base(\A_i)$ and by definition $\B \rightarrow (\A_0, \ldots, \A_{k-1})^{\one}_k$.
Moreover, by Lemma \ref{pigeonheight}, $\height(\B) = \a_0 \cplus \ldots \cplus \a_{k-1}$.
By Corollary \ref{dec} there exists a $\SD$-barrier with same base and same height as $\B$ and such that all of its elements are $\B$-large.
Such $\SD$-barrier shows that $\Ram(\a_0, \ldots, \a_{k-1})^1_k \le \a_0 \cplus \ldots \cplus \a_{k-1}$.

For the other inequality fix $\SD$-barriers $\A_0, \ldots, \A_{k-1}$ with $\base(\A_i) = \N$ and $\height (\A_i) = \a_i$.
We need to prove that if $\D$ is a $\SD$-barrier such that $\base(\D) = \N$ and $\height(\D) < \a_0 \cplus \ldots \cplus \a_{k-1}$, then $\D \nrightarrow (\A_0, \ldots, \A_{k-1})^{\one}_k$. 
Thus we need to find $s \in \D$ and $n \in \N$ with $n > \max s$ such that $s \conc \la n \ra$ has a bad $k$-coloring. 
By Lemma \ref{smallerht} there are $s \in \D$ and $t \in \B (\A_0, \ldots, \A_{k-1})$ such that $s \sqsubset t$. 
By definition of $\B (\A_0, \ldots, \A_{k-1})$, $t$ has a bad $k$-coloring and its restriction to $s \conc \la t(|s|) \ra$ is a bad $k$-coloring.
\end{proof}
		
\begin{corollary}[Barrier Pigeonhole Principle]\label{pigeon}
$\Ram(\a)^1_k = \a \ctimes k$ and therefore $\Ram(\a)^1_{< \om} = \a \cdot \om$.
\end{corollary}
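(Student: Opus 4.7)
The plan is to deduce the corollary directly from Theorem \ref{generalpigeon} by specializing it to the diagonal case and then passing to the supremum. First, observe that $\Ram(\a)^1_k$ of Definition \ref{ramord} coincides with the instance of Definition \ref{generalramord} in which every parameter ordinal equals $\a$: indeed, when every $\A_i$ is a fixed $\SD$-barrier $\A$, a homogeneous $(\C \oplus \A_i)$-size subset of some color $i < k$ is exactly a homogeneous $(\C \oplus \A)$-size subset, and the two definitions of $\Ram$ agree on such diagonal inputs. Applying Theorem \ref{generalpigeon} with $\a_0 = \a_1 = \ldots = \a_{k-1} = \a$ therefore yields
$$\Ram(\a)^1_k = \underbrace{\a \cplus \a \cplus \ldots \cplus \a}_{k \text{ times}} = \a \ctimes k,$$
where the last equality is the standard arithmetic identity that the natural sum of $k$ copies of $\a$ coincides with the natural product of $\a$ with $k$ (immediate from the term-by-term description of these operations on Cantor normal forms).

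For the second equality, I would compute $\sup_{k \in \N} (\a \ctimes k)$. Writing $\a$ in Cantor normal form as $\om^{\b_0} n_0 + \om^{\b_1} n_1 + \ldots + \om^{\b_m} n_m$ with $\b_0 \geq \b_1 \geq \ldots \geq \b_m$ and each $n_i$ a positive natural number, natural multiplication by a finite number acts term-by-term on coefficients, so $\a \ctimes k = \om^{\b_0} (n_0 k) + \ldots + \om^{\b_m} (n_m k)$. Since $n_0 k$ grows unboundedly as $k \to \om$, the supremum is $\om^{\b_0 + 1}$, which matches the ordinary product $\a \cdot \om = \om^{\b_0 + 1}$. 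Therefore $\Ram(\a)^1_{<\om} = \a \cdot \om$. No step here is a real obstacle: the argument is a direct diagonalization of Theorem \ref{generalpigeon} followed by elementary ordinal arithmetic, and the only boundary concern is $\a = 0$, for which both sides vanish by the conventions on the degenerate front.
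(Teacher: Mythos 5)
Your proposal is correct and follows the same approach as the paper's proof: specialize Theorem \ref{generalpigeon} to the diagonal case $\a_0 = \ldots = \a_{k-1} = \a$, then note $\sup_{k \in \N}(\a \ctimes k) = \a \cdot \om$. You merely spell out the Cantor-normal-form computation behind the supremum, which the paper states without proof.
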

		
\begin{proof}
For the first part apply Theorem \ref{generalpigeon} to the case $\a_0 = \ldots = \a_{k-1} = \a$.
For the second part it suffices to notice that $\sup_{k \in \N} (\a \ctimes k) = \a \cdot \om$.
\end{proof}

\section{The Lower Bound}\label{sec:lowbound}

In this section we establish the lower bound for Theorem \ref{maintheorem}, i.e.\
$$\Ram(\a)^{1 + \g}_{< \om} \ge \varphi_{\log \g}(\a \cdot \om).$$
To prove this we use the nested and regular system of fundamental sequences on $\G_\zeta$ introduced in Definition \ref{inducedsystem}, and the largeness notions associated to this system.

The next theorem is the main result of the section.

\begin{theorem}\label{lowbound}
Let $k > 0$, $\g < \a < \G_\zeta$, $\a \ge \om$ and $\mu < \varphi_{\log \g} (\a \ctimes k)$.
There exists an infinite set $M$ such that for each $s \in [M]^{1 \uplus \mu}$ there is a coloring $c \colon [s]^{1 + \g} \rightarrow k + 3$ with no homogeneous $((1 + \g) \uplus \a)$-size subsets.
\end{theorem}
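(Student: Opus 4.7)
The plan is to prove the lower bound by transfinite induction on $\g$ following its Cantor normal form, using the nested and regular system of fundamental sequences on $\G_\zeta$ from Definition \ref{inducedsystem} as the central technical device. The bad colorings will be built explicitly to encode the descent of the ordinal $\mu$ under the fundamental sequence operation applied successively along the elements of a tuple, and the Estimation Lemma (Lemma \ref{lem: estimation lemma}) will be used to rule out large homogeneous sets.

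For the base case $\g = 0$, the function $\varphi_{\log 0}$ is the identity, so $\mu < \a \ctimes k$ and $[s]^{1+\g} = [s]^1$. This case follows directly from the Barrier Pigeonhole Principle (Corollary \ref{pigeon}), since $\Ram(\a)^1_k = \a \ctimes k$, with three of the $k+3$ colors left unused. For the inductive step, I would decompose $\g$ according to its Cantor normal form as $\g = \g' + \om^\d$ with $\g' \ggeq \om^\d$. Using the identity $\varphi_{\log \g}(\a \ctimes k) = \varphi_{\log \g'}(\varphi_\d(\a \ctimes k))$, the problem reduces to first producing an ``inner'' coloring tied to the Veblen function $\varphi_\d$ and then applying the inductive hypothesis for $\g'$ on top of it. Concretely, each $(1+\g)$-size tuple $t \in [s]^{1+\g}$ decomposes as a $(1+\g')$-prefix followed by an $\om^\d$-size suffix (via the $\uplus$-structure and regularity), and the coloring I construct combines an outer coloring on the prefix, provided by the inductive hypothesis applied to a suitable $\mu' < \varphi_{\log \g'}(\varphi_\d(\a \ctimes k))$, with an inner coloring on the suffix that tracks how $\mu$ descends through the Veblen hierarchy indexed by $\d$.

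The verification that no $((1+\g) \uplus \a)$-size subset of $s$ is homogeneous proceeds by translating a putative homogeneous set into a strictly decreasing function $h$ on an $\a$-large set satisfying $|h(s_i)| \le s_i$, so that Lemma \ref{lem: estimation lemma} yields an immediate contradiction. The norm function of Definition \ref{normdefinition} and its goodness (Proposition \ref{prop: goodnorm}) rely on the nestedness of the system, which is exactly what Theorem \ref{nested->nested} guarantees for the system of Definition \ref{inducedsystem}. The three auxiliary colors beyond $k$ encode the transition data needed to piece together the outer and inner layers of the coloring (for instance: whether the prefix falls in the intended region, whether a boundary of the Cantor normal form decomposition is crossed within the tuple, and whether a special boundary case occurs at the top of the suffix), so that the inductive analysis can identify which layer is responsible for each color class of a hypothetical homogeneous set.

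The main obstacle will be the combinatorial bookkeeping at the Cantor normal form transitions: one must argue that the inner coloring, which operates at scale $\varphi_\d(\a \ctimes k)$, interacts correctly with the outer inductive coloring without allowing a homogeneous set that is ``spread across'' the two layers. This requires a careful use of Lemmas \ref{sequencelemma} and \ref{subsetlemma}, together with the pseudoregularity of Lemma \ref{pseudoregularity} applied to the $\SD$-barriers $\B^\mu$ constructed from the system, to ensure that the color-class decomposition of a homogeneous set yields, on the $\a$-large tail, a decreasing sequence of ordinals of norm bounded by the corresponding tuple elements, to which the Estimation Lemma applies.
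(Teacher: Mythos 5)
Your proposal takes a genuinely different route — a transfinite induction on $\g$ following its Cantor normal form — whereas the paper builds a \emph{single, non-inductive} coloring on tuples of ordinals. The paper's mechanism is the peeling-function hierarchy $\pbar_\d$ (Definition \ref{peeling def}): one fixes a coloring $d$ of $\a \ctimes k$ in $k$ colors, applies the peeling functions to the whole decreasing sequence $T(u)$ at once, and defines the color of $A$ by tracking the ordinal $\z_A$ — the least stage at which the first entry is overtaken by the second. Colors $0,\dots,k-1$ are used when $\z_A$ is undefined, and the three extra colors $k,k{+}1,k{+}2$ classify the comparison between $\z_A$ and $\z_{A^-}$. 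The auxiliary set $S(\tau)$ (bounding all possible $\z_A$ when $A(0)=\tau$) and the augmented norm $||\cdot||$ are then what make the growth conditions on $M$ sufficient.

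The genuine gap in your plan is the combination of the inner ($\varphi_\d$-level) and outer ($\g'$-level) colorings into a \emph{fixed} palette of $k+3$ colors. You acknowledge needing the three extra colors to "encode transition data between layers," but you give no mechanism for this, and it is exactly where a naive inductive approach breaks down: if each level of the Cantor normal form contributes its own transition information, the number of colors would appear to grow with the length of the normal form, not stay at $k+3$. The paper sidesteps this entirely because the peeling-function construction is uniform in $\g$: the index $\z_A$ packages all the Cantor-normal-form bookkeeping into one ordinal, and Lemma \ref{lemma: Stau} shows $\z_A$ always lies in a finite set $S(\tau)$ that cannot contain limit ordinals, which is what makes the colors $k+1$ and $k+2$ work. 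Your sketch also folds all four color cases into a single Estimation Lemma argument, but in the paper only colors $j<k$ and $k$ are handled that way; colors $k+1$ and $k+2$ require a different argument about distinct occurrences of subterms in the multiset $\Sub(T(t(0)))$ combined with the $\om$-largeness of $t_\a$ (this is where the hypothesis $\a \ge \om$ is used). Without the $\z_A$ classification or some equivalent device you would not be able to carry out this case split, and the claim that any homogeneous set yields a decreasing norm-bounded function is simply false for the "transition" colors.
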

		
We now show how Theorem \ref{lowbound} yields our lower bound.
Fix $\g < \a < \G_\zeta$ and $\nu < \varphi_{\log \g} (\a \ctimes \om) < \G_\zeta$ ordinals.
Pick $k \in \N$ and $\mu$ an ordinal such that $\nu < \mu < \varphi_{\log \g} (\a \ctimes k)$.
Apply Theorem \ref{lowbound} to $k, \g, \a, \mu$ to get an infinite set $M$ (we may assume $\min M > 1$).
Now let $\A = [M]^\a$, $\C = [M]^{1 + \g}$ and $\D = [M]^{1 \uplus \mu}$.
Notice that $\A$ and $\C$ do not depend on $\nu$.
By Theorem \ref{regularbarrier} $\A$, $\C$ and $\D$ are $\SD$-barrier.
Let $\B$ be any $\SD$-barrier of height $\nu$ with $\base(\B) = M$. 
Since $\height(\B) = \nu < 1 + \mu = \height(\D)$, by Lemma \ref{smallerht} there are $s \in \B$ and $t \in \D$ such that $s \sqsubset t$.
Then $s \conc \la t(|s|) \ra$ inherits a bad coloring of its $(1 + \g)$-size sets with $k + 3$ colors from $t$ and from the fact that $t \in \D$.
This means that $\B \nrightarrow (\A)^\C_{k+3}$ and, since $\B$ is a generic $\SD$-barrier of height $\nu$ with $\base(\B) = \base(\A)$, we conclude.
		
Therefore we only need to prove Theorem \ref{lowbound}.
For the rest of the section, fix $\a, \g, \mu$ and $k$ as in the statement of the theorem.
We need to introduce some machinery.
We start defining a coloring on tuples of ordinals and the first step is to define the \textit{overline function}.
		
\begin{definition}\label{overline def}
Given ordinals $\b > \d$, let $\overline{\b,\d}$ be the largest exponent in the Cantor normal form of $\b$ which differs from the corresponding term in the Cantor normal form of $\d$.
That is, if the Cantor normal forms are
\begin{align*}
	\b = & \,\, \om^{\b_0} + \ldots + \om^{\b_l} + \om^{\b_{l+1}} + \ldots + \om^{\b_n} \\
	\d = & \,\, \om^{\b_0} + \ldots + \om^{\b_l} + \om^{\d_{l+1}} + \ldots + \om^{\d_m}
\end{align*}
and $\b_{l+1} > \d_{l+1}$, then $\overline{\b,\d}=\b_{l+1}$.
If $\b \le \d$, we let $\overline{\b,\d} = 0$.
\end{definition}
		
We now need to climb the Veblen hierarchy, and for that we introduce the so called \textit{peeling functions}, which are length preserving functions on finite sequences of ordinals.
We denote finite sequences of ordinals with upper case letters from the beginning of the alphabet.
We reserve as usual lower case letters from the second half of the alphabet for finite sequences of numbers.

We also introduce the concept of the collection of subterms of an ordinal.
To do that we use the notion of multiset, i.e.\ a set in which elements can occur multiple times.
Recall that if $A$ and $B$ are multisets then $A+B$ is the multiset where each element has multiplicity the sum of the its multiplicities in $A$ and $B$.

\begin{definition}
We recursively define the multiset $\Sub (\b)$ of subterms of an ordinal $\b$ as follows.
\begin{itemize}
    \item If $\b = 0$, let $\Sub(\b) = \{0\}$.
    \item If $\b = \varphi_{\b_0} (\b_1)$ with $\b_1 < \b$, let $\Sub(\b) = \{\b\} + \Sub(\b_1)$.
    \item If $\b = \om^{\b_0} + \ldots + \om^{\b_n}$ with $n > 0$ and $\b_0 \ge \ldots \ge \b_n$, let $\Sub(\b) = \{\b\} + \Sub(\om^{\b_0}) + \ldots + \Sub(\om^{\b_n})$.
\end{itemize}
\end{definition}

Notice that if $\a \in \Sub(\b)$ and $\b \in \Sub(\g)$ then $\a \in \Sub(\g)$.
Moreover, the multiplicity of $\a$ in the multiset $\Sub(\g)$ is at least that of $\a$ in $\Sub(\b)$.
		
\begin{definition}\label{peeling def}
We define functions $\pbar_\d : \ON^{<\om} \rightarrow \ON^{<\om}$ by induction on $\d$ as follows.
Let $\pbar_0$ be the identity function, and let
$$\pbar_1 (\b_0,\b_1,\dots,\b_l) = (\overline{\b_0,\b_1}, \overline{\b_1,\b_2}, \ldots, \overline{\b_l,0}).$$
For ordinals of the form $\rho + \om^\d$ with $\rho \ggeq \om^\d$, we let
$$\pbar_{\rho + \om^\d} = \pbar_{\om^\d} \circ \pbar_\rho.$$
On infinite ordinals of the form $\om^\d$, we first define
$$\pbar_{< \om^\d}(A) = \lim_{\rho \rightarrow \om^\d} \pbar_\rho (A).$$
We prove in Lemma \ref{basic peeling} below that for each ordinal $\nu \le \rho$ and each $i < |A|$, $\pbar_\rho (A) \in \Sub (\pbar_\nu (A))$.
Therefore, $\pbar_\rho(A)$ is coordinate wise non increasing as a function of $\rho$, and the limit above exists.
Furthermore, we see in Lemma \ref{basic peeling} that each ordinal in the tuple $\pbar_{< \om^\d} (A)$ is either $0$ or a fixed point of $\varphi_{\d'}$ for every $\d' < \d$ and thus belongs to the image of $\varphi_\d$.
Then we define $\pbar_{\om^\d} (A)$ by peeling off one application of $\varphi_\d$ from each nonzero entry in $\pbar_{<\om^\d} (A)$.
In other words
$$\pbar_{\om^\d} (A) = \varphi_\d^{-1} \circ \pbar_{<\om^\d} (A)$$
where we are applying $\varphi_\d^{-1}$ to each nonzero entry of $\pbar_{<\om^\d}(A)$.

We write $p_\d (A)$ (without the bar) for the first element of $\pbar_\d (A)$.
\end{definition}
		
Notice that to be consistent with the terminology we may actually say that $\pbar_1$ peels off one application of $\varphi_0$.
This is appropriate since $\pbar_1 = \pbar_{\om^0}$.
		
\begin{lemma}\label{basic peeling}
The following properties of the peeling functions hold.
\begin{enumerate}[(1)]
	\item \label{61} For every $\rho$ and $\nu < \rho$ we have that $\pbar_\rho (A) (i) \in \Sub(\pbar_\nu (A) (i))$ for each $i < |A|$.
	\item \label{62} Each entry of $\pbar_{<\om^\d}(A)$ is either $0$ or a fixed point of $\varphi_{\d'}$ for every $\d' < \d$.
	\item \label{63} If $A (i) < \varphi_\d(\b)$ for all $i < |A|$, then for all $i < |A|$ such that $\pbar_{\om^\d} (A) (i) > 0$ we have $\pbar_{\om^\d} (A) (i) < \b$.
	\item \label{64} If $A(i) < \varphi_{\log \d}(\b)$ for all $i < |A|$, then for all $i < |A|$ such that $\pbar_\d(A)(i) > 0$ we have $\pbar_\d(A)(i) < \b$.
\end{enumerate}
\end{lemma}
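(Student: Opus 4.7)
The plan is to prove the four parts in sequence, with (3) and (4) essentially falling out of (1) and (2). First I would establish a small transitivity lemma: if $x \in \Sub(y)$ and $y \in \Sub(z)$ then $x \in \Sub(z)$, by routine induction on the recursive clauses of $\Sub$. For (1), I would induct on $\rho$. The case $\rho = 1$ is immediate: the $i$-th entry of $\pbar_1(A)$ is $\overline{A(i), A(i+1)}$, which by definition of the overline function is an exponent appearing in the Cantor normal form of $A(i)$, hence in $\Sub(A(i))$. For $\rho = \rho' + \om^\d$ I would use $\pbar_\rho = \pbar_{\om^\d} \circ \pbar_{\rho'}$, the induction hypothesis on $\rho'$, and transitivity of $\Sub$, leaving only the limit clause $\rho = \om^\d$ to check. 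Here the limit $\pbar_{<\om^\d}(A)(i)$ exists because the sequence $(\pbar_\mu(A)(i))_{\mu < \om^\d}$ lives in the finite multiset $\Sub(A(i))$ and is non-increasing in the subterm order, so it stabilizes; peeling off one $\varphi_\d$ from the stable positive value lands back inside $\Sub(A(i))$ via the $\varphi$-clause of the definition of $\Sub$.

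For (2), let $\gamma > 0$ be the stable value of some entry of $\pbar_{<\om^\d}(A)$ and suppose it fails to be a fixed point of $\varphi_{\d'}$ for some $\d' < \d$. Then either $\gamma$ has a Cantor normal form with more than one term, in which case a subsequent application of $\pbar_1$ would replace this entry with a strictly smaller subterm, contradicting stability; or $\gamma = \varphi_\nu(\mu)$ with $\nu \le \d'$ and $\mu < \gamma$, in which case $\pbar_{\om^\nu}$ would peel off one more copy of $\varphi_\nu$ and strictly decrease the entry, again contradicting stability. Hence $\gamma$ is a fixed point of every $\varphi_{\d'}$ with $\d' < \d$.

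For (3), part (2) makes $\varphi_\d^{-1}$ well-defined on the positive entries of $\pbar_{<\om^\d}(A)$, while (1) gives $\pbar_{<\om^\d}(A)(i) \le A(i) < \varphi_\d(\b)$; writing this entry as $\varphi_\d(\gamma)$ and using strict monotonicity of $\varphi_\d$ yields $\gamma < \b$. For (4), write $\d = \om^{\d_0} + \ldots + \om^{\d_n}$ in Cantor normal form, so that $\pbar_\d = \pbar_{\om^{\d_n}} \circ \ldots \circ \pbar_{\om^{\d_0}}$ and $\varphi_{\log \d} = \varphi_{\d_0} \circ \ldots \circ \varphi_{\d_n}$; iterating (3) along this decomposition peels off one $\varphi_{\d_j}$ at a time, starting from the bound $A(i) < \varphi_{\d_0}(\varphi_{\d_1}(\ldots \varphi_{\d_n}(\b)))$ and ending at $\pbar_\d(A)(i) < \b$ whenever this entry is positive. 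The main obstacle is the limit step of (1) in conjunction with (2): entries of $\pbar_\rho(A)$ do not evolve independently because the overline function couples adjacent entries, so some care is needed to make precise the claim that each entry eventually stabilizes and that the stable value has the required fixed-point structure. Transitivity of $\Sub$ and well-foundedness of $\Sub(A(i))$ as a finite multiset are the key combinatorial tools throughout.
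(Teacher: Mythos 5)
Your argument follows the same structure as the paper's: induction on $\rho$ for part (1) with the three cases $\rho=1$, $\rho=\om^\d$, and $\rho=\mu+\om^\d$, the stability-plus-contradiction analysis for part (2) (the paper phrases this constructively by showing the limit tuple is a fixed point of $\pbar_{\om^{\d'}}$ and then analyzes what such fixed points look like, but this is the same idea), and the derivation of (3) from (1) and (2) and of (4) by iterating (3) along the Cantor normal form of $\d$. The only cosmetic difference is that you invoke finiteness of the multiset $\Sub(A(i))$ where the paper relies on well-foundedness of the ordinals to get stabilization, and you make the transitivity of $\Sub$ an explicit lemma where the paper uses it silently; both are fine, so this is essentially the paper's proof.
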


The conclusions of \ref{63} and \ref{64} are stated as implications to include the case $\b = 0$.

\begin{proof}
We prove \ref{61} by induction on $\rho$.
For $\rho=0$ the statement is trivial, while for $\rho=1$ it suffices to notice that $\pbar_1$ outputs for each entry either $0$ or the exponent of one of its Cantor normal form terms, which is a subterm of the corresponding entry of $A = \pbar_0 (A)$.

When $\rho = \om^\d$, we know by inductive hypothesis that for each $\nu < \mu < \om^\d$, each entry of $\pbar_\mu (A)$ is a subterm of the corresponding entry of $\pbar_\nu (A)$.
Hence the limit $\pbar_{< \om^\d} (A)$ is well defined, and each of its entries is a subterm of the corresponding entry of $\pbar_\nu (A)$ for each $\nu < \om^\d$.
By definition of $\pbar_{\om^\d}$ we peel off one application of $\varphi_\d$ from each nonzero entry of $\pbar_{< \om^\d} (A)$.
It follows that each entry of $\pbar_{\om^\d} (A)$ is a subterm of the corresponding entry of $\pbar_{< \om^\d} (A)$ and so it is also a subterm of the corresponding entry of $\pbar_\nu (A)$ for each $\nu < \om^\d$ as required.

Finally, consider an ordinal of the form $\rho = \mu +\om^\d$ for $\mu \ggeq \om^\d > 1$ and let $\nu < \mu + \om^\d$.
In this case $\pbar_{\mu + \om^\d} (A) = \pbar_{\om^\d} (\pbar_\mu (A))$ by definition.
If $\nu \le \mu$ then by inductive hypothesis each entry of $\pbar_\mu (A)$ is a subterm of the corresponding entry of $\pbar_\nu (A)$.
Then since $\om^\d < \mu + \om^\d$ again by inductive hypothesis we know that each entry of $\pbar_{\om^\d} (\pbar_\mu (A))$ is a subterm of the corresponding entry of $\pbar_\mu (A)$.
We are left with the case $\mu < \nu < \mu + \om^\d$ which means that $\nu = \mu + \sigma$ for some $\sigma$ such that $\mu \ggeq \sigma$ and $0 < \sigma < \om^\d$.
Then $\pbar_\nu (A) = \pbar_\sigma (\pbar_\mu (A))$ by definition.
Since $\sigma < \om^\d < \mu + \om^\d$ by inductive hypothesis we know that each entry of $\pbar_{\om^\d} (\pbar_\mu (A))$ is a subterm of the corresponding entry of $\pbar_\sigma (\pbar_\mu (A))$.\smallskip
			
For \ref{62} we first claim that $\pbar_{<\om^\d} (A)$ is a fixed point of $\pbar_{\om^{\d'}}$ for all $\d'<\d$.
Let $\rho <\om^\d$ such that each entry in $\pbar_{<\om^\d}(A)$ has stabilized at $\pbar_\rho (A)$ ($\rho$ exists by $1$) and pick $\rho'$ with $\rho \le \rho' < \om^\d$ such that $\rho' \ggeq \om^{\d'}$.
Then, we have that
$$\pbar_{\om^{\d'}} (\pbar_{<\om^\d} (A)) = \pbar_{\om^{\d'}} (\pbar_{\rho'} (A)) =
\pbar_{\rho' + \om^{\d'}} (A) =  \pbar_{<\om^\d} (A).$$
This completes the proof of the claim.

Considering the case $\d'=0$ we see that $\pbar_{<\om^\d} (A)$ is a fixed
point of $\pbar_1$, and this can happen only if the Cantor normal form of each entry of the ordinals in $\pbar_{< \om^\d} (A)$ contains only one term and that term is either $0$ or a fixed point of the exponential function $\varphi_0$.
Considering larger $\d'$, we see that no entry of $\pbar_{<\om^\d} (A)$ can be of the form $\varphi_{\d'} (\b)$ with $\b < \varphi_{\d'}(\b)$, as otherwise that last application of $\varphi_{\d'}$ would have been peeled off at some previous stage.
To see this notice that for each $n$, $\om^{\d'} \cdot n < \om^\d$ and each time you reach $\pbar_{\om^{\d'} \cdot n}$ in the recursive definition of the peeling functions, you peel off at least $n$ applications of $\varphi_{\d'}$.
Therefore, before stage $\om^\d$ there will be no more applications of $\varphi_{\d'}$.
All the entries that did not go all the way down to $0$ must then be fixed points of $\varphi_{\d'}$ for all $\d' < \d$ and part \ref{62} is proved.\smallskip
			
Part \ref{63} follows from the second, as the only ordinals smaller than $\varphi_\d (\b)$ that are invariant under the operations $\varphi_{\d'}$ for	all $\d' < \d$ are those of the form $\varphi_\d (\b')$ for some $\b' < \b$.
Therefore, if $\pbar_{<\om^\d} (A) (i) = \varphi_\d (\b')$, then $\pbar_{\om^\d} (A) (i) = \b' < \b$.\smallskip
			
Part \ref{64} is obtained by iterating the third one.
\end{proof}
		
An important observation about the peeling functions is that each entry of $\pbar_\d(A)$ does not depend on the previous entries of the finite sequence $A$.
In other words $\pbar_\d (A^-) = (\pbar_\d (A))^-$ and $\pbar_\d (A^{-n}) = (\pbar_\d (A))^{-n}$ where $A^-$ denotes the finite sequence $A$ without its first element, and $A^{-(n+1)} = (A^{-n})^-$.

Before defining the coloring promised before Definition \ref{overline def}, we still need two ingredients.
First fix a coloring $d : \a \ctimes k \rightarrow k$ such that for each $i < k$ the preimage $d^{-1} (i)$ has order type $\a$.
For each $i < k$ let $\pi_i : d^{-1}(i) \rightarrow \a$ be the order isomorphism.
Second, given a tuple $A$ of ordinals below $\varphi_{\log \g} (\a \ctimes k)$, if $p_\g(A) \le p_\g(A^-)$, let $\z_A \le \g$ be the least ordinal $\z$ such that $p_\z(A) \le p_\z(A^-)$.
As $p_\z(A^-)$ is the second entry of $\pbar_\z(A)$, we have $p_{\z_A + 1} (A) = 0$.
If $p_\g (A) > p_\g (A^-)$ then $\z_A$ does not exist and in this case we have $0 < p_\g (A) < \a \ctimes k$ by Lemma \ref{basic peeling}.
		
\begin{definition}\label{def: coloring on ord}
Let $c$ be the following $(k+3)$-coloring of the finite tuples of ordinals below $\varphi_{\log \g} (\a \ctimes k)$.
\begin{itemize}
	\item If $\z_A$ does not exist, let $c(A) = d(p_\g (A)) < k$.
	\item If $\z_A$ and $\z_{A^-}$ both exist and $\z_A > \z_{A^-}$, let $c(A) = k$.
	\item If $\z_A$ and $\z_{A^-}$ both exist and $\z_A=\z_{A^-}$, let $c(A) = k+1$.
	\item If $\z_A$ exists and either $\z_{A^-}$ does not or $\z_A < \z_{A^-}$, let $c(A) = k+2$.
\end{itemize}
\end{definition}

The strategy to prove Theorem \ref{lowbound} consists in defining a coloring $\overline{c}$ on numbers starting from the coloring $c$ on ordinals and show that homogeneous sets for $\overline{c}$ induce a descending sequence in some ordinal, which depends on the color of the homogeneous set.
A homogeneous set of color $i < k$ induces a descending sequence in $\a$.
A homogeneous set of color $k$ induces a descending sequence in $\g + 1$.
A homogeneous set of color either $k + 1$ or $k + 2$ induces a descending sequence of subterms of the first element of the sequence.
This imposes a limit on how large the homogeneous sets can be.

Recall that we fixed an ordinal $\mu < \varphi_{\log \g} (\a \ctimes k)$ and that our final goal is to define an infinite set $M$ such that each $s \in [M]^{1 \uplus \mu}$ has a bad coloring on its $(1 + \g)$-size sets with $k + 3$ colors. 
The set $M$ is a rapidly increasing sequence of natural numbers where the norm function behaves in a particular way.
For that we need to define a new norm.

Before defining the new norm, we define a function that assigns to each ordinal $\tau < \varphi_{\log \g}(\a \ctimes k)$, a finite set of ordinals $S(\tau)$ that contains all the ordinals which may be equal to $\z_A$ for some tuple of ordinals $A$ with $A(0) = \tau$.
		
\begin{definition}
We recursively define the set $S(\tau)$ as follows.
\begin{itemize}
	\item If $\tau=0$, let $S(\tau) = \{0\}$.
    \item If $\tau = \varphi_\d (0)$, let $S(\tau) = \{0, 1, \om^\d\}$.
	\item If $\tau = \varphi_\d (\sigma)$ with $0 < \sigma < \tau$, let $S(\tau) = \{0, 1\} \cup	\{\om^\d + \xi: \xi \in S(\sigma) \wedge \xi > 0\}$.
	\item If $\tau = \om^{\sigma_0} + \ldots + \om^{\sigma_n}$ with $n > 0$ and $\sigma_0 \ge \ldots \ge \sigma_n$, let $S(\tau) = \{0,1\} \cup S(\om^{\sigma_0}) \cup \ldots \cup S(\om^{\sigma_n})$.
\end{itemize}
\end{definition}

\begin{lemma}\label{lemma: Stau}
Let $\tau < \varphi_{\log \g} (\a \ctimes k)$.
Then for each tuple of ordinals $A \subseteq \varphi_{\log \g} (\a \ctimes k)$ with $A(0) = \tau$, if $\z_A$ exists then $\z_A \in S(\tau)$.
\end{lemma}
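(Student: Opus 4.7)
The plan is to argue by transfinite induction on $\tau$, following the recursive definition of $S(\tau)$. The main tools are Lemma \ref{basic peeling}\ref{61}, which guarantees that every entry of $\pbar_\z(A)$ is a subterm of the corresponding entry of $A$, and the observation stated just after Definition \ref{peeling def} that $\pbar_\d(A^-) = (\pbar_\d(A))^-$, so that $p_\z(A^-)$ is literally the second entry of $\pbar_\z(A)$. These let me track how the first two entries of $\pbar_\z(A)$ evolve through subterms of $\tau$ and of $A(1)$ as $\z$ grows, and compare them.

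First I would dispose of the easy cases. If $\tau = 0$, then $p_0(A) = 0 \le A(1) = p_0(A^-)$, so $\z_A = 0 \in S(0)$. If $\tau > 0$ and $\tau \le A(1)$, again $\z_A = 0 \in S(\tau)$. If instead $\tau > A(1)$ but $p_1(A) = \overline{\tau,A(1)} \le \overline{A(1),A(2)} = p_1(A^-)$, then $\z_A \le 1$ and $1 \in S(\tau)$ suffices. So I may assume $\z_A > 1$ and split on the recursive structure of $\tau$.

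If the Cantor normal form of $\tau$ has more than one term, $\tau = \om^{\sigma_0} + \ldots + \om^{\sigma_n}$ with $n \ge 1$, let $j$ be the smallest position at which the CNFs of $\tau$ and $A(1)$ disagree; then $p_1(A) = \sigma_j$, the term $\om^{\sigma_j}$ is a subterm of $\tau$, and $S(\om^{\sigma_j}) \subseteq S(\tau)$. I would view $B = \pbar_1(A)$ as a new tuple with $B(0) = \sigma_j < \tau$ and apply the inductive hypothesis to $B$ to place $\z_B \in S(\sigma_j)$; the subterm structure of the peeling then transports this to the corresponding value of $\z_A$ sitting inside $S(\om^{\sigma_j}) \subseteq S(\tau)$. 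If instead $\tau = \varphi_\d(\sigma)$ with $\sigma < \tau$ and $\d$ chosen maximally, then because $\tau$ is a fixed point of every $\varphi_{\d'}$ for $\d' < \d$ while $p_\z(A^-)$ is a subterm of $A(1) < \tau$, the first entry of $\pbar_\z(A)$ remains equal to $\tau$ and strictly exceeds the second entry for every $\z < \om^\d$; hence $\z_A \ge \om^\d$. At level $\om^\d$ the first entry is peeled down to $\sigma$, and I would apply the inductive hypothesis to the tuple starting at $\pbar_{\om^\d}(A)$ to obtain $\xi \in S(\sigma)$ with $\xi > 0$, yielding $\z_A = \om^\d + \xi \in \{\om^\d + \xi : \xi \in S(\sigma),\, \xi > 0\} \subseteq S(\tau)$.

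The ``in particular'' clause is then a side induction showing that every element of $S(\tau)$ is either $0$ or a successor ordinal: the base $S(0) = \{0\}$ is fine, and the recursive clauses only add ordinals of the form $\om^\d + \xi$ with $\xi$ a nonzero element of some $S(\sigma)$, hence a successor by the inductive hypothesis, so that $\om^\d + \xi$ is again a successor. The hard part will be the indecomposable case, specifically verifying $\xi > 0$: it requires arguing that the comparison $p_{\om^\d}(A) \le p_{\om^\d}(A^-)$ does not already hold for degenerate reasons at the limit level $\om^\d$, which demands a careful analysis of the relative position of $p_{\om^\d}(A^-)$ with respect to $\sigma$ and uses the precise interplay between the Veblen-hierarchy structure of $\tau$ and the subterm structure propagated through $A(1)$.
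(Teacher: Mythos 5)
Your plan has the same skeleton as the paper's proof: transfinite induction on $\tau$ guided by the recursive clauses of $S(\tau)$, using the subterm-tracking of Lemma~\ref{basic peeling} and the observation $\pbar_\d(A^-) = (\pbar_\d(A))^-$, with the easy cases ($\tau\le A(1)$, $p_1(A)\le p_1(A^-)$) handled up front.

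Your treatment of the decomposable case is a genuine simplification over the paper's. You apply the inductive hypothesis directly to $B = \pbar_1(A)$, whose first entry is $\sigma_j$, to get $\z_B \in S(\sigma_j)$, and then transfer via $\z_A = 1 + \z_B$ (which holds because $\pbar_\rho\circ\pbar_1 = \pbar_{1+\rho}$ for all $\rho$) into $\{1\}\cup\{1+\xi : \xi\in S(\sigma_j),\ \xi>0\} = S(\om^{\sigma_j})\subseteq S(\tau)$. The paper instead builds an auxiliary tuple $A'$ with $A'(0)=\om^{\sigma_{i_0}}$ and $\z_{A'}=\z_A$, via a two-way case split on whether $\pbar_1(A)$ is strictly decreasing, before it can invoke the inductive hypothesis; your route avoids this construction entirely and also subsumes the paper's separate case $\tau=\varphi_0(\sigma)$.

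You flag as ``the hard part'' exactly the right spot: in the case $\tau=\varphi_\d(\sigma)$, $\d>0$, you must establish $p_{\om^\d}(A) > p_{\om^\d}(A^-)$, i.e.\ $\z_A > \om^\d$, so that $\xi = \z_{\pbar_{\om^\d}(A)}$ is positive. The paper argues $p_{\om^\d}(A)=\varphi_\d^{-1}(\tau) > \varphi_\d^{-1}(p_{<\om^\d}(A^-)) = p_{\om^\d}(A^-)$; this is fine when $p_{<\om^\d}(A^-)>0$ (so that both values lie in the range of $\varphi_\d$ and $p_{<\om^\d}(A^-)<\tau$), but when $p_{<\om^\d}(A^-)=0$ it only gives $p_{\om^\d}(A^-)=0$, and the strict inequality then requires $\sigma>0$. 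For $\tau=\varphi_\d(0)$ it can fail: with $\g=\om$, $\tau=\epsilon_0=\varphi_1(0)$ and $A=(\epsilon_0,5,2)$, one has $p_n(A)=\epsilon_0$ for all finite $n$, $p_\om(A)=0=p_\om(A^-)$, hence $\z_A=\om$, a limit, while $S(\epsilon_0)=\{0,1\}$. So this is not merely a step that ``demands careful analysis'' — as things stand it is false in the boundary case $\sigma=0$, and your plan (like the published argument) leaves the same hole. Closing it would seem to require either enlarging $S(\varphi_\d(0))$ to contain $\om^\d$ (with downstream repair of the ``in particular'' clause and its uses in Cases 3 and 4 of Theorem~\ref{lowbound}) or identifying an extra hypothesis on $A$ that excludes the degenerate case.
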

		
\begin{proof}
Recall that $\z_A$ is the least $\z \le \g$ such that $p_\z(A^-) \ge p_\z(A)$.
Because of the minimality of $\z_A$, we have that if $\z_A > 0$ then $p_{< \z_A} (A) > p_{< \z_A} (A^-)$, where in the case when $\nu$ is a successor, we use $p_{< \nu} (A)$ to mean $p_{\nu - 1} (A)$.
Since $p_\nu (A^-)$ is non increasing with $\nu$, we must have $p_{< \z_A} (A^-) \ge p_{\z_A} (A^-)$.
Putting these three inequalities together, we get
$$p_{< \z_A} (A) > p_{\z_A} (A).$$
There are two ways by which one could have $p_{< \nu} (A) > p_\nu (A) $ when $\nu > 0$: 
\begin{enumerate}[(a)]
    \item a Veblen function was peeled off from $p_{< \nu} (A)$ (possibly $\varphi_0$, when $\nu$ is successor and we apply $\pbar_1$);
    \item $\nu$ is successor and $p_{\nu - 1} (A) \le p_{\nu-1} (A^-)$.
\end{enumerate}
Notice the reason for having $p_{< \z_A} (A) > p_{\z_A} (A)$ cannot be (b), because in this case $\z_A$ would have been smaller.
It follows that	$\z_A$ must be an ordinal at which some peeling was done to $p_{< \z_A} (A)$: either an $\om$ was removed (which corresponds to peeling off $\varphi_0$), or a $\varphi_{\delta}$ was removed.

The proof is now by induction on $\tau$.
Notice that $0 \in S(\tau)$ for every $\tau$, which takes care of the case when $\tau = A(0) \le A(1)$.
So we can assume $A(0) > A(1)$.
			

Suppose $\tau = \varphi_0 (\sigma)$ with $\sigma < \tau$.
Then $p_1 (A) = \sigma$ and $\z_A = 1 + \z_B$ where $B = \pbar_1 (A)$.
By inductive hypothesis $\z_A \in S(\tau)$.

Suppose now that $\tau = \varphi_\d (\sigma)$ with $\d>0$ and $\sigma < \tau$.
Then, for all $\nu < \om^\d$, $p_\nu (A) = \tau$ and $p_{\om^\d} (A) = \sigma < \tau$.
Therefore, either $\z_A = \om^\d$ or $\z_A = \om^\d + \z_B$ with $\z_B > 0$ where $B = \pbar_{\om^\d} (A)$.
If $\sigma = 0$ then $\z_A = \om^\d \in S(\varphi_\d(0))$ and moreover $p_{< \om^\d} (A^-)=0$ must hold otherwise it belongs to $\ran \varphi_\d$ contradicting the minimality of $\z_A$.
If instead $\sigma > 0$ notice that $p_{\om^\d} (A) = \varphi_\d^{-1} (\tau) > \varphi_\d^{-1} (p_{< \om^\d} (A^-)) = p_{\om^\d} (A^-)$ because either $p_{< \om^\d} (A^-)$ belongs to the range of $\varphi_\d$ (which is strictly increasing) or it is $0$.
It follows that by inductive hypothesis $\z_A = \om^\d + \z_B$ with $\z_B \in S(\sigma)$ and $\z_B>0$; hence $\z_A \in S(\tau)$.
			
Finally, suppose that $A = (\tau, \tau_1, \ldots, \tau_m)$ and that $\tau = \om^{\sigma_0} + \ldots + \om^{\sigma_n}$ with $n > 0$.
Then $p_1(A)$ is one of the $\sigma_i$'s, say $\sigma_{i_0}$.
If $p_1(A) \le p_1(A^-)$, then $\z_A = 1 \in S(\tau)$.
So suppose that $p_1(A) > p_1(A^-)$ and $\z_A \ge 2$.
The goal now is to define a tuple $A'$ of ordinals such that $A'(0) = \om^{\sigma_{i_0}}$ and $\z_{A'} = \z_A$. Then, by the inductive hypothesis we would have $\z_{A'} \in S(\om^{\sigma_{i_0}})$, and so by definition of $S(\tau)$, $\z_A \in S(\tau)$.	
Let $\d_0 = \sigma_{i_0}$ so that, for some $\d_1, \dots, \d_m$, we have
$$\pbar_1(A) = (\overline{\tau,\tau_1}, \overline{\tau_1,\tau_2}, \ldots, \overline{\tau_m,0}) = (\d_0, \d_1, \ldots, \d_m).$$
Suppose first that $\d_0 > \d_1 > \ldots > \d_m$.
Let $A'$ be defined by applying $\varphi_0$, which is the exponential function, to all the entries of $\pbar_1(A)$.
That is, $A' = (\om^{\d_0}, \om^{\d_1}, \ldots, \om^{\d_m})$. Notice that, for all $i < m$,
$$\pbar_1 (A')(i) = \overline{\om^{\d_i}, \om^{\d_{i+1}}} = \d_i = \pbar_1 (A)(i).$$
Therefore $\pbar_1 (A') = \pbar_1 (A)$ and hence $\z_{A'} = \z_A$.
Suppose now that $0 < \ell < m$ is least such that $\d_{\ell} \le \d_{\ell + 1}$.
We have that		
$$
\begin{array}{rcccccccl}
A &=& (\tau, & \ldots, & \tau_{\ell-1}, & \tau_\ell, & \tau_{\ell + 1}, & \ldots, & \tau_m) \\
\pbar_1 (A) & = & (\d_0, & \ldots, & \d_{\ell - 1}, & \d_\ell, & \d_{\ell + 1}, & \ldots, & \d_m) \\
\pbar_2 (A) & = & (\overline{\d_0, \d_1}, & \ldots, & \overline{\d_{\ell - 1}, \d_\ell}, & 0, & \overline{\d_{\ell + 1}, \d_{\ell + 2}}, & \ldots, & \overline{\d_m, 0})
\end{array}
$$
Define $A'$ by letting $A'(i) = \om^{\d_i}$ for all $i < \ell$,
$A'(\ell) = \om^{\d_\ell} + \om^{\d_\ell}$, $A'(\ell + 1) = \om^{\d_\ell}$ and $A'(i) = 0$ for all $i > \ell + 1$.
We then have that
$$
\begin{array}{rccccccccl}
A' & = & (\om^{\d_0}, & \ldots, & \om^{\d_{\ell - 1}}, & \om^{\d_\ell} + \om^{\d_\ell}, & \om^{\d_\ell}, & 0, & \ldots, & 0) \\
\pbar_1(A') & = & (\d_0, & \ldots, & \d_{\ell - 1}, & \d_\ell, & \d_\ell, & 0, & \ldots, & 0) \\
\pbar_2 (A') & = & (\overline{\d_0, \d_1}, & \ldots, & \overline{\d_{\ell - 1}, \d_\ell}, & 0, & \overline{\d_{\ell}, 0}, & 0, & \ldots, & 0)
\end{array}
$$	
Observe that $\pbar_2 (A')(\ell) = 0 = \pbar_2 (A)(\ell)$ and hence that $\pbar_\nu (A')(\ell) = 0 = \pbar_\nu (A)(\ell)$ for all $\nu \ge 2$.
Observe also that $\pbar_2 (A')(i) = \pbar_2(A) (i)$ for all $i \le \ell$.
It is not hard to see that each application of $\pbar_1$ keeps equal the two sequences up to the $\ell$-th entry while each application of $\pbar_{\om^\nu}$ for some $\nu$ just peels off (when possible) an application of $\varphi_\nu$ from each entry independently on the other entries of the sequence.
Therefore we get that $\pbar_\nu (A')(i) = \pbar_\nu (A)(i)$ for all $i \le \ell$ and all $\nu \ge 2$.
It follows that $\z_{A'} = \z_A$ as needed.
\end{proof}

\begin{remark}\label{rem:limitcase}
Notice that if $\z_A = \om^{\d_0}+\ldots+\om^{\d_n}$ is a limit ordinal, then $p_{< \z_A} (A) = \varphi_{\d_n} (0)$ and $p_{< \z_A} (A^-) = 0$.
To see this, let $B = \pbar_{< \z_A} (A) = \pbar_{< \om^{\d_n}} \circ \pbar_{\om^{\d_{n-1}}} \circ \ldots \circ \pbar_{\om^{\d_0}} (A)$ so that $\pbar_{\z_A} (A) = \varphi_{\d_n}^{-1} (B)$.
By definition of $\z_A$ it must be $B(0) > B(1)$ and by definition of $\pbar_{< \om^{\d_n}}$ both $B(0)$ and $B(1)$ must be either $0$ or in the range of $\varphi_{\d_n}$.
If $B(1) > 0$ then since $\varphi_{\d_n}$ is strictly increasing it must be $\varphi_{\d_n}^{-1} (B(0)) > \varphi_{\d_n}^{-1} (B(1))$, contradicting the definition of $\z_A$.
Then $B(1) = 0$ and $\varphi_{\d_n}^{-1} (B(0)) \le \varphi_{\d_n}^{-1} (B(1)) = 0$ which implies $B(0) = \varphi_{\d_n} (0)$.
\end{remark}
		
We defined the norm $|\d|$ of an ordinal $\d$ in Definition \ref{normdefinition}. 
We now use that norm to define a new function $||\cdot|| : \varphi_{\log \g} (\a \ctimes k) \rightarrow \om$ as follows.
		
\begin{definition}\label{doublenorm}
For each ordinal $\b < \varphi_{\log \g} (\a \ctimes k)$, let $||\b||$ be $1$ plus the maximum of:
\begin{itemize}
	\item the norm of all the subterms of $\b$;
	\item the cardinality of the multiset $\Sub (\b)$;
	\item the norm of all the ordinals in $S(\b)$;
    \item for every $\d \in \Sub(\b)$ such that $\d < \a \ctimes k$, the norm of $\pi_{d(\d)} (\d)$.
\end{itemize}
\end{definition}

We now show that we just need to know the first entry of a finite sequence of ordinals $A$ to compute a level such that $p_{< \om^\sigma} (A)$ stabilizes.
		
\begin{lemma}\label{convergence lemma}
Let $\sigma > 0$ be an ordinal and let $A = (\a_0, \ldots, \a_m)$ be a sequence of ordinals in $\varphi_{\log \g} (\a \ctimes k)$.
Then, 
$$p_{< \om^\sigma} (A) = p_{\om^{\sigma[||\a_0||]} \cdot ||\a_0||}(A).$$ 
\end{lemma}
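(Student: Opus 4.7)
The plan is transfinite induction on $\sigma$. Set $n := ||\a_0||$ and $\tau^* := p_{<\om^\sigma}(A)$; the goal is $p_{\om^{\sigma[n]} \cdot n}(A) = \tau^*$. Two facts from Lemma \ref{basic peeling} drive the argument: part \ref{61} gives $p_\rho(A) \in \Sub(\a_0)$ for every $\rho$, so the non-increasing map $\rho \mapsto p_\rho(A)$ takes at most $|\Sub(\a_0)| < n$ distinct values; and part \ref{62} implies that $\tau^*$ is either $0$ or a common fixed point of $\varphi_{\sigma'}$ for every $\sigma' < \sigma$, so once the first entry reaches $\tau^*$ no subsequent $\pbar_\rho$ with $\rho < \om^\sigma$ can disturb it.

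For the base case $\sigma = 1$, we have $\sigma[n] = 0$, so the target is $p_n(A)$. A direct calculation with $\pbar_1(B)(0) = \overline{B(0), B(1)}$ shows that $\pbar_1$ preserves the first entry precisely when that entry is $0$ or an $\epsilon$-number paired with a strictly smaller second entry, and that every further iterate of $\pbar_1$ then continues to leave the first entry alone. Consequently the iterates $p_k(A)$ strictly descend until they reach $\tau^*$, and since at most $|\Sub(\a_0)| < n$ strict descents are available, $n$ iterations suffice.

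For the successor case $\sigma = \sigma' + 1$ with $\sigma' > 0$, regularity of the induced fundamental sequence gives $\sigma[n] = \sigma'$, so the target $p_{\om^{\sigma'} \cdot n}(A)$ represents $n$ applications of $\pbar_{\om^{\sigma'}}$. Using $\pbar_{\om^{\sigma'}} = \varphi_{\sigma'}^{-1} \circ \pbar_{<\om^{\sigma'}}$ together with part \ref{62} of Lemma \ref{basic peeling}, one checks that whenever a single application of $\pbar_{\om^{\sigma'}}$ preserves the first entry, that entry must be a fixed point of $\varphi_{\sigma'}$, and hence is preserved by every $\pbar_\rho$ with $\rho < \om^{\sigma'+1}$. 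So plateaus of the sequence $k \mapsto p_{\om^{\sigma'} \cdot k}(A)$ are permanent, and the same descent-counting argument closes the case.

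The main obstacle is the limit case, where $(\sigma[k])_k$ is non-decreasing with $\sigma[k] < \sigma$ and the operator $\pbar_{\om^{\sigma[k]}}$ genuinely varies with $k$. Here the sequence $\tau_k := p_{\om^{\sigma[k]} \cdot k}(A)$ can plateau: if $\tau_k$ is a fixed point of $\varphi_{\sigma[k]}$ but not of $\varphi_\d$ for some $\sigma[k] < \d < \sigma$, the value lingers at $\tau_k$ until $\sigma[\cdot]$ grows past $\d$. The plan is to bound these critical levels using $S(\a_0)$: by Lemma \ref{lemma: Stau}, the values of $\z_B$ that can arise along peelings of tuples $B$ with $B(0)$ a subterm of $\a_0$ all lie in the finite sets $S(\tau)$ for $\tau \in \Sub(\a_0)$, so the Veblen levels at which the first entry can transition are controlled by these sets. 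By the third clause in the definition of $||\cdot||$, every such level $\xi$ satisfies $|\xi| < n$, and then Proposition \ref{prop: goodnorm} (together with monotonicity of the fundamental sequence) yields $\xi \le \sigma[n]$ whenever $\xi < \sigma$. Hence $\sigma[n]$ already dominates every Veblen level at which a further transition of the first entry could occur; using the inductive hypothesis at each $\sigma[k] < \sigma$ to replace the implicit limits $\pbar_{<\om^{\sigma[k]}}$ by explicit stages of the form $\pbar_{\om^{\sigma[k][\cdot]} \cdot \cdot}$, one concludes that by stage $k = n$ the first entry has stabilized at $\tau^*$. The delicate bookkeeping linking norms of elements of $S(\a_0)$ to the pace of $\sigma[\cdot]$ is the technical heart of this case.
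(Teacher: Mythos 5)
Your plan — transfinite induction on $\sigma$ with base, successor, and limit cases — diverges from the paper's proof, which is a single direct argument with no case split on $\sigma$: apply $\pbar_{\om^{\sigma[n]}}$ $n := \|\a_0\|$ times (each application, by Lemma~\ref{basic peeling}, strictly descends through $\Sub(\a_0)$ until it lands on $0$ or a $\varphi_{\sigma[n]}$-fixed point), then note that the resulting value $p_\xi(A) = \varphi_\d(\b)$ has $\d$ with norm $\le n$ coming from a subterm of $\a_0$, so goodness of the norm (Lemma~\ref{lem: rm above norm}) forces $\d \ge \sigma$, and finally argue that $p_\rho(A^-) \le p_\xi(A^-) < p_\xi(A)$ keeps $\pbar_1$ from zeroing out the stabilized value at any later stage $\rho < \om^\sigma$. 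Your base and successor cases recapitulate pieces of this (plateau-counting via $|\Sub(\a_0)|$) and are fine in spirit, but they are precisely the situations where no real work is needed.

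The genuine gap is in your limit case, which is where the full strength of the norm argument is required. You invoke Lemma~\ref{lemma: Stau} and the third clause in the definition of $\|\cdot\|$ to bound ``the Veblen levels at which the first entry can transition'' by $S(\a_0)$. But Lemma~\ref{lemma: Stau} bounds the \emph{crossing ordinal} $\z_A$ — the least peeling ordinal $\z$ at which $p_\z(A) \le p_\z(A^-)$ — not the Veblen indices $\d$ at which $\pbar_{\om^\d}$ actually changes the first entry. Elements of $S(\a_0)$ are peeling ordinals of the form $\om^{\d_0}+\om^{\d_1}+\cdots$, whereas the quantities you need to control are the indices $\d$ of the outermost Veblen functions in the subterms of $\a_0$, bounded by the \emph{first} clause of $\|\cdot\|$, not the third. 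As written, applying Proposition~\ref{prop: goodnorm} to ``$\xi < \sigma$'' with $\xi \in S(\a_0)$ is a type mismatch: $\sigma$ is a Veblen index, while $\xi \in S(\a_0)$ is a peeling level, and the inequality $\xi \le \sigma[n]$ does not yield what is needed. The concluding sentence (``using the inductive hypothesis at each $\sigma[k] < \sigma$ \ldots\ one concludes'') is where the actual argument should live, and it is missing. Finally, nowhere do you address why, once the first entry has stabilized at a $\varphi_\sigma$-fixed point, the later $\pbar_1$-comparisons with the second coordinate cannot reset it to $0$; this requires observing that $p_\rho(A^-)$ is non-increasing and remained strictly below $p_\xi(A)$ from the moment of stabilization, a step the paper spells out and your sketch omits.
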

		
\begin{proof}
First we claim that for some $\xi < \om^{\sigma[||\a_0||]} \cdot ||\a_0||$ we have that $p_\xi (A)$ is either $0$ or a fixed point of $\varphi_{\sigma[||\a_0||]}$.
To prove this, notice that by Lemma \ref{basic peeling} we know that each application of $p_{\om^{\sigma[||\a_0||]}}$ peels off at least one application of $\varphi_{\sigma[||\a_0||]}$ to each entry of its input, unless we have already got to $0$ or to a fixed point of $\varphi_{\sigma[||\a_0||]}$.
The function $p_{\om^{\sigma [||\a_0||]} \cdot ||\a_0||}$ peels off at least $||\a_0||$ applications of $\varphi_{\sigma[||\a_0||]}$.
However, by definition of $||\a_0||$ the number of operations $\varphi_{\sigma[||\a_0||]}$ occurring in $\a_0$ is strictly less than $||\a_0||$.
For this reason $p_\xi (A)$ must be $0$ or a fixed point of $\varphi_{\sigma [||\a_0||]}$ for some $\xi < \om^{\sigma[||\a_0||]} \cdot ||\a_0||$ as claimed.

If $p_\xi (A) = 0$ then the sequence has already stabilized.
Otherwise $p_\xi (A)$ is a fixed point of $\varphi_{\sigma [||\a_0||]}$ and can be uniquely written as $\varphi_\d (\b)$ for some $\d > \sigma[||\a_0||]$ and some $\b < \varphi_\d (\b)$.
Since $p_\xi (A)$ is a subterm of $\a_0$ (by Lemma \ref{basic peeling}) we get that $| \d | \le || \a_0 ||$.
Then, if $\d < \sigma$, the goodness of the norm $| \cdot |$ implies that $\sigma \Rightarrow_{||\a_0||} \d$ by Lemma \ref{lem: rm above norm}.
It follows that $\sigma[||\a_0||] \ge \d$ which is a contradiction.
We conclude that $\d \ge \sigma$, and hence $p_\xi (A) \in \ran (\varphi_\sigma)$.

If $p_\xi (A) \le p_\xi (A^-)$ then $p_{\xi + 1} (A) =0$ and hence the sequence stabilizes at $\xi+1$.
If instead $p_\xi (A) > p_\xi (A^-)$ then by induction we can prove that $p_\rho (A) = p_\xi (A) > p_\rho (A^-)$ for any $\xi \le \rho < \om^\sigma$: this is because $p_\rho (A^-) \le p_\xi (A^-)$ and at stage $\rho$ we attempt to peel off $\varphi_{\sigma'}$ for some $\sigma' < \sigma$, which has no effect on $p_\xi (A)$.

Since $\xi+1 \le \om^{\sigma[||\a_0||]} \cdot ||\a_0||$, we have $p_{< \om^\sigma} (A) = p_{\om^{\sigma[||\a_0||]} \cdot ||\a_0||}(A)$.
\end{proof}

We are ready to construct the set $M$ of Theorem \ref{lowbound}.
		
\begin{lemma}\label{infinite set M}
There exists an infinite set $M$ such that, for all $s \in [M]^{< \om}$, we have that $\max (||\mu[s \restriction i]||, ||\om||, ||\g + 1||) + 2 < s(i)$ for all $i < |s|$.
\end{lemma}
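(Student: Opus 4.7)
The plan is to construct $M = \{m_0 < m_1 < m_2 < \ldots\}$ recursively, choosing each element large enough to dominate every bound that could be imposed on it as a position-$i$ element of some finite $s \subseteq M$. The crucial observation is that if $s \in [M]^{<\omega}$ and $s(i) = m_j$, then necessarily $j \geq i$ and the prefix $s \restriction i$ is a subset of $\{m_0, \ldots, m_{j-1}\}$. Hence it suffices to ensure that each $m_j$ exceeds the required bound for every conceivable prefix drawn from the finite set of already-chosen elements.

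For the base case, I would choose $m_0 > \max(||\mu||, ||\omega||, ||\gamma+1||) + 2$, which handles the case $i = 0$ since $\mu[s \restriction 0] = \mu[\langle\rangle] = \mu$. For the inductive step, suppose $m_0 < \ldots < m_{j-1}$ have been chosen. The collection of subsets $t \subseteq \{m_0, \ldots, m_{j-1}\}$ is finite. For each such $t$, the ordinal $\mu[t]$ is well-defined (being below $\Gamma_\zeta$) and thus $||\mu[t]||$ is a finite natural number. I would then pick
\[
m_j > \max\Bigl(m_{j-1},\; ||\omega||,\; ||\gamma+1||,\; \max\{||\mu[t]|| : t \subseteq \{m_0, \ldots, m_{j-1}\}\}\Bigr) + 2.
\]
This is possible because the maximum is taken over a finite set.

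Once the construction is complete, the verification is immediate: given any $s \in [M]^{<\omega}$ and $i < |s|$, let $j$ be the index with $m_j = s(i)$, so $j \geq i$. Then $s \restriction i \subseteq \{m_0, \ldots, m_{j-1}\}$, so by the choice of $m_j$,
\[
s(i) = m_j > \max(||\mu[s \restriction i]||, ||\omega||, ||\gamma+1||) + 2,
\]
as required. There is no genuine obstacle here beyond bookkeeping; the only point to notice is that at each stage only finitely many bounds must be simultaneously satisfied, so the inductive choice of $m_j$ is always possible.
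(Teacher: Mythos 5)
Your proof is correct and follows essentially the same recursive construction as the paper: pick $M(0)$ above the fixed bounds $||\mu||, ||\omega||, ||\gamma+1||$, and at each later stage pick $M(n)$ above $||\mu[t]||+2$ for every subset $t$ of the elements chosen so far, which is possible since there are only finitely many such $t$. The only cosmetic difference is that you re-include $||\omega||$ and $||\gamma+1||$ in each later stage's bound, while the paper observes these are already handled by $M(0)$ since $M$ is increasing; this changes nothing.
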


\begin{proof}
We define $M(n)$ by recursion on $n$.
Let $M(0)$ be any number strictly above $\max (||\mu||, ||\om||, ||\g + 1||) + 2$.
For each $n>0$ let $M(n) > M(n-1)$ be a number strictly larger than $||\mu[s]|| + 2$ for all $s \subseteq \{M(0), \ldots, M(n-1)\}$.

Checking that $M$ is the required set is straightforward: if $s \in [M]^{< \om}$ and $i < |s|$, then $s(i) = M(n)$ for some $n$ and $M(n) > ||\mu[s \restriction i]|| + 2$ by definition of $M(n)$.
Moreover, $s(i)$ is greater than $|| \om || + 2$ and $|| \g + 1 || + 2$ because $M(0)$ is.
\end{proof}

For the rest of this section fix a set $s \in [M]^{1 \uplus \mu}$, where $M$ is as in the previous lemma.
Define
$$T \colon s \rightarrow \varphi_{\log \g} (\a \ctimes k) \quad \text{ by } \quad T(s(i)) = \mu [s \restriction i].$$
Notice that $T(\min s) = \mu$, $T(\max s) = 0$ (as $s^*$ is $\mu$-size), and that $T$ is a strictly decreasing function.
Moreover, by definition of $M$,
$$||T(s(i))|| + 2 < s(i) \quad \quad \text{ for all $i < |s|$}.$$		
		
Now we have all the required machinery.
We define a coloring on $[s]^{\ge 1 + \g}$ (which denotes the set of $(1 + \g)$-large subsets of $s$)
\begin{align*}
    \overline{c} \colon & \,\, [s]^{\ge 1 + \g} \rightarrow k + 3	\\
    & \,\, u \mapsto c(T(u))
\end{align*}
where $c$ is the coloring from Definition \ref{def: coloring on ord} and where by $T(u_0, \ldots, u_m)$ we mean $(T(u_0), \ldots, T(u_m))$.		
Similarly, we define $\pbar_\d$ and $\z$ on $u \subseteq s$ by  $\pbar_\d (u) = \pbar_\d(T(u))$ and $\z_u = \z_{T(u)}$.
		
We now show that in the definition of $\overline{c}$, only the $(1 + \g)$-size prefix matters: i.e.\ if $t \subseteq s$ and $v \sqsubseteq t$ is the $(1 + \g)$-size initial segment of $t$, then $\overline{c}(t) = \overline{c}(v)$.
		
\begin{lemma}\label{lem: prefix coloring}
For each $\nu$ and sets $u \sqsubseteq t \subseteq s$, if $u$ is $(1 + \nu)$-large then
$$p_\nu (u) = p_\nu (t).$$
\end{lemma}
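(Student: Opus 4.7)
We proceed by transfinite induction on $\nu$, repeatedly exploiting the identity $\pbar_\d(A)(i) = p_\d(A^{-i})$ noted just before the statement of this lemma. For the base case $\nu = 0$, $\pbar_0$ is the identity, and since $u$ being $1$-large means $u \ne \emptyset$, we get $p_0(T(u)) = T(u(0)) = T(t(0)) = p_0(T(t))$ because $u \sqsubseteq t$.

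For a successor $\nu = \rho + 1$, write $\pbar_\nu = \pbar_1 \circ \pbar_\rho$ so that $p_\nu(A) = \overline{p_\rho(A), p_\rho(A^-)}$. By regularity of the system of fundamental sequences from Definition \ref{inducedsystem}, $(1+\nu)[n] = 1+\rho$, so $u^- = u \setminus \{\min u\}$ is $(1+\rho)$-large; by Corollary \ref{intuitive syst} (applicable since $\min u > 1$), so is $u$. Two appeals to the inductive hypothesis, one at $u \sqsubseteq t$ and one at $u^- \sqsubseteq t^-$, then give the equalities $p_\rho(T(u)) = p_\rho(T(t))$ and $p_\rho(T(u^-)) = p_\rho(T(t^-))$, from which the conclusion follows. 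The general additive case $\nu = \rho + \om^\d$ with $\rho, \d > 0$ is handled analogously via $\pbar_\nu = \pbar_{\om^\d} \circ \pbar_\rho$ and two applications of the inductive hypothesis at $\rho$ and $\om^\d$.

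The delicate case is $\nu = \om^\d$ with $\d > 0$, where $1+\nu = \nu$. Since $p_{\om^\d}(A) = \varphi_\d^{-1}(p_{<\om^\d}(A))$, it suffices to prove $p_{<\om^\d}(T(u)) = p_{<\om^\d}(T(t))$. Lemma \ref{convergence lemma}, applied to each of $T(u)$ and $T(t)$ --- which share the first entry $T(u(0)) = T(t(0))$ because $u \sqsubseteq t$ --- rewrites both sides as $p_\tau(\cdot)$ for the same ordinal $\tau = \om^{\d[\|T(u(0))\|]} \cdot \|T(u(0))\| < \om^\d$. The inductive hypothesis at $\tau$ then closes the case, provided we can upgrade $\om^\d$-largeness of $u$ to $(1+\tau)$-largeness.

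This upgrade is the main obstacle. The plan is to exploit the construction of $M$: the inequality $\min u > \|T(\min u)\| + 2$ together with monotonicity of the fundamental sequences gives $\d[\min u] \ge \d[\|T(\min u)\|]$, whence $\om^\d[\min u] = \om^{\d[\min u]} \cdot \min u \ge \tau$. Combined with the goodness of the norm (Proposition \ref{prop: goodnorm}), Lemma \ref{lem: rm above norm} and nestedness (Theorem \ref{nested->nested}), one compares the descents of $\om^\d[u]$ and $(1+\tau)[u]$ element by element and deduces that $\om^\d$-largeness of $u$ forces $(1+\tau)$-largeness. This norm-bookkeeping step is purely a manipulation of the system of fundamental sequences of Subsection \ref{subsec:systems}, with no new combinatorial input required.
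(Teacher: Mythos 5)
Your base case and successor case are correct and match the paper, but all three limit-related steps contain genuine gaps.

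First, the formula $\om^\d[\min u] = \om^{\d[\min u]}\cdot\min u$ holds only when $0 < \d < \om^\d$, i.e.\ only in case (3) of Definition~\ref{inducedsystem}. When $\d$ is a fixed point of $\varphi_0$ (so that $\om^\d = \d$ and the fundamental sequence of $\om^\d$ is that of $\d$), or more generally when $\d = \varphi_{\d_0}(\d_1)$ for $\d_0 > 0$ or $\d$ is a $\G$-ordinal, one must fall back on cases (4)--(7), which yield completely different expressions. The required inequality $\om^\sigma[u_0] > \om^{\sigma[\|T(u_0)\|]}\cdot\|T(u_0)\|$ is the combinatorial core of the paper's proof and takes a seven-case analysis, one branch per clause of Definition~\ref{inducedsystem}. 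You have silently assumed a single branch.

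Second, the detour through ``$(1+\tau)$-largeness of $u$'' is both unnecessary and unjustified. The paper applies the inductive hypothesis not at $\tau$ but at $\om^\sigma[u_0]$, which is strictly below $\om^\sigma$ and above $\tau$ by the seven-case claim; $u$ is automatically $(1+\om^\sigma[u_0])$-large because $u^-$ is $\om^\sigma[u_0]$-large by definition, and since the limit $p_{<\om^\sigma}$ has already stabilized before $\tau < \om^\sigma[u_0]$, the conclusion follows. Your variant instead demands that $\om^\sigma$-largeness of $u$ implies $(1+\tau)$-largeness, which you dismiss as norm-bookkeeping ``with no new combinatorial input required.'' That is not evident: Proposition~\ref{prop: goodnorm} converts ordinal inequalities to largeness facts only when the elements of $u$ dominate the $|\cdot|$-norm of the target ordinal, and $|\tau| = |\om^{\d[\|T(u_0)\|]}\cdot\|T(u_0)\||$ is not obviously bounded by the $\|\cdot\|$-bounds you have on the elements of $u$. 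The paper's route avoids this issue entirely.

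Third, the case $\nu = \rho + \om^\d$ is not ``handled analogously via two applications of the inductive hypothesis at $\rho$ and $\om^\d$.'' The inductive hypothesis concerns $p_\nu(T(u))$ and $p_\nu(T(t))$ for sets $u \sqsubseteq t \subseteq s$; you cannot apply it to the transformed sequences $\pbar_\rho(T(u))$ and $\pbar_\rho(T(t))$, which are not of the form $T(\cdot)$. The paper's argument instead uses Lemma~\ref{basic peeling}(1) to observe that $p_\rho(u)$ is a subterm of $T(u_0)$, so $\|T(u_0)\| \ge \|p_\rho(u)\|$, reuses the seven-case claim with $\|p_\rho(u)\|$ in place of $\|T(u_0)\|$, and then invokes the inductive hypothesis at the single ordinal $\rho + \om^\sigma[u_0] < \nu$.
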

		
When $\nu = \g$ this shows that the coloring $\overline{c}$ defined above can actually be regarded as a coloring of the $(1 + \g)$-size subsets of $s$.
This is because in the coloring $\overline{c}$ of a set $u$ we only care about the peeling functions of index $\g$ or index $\nu \in S(T(\min u))$.
By definition of the set $M$ (which contains $u$) each element of $u$ is larger than $||T(\min u)||$ and by definition of $||\cdot||$, it is also larger than $|\nu|$ for each $\nu \in S(T(\min u))$.
This means that if $u$ is $\g$-large, since $\g \ge \nu$, $u$ must also be $\nu$-large and so Lemma \ref{lem: prefix coloring} yields that also $p_\nu$ only depends on the $(1+\nu)$-size initial segment of its input.
		
\begin{proof}
We proceed by induction on $\nu$.
The case $\nu = 0$ is immediate since $\pbar_0$ is the identity function.			
If $\nu = \rho + 1$ and $u$ is $(1 + \nu)$-large then $u$ is $(1 + \rho + 1)$-large and $u^-$ is $(1 + \rho)$-large.
By Corollary \ref{intuitive syst} $u$ is $(1 + \rho)$-large too.
Hence we have both $p_\rho(u) = p_\rho(t)$ and $p_\rho(u^-) = p_\rho(t^-)$ by inductive hypothesis.
Then 
$$p_\nu(u) = \overline{p_\rho(u), p_\rho(u^-)} = \overline{p_\rho(t), p_\rho(t^-)} = p_\nu(t).$$
			
Suppose now $\nu = \om^\sigma > 1$.
Recall that we denote $\min u$ by $u_0$.
Suppose that $u_0$ is the $i$-th element of $s$.
First notice that the first ordinal of the sequence to which we are applying $\pbar_\nu$ is $T(u_0) = \mu[s \restriction i]$ and so by Lemma \ref{infinite set M} we have that $u_0 = s(i) > ||T(u_0) || + 2$.
Since $u$ is $(1 + \om^\sigma)$-large then it is $(1 + \om^\sigma [u_0])$-large too (this is because it contains $u^-$ which is $(1 + \om^\sigma [u_0])$-large).
We claim that $\om^\sigma [u_0] > \om^{\sigma[||T(u_0)||]} \cdot ||T(u_0) ||$.
We proceed by cases.
\begin{enumerate}
	\item If $\sigma < \om^\sigma$ then 
    \begin{align*}
        \om^\sigma[u_0] = \om^{\sigma[u_0]} \cdot u_0 > \om^{\sigma[||T(u_0)||]} \cdot ||T(u_0) ||.
    \end{align*}
	\item If $\sigma = \varphi_1(0)$ then
    \begin{align*}
        \om^\sigma[u_0] & = \varphi_1(0)[u_0] = \varphi_0^{u_0+1} (0) > \varphi_0^{||T(u_0)||+3} (0) \\
        & > \varphi_0^{||T(u_0)||+2} (0) \cdot ||T(u_0)|| = \om^{\sigma[||T(u_0)||]} \cdot ||T(u_0)||.
    \end{align*}
	\item If $\sigma = \varphi_1(\b) > \b > 0$ then
    \begin{align*}
        \om^\sigma[u_0] & = \varphi_1(\b)[u_0] = \varphi_0^{u_0 + 1} (\varphi_1 (\b[u_0]) + 1) \\
        & > \varphi_0^{||T(u_0)|| + 3} (\varphi_1 (\b[||T(u_0)||+2]) +1) \\
        & > \varphi_0^{||T(u_0)||+2} (\varphi_1 (\b[||T(u_0)||]) +1) \cdot ||T(u_0)|| \\
        & = \om^{\sigma[||T(u_0)||]} \cdot ||T(u_0)||.
    \end{align*}
	\item If $\sigma = \varphi_\d(0)$ with $\d > 1$ then
    \begin{align*}
        \om^\sigma[u_0] & = \varphi_\d(0)[u_0] = \varphi_{\d[u_0]}^{u_0 + 1} (0) > \varphi_{\d[||T(u_0)|| + 2]}^{||T(u_0)|| + 3} (0) \\
        & > \varphi_{\d[||T(u_0)||]}^{||T(u_0)||+1} (0) \cdot ||T(u_0)|| = \om^{\sigma[||T(u_0)||]} \cdot ||T(u_0)||.
    \end{align*}
	\item If $\sigma = \varphi_\d(\b) > \b > 0$ with $\d > 1$ then
    \begin{align*}
        \om^\sigma[u_0] & = \varphi_\d(\b)[u_0] = \varphi_{\d[u_0]}^{u_0 + 1} (\varphi_\d (\b[u_0]) + 1) \\
        & > \varphi_{\d[||T(u_0)|| + 2]}^{||T(u_0)|| + 3} (\varphi_\d (\b[||T(u_0)|| + 2]) +1) \\
        & > \varphi_{\d[||T(u_0)||]}^{||T(u_0)|| + 1} (\varphi_\d (\b[||T(u_0)||]) + 1) \cdot ||T(u_0)|| \\
        & = \om^{\sigma[||T(u_0)||]} \cdot ||T(u_0)||.
    \end{align*}
    \item If $\sigma = \G_0$ then
    \begin{align*}
        \om^\sigma[u_0] & = \G_0[u_0] = \varphi_{\G_0[u_0-1]} (0) > \varphi_{\G_0[||T(u_0)||+1]} (0) \\
        & > \varphi_{\G_0[||T(u_0)||]} (0) \cdot ||T(u_0)|| = \G_0[||T(u_0)|| + 1] \cdot ||T(u_0)|| \\ 
        & > \G_0[||T(u_0)||] \cdot ||T(u_0)|| = \om^{\sigma[||T(u_0)||]} \cdot ||T(u_0)||.
    \end{align*}
    \item If $\sigma = \G_\xi$ with $0 < \xi < \zeta$ then
    \begin{alignat*}{2}
        \om^\sigma[u_0] & = \G_\xi [u_0] \\
        & =\varphi_{\ddots \varphi_{\G_{\xi \lceil u_0 \rceil} + 1} (0) \iddots} (0) & \qquad & \text{$\varphi$ occurs $u_0 + 1$ times} \\
        & > \varphi_{\ddots \varphi_{\G_{\xi \lceil ||T(u_0)|| \rceil} + 1} (0) \iddots} (0) & \qquad & \text{$\varphi$ occurs $||T(u_0)|| + 2$ times} \\
        & > \varphi_{\ddots \varphi_{\G_{\xi \lceil ||T(u_0)|| \rceil} + 1} (0) \iddots} (0) \cdot ||T(u_0)|| & \qquad & \text{$\varphi$ occurs $||T(u_0)|| + 1$ times} \\
        & = \G_\xi [||T(u_0)||] \cdot ||T(u_0)|| \\
        & =\om^{\sigma [||T(u_0)||]} \cdot ||T(u_0)||.
    \end{alignat*}
\end{enumerate}

We know by Lemma \ref{convergence lemma} that $p_{< \om^\sigma} (u) = p_{\om^{\sigma[||T(u_0)||]} \cdot ||T(u_0)||} (u)$ and that $p_{< \om^\sigma} (t) = p_{\om^{\sigma[||T(u_0)||]} \cdot ||T(u_0)||} (t)$.
Combining the claim and Lemma \ref{convergence lemma}, we know that $p_{\om^{\sigma[||T(u_0)||]} \cdot ||T(u_0)||} (u) = p_{\om^\sigma[u_0]} (u)$ and that $p_{\om^{\sigma[||T(u_0)||]} \cdot ||T(u_0)||} (t) = p_{\om^\sigma[u_0]} (t)$.
Finally, since $u$ is $(1 + \om^\sigma[u_0])$-large, by inductive hypothesis we also know that $p_{\om^\sigma[u_0]} (u) = p_{\om^\sigma[u_0]} (t)$ and so we get $p_{< \om^\sigma} (u) = p_{< \om^\sigma} (t)$.
We then peel off one application of $\varphi_\sigma$ in each side and obtain $p_{\om^\sigma} (u) = p_{\om^\sigma} (t)$ as required.

We are left with the case $\nu = \rho + \om^\sigma$ with $\rho \ggeq \om^\sigma$ and $\sigma >0$.
Recall that $p_{< \om^\sigma} (\pbar_\rho (u)) = \lim_{\tau \rightarrow \om^\sigma} p_\tau (\pbar_\rho (u))$ and by Lemma \ref{convergence lemma} the limit converges after the ordinal $\om^{\sigma[||p_\rho (u)||]} \cdot ||p_\rho (u) ||$.
Analogously $p_{< \om^\sigma} (\pbar_\rho (t)) = \lim_{\tau \rightarrow \om^\sigma} p_\tau (\pbar_\rho (t))$ and the limit converges after $\om^{\sigma[||p_\rho (t)||]} \cdot ||p_\rho (t) ||$.
Since $p_\rho (u)$ and $p_\rho (t)$ are subterms of $T(u_0)$ (by Lemma \ref{basic peeling}) then $||T(u_0)|| \ge ||p_\rho(u)||$ and $||T(u_0)|| \ge ||p_\rho(t)||$.
Therefore, by the claim within the proof of case $\nu = \om^\sigma$ above, we get that $\om^\sigma [u_0] > \om^{\sigma[||p_\rho (u)||]} \cdot ||p_\rho(u)||$ and $\om^\sigma [u_0] > \om^{\sigma[||p_\rho (t)||]} \cdot ||p_\rho(t)||$.
Now $u$ is $(1 + \rho + \om^\sigma [u_0])$-large because $u^-$ is.
Hence, by inductive hypothesis, $p_{\rho + \om^\sigma [u_0]} (u) = p_{\rho +\om^\sigma [u_0]} (t)$.
We get that $p_{< \om^\sigma} (\pbar_\rho (u)) = p_{< \om^\sigma} (\pbar_\rho (t))$ and peeling off one application of $\varphi_\sigma$ in each side we obtain $p_{\rho + \om^\sigma} (u) = p_{\om^\sigma} (\pbar_\rho (u)) = p_{\om^\sigma} (\pbar_\rho (t)) = p_{\rho + \om^\sigma} (t)$ as required.
\end{proof}
		
We can now go back to the proof of Theorem \ref{lowbound}.
		
\begin{proof}[Proof of Theorem \ref{lowbound}]
Towards a contradiction suppose that $t$ is a $((1 + \g) \uplus \a)$-large homogeneous subset of $s$ for the coloring $\overline{c}$ we defined above.
Write $t$ as $t_\a \conc t_\g$ with $t_\a < t_\g$, where $t_\a$ is $\a$-size and $t_\g$ is $(1 + \g)$-large.
For each $i \le |t_\a|$, let $t^{-i}$ be the set obtained by removing the first $i$ elements from $t$.
Since $t_\g \subseteq t^{-i}$, we have that $t^{-i}$ is $(1 + \g)$-large.
By homogeneity, we have that $\overline{c} (t^{-i})$ has the same color for all $i \le |t_\a|$.
We distinguish four cases based on the color of the homogeneous set $t$.\medskip
			
{\bf Case 1:} Suppose that $t$ is homogeneous of color $j < k$.
Then, for every $i \le |t_\a|$, we have that $p_\g (t^{-i}) \in d^{-1}(j)$.
Recall that $d^{-1}(j)$ is a subset of $\a \ctimes k$ isomorphic to $\a$ and that we denoted by $\pi_j : d^{-1}(j) \rightarrow \a$ the order isomorphism.
Define a map
\begin{align*}
    f \colon & \,\, t_\a \cup \{t_\g (0)\} \rightarrow \a \\
    & \,\, t(i) \mapsto \pi_j(p_\g(t^{-i})).  
\end{align*}
Recall that since $\overline{c} (t^{-i}) = j < k$, we have that $\z_{t^{-i}}$ does not exists, and hence that $p_\g (t^{-i}) > p_\g(t^{-(i+1)})$.
Therefore $f$ is strictly decreasing.
Furthermore, the ordinal $p_\g (t^{-i})$ is a subterm of $T(t(i))$ which belongs to $d^{-1}(j)$ and hence $|\pi_j(p_\g (t^{-i}))| < ||T(t(i))|| < t(i)$ by Definition \ref{doublenorm} and by definition of the set $M$ in Lemma \ref{infinite set M}.
In other words, for all $m \in t_\a \cup \{t_\g (0)\}$, $|f(m)| < m$.	
The Estimation Lemma \ref{lem: estimation lemma} states that no such $f$ can exist because $t_\a \cup \{t_\g (0)\}$ is $(1 \uplus \a)$-size.\medskip
			
{\bf Case 2:} Suppose that $t$ is homogeneous of color $k$.
In this case we have that
$$\z_{t^{-0}} > \z_{t^{-1}} > \cdots > \z_{t^{-|t_\a|}}$$
and all these ordinals belong to $\g + 1$.
We now define a map
\begin{align*}
    f \colon & \,\, t_\a \cup \{t_\g (0)\} \rightarrow \g + 1 \\
    & \,\, t(i) \mapsto \z_{t^{-i}}.
\end{align*}
			
To apply the Estimation Lemma, we need to show that $t_\a \cup \{t_\g (0)\}$ is $(1 \uplus (\g + 1))$-large which is equivalent to $t_\a$ being $(\g+1)$-large.
First we notice that it is $(1 \uplus \a)$-large since $t_\a$ is $\a$-size. 
Then recall that we are assuming $\a > \g$ which is equivalent to $\a \ge \g+1$.
If $\a = \g + 1$ we are done.
If $\a > \g + 1$ then by Lemma \ref{prop: goodnorm} and because $t \subset M$ (and $M(0) > |\g + 1|$ by Lemma \ref{infinite set M}) we know that for each $i$ if $\a[t(0), \dots, t(i)] > \g + 1$ then $\a[t(0), \dots, t(i+1)] \ge \g + 1$.
Since $\a[t_\a]=0$ there exists $i < |t_\a|$ such that $\a [t(0), \ldots, t(i)] = \g + 1$.
Therefore $t_\a$ is $(\g + 1)$-large.
			
Notice that, by Lemma \ref{lemma: Stau}, $\z_{t^{-i}} \in S(T(t(i)))$.
It follows that for $m = t(i)$,
$$|f(m)| = |\z_{t^{-i}}| \leq ||T(t(i))|| < t(i) = m.$$
The Estimation Lemma states that no such $f$ exists. \medskip
			
{\bf Case 3:} Suppose that $t$ is homogeneous of color $k+1$.
In this case we have
$$\z_{t^{-0}} = \z_{t^{-1}} = \cdots = \z_{t^{-|t_\a|}}$$
and let $\z$ denote this ordinal.
Notice that $\z \ne 0$ because $T(t)$ is a strictly decreasing sequence of ordinals.

We claim that the ordinals $p_\z (t^{-i})$ originate from distinct occurrences of the elements of the multiset $\Sub (T(t(0)))$, and hence there should be no more than $||T(t(0))||$ many of them.
Since $||T(t(0))|| < t(0)$ by Lemma \ref{infinite set M},
this implies that $t_\a$ has less than $t(0)$ elements.
However, since $\a \ge \om$ and $M(0) > | \om |$, we get that $t_\a$ is $\om$-large (the argument is the same used to show that $t_\a$ is $(\g + 1)$-large in Case $2$ above), which contradicts the fact that $t_\a$ has less than $t(0)$ elements. 
			
Let us now prove the claim.
We have
\begin{gather*}
   p_{< \z}(t^{-0}) > p_{< \z}(t^{-1}) > \cdots > p_{< \z}(t^{-|t_\a|}) > p_{< \z}(t^{-|t_\a| - 1})\\ 
   \text{and} \quad p_\z (t^{-0}) \leq p_\z (t^{-1}) \leq \cdots \le p_\z(t^{-|t_\a|}) \le p_\z(t^{-|t_\a| - 1}).
\end{gather*}
First we show that $\z$ must be a successor ordinal so that $p_{< \z}$ is $p_{\z - 1}$.
Towards a contradiction suppose that $\z$ is limit.
By Remark \ref{rem:limitcase}, it must be $p_{< \z}(t^{-0}) = \varphi_\d (0)$ for some $\d > 0$ and $p_{< \z}(t^{-1}) = 0$.
In particular, the strictly decreasing chain above must have length $2$ and so $t_\a=\emptyset$, a contradiction.

Going back to the proof of the claim, recall that, for every $i \le |t_\a|$, $p_\z (t^{-i})$ is an exponent of a term in the Cantor normal form of $p_{\z-1}(t^{-i})$, we claim that for this to be the case, we must have the following situation:
\begin{align*}
    p_{\z-1} (t^{-0}) & = \om^{\a_0} + \cdots + \om^{\a_{n_1}} + \om^{p_\z (t^{-1})} + \cdots + \om^{\a_{n_0}} + \om^{p_\z(t^{-0})} + \tau_0 \\
    p_{\z-1} (t^{-1}) & = \om^{\a_0} + \cdots + \om^{\a_{n_1}} + \om^{p_\z(t^{-1})} + \cdots + \om^{\a_{n_0}} + \tau_1 \\
    p_{\z-1} (t^{-2}) & = \om^{\a_0} + \cdots + \om^{\a_{n_1}} + \tau_2 \\
    \vdots \qquad  & \vdots \qquad \vdots
\end{align*}
By definition of $p_\z (t^{-0})$, we know that the Cantor normal forms of the ordinals $p_{\z-1} (t^{-0})$ and $p_{\z-1} (t^{-1})$ are equal up to a certain term $\om^{\a_{n_0}}$, and then in $p_{\z-1} (t^{-0})$ we have the term $\om^{p_\z (t^{-0})}$, while the tail $\tau_1$ of $p_{\z-1}(t^{-1})$ is strictly smaller than $\om^{p_\z (t^{-0})}$.
Since $p_\z (t^{-0}) \leq p_\z (t^{-1})$, we must	have that $p_\z (t^{-1})$ is an exponent of the Cantor normal form of $p_{\z-1} (t^{-1})$ that shows up before $n_0$.
This implies that $p_\z (t^{-1})$ is one of the exponents of $p_{\z-1} (t^{-0})$ too.
Since $\om^{p_\z (t^{-0})} > \tau_1$, we must have that $\om^{p_\z (t^{-1})}$ comes strictly before $\om^{p_\z (t^{-0})}$ in the Cantor normal form of $p_{\z-1} (t^{-0})$.
That is, even if they are equal	as ordinals, they are different subterms of $^{p_\z (t^{-0})}$.
Continuing like this we get that each $p_{\z} (t^{-i})$ is a different exponent in the Cantor normal form of ${p_{\z-1} (t^{-0})}$.
Finally, recall that ${p_{\z-1} (t^{-0})}$ is already a subterm of $T(t(0))$ by Lemma \ref{basic peeling}.
Therefore, we get that that each $p_\z (t^{-i})$ for $i \le |t_\a|$ originates from a distinct occurrence of an element of $\Sub(T(t(0)))$, as required. \medskip
			
{\bf Case 4:} Suppose that $t$ is homogeneous of color $k+2$.
In this case we have
$$\z_{t^{-0}} < \z_{t^{-1}} < \z_{t^{-2}} < \cdots < \z_{t^{-|t_\a|}}.$$
Notice that, by definition of the color $k+2$, they must all be defined while $\z_{t^{-|t_\a|-1}}$ is either defined and larger than $\z_{t^{-|t_\a|}}$ or undefined.
For each $i$, let us denote $\z_{t^{-i}}$ by $\z_i$ for simplicity.
Again we first show that each $\z_i$ is a successor ordinal.
Each of them must be non zero because $T(t)$ is strictly decreasing.
Towards a contradiction suppose that $\z_i$ is limit.
Then by Remark \ref{rem:limitcase}, it must be $p_{< \z_i}(t^{-i}) = \varphi_\d (0)$ for some $\d > 0$ and $p_{< \z_i}(t^{-i-1}) = 0$.
But in this case $\z_{i+1}$ must be defined and strictly smaller than $\z_i$, a contradiction.

We claim that for $i>0$, $p_{\z_i-1} (t^{-i})$ is a proper subterm of  $p_{{\z_{i-1}}-1} (t^{-i+1})$.
It would then again follow that the ordinals $p_{\z_i} (t^{-i})$ are all different subterms of $T(t(0))$ and we would get the same contradiction as in the previous case.
For ease the notation we only consider the case $i = 1$.
Since $p_{\z_0-1} (t^{-0}) > p_{\z_0-1}(t^{-1})$ and $p_{\z_0} (t^{-0}) \leq p_{\z_0} (t^{-1})$, as in Case 3, we have that $p_{\z_0} (t^{-1})$ is an exponent of	the Cantor normal form of $p_{\z_0-1} (t^{-0})$ that shows up before $p_{\z_0} (t^{-0})$.
Since $\z_1 > \z_0$, we have that $p_{\z_1-1} (t^{-1})$ is a subterm of $p_{\z_0} (t^{-1})$.
We thus have that $p_{\z_1-1} (t^{-1})$ is a proper subterm of $p_{\z_0-1} (t^{-0})$.
\end{proof}

\section{The Upper Bound}\label{sec:upbound}

In this section we show that for $k \in \N$ and countable ordinals $\a$ and $\g$, $\Ram(\a)^{1 + \g}_{< \om} \le \varphi_{\log \g}(\a \cdot \om)$.
This is achieved by proving the following theorem, which is the main result of this section.
		
\begin{theorem}\label{upbound}
Let $k \in \N$ and $\a$ and $\g$ be countable ordinals.	
Then 
$$\Ram(\a)^{1 + \g}_{k} \le \varphi_{\log \g}(\a \ctimes k).$$
\end{theorem}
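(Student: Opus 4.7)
The plan is to prove Theorem \ref{upbound} by transfinite induction on $\g$, mirroring the Cantor normal form decomposition that defines $\varphi_{\log \g}$ in Definition \ref{philog}.

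The base case $\g = 0$ is immediate: $\varphi_{\log 0}$ is the identity, and the required bound $\Ram(\a)^{1}_k \le \a \ctimes k$ is exactly the Barrier Pigeonhole Principle (Corollary \ref{pigeon}, or its asymmetric refinement Theorem \ref{generalpigeon} with all colors equal to $\a$).

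For the inductive step, write $\g = \om^\d + \g''$ in Cantor normal form with $\om^\d$ the leading (greatest) term, so that
$$\varphi_{\log \g}(\a \ctimes k) = \varphi_\d\bigl(\varphi_{\log \g''}(\a \ctimes k)\bigr).$$
By the inductive hypothesis $\Ram(\a)^{1+\g''}_k \le \varphi_{\log \g''}(\a \ctimes k)$, so it suffices to establish the stepping-up inequality $\Ram(\a)^{1+\g}_k \le \varphi_\d\bigl(\Ram(\a)^{1+\g''}_k\bigr)$. I would prove this by an Erd\H{o}s--Rado style argument adapted to the SD-barrier framework. Given a SD-barrier $\C$ of height $1 + \g$, decompose it via Corollary \ref{dec} as $\C = \C_1 \oplus \C_2$ with $\height(\C_1) = \om^\d$ and $\height(\C_2) = \g''$; then, for a coloring $c \colon [s]^\C \to k$, the $\C_2$-size head of each $\C$-size tuple acts as an index, and one builds an auxiliary coloring whose homogeneity can be obtained by invoking the inductive hypothesis on the parameter $\g''$. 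A secondary induction on $\d$ handles the iteration: for $\d = 0$ the step corresponds to a single $\varphi_0$-exponentiation (the classical Erd\H{o}s--Rado step for pairs), and for general $\d$ one iterates this $\om^\d$ many times, using the fixed-point definition of $\varphi_\d$ to repackage the iterated $\varphi_{\d'}$-exponentiations (for $\d' < \d$) into a single application of $\varphi_\d$.

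A key structural tool throughout is that SD-barriers are closed under $\oplus$ (Lemma \ref{lowerpart}) and can be assembled to any prescribed countable height, so the target SD-barrier of height $\varphi_{\log \g}(\a \ctimes k)$ is produced by layering the inductive constructions. The main obstacle is the delicate bookkeeping of barrier heights: one must produce an actual SD-barrier of the exact required height (rather than merely bounding an ordinal, as demanded by Definition \ref{ramord}); handle limit stages for limit $\d$ via continuous limits of SD-barrier constructions; and reconcile the $1 + \g$ indexing with the finite versus infinite cases of $\g$, where $1 + \g$ behaves differently than $\g$ itself.
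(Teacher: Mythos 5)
Your top-level plan matches the paper's: transfinite induction on $\g$ along its Cantor normal form, base case via the Barrier Pigeonhole Principle (Theorem~\ref{generalpigeon}), a Mathias-forcing stepping-up argument, and absorption of the iteration via the fixed-point property of the Veblen functions. But two essential mechanisms are missing from the sketch, and they are exactly where the height bookkeeping is delicate.

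First, the stepping-up inequality $\Ram(\a)^{1+\g}_k \le \varphi_\d\bigl(\Ram(\a)^{1+\g''}_k\bigr)$ obscures where the $\ctimes k$ factor must live. The paper isolates a prehomogeneity statement (Theorem~\ref{keyth}): there is a $\SD$-barrier $\B$ of height $\le \varphi_{\log\g}(\a)$ --- with \emph{no} $k$-factor --- such that every $k$-coloring of $[s]^{\one\oplus\C}$ admits a $(\one\oplus\C\oplus\A)$-size subset on which the color depends only on the minimum. Theorem~\ref{upbound} is then derived by applying this prehomogeneity result not to $\A$ but to the pigeonhole barrier $\B(\A)$ of height $\a\ctimes k$, so the $\ctimes k$ is applied exactly once, as the argument of the innermost Veblen function. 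Stepping up the full Ramsey ordinal directly without this factoring risks silently reapplying the pigeonhole at every level and piling up $\ctimes k$ factors. Second, the Mathias-forcing construction (Lemmas~\ref{caseomdelta} and~\ref{casenu2}) does not use $k$ colors at each stage: when $|s|$ elements of the head have been placed, the auxiliary coloring has $k^{(2^{|s|})}$ colors, since it must code the behavior of $c$ on every subset of the current head. Controlling the resulting heights requires invoking Theorem~\ref{upbound} itself for the strictly smaller exponents $1 + \height(\C_n) < \om^\d$ --- a \emph{mutual} recursion between Theorems~\ref{keyth} and~\ref{upbound} --- so that $\height(\E_{s\conc\la n\ra}) \le \varphi_{\log\height(\C_n)}\bigl(\height(\F_{s\conc\la n\ra})\ctimes k^{(2^{|s|})}\bigr)$, and it is the fixed-point property of $\varphi_\d$ that then swallows all the lower-indexed Veblen compositions together with the $\ctimes k^{(2^{|s|})}$ factors. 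Your ``secondary induction on $\d$'' is not quite the right shape: the construction in Lemma~\ref{casenu2} is a transfinite recursion along $T(\A)$ (so an induction on $\a$), and the $\om^\d$-iteration is absorbed through that mutual recursion rather than through a separate induction on $\d$.
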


Recall that this means that for all $\SD$-barriers $\A$ and $\C$ with $\height(\A)=\a$, $\height(\C) = 1 + \g$ and $\base(\A)$ a final segment of $\base(\C)$, there exists a $\SD$-barrier $\B$ such that $\base(\B) = \base(\A)$, $\height(\B) \le \varphi_{\log \g} (\a \ctimes k)$ and $\B \rightarrow (\A)^\C_k$.

Since $\sup_{k \in \N} \a \ctimes k = \a \cdot \om$ and the Veblen functions are normal, Theorem \ref{upbound} implies that $\Ram(\a)^{1 + \g}_{< \om} \le \varphi_{\log \g}(\a \cdot \om)$.
		
Notice that if $\g = 0$, by Corollary \ref{pigeon} we already know that $\Ram(\a)^1_k = \a \ctimes k$.
Since by Definition \ref{philog} $\varphi_{\log 0}$ is the identity function we obtain Theorem \ref{upbound} in this case and from now on we can assume that $\g \ge 1$.
Moreover, if both $\a$ and $\g$ are below $\om$ then $\Ram(\a)^{1 + \g}_{k} < \om \le \varphi_{\log \g}(\a \ctimes k)$.
Hence we can restrict to the case $\max \{\a, \g\} \ge \om$.
		
First we show that we can reduce to a simpler case, useful for the rest of the section.
		
\begin{lemma}\label{smallerexp}
Let $k \in \N$ and $\A$, $\B$ and $\C$ be fronts where $\base(\B) = \base(\A)$ is a final segment of $\base(\C)$.
If $\B \rightarrow (\A)^{\one \oplus \C}_k$ then $\B \rightarrow (\A)^\C_k$  
\end{lemma}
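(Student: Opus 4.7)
The plan is to use a simple buffer trick: since a $(\one \oplus \C)$-size set is just a $\C$-size set with one trailing element appended, I can promote any $k$-coloring of $\C$-size subsets to a $k$-coloring of $(\one \oplus \C)$-size subsets, apply the assumed arrow, and then discard the buffer from the homogeneous witness.

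First I fix a $(\one \oplus \B)$-size set $s$ and a coloring $c \colon [s]^\C \rightarrow k$; the task is to produce a homogeneous $(\C \oplus \A)$-size subset of $s$. Every $u \in [s]^{\one \oplus \C}$ decomposes uniquely as $u = v \conc \la \max u \ra$ with $v \in [s]^\C$, so I define $\tilde c \colon [s]^{\one \oplus \C} \rightarrow k$ by $\tilde c(u) = c(v)$. Applying the hypothesis $\B \rightarrow (\A)^{\one \oplus \C}_k$ to $s$ and $\tilde c$ supplies a homogeneous $((\one \oplus \C) \oplus \A)$-size subset $t \subseteq s$. Unfolding the definition of $\oplus$ twice, $t$ splits as $t = t_\A \conc t_\C \conc \la M \ra$ with $t_\A \in \A$, $t_\C \in \C$ and $M = \max t$.

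Next I set $t' = t \setminus \{M\} = t_\A \conc t_\C$; directly from the definition of $\oplus$, $t' \in \C \oplus \A$, so $t'$ is $(\C \oplus \A)$-size. For homogeneity under $c$, fix any $v \in [t']^\C$. Since $\max v \leq \max t' < M$, the concatenation $v \conc \la M \ra$ is a $(\one \oplus \C)$-size subset of $t$, and homogeneity of $t$ for $\tilde c$ forces $c(v) = \tilde c(v \conc \la M \ra)$ to equal the common color $i < k$. Hence $t'$ is a homogeneous $(\C \oplus \A)$-size subset for $c$, proving $\B \rightarrow (\A)^\C_k$.

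There is really no serious obstacle here --- this is a clean padding argument --- and the only thing one has to watch is the paper's (slightly counter-intuitive) convention that in $\A \oplus \B$ the sequence has the $\B$-part first and the $\A$-part afterward. As long as the two-step decomposition of $((\one \oplus \C) \oplus \A)$-size sets is unpacked in the correct order, the verification goes through in a few lines.
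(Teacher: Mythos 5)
Your proof is correct and takes essentially the same route as the paper's: both promote the coloring $c$ on $\C$-size sets to a coloring on $(\one \oplus \C)$-size sets by ignoring the last element (your $\tilde c(u) = c(u^*)$ is the paper's $c'$), apply the hypothesis, and then discard the top element of the resulting homogeneous set. The only difference is cosmetic: you unfold the two-level $\oplus$ decomposition explicitly, while the paper just writes $t^*$.
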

		
\begin{proof}
Let $s$ be a $(\one \oplus \B)$-size set and let $c \colon [s]^\C \rightarrow k$ be a coloring.
Define the coloring $c' \colon [s]^{\one \oplus \C} \rightarrow k$ as $u \mapsto c(u^*)$. 
Then by assumption there exists a set $t \in [s]^{\one \oplus \C \oplus \A}$ $c'$-homogeneous, say of color $i < k$.
By definition $t^*$ is $(\C \oplus \A)$-size: we claim that $t^*$ is $c$-homogeneous of color $i$.
Let $u \in [t^*]^\C$.
Then $\max t > \max u$ and so $u \conc \la \max t \ra \in [t]^{\one \oplus \C}$. 
It follows that $c(u) = c'(u \conc \la \max t \ra) = i$.
\end{proof}
		
We need the following extension of the arrow notation.
		
\begin{definition}
Let $k \in \N$ and $\A$, $\B$, $\C$ and $\D$ be fronts such that $\base(\B) = \base(\A)$ and $\base(\D)$ are final segments of $\base(\C)$.  
We write
$$\B \rightarrow (\A)^{\one \oplus \C \oplus \D \rightarrow \one \oplus \D}_k$$
to mean that for each $(\one \oplus \B)$-size $s$ and each $k$-coloring $c$ of $ [s]^{\one \oplus \C \oplus \D}$, there exists a $(\one \oplus \C \oplus \A)$-size $t$ subset of $s$ such that the coloring of an element of $[t]^{\one \oplus \C \oplus \D}$ depends only on its $(\one \oplus \D)$-size initial segment.
We say that $t$ is a $(\one \oplus \D)$-prehomogeneous set for $c$.
\end{definition}
		
Next we state a key intermediate result to prove Theorem \ref{upbound}.
		
\begin{theorem}\label{keyth}
Let $k \in \N$ and $\A$ and $\C$ be $\SD$-barriers where $\height(\A) = \a$, $\height(\C) = \g$ and $\base(\A)$ is a final segment of $\base(\C)$.
There exists a $\SD$-barrier $\B$ with $\base(\B) = \base(\A)$ and $\height(\B) \le \varphi_{\log \g} (\a)$ such that $\B \rightarrow (\A)^{\one \oplus \C \rightarrow \one}_k$.
\end{theorem}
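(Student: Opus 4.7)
The plan is to establish Theorem~\ref{keyth} by transfinite induction on $\g$, strengthening the statement so that the inductive hypothesis admits a parameter $\SD$-barrier $\D$ and the conclusion is prehomogeneity with respect to $\one \oplus \D$ rather than merely $\one$. The strengthened form asserts that, for any $\SD$-barriers $\A$, $\C$, $\D$ with compatible bases and any $k \in \N$, there exists a $\SD$-barrier $\B$ with $\height(\B) \le \varphi_{\log \g}(\a)$ such that $\B \rightarrow (\A)^{\one \oplus \C \oplus \D \rightarrow \one \oplus \D}_k$. The original theorem is the case in which $\D$ is the degenerate front, and the stronger version is what allows the induction to splice intermediate prehomogenizations together.

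The base case $\g = 0$ is immediate: $\C$ is degenerate, so $\one \oplus \C \oplus \D = \one \oplus \D$, and $\B = \A$ works with bound $\varphi_{\log 0}(\a) = \a$. When the Cantor normal form of $\g$ has more than one term, write $\g = \b_0 + \om^{\b_1}$ with $\b_0 \ggeq \om^{\b_1}$ and $\b_0 > 0$, and use the $\SD$-decomposition $\C = \C_0 \oplus \C_1$ supplied by Corollary~\ref{dec}. I would apply the strengthened inductive hypothesis twice in sequence: first with parameter $\C_0 \oplus \D$ and target front $\C_1$, which makes the coloring depend only on its $(\one \oplus \C_0 \oplus \D)$-size prefix; then with parameter $\D$ and target $\C_0$ (whose Cantor normal form is strictly shorter), to reach full $(\one \oplus \D)$-prehomogeneity. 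The composition of the two height bounds yields $\varphi_{\log \b_0}(\varphi_{\b_1}(\a)) = \varphi_{\log \g}(\a)$ via the composition identity for $\varphi_{\log}$ on the Cantor normal form.

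The genuinely new content lies in the indecomposable case $\g = \om^{\b_1}$, where the bound is the single Veblen value $\varphi_{\b_1}(\a)$, and here I would sub-induct on $\b_1$. For $\b_1 = 0$ one is coloring $(\one \oplus \one)$-size sets, that is pairs, and a classical Erd\H{o}s--Rado style tree construction produces a $\SD$-barrier of height $\om^\a = \varphi_0(\a)$ on which the color of a pair depends only on its smaller element. For $\b_1 > 0$, the value $\varphi_{\b_1}(\a)$ is built by fixed-point iteration of $\varphi_{\b_1'}$ for $\b_1' < \b_1$; I would construct $\B$ as a $\SD$-barrier whose decomposition tracks the fundamental sequence of $\varphi_{\b_1}(\a)$ in the nested, regular system from Definition~\ref{inducedsystem}, with each successive building block produced by the sub-inductive hypothesis at a smaller index in the $\varphi$-hierarchy.

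The main obstacle is precisely this indecomposable case for $\b_1 > 0$: one must assemble the iterated prehomogenizations into a single $\SD$-barrier whose height is exactly $\varphi_{\b_1}(\a)$, while ensuring that the resulting structure is strong enough to effect the coloring reduction globally and not just stage by stage along the iteration. Matching the $\SD$-decomposition produced by the construction to the nested fundamental sequence of $\varphi_{\b_1}(\a)$ on $\G_\zeta$ is the technical heart of the argument, and is where I expect the interaction between the Veblen hierarchy and the barrier machinery to be most delicate.
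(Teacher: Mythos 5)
Your overall framing — strengthen the statement with a parameter $\D$, prove by transfinite induction on $\g$, treat the indecomposable case $\g = \om^{\b_1}$ separately, and splice two prehomogenizations together in the decomposable case — matches the spirit of the paper's argument (which isolates the parametrized version as Lemma~\ref{casenu2}). However, the decomposable-case composition as you state it is reversed. In $\one \oplus \C \oplus \D$ with $\C = \C_0 \oplus \C_1$, the paper's $\oplus$ convention puts the $\C_1$-size piece (the last, smallest term $\om^{\b_1}$) immediately after the $\D$-size prefix, while the $\C_0$-size piece (the leading part $\b_0$) sits next to the final singleton. So there is no $(\one \oplus \C_0 \oplus \D)$-size initial segment of a $(\one \oplus \C \oplus \D)$-size set; your first application, with target $\C_1$ and parameter $\C_0 \oplus \D$, produces a statement about $[s]^{\one \oplus \C_1 \oplus \C_0 \oplus \D}$, which is a different front. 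The correct order is to cut $\C_0$ first (target $\C_0$, parameter $\C_1 \oplus \D$, reducing to $(\one \oplus \C_1 \oplus \D)$-prehomogeneity), then $\C_1$. Your height bound $\varphi_{\log \b_0}(\varphi_{\b_1}(\a))$ is in fact the one this corrected order produces, so this is a recoverable bookkeeping slip rather than a conceptual error.

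The larger gap is the indecomposable case $\g = \om^{\b_1}$ with $\b_1 > 0$, which you flag as the hard part but leave essentially unproved. Your plan — sub-induct on the Veblen index $\b_1$ and build a barrier whose $\SD$-decomposition tracks the fundamental sequence of $\varphi_{\b_1}(\a)$ — is not what the paper does and misses the key mechanism. The paper's Lemmas~\ref{caseomdelta} and~\ref{casenu2} induct on $\a = \height(\A)$, not on $\b_1$: for each $n$ they obtain $\B'_n$ from the inductive hypothesis applied to $\A_n$, but the passage from $\B'_n$ to a barrier $\B_n$ with $\B_n \rightarrow (\B'_n)^{\one \oplus \C_n}_k$ is a full Ramsey step (not a prehomogeneity step), whose height bound $\varphi_{\log \height(\C_n)}(\height(\B'_n) \ctimes k)$ is supplied by Theorem~\ref{upbound} at $\g' = \height(\C_n) < \om^{\b_1}$. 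Hence the transfinite induction on $\g$ must carry Theorem~\ref{upbound} alongside Theorem~\ref{keyth} (the paper points out that the derivation is modular), and your proposal never invokes the full Ramsey theorem for smaller $\g$. Moreover the parametrized version needs a Mathias-forcing construction — recursively thinning a reservoir $F_n$ while coding the dependence on the $\D$-size prefix as a coloring with $k^{(2^n)}$ colors — that you have not sketched.
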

		
We delay the proof of Theorem \ref{keyth} to the end of the section.
We show first how to derive Theorem \ref{upbound}.
		
\begin{proof}[Proof of Theorem \ref{upbound}]
Recall that $\A$ and $\C$ are $\SD$-barriers with $\height(\A)=\a$, $\height(\C) = 1 + \g$ and $\base(\A)$ a final segment of $\base(\C)$.
Recall also that we assume $\g \ge 1$.
If $\g$ is finite then by decomposability $\C = [\base(\C)]^{1 + \g}$. 
Therefore $\C = \one \oplus \C'$ where $\C' = [\base(\C)]^\g$.
If $\g$ is infinite then $\height(\C) = 1 + \g = \g$ and in this case let $\C' = \C$.
		
Let $\B(\A) = \B(\A, \ldots, \A)$ be the $\SD$-barrier of Definition \ref{pigeonfront}.
By Lemma \ref{pigeonheight} $\height (\B (\A)) = \a \ctimes k$ and by Theorem \ref{generalpigeon} $\B(\A) \rightarrow (\A)^{\one}_k$.
By Theorem \ref{keyth} there exists a $\SD$-barrier $\B$ such that $\height(\B) \le \varphi_{\log \g}(\a \ctimes k)$ and $\B \rightarrow (\B(\A))^{\one \oplus \C' \rightarrow \one}_k$.
We claim that it suffices to prove that $\B \rightarrow (\A)^{\one \oplus \C'}_k$.
Indeed, if $\g$ is finite then $\one \oplus \C' = \C$, while if $\g$ is infinite then $\one \oplus \C' = \one \oplus \C$ so that Lemma \ref{smallerexp} yields $\B \rightarrow (\A)^\C_k$ for the same $\SD$-barrier $\B$.

We thus need to prove $\B \rightarrow (\A)^{\one \oplus \C'}_k$.
Let $s$ be $(1 \oplus \B)$-size and let $c \colon [s]^{\one \oplus \C'} \rightarrow k$ be any coloring. 
We need to prove that there exists a homogeneous $(\one \oplus \C' \oplus \A)$-size set for $c$.
Let $t \subseteq s$ be the $(\one \oplus \C' \oplus \B(\A))$-size set which is $\one$-prehomogeneous for $c$.
By definition $t = t_0' \conc t_1'$ for some $\B(\A)$-size $t_0'$ and some $(\one \oplus \C')$-size $t_1'$ and so
we may also write $t= t_0' \conc \la \min t_1' \ra \conc t_1'^-$.
We define $t_0 = t_0' \conc \la \min t_1' \ra$ and $t_1 = t_1'^-$: in this way $t= t_0 \conc t_1$ where $t_0$ is $(\one \oplus \B(\A))$-size and $t_1$ is $(\one \oplus \C'_{\max t_0})$-size.

We claim that for each $n \in t_0$, $\la n \ra \conc t_1$ is $(\one \oplus \C')$-large and so for each $n \in t_0$ there exists $r \subseteq t$ such that $\la n \ra \conc r$ is $(\one \oplus \C')$-size.
Suppose that for some $n$, $\la n \ra \conc t_1$ is $(\one \oplus \C')$-small and let $v$ be a $(\one \oplus \C')$-size set such that $\la n \ra \conc t_1 \sqsubset v$.
Then since $n \le \max t_0$ it follows that $\la \max t_0 \ra \conc t_1$ and $v$ contradict the smoothness of $\one \oplus \C'$.

Let $\overline{c} \colon t_0 \rightarrow k$ be the coloring defined as $\overline{c}(n) = c (\la n \ra \conc r)$ where $r$ is any $(\one \oplus \C'_n)$-size subset of $t$.
By the claim above and since $t$ is $\one$-prehomogeneous for $c$, $\overline{c}$ is well defined.
We obtain that $\overline{c}$ is a $k$-coloring of a $(\one \oplus \B(\A))$-size set and by Theorem \ref{generalpigeon} there exists $u \in [t_0]^{\one \oplus \A}$ $\overline{c}$-homogeneous, say of color $i < k$.

We claim that $u \conc t_1$ is a $(\one \oplus \C' \oplus \A)$-large homogeneous set for $c$.			
We start by showing the largeness.
Since $u^*$ is $\A$-size by definition, we only need to prove that $\la \max u \ra \conc t_1$ is $(\one \oplus \C')$-large.
Towards a contradiction assume that $\la \max u \ra \conc t_1$ is $(\one \oplus \C')$-small and let $v$ be a $(\one \oplus \C')$-size set such that $\la \max u \ra \conc t_1 \sqsubset v$.
Then since $\la \max t_0 \ra \conc t_1 = t_1'$ is $(\one \oplus \C')$-size and $\max u \le \max t_0$, we get that $v$ and $t_1'$ contradict the smoothness of $\one \oplus \C'$.
Therefore $\la \max u \ra \conc t_1$ is $(\one \oplus \C')$-large.
We are left to prove that $u \conc t_1$ is $c$-homogeneous.
Since $\la \max t_0 \ra \conc t_1$ is $(\one \oplus \C')$-size then $t_1$ must be $(\one \oplus \C')$-small.
For this reason a $(\one \oplus \C')$-size subset $w$ of $u \conc t_1$ cannot be entirely contained in $t_1$ and this means that $\min w \in u$.
By definition $c(w) = \overline{c} (\min w) = i$.

Therefore $u \conc t_1$ is $c$-homogeneous and so $s$ has a homogeneous $(\one \oplus \C' \oplus \A)$-size set included in $u \conc t_1$.
\end{proof}	

Notice that the above proof is modular: if Theorem  \ref{keyth} holds when $\g'<\g$ then Theorem \ref{upbound} holds when $\g'<\g$ as well. 
Thus we can prove Theorem \ref{keyth} by transfinite induction on $\g$ using as hypothesis also  Theorem \ref{upbound} for smaller ordinals.

The next lemma deals with the case $\g = 1$, so that $\varphi_{\log \g} = \varphi_0$ is ordinal exponentiation with base $\om$.
		
\begin{lemma}\label{caseg=1}
Let $k \in \N$ and $\A$ be a $\SD$-barrier with $\height(\A) = \a \ge \om$.
There exists a $\SD$-barrier $\B$ with $\base(\B) = \base(\A)$, $\height(\B) \le \om^\a$ and such that $\B \rightarrow (\A)^{\two \rightarrow \one}_k$.
\end{lemma}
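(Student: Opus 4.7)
My plan is to construct $\B$ as a $\SD$-barrier of height $\om^\a$ whose tree structure mirrors that of $\A$, and then verify the arrow property by a standard greedy iterative pigeonhole argument.

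For the construction, I will proceed recursively along the tree $T(\A)$, using the pseudosystem of fundamental sequences on $\A$ from Lemma \ref{barrierfundseq}. To each $s \in T(\A)$ I assign a $\SD$-barrier $\B^s$ of height $\om^{\b^\A_s}$ (where $\b^\A_s = \height_\A(s)$), setting $\B^s := \one$ if $s \in \A$, and otherwise letting $\B^s_n$ be the $n$-fold $\oplus$-sum of $\B^{s \conc \la n \ra}$ (with appropriate base restrictions so that successive $\oplus$-summands begin in a final segment of the previous). By Remark \ref{heightunionsum}, $\height(\B^s_n) = \om^{\b^\A_s[n]} \cdot n$, and hence $\height(\B^s) = \sup_n(\om^{\b^\A_s[n]} \cdot n + 1) = \om^{\b^\A_s}$; taking $\B := \B^{\la \ra}$ yields a $\SD$-barrier of height $\om^\a$ with $\base(\B) = \base(\A)$.

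To verify the arrow, suppose $s$ is $(\one \oplus \B)$-size and $c : [s]^2 \to k$ is given. I initialize $S_0 := s^*$ (which is $\B$-size) and iterate greedily. At stage $i \ge 0$ I set $n_i := \min S_i$; then $S_i \setminus \{n_i\}$ is $\B^{\la n_0, \ldots, n_i \ra}_{n_i}$-size, which by construction decomposes as an $n_i$-fold $\oplus$-sum of $\B^{\la n_0, \ldots, n_i, \cdot\ra}$-large pieces. Applying the Barrier Pigeonhole Principle (Theorem \ref{generalpigeon}) to the coloring $m \mapsto c(\{n_i, m\})$ --- which is valid provided $n_i \ge k$, arrangeable by restricting $\base(\A)$ to $\{n \ge 2k\}$ --- I obtain a color $d(n_i) \in k$ and a subset $S_{i+1} \subseteq S_i \setminus \{n_i\}$ that is $\om^{\b^\A_{\la n_0, \ldots, n_i \ra}}$-large and on which $c(\{n_i, \cdot\}) \equiv d(n_i)$.

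I stop the iteration at the stage $R$ where $\b^\A_{\la n_0, \ldots, n_{R-1}\ra} = 1$. At this point $S_R$ is $\om$-large, so $|S_R| \ge \min S_R + 1 \ge 2k + 1$. A final pigeonhole step applied to $c(\{n_R, \cdot\})$ over $S_R \setminus \{n_R\}$ (of cardinality at least $n_R \ge 2k$) produces two elements $n_{R+1} < n_{R+2}$ lying in a common color class. The resulting set $\{n_0, \ldots, n_{R+2}\}$ is $(\two \oplus \A)$-size: its prefix $\{n_0, \ldots, n_R\}$ lies in $\A$ because $\b^\A_{\la n_0, \ldots, n_R\ra} = 1[n_R] = 0$, and $\{n_{R+1}, n_{R+2}\}$ is the additional pair. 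Min-homogeneity is automatic: pairs $\{n_i, n_j\}$ with $i \le R-1$ and $j > i$ have color $d(n_i)$ because $n_j \in S_{i+1}$ by construction, pairs $\{n_R, n_{R+1}\}$ and $\{n_R, n_{R+2}\}$ share the color from the final pigeonhole (call it $d(n_R)$), and the pair $\{n_{R+1}, n_{R+2}\}$ sets $d(n_{R+1})$.

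The main obstacle I anticipate is the precise recursive definition of $\B$ together with the base bookkeeping in the iterated $\oplus$-decompositions, and verifying that the Barrier Pigeonhole Principle applies at each greedy step with the correct sub-barrier largeness in the chain $S_0 \supseteq S_1 \supseteq \cdots \supseteq S_R$. A secondary subtlety is ensuring $(\two \oplus \A)$-size rather than merely $(\one \oplus \A)$-size, which is handled by stopping the greedy one step early (at $\b^\A_{\la \cdot \ra} = 1$) to guarantee an $\om$-large final candidate pool from which two fresh elements can be extracted via a last pigeonhole.
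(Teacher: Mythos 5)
Your construction differs from the paper's in a genuine way, and there is a gap in the verification step that I don't see how to close without essentially reinventing the paper's approach.

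The paper's proof (by induction on $\a$) defines, for each $n$, a barrier $\B'_n$ with $\B'_n \rightarrow (\A_n)^{\two\rightarrow\one}_k$ (from the inductive hypothesis) and then takes $\B_n$ to be \emph{the pigeonhole barrier} $\B(\B'_n,\ldots,\B'_n)$ of Definition~\ref{pigeonfront}, of height $\height(\B'_n)\ctimes k$, so that Theorem~\ref{generalpigeon} guarantees $\B_n \rightarrow (\B'_n)^{\one}_k$. You instead define $\B^s_n$ as an \emph{$n$-fold $\oplus$-sum} of $\B^{s\conc\la n\ra}$. This is a different barrier, and your verification breaks exactly where you invoke ``the Barrier Pigeonhole Principle'' to each stage of the greedy iteration.

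The concrete gap: Theorem~\ref{generalpigeon} (and Definition~\ref{generalramord}) are \emph{existential} in $\B$ — they say there exists \emph{some} $\SD$-barrier of height $\a_0\cplus\ldots\cplus\a_{k-1}$ satisfying the arrow, namely $\B(\A_0,\ldots,\A_{k-1})$; they do \emph{not} say that every $\SD$-barrier of sufficient height satisfies it. Your $(\B^{s_i})_{n_i}$ has height $\om^{\b^\A_{s_{i+1}}}\cdot n_i$, which is $\ge \om^{\b^\A_{s_{i+1}}}\ctimes k$ when $n_i \ge k$, but height alone does not transfer the arrow: for smooth barriers $\D$ and $\E$ on the same base with $\height(\D)>\height(\E)$, a $\D$-size set need not be $\E$-large (this is easy to see by varying the ``growth rate'' of the barrier, e.g.\ comparing a slow-growing $\SD$-barrier of large height against a fast-growing one of smaller height). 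So you cannot conclude that a $k$-coloring of the $(\B^{s_i})_{n_i}$-size set $S_i\setminus\{n_i\}$ yields a homogeneous $\B^{s_{i+1}}$-large subset. To salvage the argument you would need to prove the pigeonhole specifically for your $n$-fold sums — a nontrivial claim that is not in the paper and that I don't believe follows from what you've cited — or replace the $n$-fold sum with the pigeonhole barrier $\B(\B^{s\conc\la n\ra},\ldots,\B^{s\conc\la n\ra})$ ($k$ copies), at which point you have essentially rederived the paper's construction.

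There are also two secondary issues worth flagging. First, the pigeonhole arrow $\B\rightarrow(\cdots)^{\one}_k$ applies to $(\one\oplus\B)$-size sets, but your $S_i\setminus\{n_i\}$ is $(\B^{s_i})_{n_i}$-size, which does not begin with a $\one$-summand, so the hypothesis of the arrow is not literally met; the paper handles this by keeping track of the extra leading element throughout. Second, the set $\B=\B^{\la\ra}$ you build by recursion is a block but not automatically a smooth barrier (the summands $\B^{s_i}_{n}$ for different $n$ need not cohere smoothly across $n$); the paper applies Lemma~\ref{comb} and Corollary~\ref{dec} to convert the block into a $\SD$-barrier of the same height and base, a step your proposal omits.
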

		
\begin{proof}
We proceed by induction on $\a$.
For each $n$ let $\B'_n$ be a $\SD$-barrier with $\base(\B'_n) = \base(\A_n)$ and $\height(\B'_n) \le \om^{\height(\A_n)}$ such that $\B'_n \rightarrow (\A_n)^{\two \rightarrow \one}_k$.
If $\height(\A_n)$ is infinite then $\B'_n$ exists by induction hypothesis.
If $\height(\A_n) = 0$ then $\A_n$ is the degenerate front and $\B'_n = \one$ works, so in this case $\height(\B'_n) = 1 = \om^0 = \om^{\height(\A_n)}$.
If $0 < \height(\A_n) < \om$ then by classical Ramsey theory there exists $m \in \N$ such that $\B'_n$ can be taken as the $\SD$-barrier of $m$-tuples so in this case $\height(\B'_n) = m < \om \le \om^{\height(\A_n)}$.

Let $\B_n$ be a $\SD$-barrier (whose existence is proved in Section \ref{sec:pigeonhole}) of height $\height(\B'_n) \ctimes k$ and such that $\B_n \rightarrow (\B'_n)^{\one}_k$.
We define $\B = \{s: s^- \in \B_{\min s} \}$ (which justifies the name $\B_n$ for the previous $\SD$-barriers).
Such $\B$ is a block but not necessarily a $\SD$-barrier.
However by Lemma \ref{comb} and by Corollary \ref{dec} there exists a $\SD$-barrier with same height and same base of $\B$ and such that each of its elements is $\B$-large.
Thus we may assume that $\B$ is a $\SD$-barrier and we claim that $\B \rightarrow (\A)^{\two \rightarrow \one}_k$.
			
Let $s$ be $(\one \oplus \B)$-size, $n = \min s$ and $c \colon [s]^{\two} \rightarrow k$.
We show that there exist $t' \in [s^-]^{\one \oplus \B'_n}$ and $i < k$ such that $c(\la n,m \ra) = i$ for all $m \in t'$. 
In order to do that, we define a new coloring $\overline{c} \colon s^- \rightarrow k$ as $\overline{c}(m) = c(\la n,m \ra)$.
Since $s^-$ is $(\one \oplus \B_n)$-size and $\B_n \rightarrow (\B'_n)^{\one}_k$, there exists a $\overline{c}$-homogeneous $(\one \oplus \B'_n)$-size set, which is the desired $t'$.
			
By inductive hypothesis and since $\B'_n \rightarrow (\A_n)^{\two \rightarrow \one}_k$ there exists a set $t'' \in [t']^{\two \oplus \A_n}$ which is $\one$-prehomogeneous for $c$.
Therefore $\la n \ra \conc t'' \in [s]^{\two \oplus \A}$ is $\one$-prehomogeneous for $c$.
			
Notice that
\begin{multline*}
    \height(\B) = \sup_{n \in \base(\B)} (\height(\B_n) + 1) = \sup_{n \in \base(\B)} ((\height(\B'_n) \ctimes k) + 1) \\
    \le \sup_{n \in \base(\B)} ((\om^{\height(\A_n)} \ctimes k) +1) \le \om^\a
\end{multline*}
as required.
\end{proof}
		
The next Lemma deals with the case $\g = \om^\d$. 
		
\begin{lemma}\label{caseomdelta}
Assume that Theorem \ref{upbound} holds for each $\g' < \om^\d$.
Let $k \in \N$ and $\A$ and $\C$ be $\SD$-barriers where $\height(\A) = \a$, $\height(\C) = \om^\d \ge \om$ and $\base(\A)$ is a final segment of $\base(\C)$.
There exists a $\SD$-barrier $\B$ with $\base(\B) = \base(\A)$, $\height(\B) \le \varphi_\d (\a)$ and such that $\B \rightarrow (\A)^{\one \oplus \C \rightarrow \one}_k$.
\end{lemma}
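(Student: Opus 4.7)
The plan is transfinite induction on $\a$, modeled on the two-layer construction in Lemma \ref{caseg=1} but with an outer Ramsey step whose exponent is $\one \oplus \C_n$ (of height $1 + \b_n < \om^\d$) rather than $\one$.

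\textbf{Base case} $\a = 0$. The barrier $\A$ is degenerate, so $(\one \oplus \C \oplus \A)$-size coincides with $(\one \oplus \C)$-size, and by Corollary \ref{intuitive bar} no proper subset of a $(\one \oplus \C)$-size set is itself $(\one \oplus \C)$-size, so $\one$-prehomogeneity is vacuous. Refining $\C$ to a $\SD$-barrier via Lemma \ref{comb} and Corollary \ref{dec} yields $\B$ with $\height(\B) = \om^\d \le \varphi_\d(0)$, the inequality using that $\varphi_\d(0)$ is an $\epsilon$-number.

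\textbf{Inductive step} $\a > 0$. For each $n \in \base(\C)$, set $\a_n = \height(\A_n) < \a$ and $\b_n = \height(\C_n) < \om^\d$. The inductive hypothesis applied to $\A_n$ produces a $\SD$-barrier $\B^{(n)}$ of height at most $\varphi_\d(\a_n)$ with $\B^{(n)} \to (\A_n)^{\one \oplus \C \to \one}_k$ (inner layer). The assumed Theorem \ref{upbound} for $\g' = \b_n < \om^\d$, applied with target $\B^{(n)}$ and exponent $\SD$-barrier $\one \oplus \C_n$ of height $1 + \b_n$, produces a $\SD$-barrier $\B^*_n$ of height at most $\varphi_{\log \b_n}(\varphi_\d(\a_n) \ctimes k)$ with $\B^*_n \to (\B^{(n)})^{\one \oplus \C_n}_k$ (outer layer). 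Define $\B = \{s : s^- \in \B^*_{\min s}\}$ and refine to a $\SD$-barrier via Lemma \ref{comb} and Corollary \ref{dec}.

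\textbf{Verification.} Given $s$ that is $(\one \oplus \B)$-size and $c \colon [s]^{\one \oplus \C} \to k$, let $n = \min s$; then $s^-$ is $(\one \oplus \B^*_n)$-size. The key structural observation is that every $(\one \oplus \C)$-size subset of $s$ with minimum $n$ factors as $\la n \ra \conc w$ where $w$ is $(\one \oplus \C_n)$-size. Define $\overline{c}(w) = c(\la n \ra \conc w)$ on $[s^-]^{\one \oplus \C_n}$. Applying the outer arrow yields a monochromatic $((\one \oplus \C_n) \oplus \B^{(n)})$-size subset $s'' \subseteq s^-$ of some color $i$. Extract a $(\one \oplus \B^{(n)})$-size initial portion $s'''$ of $s''$ and apply the inner arrow to obtain a $\one$-prehomogeneous $(\one \oplus \C \oplus \A_n)$-size subset $t'' \subseteq s'''$. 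Then $\la n \ra \conc t''$ is $(\one \oplus \C \oplus \A)$-size (since $\la n \ra \conc (\A_n$-size$)$ is $\A$-size) and $\one$-prehomogeneous for $c$: the subsets with minimum $n$ take constant color $i$ (from outer monochromatization), while subsets with minimum $m \in t''$ have color depending only on $m$ (from inner prehomogenization).

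\textbf{Height bound.} Since $\b_n < \om^\d$, every exponent in the Cantor normal form of $\b_n$ lies below $\d$, and $\varphi_\d(\a)$ is a common fixed point of $\varphi_{\d'}$ for all $\d' < \d$ and is closed under $\ctimes$. Hence each $\varphi_{\log \b_n}(\varphi_\d(\a_n) \ctimes k) < \varphi_\d(\a)$, giving $\height(\B) = \sup_n (\height(\B^*_n) + 1) \le \varphi_\d(\a)$.

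\textbf{Main obstacle.} The crux is the reduction of the outer Ramsey from a $(\one \oplus \C)$-colored exponent (of height $\om^\d$, out of reach of the inductive hypothesis) to a $(\one \oplus \C_n)$-colored exponent (of height $1 + \b_n < \om^\d$). This requires the structural factorization $\la n \ra \conc w$ with $w$ being $(\one \oplus \C_n)$-size, which relies on the barrier decomposition $\C = \{\la m \ra \conc u : u \in \C_m\}$ specialized to $m = n$. The subsequent bookkeeping for base-alignment (ensuring $\base(\B^{(n)}) \subseteq \base(\C_n)$ so that the inner Ramsey applies) and for verifying that $\la n \ra \conc t''$ has the claimed $\SD$-barrier size follows routinely from the definitions.
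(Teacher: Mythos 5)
Your proof is correct and follows essentially the same strategy as the paper's: an inner Ramsey step via the inductive hypothesis on $\a_n$, an outer Ramsey step via the assumed Theorem~\ref{upbound} with exponent $\one \oplus \C_n$ (of height $1 + \height(\C_n) < \om^\d$), the gluing $\B = \{s : s^- \in \B^*_{\min s}\}$ followed by Lemma~\ref{comb} and Corollary~\ref{dec}, and the height bound using that $\varphi_\d(\a)$ is a common fixed point of $\varphi_{\d'}$ for all $\d' < \d$. The only cosmetic deviation is in the base case, where the paper simply takes $\B = \C$ (already a $\SD$-barrier), so the refinement you invoke is unnecessary; otherwise the construction and verification are the same.
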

		
\begin{proof}
We proceed by induction on $\a$.
If $\a=0$ then $\A$ is the degenerate front and $\one \oplus \C \oplus \A = \one \oplus \C$.
Take $\B = \C$: then $\B \rightarrow (\A)^{\one \oplus \C \rightarrow \one}_k$ and $\height(\B) = \varphi_0 (\d) \le \varphi_0(\varphi_\d (0)) = \varphi_\d(0)$ as required.
		
Suppose $\a > 0$ and, by inductive hypothesis, for each $n$ let $\B_n'$ be a $\SD$-barrier with $\base(\B'_n) = \base(\A_n)$ such that $\height(\B'_n) \le \varphi_\d (\height(\A_n))$ and $\B'_n \rightarrow (\A_n)^{\one \oplus \C \rightarrow \one}_k$.  
Let $\B_n$ be a $\SD$-barrier (whose existence is ensured by the hypothesis on Theorem \ref{upbound} and the fact that $\height(\C_n) < \om^\d$ for each $n$) such that $\height(\B_n) \le \varphi_{\log \height(\C_n)} (\height(\B'_n) \ctimes k)$ and $\B_n \rightarrow (\B'_n)^{\one \oplus \C_n}_k$.
We define $\B = \{s: s^- \in \B_{\min s} \}$ (which justifies the name $\B_n$ for the previous $\SD$-barriers).
As in the proof of Lemma \ref{caseg=1} we may assume that $\B$ is a $\SD$-barrier and we claim that $\B \rightarrow (\A)^{\one \oplus \C \rightarrow \one}_k$.
			
Let $s$ be $(\one \oplus \B)$-size, $n = \min s$ and $c \colon [s]^{\one \oplus \C} \rightarrow k$.
We show that there exist $t' \in [s^-]^{\one \oplus \B'_n}$ and $i < k$ such that $c(\la n \ra \conc r) = i$ for all $r \in [t']^{\one \oplus \C_n}$. 
We define $\overline{c}: [s^-]^{\one \oplus \C_n} \rightarrow k$ as $\overline{c}(r) = c(\la n \ra \conc r)$.
Since $s^-$ is $(\one \oplus \B_n)$-size and $\B_n \rightarrow (\B'_n)^{\one \oplus \C_n}_k$ then there exists a $\overline{c}$-homogeneous $(\one \oplus \C_n \oplus \B'_n)$-size set.
Its $(\one \oplus \B'_n)$-size prefix is the desired $t'$.
			
Since $\B'_n \rightarrow (\A_n)^{\one \oplus \C \rightarrow \one}_k$, there exists a set $t'' \in [t']^{\one \oplus \C \oplus \A_n}$ which is $\one$-prehomogeneous for $c$.
Therefore $\la n \ra \conc t'' \in [s]^{\one \oplus \C \oplus \A}$ is $\one$-prehomogeneous for $c$.
			
We are left to prove that the height of $\B$ is bounded by $\varphi_\d(\a)$.
Indeed
\begin{align*}
	\height(\B) & = \sup_{n \in \base(\B)} (\height(\B_n) + 1) \le \sup_{n \in \base(\B)} (\varphi_{\log \height(\C_n)} (\height(\B'_n) \ctimes k) +1) \\
	& \le \sup_{n \in \base(\B)} (\varphi_{\log \height(\C_n)} (\varphi_\d (\height(\A_n)) \ctimes k) +1) \\
	& \le \sup_{n \in \base(\B)} (\varphi_{\log \height(\C_n)} (\varphi_\d(\a))) = \varphi_\d(\a)
\end{align*}
where the last inequality follows because the Veblen functions are strictly increasing, while the last equality follows by the fact that $\log \height(\C_n) < \d$ for each $n$ and so $\varphi_\d (\a)$ is a fixed point of $\varphi_{\log \height(\C_n)}$.
\end{proof}

The next step is to prove a strengthening of Lemma \ref{caseomdelta} which is useful for the proof of the remaining cases of Theorem \ref{keyth}.
		
\begin{lemma}\label{casenu2}
Assume that Theorem \ref{upbound} holds for each $\g' < \om^\d$.
Let $k \in \N$ and $\A$, $\C$ and $\D$ be $\SD$-barriers where $\height(\A) = \a$, $\height(\C) = \om^\d$ and $\base(\A)$ and $\base(\D)$ are final segments of $\base(\C)$.
There exists a $\SD$-barrier $\B$ with $\height(\B) \le \varphi_\d (\a)$ and $\base(\B) = \base(\A)$ such that $\B \rightarrow (\A)^{\one \oplus \C \oplus \D \rightarrow \one \oplus \D}_k$.
\end{lemma}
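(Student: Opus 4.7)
The approach is to generalize the Mathias-style forcing from the proof of the commented-out Lemma \ref{casenu} (which treats the subcase $\C = \one$, i.e.\ $\d = 0$) to general $\C$ of height $\om^\d$. The target barrier $\B = \F_{\la\ra}$ is obtained as the root of a family $(\F_s)_{s \in T(\A)}$ of $\SD$-barriers defined by downward recursion on $T(\A)$: at leaves $s \in \A$, $\F_s$ is essentially $\C$ (or $\one \oplus \C$) restricted to the appropriate base, while at an internal node $s$ one first constructs, for each $n > \max s$, an auxiliary $\SD$-barrier $\E_{s \conc \la n \ra}$ via a Ramsey-type result for colorings of sets of height below $\om^\d$, and then sets $\F_s = \{t : t^- \in \E_{s \conc \la \min t \ra}\}$, passing to a $\SD$-barrier of the same base and height via Lemma \ref{comb} and Corollary \ref{dec}.

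Given a coloring $c \colon [s]^{\one \oplus \C \oplus \D} \rightarrow k$ on a $(\one \oplus \B)$-size set $s$, the barrier $\F_{\la\ra}$ is designed so that a Mathias-like construction produces a sequence $f_0 < f_1 < \dots$ in $s$ and shrinking reservoirs $F_0 \supseteq F_1 \supseteq \dots$ preserving the invariant that whenever two members $t_1, t_2 \in [s]^{\one \oplus \C \oplus \D}$ share the same $(\one \oplus \D)$-size initial segment $v \subseteq u_n = \{f_0, \dots, f_n\}$ and the rest of each lies in $F_n$, then $c(t_1) = c(t_2)$. The inductive step from $F_n$ to $F_{n+1}$ amounts, for each of the finitely many new $(\one \oplus \D)$-size initial segments $v$ involving $f_{n+1}$, to finding a homogeneous subset for the coloring $r \mapsto c(v \cup r)$ on $(\one \oplus \C_{\max v})$-size subsets $r$ of $F_n$. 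Since $\height(\one \oplus \C_{\max v}) < \om^\d$, we invoke the hypothesis that Theorem \ref{upbound} holds below $\om^\d$ to obtain these Ramsey barriers; handling all such $v$'s at once requires a product-of-colorings with $k^{(2^{n+1})}$ many values, mirroring the factor $k^{(2^{|s|})}$ appearing in the proof of Lemma \ref{casenu}.

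The construction terminates once $u_m$ is $\A$-large, and the desired $(\one \oplus \C \oplus \A)$-size $(\one \oplus \D)$-prehomogeneous subset of $s$ is obtained by concatenating the $\A$-size initial segment of $u_m$ with a $(\one \oplus \C)$-size tail from $F_m$: this works because any $(\one \oplus \C \oplus \D)$-size subset of the resulting set has its $(\one \oplus \D)$-size initial segment inside $u_m$, and is therefore controlled by the stability invariant maintained during the construction.

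The main obstacle is to verify that $\height(\F_s)$ stays bounded by $\varphi_\d(\height(\A_s))$, so that in particular $\height(\B) \le \varphi_\d(\a)$. Analogously to how $\om^\a = \varphi_0(\a)$ arises in Lemma \ref{casenu}, one argues by induction on $\height(\A_s)$; the crucial fact making the bookkeeping go through is that $\varphi_\d(\a)$ is a fixed point of $\varphi_{\log \g'}$ for every $\g' < \om^\d$, which absorbs the Ramsey heights accumulated at each shrinking step and prevents them from pushing the final bound past $\varphi_\d(\a)$. The delicate part is that, unlike in Lemma \ref{casenu}, at each node the Ramsey step employs colorings whose own heights range over all ordinals $1 + \height(\C_{\max v}) < \om^\d$, so one needs the fixed-point absorption to be uniform across all such $\g'$ and all $n$.
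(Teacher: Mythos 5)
Your proposal is correct and follows essentially the same route as the paper: construct the family $(\F_s)_{s \in T(\A)}$ by recursion on $T(\A)$ with leaves assigned (copies of) $\C$, use the inductive hypothesis on Theorem \ref{upbound} to produce the auxiliary $\SD$-barriers $\E_{s\conc\la n\ra}$ for colorings with $k^{(2^{|s|})}$ colors, run the Mathias-style shrinking construction preserving the $(\one\oplus\D)$-prefix invariant, and bound $\height(\F_s)\le\varphi_\d(\height(\A_s))$ by induction using that $\varphi_\d(\cdot)$ is a fixed point of $\varphi_{\log\g'}$ for every $\g'<\om^\d$. The minor index shift ($k^{(2^{n+1})}$ vs.\ $k^{(2^n)}$) is just a bookkeeping offset and does not affect the argument.
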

		
\begin{proof}
Starting from $\A$ and $\C$ we define sets of $\SD$-barriers $(\F_s)_{s \in T(\A)}$ and $(\E_s)_{s \in T(\A)}$ where $\base(\E_s) = \base(\F_s) = \base(\A) \setminus \{0, \ldots, \max s\}$. 
If $s \in \A$ then we define $\F_s = \C$ restricted to $\base(\A) \setminus \{0, \ldots, \max s\}$ (the restriction is still a $\SD$-barrier by Remark \ref{sd restriction}).
If $s \in T(\A) \setminus \A$, assume that for each $n > \max s$ we have defined $\F_{s \conc \la n \ra}$. 
Let $\E_{s \conc \la n \ra}$ be a $\SD$-barrier (whose existence is ensured by the assumption on Theorem \ref{upbound} and by the fact that $1 + \height(\C_n) < \om^\d$ for each $n$) with $\base(\E_{s \conc \la n \ra}) = \base(\F_{s \conc \la n \ra})$, $\height(\E_{s \conc \la n \ra}) \le \varphi_{\log \height(\C_n)} (\height(\F_{s \conc \la n \ra}) \ctimes k^{(2^{|s|})})$ and $\E_{s \conc \la n \ra} \rightarrow (\F_{s \conc \la n \ra})^{\one \oplus \C_n}_{k^{(2^{|s|})}}$.
We define $\F_s$ as the set $\{t \subseteq \base(\A) : t^- \in \E_{s \conc \la \min t \ra}\}$. 
Then $\F_s$ is clearly a block with $\base(\F_s) = \base(\A) \setminus \{0, \ldots, \max s\}$. 
By Lemma \ref{comb} and Corollary \ref{dec} we may assume that $\F_s$ is a $\SD$-barrier with same height and same base. 

We claim that for each $s \in T(\A)$, $\height(\F_s) \le \varphi_\d (\height(\A_s))$.
We proceed by induction on $\height(\A_s)$.
If $s \in \A$ then $\F_s$ is the restriction of $\C$ to a final segment of its base and by Lemma \ref{sb tail height} $\height(\F_s) = \om^\d \le \varphi_\d(0) = \varphi_\d (\height(\A_s))$.
Now suppose that $s \in T(\A) \setminus \A$ and the claim holds for all $s \conc \la n \ra \in T(\A)$. 
By construction, $\height(\E_{s \conc \la n \ra}) \le \varphi_{\log \height(\C_n)} (\height(\F_{s \conc \la n \ra}) \ctimes k^{(2^{|s|})})$.
It follows that 
\begin{align*}
	\height(\F_s) & = \sup_{n \in \base(\F_s)} (\height(\E_{s \conc \la n \ra}) +1 ) \\
    & \le \sup_{n \in \base(\F_s)} (\varphi_{\log \height(\C_n)} (\height(\F_{s \conc \la n \ra}) \ctimes k^{(2^{|s|})}) +1) \\
	& \le \sup_{n \in \base(\F_s)} (\varphi_{\log \height(\C_n)} (\varphi_\d(\height(\A_{s \conc \la n \ra})) \ctimes k^{(2^{|s|})}) +1) \\
	& \le \sup_{n \in \base(\F_s)} \varphi_{\log (\height(\C_n))} (\varphi_\d(\height(\A_s))) = \varphi_\d (\height(\A_s)) 
\end{align*}
where the last inequality follows since the Veblen functions are strictly increasing and the last equality follows by the fact that for every $n$ $\varphi_{\log \height(\C_n)}$ is a composition of Veblen functions each of index strictly smaller than $\d$ and so $\varphi_\d (\height(\A_s))$ is a fixed point of $\varphi_{\log \height(\C_n)}$.

In particular $\height(\F_{\la \ra}) \le \varphi_\d(\a)$, so if we set $\B = \F_{\la \ra}$ it suffices to show that $\B \rightarrow (\A)^{\one \oplus \C \oplus \D \rightarrow \one \oplus \D}_k$.
			
Let $s$ be $(\one \oplus \B)$-size and let $c \colon [s]^{\one \oplus \C \oplus \D} \rightarrow k$.
Using an idea similar to Mathias forcing, we recursively construct sequences $f_0, f_1, \ldots \in s$ and $F_0, F_1, \ldots \subseteq s$ such that for each $n$ (letting $u_n = \la f_0, \ldots, f_n \ra$) the following holds:
\begin{enumerate}[(1)]
	\item $f_0 < f_1 < \ldots < f_n < F_n$;
	\item $f_n = \min F_{n-1}$;
	\item $F_n \subseteq F_{n-1}$;
	\item $F_n$ is $(1 \oplus \F_{u_n})$-large;
	\item let $t_1,t_2 \in [s]^{\one \oplus \C \oplus \D}$ and let $t_1'$ and $t_2'$ be their $(\one \oplus \D)$-size prefixes. If $t_1' = t_2' \subseteq u_n$ and $t_1 \setminus t_1', t_2 \setminus t_2' \subseteq F_n$ then $c(t_1) = c(t_2)$.
\end{enumerate}
			
We initialize the construction by setting $F_{-1} = s$ and leaving $f_{-1}$ undefined, so that $u_{-1}=\la \ra$. 
Clauses $1 \div 4$ are trivial.
Since $(\one \oplus \C \oplus \D)$-size sets have at least two elements, clause $5$ is trivial too.
			
Suppose we are at stage $n$ and that we have already defined $f_{n-1}$ and $F_{n-1}$ satisfying clauses $1 \div 5$.
Let $f_n = \min F_{n-1}$ (so that clause $2$ is satisfied) and let $E_n = F_{n-1}^-$.
Let $c'_n \colon [E_n]^{\one \oplus \C_{f_n}} \rightarrow k^{(2^n)}$ be the function that maps $e \in [E_n]^{\one \oplus \C_{f_n}}$ to (a code for) the function $[u_{n-1}]^\D \rightarrow k$ defined by $d \mapsto c(d \conc \la f_n \ra \conc e)$.
Since $u_{n-1}$ has $2^n$ subsets, then $[u_{n-1}]^\D$ has at most $2^n$ elements.
Therefore there are at most $k^{(2^n)}$ functions $[u_{n-1}]^\D \rightarrow k$ and so $c'_n$ is well defined.
We know that $F_{n-1}$ is $(1 \oplus \F_{u_{n-1}})$-large and so $F_{n-1}^*$ is $\F_{u_{n-1}}$-large. 
It follows that $E_n^{*} = F_{n-1}^{*-}$ is $\E_{u_n}$-large and hence $E_n$ is $(\one \oplus \E_{u_n})$-large. 
By construction $\E_{u_n} \rightarrow (\F_{u_n})^{\one \oplus \C_{f_n}}_{k^{(2^n)}}$ and so there is a $(\one \oplus \C_{f_n} \oplus \F_{u_n})$-large subset of $E_n$ which is $c'_n$-homogeneous.
We call such set $F_n$ and notice that $F_n$ is $(\one \oplus \F_{u_n})$-large (hence clauses $3$ and $4$ are satisfied).
Since $F_n \subseteq E_n$, $f_n < F_n$ and clause $1$ holds too.
We are left to prove that clause $5$ is satisfied.
Let $t_1,t_2 \in [s]^{\one \oplus \C \oplus \D}$ and let $t_1'$ and $t_2'$ be their $(\one \oplus \D)$-size prefixes.
Moreover assume that $t_1' = t_2' \subseteq u_n$ and $t_1 \setminus t_1', t_2 \setminus t_2' \subseteq F_n$.
If $t_1' = t_2' \subseteq u_{n-1}$ then since $F_n \subseteq F_{n-1}$, by clause $5$ of stage $n-1$ we are done. 
If not then $t_1'^* = t_2'^* \in [u_{n-1}]^\D$ and $\max t_1' = \max t_2' = f_n$. 
Since $F_n$ is $c'_n$-homogeneous, then $d \mapsto c(d \conc \la f_n \ra \conc t_1 \setminus t_1')$ and $d \mapsto c(d \conc \la f_n \ra \conc t_2 \setminus t_2')$ are the same function (notice that $t_1 \setminus t_1'$ and $t_2 \setminus t_2'$ are $(\one \oplus \C_{f_n})$-size).
Therefore $c(t_1) = c(t_1'^* \conc \la f_n \ra \conc t_1 \setminus t_1') = c(t_2'^* \conc \la f_n \ra \conc t_2 \setminus t_2') = c(t_2)$.

The construction stops at a stage $m$ such that $u_m \in \A$. 
At this stage $\F_{u_m}$ is the restriction of $\C$ to a final segment of its base by definition and so $F_m$ is $(\one \oplus \C)$-large.
Let $w \sqsubseteq F_m$ be $(\one \oplus \C)$-size and let $f_{m+1} = \min w$.
The set $v = u_m \conc w$ is $(\one \oplus \C \oplus \A)$-size. 
Let $t_1, t_2 \in [v]^{\one \oplus \C \oplus \D}$ and let $t_1'$ and $t_2'$ be their $(\one \oplus \D)$-size prefixes.
Moreover assume that $t_1'=t_2'$.
Suppose first that $\max t_1' = \max t_2' \ge f_{m+1}$ then it must be $t_1 \setminus t_1'^*, t_2 \setminus t_2'^* \in [w]^{\one \oplus \C}$ and since $\one \oplus \C$ is a barrier we get $t_1 \setminus t_1'^* = w = t_2 \setminus t_2'^*$.
Hence $t_1 = t_2$ and $c(t_1) = c(t_2)$.
Suppose now that $\max t_1' = \max t_2' = f_n$ for some $n \le m$ and so $t_1'^* = t_2'^* \in [u_{n-1}]^\D$. 
Since $F_n$ is $c'_n$-homogeneous and $\la f_{n+1}, \ldots, f_m \ra \conc w \subseteq F_n$, then $d \mapsto c(d \conc \la f_n \ra \conc t_1 \setminus t_1')$ and $d \mapsto c(d \conc \la f_n \ra \conc t_2 \setminus t_2')$ are the same function.
Therefore 
$$c(t_1) = c(t_1'^* \conc \la f_n \ra \conc t_1 \setminus t_1') = c(t_2'^* \conc \la f_n \ra \conc t_2 \setminus t_2') = c(t_2).$$
Hence $v$ is a $(\one \oplus \C \oplus \A)$-size $(1 \oplus \D)$-prehomogeneous set for $c$.
\end{proof}	
		
We are finally ready to prove Theorem \ref{keyth}, concluding Section \ref{sec:upbound}.
		
\begin{proof}[Proof of Theorem \ref{keyth}]
Recall that Theorem \ref{keyth} states the following.
Given $k \in \N$ and $\A$ and $\C$ $\SD$-barriers where $\height(\A)=\a$, $\height(\C) = \g$ and $\base(\A)$ is a final segment of $\base(\C)$, there exists a $\SD$-barrier $\B$ with $\height(\B) \le \varphi_{\log \g} (\a)$ and the same base as $\A$ such that $\B \rightarrow (\A)^{\one \oplus \C \rightarrow \one}_k$.
			
We proceed by induction on $\g$. 
If $\g = 0$ then $\B=\A$ works.

If $\g>0$ the induction hypothesis and the modularity of the proof that Theorem \ref{keyth} implies Theorem \ref{upbound} yield that Theorem \ref{upbound} holds for every $\g'<\g$.
Then case $\g = \om^\d$ is proved in Lemmas \ref{caseg=1} and \ref{caseomdelta}.

Suppose now that $\g = \om^\d + \nu$ with $\om^\d \ggeq \nu > 0$.
Since $\C$ is decomposable, there exist $\SD$-barriers $\C_0$, $\C_1$ such that $\height(\C_0) = \om^\d$, $\height(\C_1) = \nu$ and $\C = \C_0 \oplus \C_1$.
By inductive hypothesis (since $\height(\C_1) = \nu < \g$) there exists a $\SD$-barrier $\B'$ with $\height(\B') \le \varphi_{\log \nu} (\a)$ and $\base(\B') = \base(\A)$ such that $\B' \rightarrow (\A)^{\one \oplus \C_1 \rightarrow \one}_k$.
By Lemma \ref{casenu2} there exists a $\SD$-barrier $\B$ with $\height(\B) \le \varphi_\d (\varphi_{\log \nu} (\a)) = \varphi_{\log \g} (\a)$ and $\base(\B) = \base(\B')$ such that $\B \rightarrow (\B')^{\one \oplus \C_0 \oplus \C_1 \rightarrow \one \oplus \C_1}_k$.
We claim that $\B \rightarrow (\A)^{\one \oplus \C \rightarrow \one}_k$.
			
Let $s$ be $(\one \oplus \B)$-size and $c \colon [s]^{\one \oplus \C} \rightarrow k$ be a coloring.
Since $\C = \C_0 \oplus \C_1$ and $\B \rightarrow (\B')^{\one \oplus \C_0 \oplus \C_1 \rightarrow \one \oplus \C_1}_k$, there exists $t \in [s]^{\one \oplus \C_0 \oplus \B'}$ which is $(\one \oplus \C_1)$-prehomogeneous for $c$.
Let $t = t_0' \conc t_1'$ where $t_0'$ is $\B'$-size $t_0'$ and $t_1'$ is $(\one \oplus \C_0)$-size.
We may also write $t= t_0' \conc \la \min t_1' \ra \conc t_1'^-$.
We define $t_0 = t_0' \conc \la \min t_1' \ra$ and $t_1 = t_1'^-$ so that $t= t_0 \conc t_1$, $t_0$ is $(\one \oplus \B')$-size and $t_1$ is $(\one \oplus (\C_0)_{\max t_0})$-size.
			
Let $\overline{c} \colon [t_0]^{\one \oplus \C_1} \rightarrow k$ be the coloring defined as $\overline{c}(d) = c (d \conc r)$ where $r$ is any $(\one \oplus (\C_0)_{\max d})$-size subset of $t$.
We claim that for each $d \in [t_0]^{\one \oplus \C_1}$, $d \conc t_1$ is $(\one \oplus \C)$-large and so there exists $r \subseteq t$ which is $(\one \oplus (\C_0)_{\max d})$-size.
Suppose $d \conc t_1$ is $(\one \oplus \C)$-small for some $(\one \oplus \C_1)$-size $d$.
This implies that $\la \max d \ra \conc t_1$ is $(\one \oplus \C_0)$-small.
Let $v$ be $(\one \oplus \C_0)$-size such that $\la \max d \ra \conc t_1 \sqsubset v$.
From $\max d \le \max t_0$ it follows that $\la \max t_0 \ra \conc t_1$ and $v$ contradict the smoothness of $\one \oplus \C_0$.
Moreover, since $t$ is $(\one \oplus \C_1)$-prehomogeneous for $c$, $\overline{c}$ is well defined.
We obtain that $\overline{c}$ is a coloring of a $(\one \oplus \B')$-size set and since  $\B' \rightarrow (\A)^{\one \oplus \C_1 \rightarrow \one}_k$ there exists $u \in [t_0]^{\one \oplus \C_1 \oplus \A}$ which is $\one$-prehomogeneous for $\bar{c}$.

We claim that $u \conc t_1$ is a $(\one \oplus \C \oplus \A)$-large $\one$-prehomogeneous set for $c$.			
We start by showing the largeness.
Let $u'$ be the $\A$-size initial segment of $u$.
We need to prove that $u \setminus u' \conc t_1$ is $(\one \oplus \C)$-large.
Notice that $u \setminus u'$ is $(\one \oplus \C_1)$-size.
Towards a contradiction assume that $u \setminus u' \conc t_1$ is $(\one \oplus \C)$-small. 
In other words suppose that $\la \max u \ra \conc t_1$ is $(\one \oplus \C_0)$-small.
Let $v$ be a $(\one \oplus \C_0)$-size set such that $\la \max u \ra \conc t_1 \sqsubset v$.
Since $\la \max t_0 \ra \conc t_1 = t_1'$ is $(\one \oplus \C_0)$-size and $\max u \le \max t_0$, we get that $v$ and $t_1'$ contradict the smoothness of $\one \oplus \C_0$.
Therefore $u \setminus u' \conc t_1$ is $(\one \oplus \C)$-large and so $u \conc t_1$ is $(\one \oplus \C \oplus \A)$-large.
			
We are left to prove that $u \conc t_1$ is $\one$-prehomogeneous for $c$.
Let $w \in [u \conc t_1]^{\one \oplus \C}$ and let $w' \sqsubset w$ be $(\one \oplus \C_1)$-size.
We claim that $w' \subseteq u$.
Otherwise if we write $w = w'^* \conc \la \max w' \ra \conc w \setminus w'$, we get $\la \max w' \ra \conc w \setminus w' \subseteq t_1$.
Since $w'^*$ is $\C_1$-size we have that $\la \max w' \ra \conc w \setminus w'$ is $(\one \oplus \C_0)$-size which is a contradiction since $\la \max w' \ra \conc w \setminus w' \subseteq t_1$ and $t_1$ is $(\one \oplus \C_0)$-small by its definition.
Therefore $w' \subseteq u$.
By definition of $\overline{c}$ we have $\overline{c} (w') = c(w)$ and since $u$ is $\one$-prehomogeneous for $\overline{c}$ it follows that $c(w)$ depends only on $\min w$.
Hence $u \conc t_1$ is $\one$-prehomogeneous for $c$.
\end{proof}

Theorem \ref{lowbound} together with Theorem \ref{upbound} proves Theorem \ref{maintheorem}.

\section{Nestedness of the System}\label{sec:nestedness}

This section is devoted to the proof that the system of fundamental sequences of Definition \ref{inducedsystem} on $\G_\zeta$ is nested (Theorem \ref{nested->nested}).		
Recall that a system of fundamental sequences is nested if it is never the case for $n > 1$ that
$$\b >  \g > \b[n] > \g[n].$$

The first few lemmas show that, in some circumstances, the fundamental sequences for ordinals above some bound lie entirely above that bound.
Each ordinal in the rest of the section is below $\G_\zeta$.
		
\begin{lemma}\label{<epsilon0}
Let $k \in \om$ and $\g$ be an ordinal.
If $\varphi_0^k(0) < \g < \epsilon_0$ then $\varphi_0^k (0) \le \g[1]$ and hence $\varphi_0^k(0) \le \g[n]$ for all $n > 0$.
\end{lemma}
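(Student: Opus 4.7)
The plan is to proceed by induction on $k$, with the base case $k=0$ immediate since $\varphi_0^0(0)=0$ is dominated by every ordinal. For the inductive step, assume the statement for $k$ and suppose $\varphi_0^{k+1}(0) = \omega^{\varphi_0^k(0)} < \gamma < \epsilon_0$; the goal is to show $\gamma[1] \geq \omega^{\varphi_0^k(0)}$. Since $\gamma > 1$, clause (2) of Definition \ref{inducedsystem} is excluded, so the argument splits according to whether $\gamma[1]$ is computed by clause (1) or clause (3).

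First, suppose the Cantor normal form of $\gamma$ has more than one term, so $\gamma = \gamma_0 + \omega^{\gamma_1}$ with $\gamma_0 \ggeq \omega^{\gamma_1}$ and $\gamma_0 > 0$. By clause (1) we have $\gamma[1] = \gamma_0 + \omega^{\gamma_1}[1] \geq \gamma_0$. The leading term of $\gamma$ coincides with the leading term of $\gamma_0$; since any ordinal whose Cantor normal form has all exponents strictly below $\varphi_0^k(0)$ is itself strictly below $\omega^{\varphi_0^k(0)}$, the hypothesis $\gamma > \omega^{\varphi_0^k(0)}$ forces the leading exponent of $\gamma_0$ to be at least $\varphi_0^k(0)$. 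Hence $\gamma_0 \geq \omega^{\varphi_0^k(0)}$ and therefore $\gamma[1] \geq \omega^{\varphi_0^k(0)}$.

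Otherwise $\gamma = \omega^{\gamma_1}$ with $\gamma_1 > 0$; since $\gamma < \epsilon_0$ we have $\gamma_1 < \omega^{\gamma_1} = \gamma$, so clause (3) applies and $\gamma[1] = \omega^{\gamma_1[1]} \cdot 1 = \omega^{\gamma_1[1]}$. The inequality $\gamma > \omega^{\varphi_0^k(0)}$ forces $\gamma_1 > \varphi_0^k(0)$, and since $\gamma_1 < \gamma < \epsilon_0$ the inductive hypothesis yields $\gamma_1[1] \geq \varphi_0^k(0)$; hence $\gamma[1] = \omega^{\gamma_1[1]} \geq \omega^{\varphi_0^k(0)}$. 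The extension to arbitrary $n > 0$ is then automatic from the non-decreasing property of fundamental sequences built into their definition, which gives $\gamma[n] \geq \gamma[1] \geq \varphi_0^k(0)$. No serious obstacle arises: the argument is a verification based on the Cantor normal form and the explicit clauses of Definition \ref{inducedsystem}, and the only point to watch is that in the pure-power case the inequality $\gamma > \omega^{\varphi_0^k(0)}$ must be propagated through the $\omega^\cdot$-operator so as to feed the inductive hypothesis at the exponent.
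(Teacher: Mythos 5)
Your proof is correct and follows essentially the same route as the paper's: induction on $k$ with a case split on whether $\gamma$ is a pure power of $\omega$ (where the inductive hypothesis is applied to the exponent) or has a multi-term Cantor normal form (where the leading term already dominates $\varphi_0^{k+1}(0)$). You have in fact spelled out the decomposable case more carefully than the printed proof, which contains a slip in asserting $\alpha \ge \varphi_0^{k+1}(0)$ for the leading exponent $\alpha$, whereas what holds and is needed is $\omega^\alpha \ge \varphi_0^{k+1}(0)$, i.e.\ $\alpha \ge \varphi_0^{k}(0)$.
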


\begin{proof}
We proceed by induction on $k$.
The case $k = 0$ is obvious as $\varphi_0^0 (0) = 0$.
Assume now that $\varphi_0^{k + 1} (0) < \g  < \epsilon_0$.
If $\g = \om^\a$ then $\varphi_0^k (0) < \a < \epsilon_0$ and by induction hypothesis $\varphi_0^k (0) \le \a[1]$: then $\varphi_0^{k + 1} (0) \le \om^{\a [1]} = \g [1]$.
Otherwise, we can write $\g = \om^\a + \b$ for some $\a \ge \varphi_0^{k + 1} (0)$ and $\om^\a \ggeq \b$.
Then $\g [1] = \om^\a + \b [1] \ge \varphi_0^{k + 1} (0)$.
\end{proof}
		
\begin{lemma}\label{>epsilon0}
For every $\d > \epsilon_0$ we have $\d [1] \ge \epsilon_0$ and hence $\d [n] \ge \epsilon_0$ for all $n > 0$.
Similarly, for every $\d > \G_0$ we have $\d [1] \ge \G_0$ and hence $\d [n] \ge \G_0$ for all $n > 0$.
\end{lemma}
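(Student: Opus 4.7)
The plan is to prove both inequalities by transfinite induction on $\d$; since fundamental sequences are non-decreasing in $n$ by definition, it is enough to establish $\d[1] \geq \epsilon_0$ and $\d[1] \geq \G_0$, from which the ``hence'' clauses follow for free. The structural fact I would exploit throughout is that $\epsilon_0$ and $\G_0$ are both $\epsilon$-numbers: they satisfy $\epsilon_0 = \om^{\epsilon_0}$ and $\G_0 = \om^{\G_0}$, and the ordinals strictly below them form initial segments closed under ordinal addition.

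The case analysis follows the seven clauses of Definition \ref{inducedsystem}. For clause~(1), writing $\d = \d_0 + \om^{\d_1}$ with $\d_0 \ggeq \om^{\d_1}$, the closure-under-sum property of the target rules out the sub-case $\d_0 < \epsilon_0$ (resp.\ $\d_0 < \G_0$): indeed $\om^{\d_1} \leq \d_0$ would place both summands below the target and force $\d$ itself below the target. Hence $\d_0 \geq \epsilon_0$ (resp.\ $\G_0$), and the formula $\d[1] = \d_0 + \om^{\d_1}[1] \geq \d_0$ finishes the step. For clause~(3), $\d = \om^\a$ with $\a < \om^\a$, the fixed-point identity $\epsilon_0 = \om^{\epsilon_0}$ (resp.\ $\G_0 = \om^{\G_0}$) pushes $\a$ past the target, the inductive hypothesis applies to $\a$, and $\d[1] = \om^{\a[1]}$ together with the monotonicity of $\om^{\cdot}$ concludes.

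The $\epsilon$-number clauses~(4)--(7) are the substantive part. In clause~(4), $\d = \varphi_\eta(0)$ with $0 < \eta < \d$, the formula $\d[1] = \varphi_{\eta[1]}^{2}(0)$ reduces everything to a lower bound on $\eta[1]$. For the $\epsilon_0$-bound, one notes $\eta \geq 2$ and checks that no clause of Definition \ref{inducedsystem} outputs $0$ from an input different from $0$ and $1$, so $\eta[1] \geq 1$ and hence $\d[1] \geq \varphi_1(0) = \epsilon_0$; for the $\G_0$-bound, strict monotonicity of $\eta \mapsto \varphi_\eta(0)$ forces $\eta > \G_0$, induction on $\eta$ yields $\eta[1] \geq \G_0$, and $\varphi_{\G_0}(0) = \G_0$ finishes. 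Clause~(5) is direct, since $\d[1] \geq \varphi_\eta(\a[1]) + 1 \geq \varphi_\eta(0) + 1 \geq \epsilon_0 + 1$ as soon as $\eta \geq 1$; for the $\G_0$-version one splits on whether $\eta \geq \G_0$ (immediate) or $\eta < \G_0$ (in which case $\G_0$ being a fixed point of $\varphi_\eta$ forces $\a > \G_0$, and induction on $\a$ concludes). Finally, clauses~(6)--(7) are explicit computations: $\G_0[1] = \varphi_{\varphi_0(0)}(0) = \varphi_1(0) = \epsilon_0$, and $\G_\xi[1] = \varphi_{\varphi_{\G_{\xi \lceil 1 \rceil}+1}(0)}(0)$ exceeds both $\epsilon_0$ (because the inner index is at least $1$) and $\G_0$ (because $\G_{\xi \lceil 1 \rceil} \geq \G_0$).

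The main obstacle I anticipate is not any individual calculation but keeping the induction honest across cases: in clause~(4) for the $\G_0$-bound the inductive hypothesis is invoked on $\eta$, which is strictly smaller than $\d$ without being a ``syntactic subterm'' in the sense of clauses~(1) or~(3), so it is worth being explicit that the induction is on the ordinal $\d$ itself rather than on any notation-theoretic depth. A secondary point to handle carefully is the sum-closure argument in clause~(1), where one uses that $\d_0 \ggeq \om^{\d_1}$ entails $\om^{\d_1} \leq \d_0$ in order to place both summands simultaneously below the target before invoking the $\epsilon$-number property.
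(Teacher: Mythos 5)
Your proof is correct and follows essentially the same route as the paper's: transfinite induction on $\d$, organized by the clauses of Definition \ref{inducedsystem} (which coincides with the paper's split into decomposable vs.\ the indecomposable forms $\om^\a$, $\varphi_{\d_0}(0)$, $\varphi_{\d_0}(\a)$, $\G_\xi$), using the additive and multiplicative closure of $\epsilon_0$ and $\G_0$ plus monotonicity of the Veblen functions. The only cosmetic difference is that in clause~(5) for the $\G_0$ bound you branch on whether $\eta \ge \G_0$, whereas the paper branches on the position of $\a$ relative to $\G_0$; the two case splits yield the same chain of inequalities.
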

		
\begin{proof}
We start with the first statement.
The proof is by induction on $\d$.
If the Cantor normal form of $\d$ has more than one term, then $\d [1]$ is larger or equal than the first term, which is $\ge \epsilon_0$.
If the normal form of $\d$ consists of a single term we distinguish four cases.
\begin{itemize}
    \item If $\d = \om^\a$ with $\d > \a$ then we must have $\a > \epsilon_0$ and the induction hypothesis yields $\a [1] \ge \epsilon_0$.
    Hence $\d [1] = \om^{\a [1]} \ge \epsilon_0$.
	\item If $\d = \varphi_{\d_0} (0) > \d_0$ then $\d_0 > 1$ and we have $\d_0 [1] > 0$.
	Thus $\d [1] = \varphi_{\d_0 [1]}^2 (0) > \epsilon_0$.
	\item If $\d = \varphi_{\d_0} (\a)$ with $\d > \a > 0$ and $\d_0 > 0$ we have $\varphi_{\d_0} (\a [1]) \ge \epsilon_0$ and, a fortiori, $\d [1] =	\varphi_{\d_0 [1]}^2(\varphi_{\d_0}(\a [1]) + 1) > \epsilon_0$.
    \item If $\d = \G_\xi$ then, for some $\b$ depending on $\xi$, $\d[1] = \varphi_{\varphi_\b (0)} (0) \ge \epsilon_0$.
\end{itemize}

For the second statement we proceed similarly, treating only the cases where a different argument is needed.
\begin{itemize}
	\item If $\d = \varphi_{\d_0} (0) > \d_0$ then $\d_0 > \G_0$ and we have $\d_0 [1] \ge \G_0$ by inductive hypothesis.
    Thus $\d [1] = \varphi_{\d_0 [1]}^2 (0) > \G_0$.
	\item Suppose $\d = \varphi_{\d_0} (\a)$ with $\d > \a > 0$ and $\d_0 > 0$. 
    We consider subcases depending on the value of $\a$:
    \begin{itemize}
        \item when $0 < \a \le \G_0$ we must have $\d_0 \ge \G_0$ as otherwise $\d \le \G_0$, a contradiction; then we obtain $\d[1] = \varphi^2_{\d_0[1]} (\varphi_{\d_0} (\a[1])+1) > \G_0$;
        \item when $\a > \G_0$, by inductive hypothesis we have $\a[1] \ge \G_0$, and then $\d[1] = \varphi^2_{\d_0[1]} (\varphi_\d(\a[1])+1) > \G_0$.
    \end{itemize}
    \item If $\d = \G_\xi$ with $\xi > 0$ then $\d[1] = \varphi_{\varphi_{\G_{\xi \lceil n \rceil} + 1} (0)} (0) > \G_0$.\qedhere
\end{itemize}
\end{proof}
		
\begin{lemma}\label{notrange}
Fix $\d$ and suppose $\g < \b$ are such that $\g$ is in the range of $\varphi_\d$ but $\b$ is not.
Then $\g \le \b [1]$ and hence $\g \leq \b[n]$ for all $n > 0$.
The same holds if $\g < \b$ are such that $\g = \Gamma_{\g_1}$ but $\b$ is not of the form $\Gamma_\d$ for any $\d$.
\end{lemma}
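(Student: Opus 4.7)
The plan is to prove both statements simultaneously by transfinite induction on $\b$, splitting cases according to which clause of Definition \ref{inducedsystem} determines $\b[n]$. Since fundamental sequences are non-decreasing, it suffices to show $\g \le \b[1]$, and the ``hence'' part follows at once. A unifying observation is that under either hypothesis on $\g$ (being in the range of $\varphi_\d$, or of the form $\G_{\g_1}$), $\g$ is a fixed point of $\varphi_0$, hence additively indecomposable and satisfying $\g = \om^\g$.

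The straightforward cases are: (i) $\b$ with a multi-term Cantor normal form, i.e.\ $\b = \b_0 + \om^{\b_1}$ with $\b_0 \ggeq \om^{\b_1} > 0$, where additive indecomposability of $\g$ together with $\b_0 \ge \om^{\b_1}$ forces $\g \le \b_0 \le \b[1]$; (ii) $\b = \om^\a$ with $\a < \om^\a$, where $\g = \om^\g < \om^\a$ yields $\g < \a$, $\a$ inherits the hypothesis on $\b$ (since $\a = \om^\a$ would contradict $\a < \om^\a$), and the inductive hypothesis applied to $\a$ produces $\g \le \a[1]$, hence $\g \le \om^{\a[1]} = \b[1]$; (iii) $\b = \varphi_{\d'}(0)$ and $\b = \G_\xi$, which are vacuous for the first statement (since $\b$ not in the range of $\varphi_\d$ forces $\varphi_\d(0) > \b$, so no valid $\g$ exists), and for the second statement are either excluded by hypothesis ($\b = \G_\xi$) or handled by deducing $\G_{\g_1} < \d'$ from strict monotonicity of $\a \mapsto \varphi_\a(0)$ and closing via the bound $\varphi_\a(0) \ge \a$ and an inductive hypothesis on $\d'$.

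The main case is $\b = \varphi_{\d'}(\xi)$ with $\d', \xi > 0$ and $\xi < \b$. For the first statement, $\b$ not in the range of $\varphi_\d$ forces $\d > \d'$; crucially, $\xi$ itself cannot be in the range of $\varphi_\d$, for otherwise $\xi$ would be fixed by $\varphi_{\d'}$ and $\b = \varphi_{\d'}(\xi) = \xi$ would contradict $\xi < \b$. Since $\g$ is fixed by $\varphi_{\d'}$, $\g < \b$ yields $\g < \xi$, and the inductive hypothesis applied to $\xi$ gives $\g \le \xi[1]$; then $\b[1] = \varphi_{\d'[1]}^2(\varphi_{\d'}(\xi[1])+1) \ge \varphi_{\d'}(\xi[1]) \ge \xi[1] \ge \g$ closes this case. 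For the second statement we split on whether $\d' \le \G_{\g_1}$ (reduce to the inductive hypothesis on $\xi$ just as before, using that $\xi$ cannot be a $\G$-ordinal) or $\d' > \G_{\g_1}$ (when $\d'$ is a $\G$-ordinal, directly $\b[1] \ge \varphi_{\d'}(\xi[1]) \ge \varphi_{\d'}(0) = \d' > \G_{\g_1}$; when $\d'$ is not $\G$, the inductive hypothesis on $\d'$ combined with $\varphi_\a(0) \ge \a$ yields $\b[1] \ge \varphi_{\d'[1]}(0) \ge \d'[1] \ge \g$). The main obstacle will be verifying, in each Case~5 subcase, that the subordinate parameter ($\xi$ or $\d'$) truly satisfies the hypothesis needed to invoke the inductive hypothesis; this always follows from the ``non-range'' assumption on $\b$ via the fixed-point calculation above, but requires a small case-specific argument.
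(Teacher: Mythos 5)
Your strategy --- transfinite induction on $\b$ organized by the clause of Definition~\ref{inducedsystem} producing $\b[n]$ --- is the same as the paper's, and your treatment of the $\varphi_\d$-range statement and the degenerate cases (i)--(iii) is sound. In the main case of the $\G$-statement, $\b = \varphi_{\d'}(\xi)$, you split on the position of the Veblen \emph{index} $\d'$ relative to $\G_{\g_1}$, whereas the paper splits on the Veblen argument; this is a genuinely different organization of the case analysis, but as written it leaves a gap at the boundary.

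You route $\d' = \G_{\g_1}$ into the branch that argues ``just as before'', i.e., using that $\g$ is a fixed point of $\varphi_{\d'}$ to deduce $\g < \xi$, and that $\xi$ cannot be a $\G$-ordinal so the inductive hypothesis applies. Both steps fail when $\d' = \g$: the ordinal $\G_{\g_1} = \varphi_{\G_{\g_1}}(0)$ is the \emph{least} element of $\ran\varphi_{\G_{\g_1}}$, not a fixed point of it, since $\varphi_\g(\g) > \varphi_\g(0) = \g$. Concretely, $\b = \varphi_\g(1)$ has $\xi = 1 < \g$, and $\b = \varphi_\g(\g)$ has $\xi = \g$, a $\G$-ordinal, so in these cases the inductive hypothesis is not available. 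The repair is one line: move $\d' = \g$ to the other branch, where the direct bound $\b[1] > \varphi_{\d'}(\xi[1]) \geq \varphi_{\d'}(0) = \G_{\g_1}$ already closes the case; that is, the split should be $\d' < \G_{\g_1}$ versus $\d' \geq \G_{\g_1}$.
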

		
\begin{proof}
Consider first the case of the Veblen functions.
Let $\g = \varphi_\d (\g_1)$ and argue by induction on $\b$.
If the Cantor normal form of $\b$ has more than one term then $\b[1]$ is larger or equal than the first term, which is $\ge \varphi_\d (\g_1) = \g$.
Since $\b$ is not in the range of $\varphi_\d$, the remaining case is $\b = \varphi_\xi (\a)$ for some $\xi < \d$ and $\a < \b$.
If $\a \le \g$ then $\b = \varphi_\xi (\a) \le \varphi_\xi (\varphi_\d (\g_1)) = \varphi_\d (\g_1) = \g$ which is a contradiction.
Thus $\g < \a$ and by the induction hypothesis ($\a$ is not a fixed point of $\varphi_\xi$ and hence is not in the range of $\varphi_\d$) we have $\g \le \a [1]$.
Since $\b [1] = \varphi^2_{\xi [1]} (\varphi_\xi (\a[1])+1) > \a[1]$, 
we get $\g < \b[1]$.		

In the case of the $\G$ function we again proceed by induction on $\b$.
Recall that $\G_{\g_1}$ is the $\g_1$-th ordinal such that $\varphi_\g (0) = \g$.
The case $\b$ decomposable is trivial as before.
We suppose $\b = \varphi_\xi (\a) > \a$ and we distinguish different subcases.
\begin{itemize}
    \item If $\b = \varphi_\xi(0)$ then $\xi > \g$.
    Moreover $\b > \xi$ otherwise it would be $\b = \xi = \varphi_\xi (0)$, contradicting the assumption of the Lemma that $\b$ is not a $\G$ ordinal.
    Therefore by inductive hypothesis $\xi[1] \ge \g$ and so $\b[1] = \varphi^2_{\xi[1]}(0) > \g$.
    \item If $\b = \varphi_\xi (\a)$ with $0 < \a \le \g$ then it must be $\xi \ge \g$ as otherwise $\b \le \G_{\g_1} = \g$, a contradiction.
    Hence $\b[1] = \varphi^2_{\xi [1]} (\varphi_\xi (\a[1])+1) \ge \varphi^2_{\xi [1]} (\varphi_\g (\a[1])+1) > \g$.
    \item If $\b = \varphi_\xi (\a)$ with $\a > \g$ then, since $\b > \a$, by inductive hypothesis $\a [1] \ge \g$.
    Hence $\b[1] = \varphi^2_{\xi [1]} (\varphi_\xi (\a[1])+1) > \varphi^2_{\xi [1]} (\g) \ge \g$. \qedhere
\end{itemize}
\end{proof}

\begin{lemma}\label{phidelta}
Let $\g$ and $\d$ be ordinals such that $\g$ is not of the form $\varphi_{\g_1} (0)$ for any $\g_1$.
If $\g > \varphi_\d (0)$ then $\g[1] \ge \varphi_\d (0)$ and so $\g[n] \ge \varphi_\d(0)$ for each $n > 0$.
\end{lemma}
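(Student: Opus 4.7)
The plan is to argue by transfinite induction on $\g$, splitting according to which clause of Definition \ref{inducedsystem} governs $\g[1]$. The hypothesis that $\g$ is not of the form $\varphi_{\g_0}(0)$ immediately rules out clauses (2), (4), (6), and (7), so $\g$ must fall under (1), (3), or (5). The case $\d = 0$ reduces to $\g[1] \ge 1$, which is immediate, so from here on I assume $\d \ge 1$, in which case $\varphi_\d(0)$ is an $\e$-number satisfying $\varphi_\d(0) = \om^{\varphi_\d(0)}$ and is in particular additively principal.

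For clause (1), writing $\g = \g' + \om^{\g_1}$ with $\g' \ggeq \om^{\g_1}$, one has $\g[1] \ge \g'$, and the leading term of the Cantor normal form of $\g$ is forced to be $\ge \varphi_\d(0)$ by additive principality, giving $\g[1] \ge \varphi_\d(0)$. For clause (3), $\g = \om^\a$ with $0 < \a < \om^\a$ gives $\g[1] = \om^{\a[1]}$, and $\g > \om^{\varphi_\d(0)}$ forces $\a > \varphi_\d(0)$. Since $\a$ is not an $\e$-number (otherwise $\a = \om^\a$), and $\a > \varphi_\d(0) > 1$, $\a$ itself is not of the form $\varphi_{\g_0}(0)$, so the induction hypothesis applies and yields $\a[1] \ge \varphi_\d(0)$, whence $\g[1] = \om^{\a[1]} \ge \om^{\varphi_\d(0)} = \varphi_\d(0)$.

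The main case is clause (5): $\g = \varphi_\xi(\a)$ with $\xi > 0$, $0 < \a < \g$, and $\a$ not a fixed point of $\varphi_\xi$; here $\g[1] = \varphi_{\xi[1]}^2(\varphi_\xi(\a[1]) + 1)$. If $\xi \ge \d$, then $\varphi_\xi(\a[1]) \ge \varphi_\xi(0) \ge \varphi_\d(0)$, and $\varphi_{\xi[1]}^2$ cannot decrease its argument, so $\g[1] \ge \varphi_\d(0)$. If $\xi < \d$, then $\varphi_\d(0)$ is a fixed point of $\varphi_\xi$, and $\g = \varphi_\xi(\a) > \varphi_\xi(\varphi_\d(0))$ forces $\a > \varphi_\d(0)$. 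Applying the induction hypothesis to $\a$ gives $\a[1] \ge \varphi_\d(0)$, hence $\varphi_\xi(\a[1]) \ge \varphi_\xi(\varphi_\d(0)) = \varphi_\d(0)$, and $\g[1] \ge \varphi_\d(0)$.

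The only delicate step is verifying that the induction hypothesis is applicable in this last sub-case, i.e.\ that $\a$ is not itself of the form $\varphi_{\g_0}(0)$, and this is where the hypothesis on $\g$ propagates through the recursion. The argument is short: if $\a = \varphi_{\g_0}(0)$, then $\a$ being not a fixed point of $\varphi_\xi$ (which is built into clause (5)) forces $\g_0 \le \xi$, and hence $\a \le \varphi_\xi(0) < \varphi_\d(0)$ (using $\xi < \d$), contradicting $\a > \varphi_\d(0)$. This closes the induction.
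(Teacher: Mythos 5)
Your proof is correct. It takes a somewhat different route from the paper's: the paper disposes of the decomposable case and $\d = 0$ as trivial, reduces the case $\d = 1$ to Lemma \ref{>epsilon0}, reduces the case $\xi < \d$ (where $\g = \varphi_\xi(\g_0)$ with $\g_0 > 0$) to Lemma \ref{notrange}, and handles $\xi \ge \d$ by the explicit computation $\g[1] = \varphi^2_{\xi[1]}(\varphi_\xi(\g_0[1])+1) \ge \varphi^2_{\xi[1]}(\varphi_\d(\g_0[1])+1) > \varphi_\d(0)$. You instead run a self-contained transfinite induction on $\g$, which requires propagating the hypothesis ``not of the form $\varphi_{\g_0}(0)$'' through the recursion --- and you correctly identify this as the delicate point and verify it in both clauses (3) and (5). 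The two arguments are closely related since Lemma \ref{notrange} is itself proved by a structurally similar induction; what the paper's version buys is brevity by reusing that lemma (and Lemma \ref{>epsilon0} for the base-case $\d=1$), while what your version buys is self-containedness and a clearer exhibition of why the hypothesis on $\g$ is exactly what is needed for the induction to close. A small presentational note: your clause-(1) argument is what the paper compresses into ``decomposable is trivial,'' and your clause-(3) argument is absorbed in the paper into the $\xi < \d$ case (since there $\xi = 0 < \d$, except for $\d \in \{0,1\}$ handled separately).
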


\begin{proof}
If $\g$ is decomposable or $\d = 0$ the thesis is trivial. 
We can assume $\g = \varphi_\xi (\g_0) > \g_0 > 0$.
Case $\d = 1$ was proved in Lemma \ref{>epsilon0}.
Case $\xi < \d$ follows by Lemma \ref{notrange}: this is because $\g \notin \ran \varphi_\d$ as otherwise $\g$ would be a fixed point of $\varphi_\xi$, contradicting our assumption that $\g > \g_0$.
Finally, if $\xi \ge \d$ then $\g[1] = \varphi^2_{\xi[1]} (\varphi_\xi(\g_0[1])+1) \ge \varphi^2_{\xi[1]} (\varphi_\d(\g_0[1])+1) > \varphi_\d (0)$.
\end{proof}

Notice that the previous result fails when $\g$ of the form $\varphi_{\g_0} (0)$: e.g.\ $\varphi_\om (0) > \varphi_3 (0) > \varphi^2_1 (0) = \varphi_\om (0) [1]$.

\begin{lemma}\label{gammaseq}
Let $n > 1$ and $\G_\xi$ and $\d$ ordinals.
Then $\d = \G_\xi + 1$ if and only if $\d[n] = \G_\xi$.
\end{lemma}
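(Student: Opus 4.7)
The plan is to prove the equivalence by a direct computation in one direction and an exhaustive case analysis on the form of $\d$ in the other, consulting the clauses of Definition \ref{inducedsystem} at each step. For the implication $\d = \G_\xi + 1 \Rightarrow \d[n] = \G_\xi$, the argument is immediate: write $\d = \G_\xi + \om^0$ with $\G_\xi \ggeq 1$, apply clause (1) to get $\d[n] = \G_\xi + (\om^0[n])$, and finish using clauses (2)--(3), which together give $\om^0[n] = 1[n] = 0$.

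For the converse, I plan to assume $\d[n] = \G_\xi$ with $n > 1$ and distinguish cases by the form of $\d$. The trivial cases $\d \in \{0,1\}$ give $\d[n] = 0 \neq \G_\xi$ at once. When $\d = \d_0 + \om^{\d_1}$ with $\d_0 > 0$ and $\d_0 \ggeq \om^{\d_1}$ (clause (1)), one has $\d[n] = \d_0 + (\om^{\d_1}[n])$; the subcase $\d_1 = 0$ forces $\d_0 = \G_\xi$ and hence $\d = \G_\xi + 1$, while for $\d_1 > 0$ a short inspection of clauses (3)--(7) shows $\om^{\d_1}[n] > 0$, so $\d[n]$ is additively decomposable, contradicting that $\G_\xi = \om^{\G_\xi}$ is additively principal. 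When $\d = \om^{\d_1}$ with $0 < \d_1 < \om^{\d_1}$ (clause (3)), $\d[n] = \om^{\d_1[n]} \cdot n$ is either additively decomposable (when $\d_1[n] > 0$, since $n > 1$) or finite (when $\d_1[n] = 0$), so it cannot equal $\G_\xi$; this is the step where the hypothesis $n > 1$ is essential, as the case $n = 1$ indeed produces genuine counterexamples to the lemma.

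The main obstacle will be the Veblen cases (clauses (4) and (5)), in which $\d[n] = \varphi_{\d_0[n]}^{n+1}(x)$ with $x$ either $0$ or the successor $\varphi_{\d_0}(\a[n])+1$. The key fact I plan to leverage is that $\G_\xi$ is a common fixed point of $\varphi_\b$ for every $\b < \G_\xi$ and that each $\varphi_\b$ is strictly increasing; hence $\G_\xi$ is the unique preimage of $\G_\xi$ under $\varphi_{\d_0[n]}$ whenever $\d_0[n] < \G_\xi$. Repeatedly peeling off $\varphi_{\d_0[n]}$ from the hypothetical identity $\varphi_{\d_0[n]}^{n+1}(x) = \G_\xi$ then forces $x = \G_\xi$, which is impossible since $\G_\xi$ is a limit whereas $x$ is either $0$ or a successor. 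If instead $\d_0[n] \geq \G_\xi$, monotonicity gives $\varphi_{\d_0[n]}(0) \geq \varphi_{\G_\xi}(0) = \G_\xi$, and the further iterations push the value strictly past $\G_\xi$. Finally, for the $\G$-clauses (6) and (7), where $\d = \G_\eta$, $\d[n]$ takes the form $\varphi_{b_n}(0)$ for a sequence defined by $b_{k+1} = \varphi_{b_k}(0)$ starting from a non-$\G$ ordinal $b_0 \in \{0,\, \G_{\eta \lceil n \rceil}+1\}$; since $\varphi_\a(0) > \a$ whenever $\a$ is not a $\G$-ordinal, an induction on $k$ shows that no $b_k$ is a $\G$-ordinal, and then injectivity of $\b \mapsto \varphi_\b(0)$ rules out $\varphi_{b_n}(0) = \varphi_{\G_\xi}(0) = \G_\xi$.
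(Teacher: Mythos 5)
Your proposal is correct and follows essentially the same approach as the paper: the forward direction by regularity (clause (1) plus $1[n]=0$), the converse by an exhaustive case analysis on the form of $\d$ against the clauses of Definition \ref{inducedsystem}, ruling out each case using monotonicity, fixed-point, and injectivity properties of the Veblen functions. The only differences are tactical — in the Veblen cases you peel off $\varphi_{\d_0[n]}$ by injectivity to force the argument down to $0$ or a successor, where the paper just bounds $\d[n]$ strictly above or below $\G_\xi$; and in the $\G$-cases you prove the slightly stronger fact that $\d[n]$ is never a $\G$-ordinal, where the paper shows directly that it cannot equal $\G_\xi$ — but both routes land in the same places.
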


\begin{proof}
The forward implication follows from the regularity of the system of fundamental sequences.

For the backward direction suppose $\d$ is decomposable.
Then, by regularity, $\d[n] = \G_\xi$ is greater or equal than the first term in the Cantor normal form of $\d$.
Hence $\d = \G_\xi + \b$ with $\G_\xi \ggeq \b > 0$.
Then $\G_\xi = \d [n] = \G_\xi + \b[n]$ implies $\b[n] = 0$.
Since $n > 0$, this can only happen if $\b = 1$.
We are left to prove that $\d$ cannot be indecomposable and we do that by cases.
\begin{itemize}
    \item If $\d = \om^{\d_0} > \d_0$ then $\d[n] = \om^{\d_0 [n]} \cdot n$.
    Since $n > 1$, $\d[n]$ is decomposable contradicting $\d[n] = \G_\xi$.
    \item If $\d = \varphi_{\d_0} (0) > \d_0$ then $\d_0 > 0$ and $\d[n] = \varphi^{n+1}_{\d_0[n]} (0)$.
    If $\d_0[n] < \G_\xi$ then $\d[n] < \G_\xi$.
    If $\d_0[n] \ge \G_\xi$ then, since $n > 0$, $\d[n] > \G_\xi$.
    \item If $\d = \varphi_{\d_0} (\d_1) > \d_1 > 0$ then $\d[n] = \varphi^{n+1}_{\d_0[n]} (\varphi_{\d_0} (\d_1[n]) + 1)$.
    If $\d_0[n] < \G_\xi$ then $\G_\xi$ is a fixed point of $\varphi_{\d_0[n]}$ and so $\G_\xi = \d[n]$ implies $\G_\xi = \varphi_{\d_0} (\d_1[n]) + 1$ which cannot be.
    If $\d_0[n] \ge \G_\xi$ then $\d[n] > \G_\xi$.
    \item If $\d = \G_\a$ then $\a > \xi$ otherwise $\G_\xi \ge \d > \d[n]$.
    In particular $\a > 0$.
    Then if $\G_\xi = \d[n]$, since $\G_\xi = \varphi_{\G_\xi} (0)$ and by the definition of $\G_\a [n]$, we get $\G_\xi = \G_{\a \lceil n \rceil} + 1$ which cannot be. \qedhere
\end{itemize}
\end{proof}

\begin{proof}[Proof of Theorem \ref{nested->nested}]
We prove that for every $\b, \g < \G_\zeta$ and $n > 1$:
\begin{enumerate}[(A)]
	\item\label{A} it is never the case that $\g[n] < \b[n] < \g < \b$;
	\item\label{B} for every $\d < \G_\zeta$, if $\g < \varphi_\d (\g)$ it is not the case that $\g [n] < \varphi_\d (\b [n]) < \g < \varphi_\d (\b)$.
\end{enumerate}

\begin{remark}
Notice that the hypothesis $\g < \varphi_\d(\g)$ in \ref{B} is necessary: let $\lambda$ be a limit ordinal, $\d = \lambda [n]$, $\b = \varphi_\lambda (0)$ and $\g = \varphi_{\d + 1} (0)$.
Then $\b [n] = \g [n] = \varphi_\d^{n + 1} (0) < \varphi_\d^{n + 2} (0) = \varphi_\d (\b [n]) < \g < \b = \varphi_\d (\b)$.
In this counterexample both $\g$ and $\b$ are fixed points of $\varphi_\d$.
\end{remark}

The proofs of \ref{A} and \ref{B} are by simultaneous induction on the pair $(\g, \b)$.
Notice that if $\g$ is not in the range of $\varphi_\d$ then \ref{B} is immediate.
In fact, either $\g \le \varphi_\d (\b [n])$ or $\varphi_\d (\b [n]) < \g$ and we apply Lemma \ref{notrange} to $\varphi_\d (\b [n])$ and $\g$: in this way we get $\varphi_\d(\b[n]) \le \g [1] \le \g [n]$ for all $n \ge 1$.
Therefore we prove \ref{B} only for $\d$ such that $\g$ is in the range, but not a fixed point, of $\varphi_\d$.
			
\begin{description}[before={\renewcommand\makelabel[1]{\bfseries ##1}},wide]
	\item[If $\g \leq 1$] both \ref{A} and \ref{B} hold because two distinct ordinals below $\g$ do not exist.
    
	\item[If $\g$ is of the form $\om^{\g_0}+ \g_1$ with $\om^{\g_0} \ggeq \g_1 > 0$] first recall that $\g[n] = \om^{\g_0}+ \g_1[n]$.
	To prove \ref{A} assume towards a contradiction that $\g[n] < \b[n] < \g < \b$ and consider different subcases depending on the form of $\b$.
	\begin{enumerate}[(i)]
		\item $\b = \om^{\b_0} + \b_1$ with $\om^{\b_0} \ggeq \b_1 > 0$.
		In this case $\g < \b$ implies $\g_0 \le \b_0$ and $\b[n] = \om^{\b_0} + \b_1 [n] < \g$ implies $\b_0 \le \g_0$, so that $\b_0 = \g_0$.
		Hence $\g_1 [n] < \b_1 [n] < \g_1 < \b_1$, contradicting the induction hypothesis of \ref{A}.
		\item $\b = \om^{\b_0} > \b_0$.
        In this case $\g < \b$ implies $\g_0 < \b_0$ and $\b [n] = \om^{\b_0 [n]} \cdot n < \g$ implies $\b_0 [n] \le \g_0$.
		\begin{itemize}
			\item If $\b_0 [n] < \g_0$ then $\b [n] < \om^{\g_0} \le \g [n]$ against $\g [n] < \b [n]$.
			\item If $\b_0 [n] = \g_0$ then $\om^{\g_0} \cdot n < \g = \om^{\g_0} + \g_1$ and hence $\om^{\g_0} \cdot (n-1) < \g_1$ so that we can write $\g_1 = \om^{\g_0} \cdot (n-1) + \d$ for some $\d$ with $\om^{\g_0} \ggeq \d > 0$.
            Then $\g_1 [n] = \om^{\g_0} \cdot (n-1) + \d [n] \ge \om^{\g_0} \cdot (n-1)$ and $\g [n] \ge \om^{\g_0} \cdot n = \b [n]$.
            This contradicts again $\g [n] < \b [n]$.
		\end{itemize}
		\item $\b = \varphi_{\b_0} (\b_1)$ with $\b_0 > 0$ and $\b_1 < \b$ or $\b = \G_\xi$.
		In this case $\b [n]$ is in the range of $\varphi_0$ while $\g$ is not: applying Lemma \ref{notrange} we obtain $\b [n] \le \g [1]$ contradicting $\g [n] < \b [n]$.
	\end{enumerate}
	There is no need to prove \ref{B} because $\g$ is in the range of no $\varphi_\d$.				

    \item[If $\g$ is of the form $\om^{\g_0}$ with $0 < \g_0 < \g$] recall that $\g [n] = \om^{\g_0 [n]} \cdot n$.
	To prove \ref{A} assume towards a contradiction that $\g [n] < \b [n] < \g < \b$ and consider different subcases depending on the form of $\b$:
	\begin{enumerate}[(i)]
		\item $\b = \om^{\b_0} + \b_1$ with $\om^{\b_0} \ggeq \b_1 > 0$.
		Then $\g < \b$ implies $\g_0 \le \b_0$ and hence $\g \le \om^{\b_0} + \b_1 [n] = \b [n]$.
		\item $\b = \om^{\b_0}$ with $\b_0 < \b$.
        Then $\g < \b$ implies $\g_0 < \b_0$, $\b [n] = \om^{\b_0 [n]} \cdot n < \g$ implies $\b_0 [n] < \g_0$ and $\g [n] < \b [n]$ implies $\g_0 [n] < \b_0 [n]$.
		We thus have $\g_0 [n] < \b_0 [n] < \g_0 < \b_0$ against the induction hypothesis of \ref{A}.
		\item $\b = \varphi_1 (0) = \epsilon_0$.
		By Lemma \ref{<epsilon0} $\b[n] = \varphi_0^{n+1} (0) \le \g [1] \le \g [n]$, which contradicts $\g [n] < \b [n]$.
		\item $\b = \varphi_1 (\b_1) = \epsilon_{\beta_1}$ with $0<\b_1 < \b$.
		To simplify the notation let $\xi = \epsilon_{\b_1 [n]} + 1$, so that $\b [n] = \varphi_0^{n + 1} (\xi)$.
		We have that $\b [n] < \g$ implies $\varphi_0^n (\xi) < \g_0$.
		If $\g_0 < \om^{\varphi_0^{n - 1} (\xi) + 1}$ then $\varphi_0^n (\xi)$ is the leading term of the Cantor normal form of $\g_0$ (which includes other terms) and $\g_0 [n] \ge \varphi_0^n (\xi)$.
		Thus $\g [n] > \om^{\g_0 [n]} \ge \varphi_0^{n + 1} (\xi) = \b [n]$, a contradiction.
		We can therefore assume $\g_0 \ge \om^{\varphi_0^{n - 1} (\xi) + 1}$.
		Let $\g_1$ be such that $\om^{\g_1} \le \g_0 < \om^{\g_1 + 1}$ which, by the above assumption, implies $\g_1 > \varphi_0^{n - 1} (\xi)$.
		Consider first the case $\om^{\g_1} < \g_0$: in this case the Cantor normal form of $\g_0$ includes other terms after the leading term $\om^{\g_1}$ and hence $\g_0 [n] \ge \om^{\g_1} > \varphi_0^n (\xi)$ so that $\g [n] > \om^{\g_0 [n]} > \varphi_0^{n + 1} (\xi) = \b [n]$, a contradiction.
		We thus have $\g_0 = \om^{\g_1}$ and $\g = \varphi_0^2 (\g_1)$.
		We can repeat this argument until we obtain that $\g = \varphi_0^{n + 1} (\g_n)$ for some $\g_n > \xi$.
		But then $\varphi_0^{n + 1} (\g_n [n]) < \g [n] < \b [n] = \varphi_0^{n + 1} (\xi) < \g = \varphi_0^{n + 1} (\g_n)$ which implies $\g_n [n] < \xi < \g_n$.
        Therefore we have $\g_n[n] \le \epsilon_{\b_1[n]}$.
        Towards a contradiction suppose $\g_n[n] = \epsilon_{\b_1[n]}$.
        If $\g_n$ is decomposable then it must be $\g_n = \epsilon_{\b_1[n]} + 1$ which cannot be.
        Otherwise, since $\g_n \notin \ran \varphi_1$, it must be $\g_n = \om^\d > \d$.
        In this case $n > 1$ implies that $\g_n [n]$ is decomposable and so it cannot be $\epsilon_{\b_1[n]}$.
        It follows that $\g_n [n] < \epsilon_{\b_1 [n]} < \g_n$.
		Moreover $\g_n < \g < \b = \epsilon_{\beta_1}$ and, since $\g_n$ is not a fixed point of $\varphi_1$, we contradict the induction hypothesis of \ref{B} with $\g_n[n] < \epsilon_{\b_1[n]} < \g_n < \epsilon_{\b_1}$.
		\item $\b = \varphi_{\b_0} (\b_1)$ with $\b_0 > 1$ and $\b_1 < \b$ or $\b = \G_\xi$.
		In this case both $\b [n]$ and $\b$ are fixed points of $\varphi_0$ and $\om^{\g_0 [n]} \cdot n < \b [n] < \om^{\g_0} < \b$ implies $\g_0 [n] < \b [n] < \g_0 < \b$ against the induction hypothesis of \ref{A}.
	\end{enumerate}
	We need to prove \ref{B} only for $\d = 0$ since $\g \notin \ran \varphi_1$.
	If $\om^{\g_0 [n]} \cdot n < \om^{\b [n]} < \om^{\g_0} < \om^\b$ for some $\b$, we have that $\g_0 [n] < \b [n] < \g_0 < \b$ against the induction hypothesis of \ref{A}.
				
	\item[If $\g$ is $\varphi_1 (0) = \epsilon_0$] then $\b > \epsilon_0$ and Lemma \ref{>epsilon0} imply that $\b[n] \ge \epsilon_0$ for all $n \ge 1$.
	Therefore we cannot have a counterexample to \ref{A}.
	On the other hand \ref{B} is immediate because we should consider only the case $\d = 1$, and $\varphi_1 (\b [n]) < \epsilon_0$ is impossible.
				
	\item[If $\g$ is of the form $\varphi_{\g_0} (0)$ with $1 < \g_0 < \g$] recall that $\g [n] = \varphi_{\g_0 [n]}^{n+1} (0)$.
	To prove \ref{A} we assume towards a contradiction that $\g [n] < \b [n] < \g < \b$ and consider different subcases depending on the form of $\b$:
	\begin{enumerate}[(i)]
		\item $\b = \om^{\b_0} + \b_1$ with $\om^{\b_0} \ggeq \b_1 > 0$.
		Then, since $\g$ is an indecomposable ordinal, $\g < \b$ implies $\g \le \om^{\b_0}$ and hence $\g \le \om^{\b_0} + \b_1 [n] = \b [n]$.
		\item $\b = \varphi_{\b_0} (0)$ with $\b_0 < \b$.
        Case $\b_0 = 0$ is trivial.
		Otherwise $\g_0 < \b_0$ and we have $\varphi_{\g_0 [n]}^{n + 1} (0) < \varphi_{\b_0 [n]}^{n + 1} (0) < \varphi_{\g_0} (0) < \varphi_{\b_0} (0)$.
		Then $\g_0 [n] < \b_0 [n] < \g_0 < \b_0$, contradicting the induction hypothesis of \ref{A}.   
		\item $\b = \varphi_{\b_0} (\b_1)$ with $0 < \b_1 < \b$.
		Notice that $\g_0 < \b_0$ cannot hold, otherwise $\varphi_{\g_0} (\varphi_{\b_0} (\b_1 [n])) = \varphi_{\b_0} (\b_1 [n]) < \b [n] < \g$ (the first equality holds because $\b_0 > \g_0$) which is impossible as $\g$ is the least element in the range of $\varphi_{\g_0}$.
		Analogously, $\b_0 = \g_0$ yields $\varphi_{\g_0} (\b_1 [n]) = \varphi_{\b_0} (\b_1 [n]) < \b [n] < \g$ which is impossible for the same reason.
		Thus $\b_0 < \g_0$ and $\b$ does not belong to the range of $\varphi_{\g_0}$: Lemma \ref{notrange} then implies $\g \le \b [n]$, a contradiction.
		\item \label{case G0 g0} $\b = \G_0$.
        Then we are assuming $\varphi_{\g_0 [n]}^{n + 1} (0) < \varphi_{\G_0 [n - 1]} (0) < \varphi_{\g_0} (0) < \G_0$ which implies $\g_0 [n] < \G_0[n - 1] < \g_0 < \G_0$.
		If $n > 2$ (so that $n - 1 > 1$) we have $\g_0 [n - 1] < \G_0 [n - 1] < \g_0 < \G_0$ contradicting the induction hypothesis of \ref{A}.
		In the case $n = 2$, since $\G_0 [1] = \epsilon_0$, we get a contradiction with Lemma \ref{>epsilon0}.
		\item $\b = \G_\xi$.
        Let $\d_{n+1} = \G_{\xi \lceil n \rceil} + 1$ and $\d_i = \varphi_{\d_{i+1}} (0)$ for $i \le n$.
        Thus $\G_\xi [n] = \varphi_{\d_0} (0)$.
        Then our assumption $\g [n] < \b [n] < \g < \b$ yields $\g_0[n] < \d_0 < \g_0 < \b$.
        We claim that $\g_0 = \varphi_{\g_1} (0)$ for some $\g_1$.
        In fact, if $\g_0$ were decomposable then, since $\d_0$ is indecomposable, $\g_0[n] \ge \d_0$, a contradiction.
        If instead $\g_0 = \varphi_{\g_1} (\g_2) > \g_2 > 0$ then, since $\d_0 = \varphi_{\d_1} (0)$, Lemma \ref{phidelta} implies $\g_0[n] \ge \d_0$.
        This proves the claim.        
        Notice that it must be $\g_0 > \g_1 > 0$.
        The claim and the chain of inequalities $\g_0[n] < \d_0 < \g_0 < \b$ yields $\g_1[n] < \d_1 < \g_1 < \b$.
        Iterating $n+1$ times we obtain for each $i \le n+1$ an ordinal $\g_i$ such that $\g_{i-1} = \varphi_{\g_i} (0)$ and $\g_i[n] < \d_i < \g_i < \b$.
        In particular $\g_{n+1}[n] < \d_{n+1} < \g_{n+1} < \b$.
        Since $\g_{n+1}$ is not a $\G$ ordinal and $\g_{n+1} > \d_{n+1} = \G_{\xi \lceil n \rceil} + 1$, Lemma \ref{notrange} implies $\g_{n+1} [n] \ge \G_{\xi \lceil n \rceil}$.
        Since $\G_{\xi \lceil n \rceil} + 1 > \g_{n+1} [n]$ it must be $\g_{n+1} [n] = \G_{\xi \lceil n \rceil}$.
        Lemma \ref{gammaseq} implies that $\g_{n+1} = \G_{\xi \lceil n \rceil} + 1$, contradicting $\G_{\xi \lceil n \rceil} + 1 = \d_{n+1} < \g_{n+1}$.
	\end{enumerate}
	To prove \ref{B} we should consider only the case $\d = \g_0$: $\varphi_{\g_0} (\b [n]) < \varphi_{\g_0} (0)$ is impossible.
				
	\item[If $\g$ is of the form $\varphi_{\g_0}(\g_1)$ with $0 < \g_0$ and $0<\g_1<\g$] we first recall that $\g[n] = \varphi_{\g_0[n]}^{n+1}(\varphi_{\g_0}(\g_1[n])+1)$.
    To prove \ref{A} we assume towards a contradiction that $\g[n] < \b[n] < \g < \b$ and consider different subcases depending on the form of $\b$:
	\begin{enumerate}[(i)]
		\item $\b = \om^{\b_0} + \b_1$ with $\om^{\b_0} \ggeq \b_1 > 0$.
        Then, since $\g$ is an indecomposable ordinal, $\g < \b$ implies $\g \le \om^{\b_0}$ and hence $\g \le \om^{\b_0} + \b_1 [n] = \b [n]$.
        \item $\b = \varphi_{\b_0} (0)$ with $\b_0<\b$.
        Case $\b_0 = 0$ is trivial.
        So assume $\b_0 > 0$, recall that $\b[n] = \varphi_{\b_0[n]}^{n+1}(0)$ and notice that $\g<\b$ implies $\g_0 < \b_0$.
        We consider three different subcases.
		\begin{itemize}
			\item If $\b_0 [n] < \g_0$ then we also have $\g_0[n] < \b_0[n]$, against the induction hypothesis of \ref{A}.
            In fact $\b_0[n] \leq \g_0[n]$ implies $\b[n] = \varphi_{\b_0[n]}^{n+1} (0) \leq \varphi_{\g_0[n]}^{n+1}(0) < \g[n]$, a contradiction.
			\item If $\b_0 [n] = \g_0$ then, peeling off an application of $\varphi_{\g_0}$ we obtain $\varphi_{\b_0[n]}^n(0) < \g_1$.
            If $\g_1$ does not belong to the range of $\varphi_{\b_0 [n]} = \varphi_{\g_0}$ then Lemma \ref{notrange} implies $\varphi_{\b_0 [n]}^n (0) \le \g_1[n]$ and hence $\b[n] \le \varphi_{\g_0} (\g_1 [n]) < \g [n]$ which is a contradiction.
            Therefore $\g_1 = \varphi_{\g_0} (\g_2)$ for some $\g_2 < \g_1$ and we have $\varphi_{\b_0[n]}^{n-1}(0) < \g_2$.
            If $\g_2$ does not belong to the range of $\varphi_{\b_0 [n]}$ then Lemma \ref{notrange} implies $\varphi_{\b_0 [n]}^{n-1} (0) \le \g_2[n]$ and hence $\b[n] \le \varphi_{\g_0}^2(\g_2[n]) < \varphi_{\g_0} (\varphi_{\g_0}(\g_2[n]) +1) < \varphi_{\g_0} (\g_1[n]) < \g[n]$.
            Therefore $\g_2 = \varphi_{\g_0} (\g_3)$ for some $\g_3 < \g_2$ and we iterate this procedure finding for each $i \leq n+1$ some $\g_i$ such that $\varphi_{\g_0}(\g_i) = \g_{i-1}$ and $\varphi_{\b_0[n]}^{n-i+1}(0) < \g_i$.
            Then, starting from $0 \le \g_{n+1} [n]$ we obtain $\b[n] \leq \varphi_{\g_0}^{n+1}(\g_{n+1}[n]) < \dots < \varphi_{\g_0}^i (\g_i[n]) <  \dots < \varphi_{\g_0} (\g_1[n]) < \g[n]$, which is again a contradiction.
			\item If $\g_0 < \b_0 [n]$ then $\g$ is not in the range of $\varphi_{\b_0[n]}$ and Lemma \ref{notrange} implies that $\b [n] \le \g [n]$.
		\end{itemize}
		\item $\b = \varphi_{\b_0} (\b_1)$ with $0<\b_1<\b$.
        As before, we consider different subcases.
		\begin{itemize}
			\item If $\b_0 < \g_0$ then $\b$ is not in the range of $\varphi_{\g_0}$ and Lemma \ref{notrange} implies that $\g \le \b[n]$.
			\item If $\b_0 = \g_0$ then we have $\g_1<\b_1$ and, since $\b_0[n]=\g_0[n]$, also $\g_1 [n] < \b_1 [n]$.
            The induction hypothesis of \ref{A} then yields that $\g_1 \le \b_1 [n]$ and hence $\g \leq \varphi_{\g_0}(\b_1[n]) < \varphi_{\b_0}(\b_1[n]) + 1 < \b[n]$ a contradiction.
			\item If $\b_0 [n] < \g_0 < \b_0$ then by the induction hypothesis of \ref{A} we have $\b_0[n] \le \g_0[n]$.
            Since $\varphi_{\g_0} (\varphi_{\b_0} (\b_1[n])) = \varphi_{\b_0} (\b_1[n]) < \b[n] < \g$ (where the equality holds because $\g_0 < \b_0$) we have, peeling off an application of $\varphi_{\g_0}$, that $\varphi_{\b_0} (\b_1[n]) < \g_1$.
            Notice also that $\g_1 < \g < \b$ so that we get $\varphi_{\b_0} (\b_1[n]) < \g_1 < \varphi_{\b_0}(\b_1)$.
            We know that $\g_1$ is not a fixed point of $\varphi_{\g_0}$ and, a fortiori, of $\varphi_{\b_0}$.
            Hence, by the induction hypothesis of \ref{B}, $\varphi_{\b_0} (\b_1[n]) \leq \g_1[n]$.
            Applying $\varphi_{\g_0}$ to both members of the previous inequality, we get $\varphi_{\b_0} (\b_1[n]) + 1 \le \varphi_{\g_0}(\g_1[n]) + 1$.
            Finally, applying $n+1$ times $\varphi_{\b_0[n]}$ to the left hand side and $\varphi_{\g_0[n]}$ to the right hand side of the previous inequality (in addition to the fact that $\b_0[n] \le \g_0[n]$), we get precisely $\b[n] \le \g[n]$.
			\item If $\b_0[n] = \g_0$ then to ease the notation let $\xi = \varphi_{\b_0}(\b_1[n])+1$ so that $\b[n] = \varphi_{\b_0 [n]}^{n+1} (\xi)$. 
            Then we have $\varphi_{\b_0 [n]}^n (\xi) < \g_1$ by peeling off one application of $\varphi_{\g_0}$ from both sides.
            If $\g_1$ does not belong to the range of $\varphi_{\b_0 [n]} = \varphi_{\g_0}$ then Lemma \ref{notrange} implies $\varphi_{\b_0[n]}^n(\xi) \le \g_1[n]$ and hence $\b[n] \le \varphi_{\g_0} (\g_1 [n]) < \g [n]$ which is a contradiction.
            Therefore $\g_1 = \varphi_{\g_0} (\g_2)$ for some $\g_2 < \g_1$ and we have $\varphi_{\b_0[n]}^{n-1}(\xi) < \g_2$ again by peeling off one application of $\varphi_{\g_0}$ from both sides. 
            If $\g_2$ does not belong to the range of $\varphi_{\b_0 [n]}$ then Lemma \ref{notrange} implies $\varphi_{\b_0 [n]}^{n-1} (\xi) \le \g_2 [n]$ and hence $\b [n] \le \varphi_{\g_0}^2(\g_2 [n]) < \varphi_{\g_0} (\g_1 [n]) < \g[n]$.
            Therefore $\g_2 = \varphi_{\g_0} (\g_3)$ for some $\g_3 < \g_2$ and we iterate this procedure finding for each $i \le n+1$ some $\g_i$ such that $\varphi_{\g_0} (\g_i) = \g_{i-1}$ and $\varphi_{\b_0 [n]}^{n-i+1} (\xi) < \g_i$.
            Then we have $\varphi_{\b_0} (\b_1 [n]) < \xi < \g_{n+1}$ and $\g_{n+1}$ is not a fixed point for $\varphi_{\g_0}$ and hence not in the image of $\varphi_{\b_0}$ because $\b_0 > \b_0[n] = \g_0$.
            By Lemma \ref{notrange} $\varphi_{\b_0}(\b_1[n]) \le \g_{n+1}[1]$.
            Notice that if $\g_{n+1}$ is a successor then $\varphi_{\b_0}(\b_1[n]) < \xi \le \g_{n+1}[1]$ and if $\g_{n+1}$ is limit then $\g_{n+1}[1] < \g_{n+1}[n]$.
            Therefore $\varphi_{\b_0}(\b_1[n]) < \g_{n+1}[n]$ and hence $\xi \le \g_{n+1}[n]$.
            Thus, applying $n+1$ times $\varphi_{\g_0}$ to both sides of this inequality, we get $\b [n] \le \varphi_{\g_0}^{n+1} (\g_{n+1} [n]) < \ldots < \varphi_{\g_0}^i (\g_i [n]) < \ldots < \varphi_{\g_0} (\g_1 [n]) < \g [n]$.
			\item If $\g_0 < \b_0 [n]$ then $\g$ is not in the range of $\varphi_{\b_0[n]}$ and Lemma \ref{notrange} implies that $\b [n] \le \g [n]$.
		\end{itemize}
    	\item $\b = \Gamma_0$. We are assuming $\varphi_{\g_0[n]}^{n+1}(\varphi_{\g_0}(\g_1[n])+1) < \varphi_{\Gamma_0[n-1]}(0) < \varphi_{\g_0}(\g_1) < \Gamma_0$ and consider three subcases.
        \begin{itemize}
            \item If $\G_0[n-1] > \g_0$ then $\g_1 > \G_0[n]$.
            Since $\g_1 [n] < \g[n]$ we would get $\g_1[n] < \G_0[n] < \g_1 < \G_0$, contradicting the inductive hypothesis of \ref{A}.
            \item If $\G_0[n-1] = \g_0$ then $\G_0 [n] < \varphi_{\g_0} (\g_1[n]) + 1 < \g[n]$, a contradiction.
            \item Otherwise $\g_0[n] < \Gamma_0[n-1] < \g_0 < \Gamma_0$ and we can argue as in \ref{case G0 g0} of case $\g = \varphi_{\g_0} (0)$.
	    \end{itemize}
       \item $\b = \Gamma_\xi$.
        Let $\d$ be such that $\b[n] = \varphi_{\varphi_\d (0)} (0)$ and notice that $\varphi_\d (0) > \g_0 [n]$.
        We are assuming $\varphi^{n+1}_{\g_0[n]} (\varphi_{\g_0} (\g_1[n]) + 1) < \varphi_{\varphi_\d (0)} (0) < \varphi_{\g_0} (\g_1) < \G_\xi$ and consider different subcases.
        \begin{itemize}
            \item If $\g_0 > \varphi_\d (0)$.
            Thus $\g_0[n] < \varphi_\d (0) < \g_0 < \b$ and so $\varphi_{\g_0 [n]} (0) < \varphi_{\varphi_\d (0)} (0) = \b [n] < \varphi_{\g_0} (0) < \varphi_\b (0) = \b$.
            Since $\varphi_\d (0) > \g_0[n]$ then $\b [n]$ is a fixed point of $\varphi_{\g_0 [n]}$ and we get $\varphi_{\g_0 [n]}^{n+1} (0) < \b [n] < \varphi_{\g_0} (0) < \b$.
            This contradicts the induction hypothesis of \ref{A}.
            \item If $\g_0 = \varphi_\d (0)$ then $\varphi^{n+1}_{\g_0 [n]} (0) < \g[n] < \b[n] = \varphi_{\g_0} (0) < \g$ contradicting the inductive hypothesis of \ref{A}.
            \item If $\g_0 < \varphi_\d (0)$ then both $\b$ and $\b[n]$ are fixed points of $\varphi_{\g_0}$.
            Hence peeling off one application of $\varphi_{\g_0}$ we get $\b[n] < \g_1 < \b$.
            Moreover, $\g_1 [n] < \g[n] < \b[n]$ contradicts the inductive hypothesis of \ref{A}.
        \end{itemize}
	\end{enumerate}
	To prove \ref{B}  we should consider only the case $\d=\g_0$ and assume towards a contradiction that $\g [n] < \varphi_{\g_0} (\b [n]) < \g < \varphi_{\g_0} (\b)$.
	Then $\b[n]<\g_1<\b$ and the induction hypothesis of \ref{A} implies $\b[n] \leq \g_1[n]$ so that $\varphi_{\g_0}(\b[n]) \leq \varphi_{\g_0}(\g_1[n]) < \g[n]$, a contradiction.
    \item[If $\g$ is of the form $\G_0$] then $\b > \G_0$ and Lemma \ref{>epsilon0} imply that $\b[n] \ge \G_0$ for all $n \ge 1$.
    Therefore we cannot have a counterexample to \ref{A}.
    On the other hand \ref{B} is immediate because we should consider only the case $\d = \G_0$, and $\varphi_{\G_0} (\b [n]) < \G_0 = \varphi_{\G_0} (0)$ is impossible.
    \item[If $\g$ is of the form $\G_\xi$] then to prove \ref{A} assume towards a contradiction that $\g[n] < \b[n] < \g < \b$ and consider different subcases depending on the form of $\b$:
    \begin{enumerate}[(i)]
        \item $\b \ne \G_\d$ for all $\d$.
        In this case $\b > \G_\xi$ and Lemma \ref{notrange} imply that $\g \le \b[n]$ a contradiction.
        \item $\b = \G_\d$ with $\d < \b$.
        In this case $\xi < \d$ and $\G_\xi [n] < \G_\d [n] < \G_\xi < \G_\d$.
        By considering $n+1$ times the index of the Veblen functions, the first inequality becomes $\G_{\xi \lceil n \rceil} < \G_{\d \lceil n \rceil}$.
        Then we get $\xi \lceil n \rceil < \d \lceil n \rceil < \xi < \d$ contradicting the smoothness of the system of fundamental sequences on $\zeta$ we started from.
    \end{enumerate}
    To prove \ref{B} we should only consider the case of $\varphi_{\G_\xi}$.
    This is immediate since $\varphi_{\G_\xi} (\b[n]) < \G_\xi = \varphi_{\G_\xi} (0)$ is impossible.\qedhere
   \end{description}
\end{proof}

%
%
%
%


		
\end{document}